\newtheorem{theorem}{Theorem}[section]
\newtheorem{corollary}[theorem]{Corollary}
\newtheorem{conjecture}[theorem]{Conjecture}
\newtheorem{lemma}[theorem]{Lemma}
\newtheorem{proposition}[theorem]{Proposition}
\theoremstyle{definition}
\newtheorem{definition}[theorem]{Definition}
\theoremstyle{remark}
\newtheorem{remark}[theorem]{Remark}
\newcommand{\tr}[1]{\vphantom{#1}^t #1}
\newcommand{\labitem}[2]{\def\@itemlabel{#1} \item \def\@currentlabel{#1}\label{#2}}
\numberwithin{equation}{section}
\title{Geometry, dynamics, and arithmetic of $S$-adic shifts$^1$}
\author[V.~Berth\'e]{Val\'erie Berth\'e}
\author[W.~Steiner]{Wolfgang Steiner}
\address{IRIF, CNRS UMR 8243, Universit\'e Paris Diderot -- Paris 7, Case 7014, 75205 Paris Cedex 13, FRANCE}
\email{berthe@irif.fr, steiner@irif.fr}
\author[J. M. Thuswaldner]{J\"org M. Thuswaldner}
\address{Chair of Mathematics and Statistics, University of Leoben, A-8700 Leoben, AUSTRIA}
\email{joerg.thuswaldner@unileoben.ac.at}
\thanks{This work was supported by the Agence Nationale de la Recherche and the Austrian Science Fund through the projects ``Fractals and Numeration'' (ANR-12-IS01-0002, FWF I1136), ``Discrete Mathematics'' (FWF W1230) and ``Dyna3S'' (ANR-13-BS02-0003).}
\thanks{$^1$After this paper was published in {\it Ann.\ Inst.\ Fourier (Grenoble)} {\bf 69} (2019) 1347--1409, we observed that Theorem~\ref{t:3} holds in a more general setting. The present manuscript contains this more general version of Theorem~\ref{t:3}. To prove it we needed to change the last part of the proof of Theorem~\ref{t:3} (see p.~\pageref{proof:3.3}).}
\date{\today}
\keywords{Symbolic dynamics; non-stationary dynamics; $S$-adic shifts; substitutions;  tilings; Pisot numbers;  continued fractions;  Brun algorithm; Arnoux-Rauzy algorithm; Lyapunov exponents}
\subjclass[2010]{37B10, 37A30, 11K50, 28A80}
\begin{document}

\begin{abstract}
This paper studies geometric and spectral properties of $S$-adic shifts and their relation to continued fraction algorithms. These  shifts are symbolic dynamical systems obtained by iterating infinitely many substitutions. Pure discrete spectrum for $S$-adic shifts and tiling properties of associated Rauzy fractals are established  under  a  generalized  Pisot  assumption together with  a geometric coincidence condition. These general results extend the scope of the Pisot substitution conjecture to the $S$-adic framework. They are applied to families of $S$-adic shifts generated by Arnoux-Rauzy as well as Brun substitutions. It is shown that almost all of these shifts have pure discrete spectrum. Using $S$-adic words related to Brun's continued fraction algorithm, we exhibit bounded remainder sets and natural codings for almost all translations on the two-dimensional torus.  Due to the lack of self-similarity properties present for substitutive systems we  have to develop new proofs to obtain our results in the $S$-adic setting. 
\end{abstract}

\maketitle

\section{Introduction}

Substitutive dynamical systems for substitutions with dominant Pisot eigenvalue are widely known to yield pure discrete spectrum in the symbolic setting as well as for tiling spaces, cf.\  \cite{Rauzy:82,Fog02,Barge-Kwapisz:06,CANTBST,AkiBBLS}. The aim of this paper is to extend substitutive dynamical systems to the non-stationary (i.e., time inhomogeneous) framework. The iteration of a single transformation is replaced by a sequence of transformations, along a sequence of spaces; see e.g.\ \cite{Arnoux-Fisher:01,Arnoux-Fisher:05,Fisher:09} for sequences of substitutions and Anosov maps as well as for relations to Vershik's adic systems. In this setting, the Pisot condition is replaced by the requirement that the second Lyapunov exponent of the  dynamical system is negative, leading to hyperbolic dynamics with a one-dimensional unstable foliation. This requirement has an arithmetical meaning, as it assures  a.e.\ strong convergence of continued fraction algorithms associated with these dynamical systems; see \cite{SCHWEIGER,Berthe:11,Berthe-Delecroix,AD13}.
  
We consider \emph{$S$-adic} symbolic dynamical systems, where the letter~$S$ refers to ``substitution''.  
These shift spaces are obtained by iterating different substitutions in a prescribed order, generalizing the substitutive case where a single substitution is iterated. 
An $S$-adic expansion of an infinite word~$\omega$ is given by a sequence $(\sigma_n,i_n)_{n \in \mathbb{N}}$, where the $\sigma_n$ are substitutions and the $i_n$ are letters, such that $\omega = \lim_{n\to\infty} \sigma_0 \sigma_1 \cdots \sigma_{n-1}(i_n)$. 
Under mild assumptions (needed in order to exclude degenerate constructions), the orbit closure under the action of the shift~$\Sigma$ on the infinite word~$\omega$  
is a minimal symbolic dynamical system equipped with an $S$-adic substitutive structure, and has zero entropy~\cite{Berthe-Delecroix}.
The $S$-adic shifts are closely related to Vershik's adic systems~\cite{Vershik:81,sadic4}, which have  provided the  terminology ``$S$-adic''. More generally they belong to the family of fusion systems (see  \cite{PriebeFrank-Sadun11,PriebeFrank-Sadun14}), which also includes Bratteli-Vershik systems and multidimensional cut-and-stack transformations, and pertain to arithmetic dynamics~\cite{Sidorov}. The connections with continued fractions are natural in  this framework: they had big influence on the set-up of the $S$-adic formalism, inspired by the Sturmian dynamics which is thoroughly described by regular continued fractions; see e.g.\ \cite{Arnoux-Fisher:01,Berthe-Ferenczi-Zamboni:05}.

In the classical Pisot substitutive setting, the basic object is a single Pisot substitution, i.e., a substitution~$\sigma$ whose incidence matrix~$M_{\sigma}$ has a Pisot number as dominant eigenvalue. When the characteristic polynomial of~$M_{\sigma}$ is furthermore assumed to be irreducible, then  the associated  symbolic dynamical system $(X_{\sigma}, \Sigma)$ is conjectured to have pure discrete spectrum. This is the {\em Pisot substitution conjecture}. For more details and partial results on this conjecture, see \cite{Fog02,ST09,CANTBST,AkiBBLS}. One now classical approach for exhibiting the translation on a compact abelian group to which $(X_{\sigma}, \Sigma)$ is conjectured to be isomorphic relies on the associated {\it Rauzy fractal}. 
\begin{figure}[h]
\includegraphics[trim=0 200 0 200,width=0.35\textwidth,angle=235]{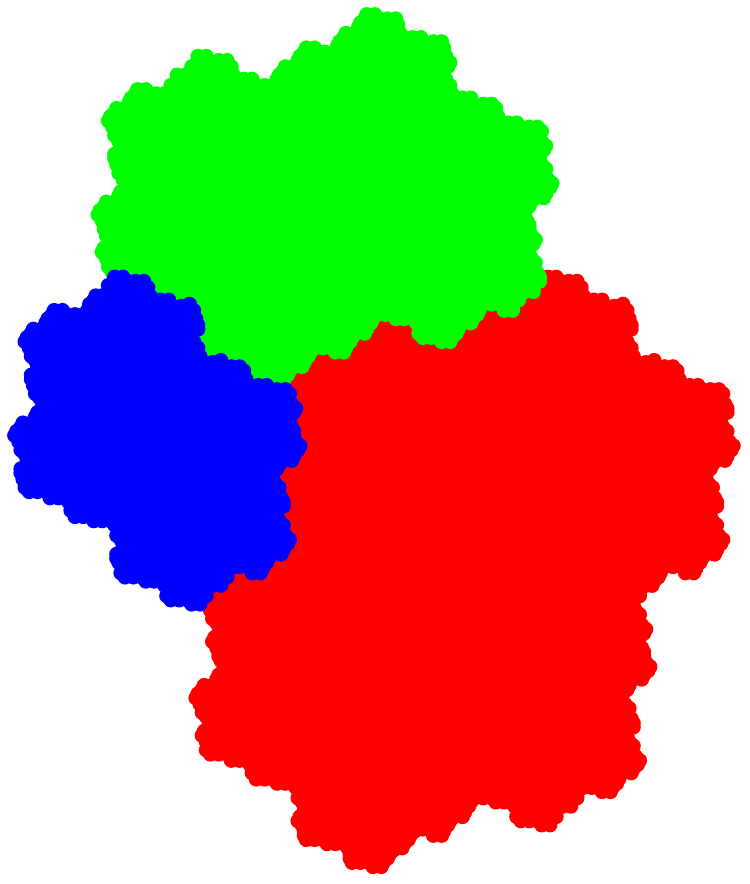}
\caption{The classical Rauzy fractal. \label{fig:1}}
\end{figure}
This set, which has fractal boundary in most cases, can be viewed as the solution of a graph-directed iterated function system, which allows to derive many of its geometric and topological properties, see~\cite{SirventWang02}. Moreover, it forms a fundamental domain for the $\mathbb{Z}$-action provided by the Kronecker group translation (or at least for a Kronecker factor).  An image of the classical Rauzy fractal going back to \cite{Rauzy:82} is depicted in Figure~\ref{fig:1}. This Rauzy fractal is associated with the \emph{Tribonacci substitution} defined on the alphabet $\mathcal{A}=\{1,2,3\}$ by $\sigma(1)=12$, $\sigma(2)=13$, and $\sigma(3)=1$.

We extend classical notions, results, and problems studied in the Pisot substitutive case to the $S$-adic framework. We are able to define Rauzy fractals associated with $S$-adic symbolic dynamical systems, with the Pisot assumption being extended to the $S$-adic framework by requiring the second Lyapunov exponent to be negative. In other words, we work with $S$-adic shifts whose associated cocyles (provided by the incidence matrices of the substitutions) display strong convergence properties
analogous to the Pisot case. Combinatorially, this reflects in certain balancedness properties of the associated language. This also  allows us to define analogs of the  stable/unstable splitting in the Pisot substitution case. In order to prove discrete spectrum, we associate with any Pisot $S$-adic shift a Rauzy fractal that lives in 
the analog of the stable space. 

We then introduce a family of coverings and multiple tilings, including periodic and aperiodic ones, that comes together with set equations playing the role of the graph-directed iterated function system in the Pisot substitutive case.
A~particular choice of a periodic tiling yields number theoretic applications and the isomorphism with a toral translation, whereas other (aperiodic) choices allow the  study of the associated coverings. We then express a criterion for the multiple tilings  to be indeed tilings, which yields pure discrete spectrum.
This criterion is a coincidence type condition in the same vein as the various coincidence conditions (algebraic, combinatorial, overlap, etc.) introduced in the substitutive framework (first in~\cite{Dekking} for substitutions of constant length and then extended to the most general substitutive framework, see e.g.\ \cite{Solomyak:97,AL11}). 
 
The idea of constructing Rauzy fractals associated with multidimensional continued fractions is already present in \cite{Ito:89,Ito:95b},  but the problem remained to prove tiling properties, and even the question whether subpieces of the Rauzy fractal do not overlap could not be answered. Furthermore, although there exist results for the generation of discrete hyperplanes in connection with continued fraction algorithms \cite{Ito-Ohtsuki:93,Ito-Ohtsuki:94,Arnoux-Berthe-Ito:02,BBJS13,BBJS14},  more information on convergence and renormalization  properties is needed in order to deduce spectral properties. In~\cite{Arnoux-Mizutani-Sellami}, $S$-adic sequences are considered  where the substitutions all have the same Pisot irreducible unimodular matrix; in our case, the matrices are allowed to be different at each step.

\subsection*{Main results}
In our first result  (see Theorem~\ref{t:1}) we describe geometric and dynamical properties of an $S$-adic shift $(X,\Sigma)$ under very general combinatorial conditions. In particular, we are able to associate Rauzy fractals with $(X,\Sigma)$ that are compact, the closure of their interior, and have a boundary of zero measure. We deduce covering and (multiple) tiling properties of these Rauzy fractals and, subject to a combinatorial condition (a coincidence type condition), we are able to show that they form a periodic tiling. Due to the lack of a dominant eigenvector and the fact that we lose the self-similarity properties present for substitutive systems, these proofs require new ideas and do not run along the lines of the substitutive setting.
In particular, a crucial point is  to prove that the  boundary of  Rauzy fractals   has measure zero (see Proposition \ref{p:boundary}). The tiling property of the Rauzy fractals is then used to prove that $(X,\Sigma)$ is  measurably conjugate to a translation on a torus of suitable dimension. In this case, the subpieces of the Rauzy fractal turn out to be bounded remainder sets, and the elements of~$X$ are natural codings for this translation (as defined in Section~\ref{sec:coding}). Since the assumptions on the shift are very mild, a slight variation of this result can be used to establish a metric result stating that almost all shifts of certain families of $S$-adic shifts (under the above-mentioned Pisot condition in terms of Lyapunov exponents) have the above properties. We apply these constructions to two multidimensional continued fraction algorithms, the Arnoux-Rauzy and the Brun algorithm, that are proved to satisfy our Pisot assumptions as well as the combinatorial coincidence  condition.

Arnoux-Rauzy substitutions are known to be Pisot~\cite{Arnoux-Ito:01}. Purely substitutive Arnoux-Rauzy words are even natural codings of toral translations \cite{Berthe-Jolivet-Siegel:12,Barge-Stimac-Williams:13}. This is not true for arbitrary non-substitutive Arnoux-Rauzy words (see \cite{Cassaigne-Ferenczi-Zamboni:00,Cassaigne-Ferenczi-Messaoudi:08}), but we are able to show this property for large classes of them; to our knowledge, no such examples (on more than 2 letters) were known before. Moreover, we deduce from a recent result by Avila and Delecroix~\cite{AD13} that  almost every Arnoux-Rauzy shift (w.r.t.\ a measure on the sequences of substitutions generating these words) is a natural coding of a toral translation. This proves a conjecture of Arnoux and Rauzy that goes back to the early nineties (see e.g.\ \cite{Cassaigne-Ferenczi-Zamboni:00,Berthe-Ferenczi-Zamboni:05}) in a metric sense.
We also prove that any linearly recurrent Arnoux-Rauzy shift with recurrent directive sequence has pure discrete spectrum.

Brun's algorithm~\cite{BRUN} is one of the most classical multidimensional generalizations of the regular continued fraction expansion \cite{BRENTJES,SCHWEIGER}. This algorithm generates a sequence of simultaneous rational approximations to a given pair of points (each of these approximations is a pair of points having the same denominator). It is also closely related to the modified Jacobi-Perron algorithm introduced by Podsypanin in~\cite{POD77}, which is a two-point extension of the Brun algorithm. It is shown to be strongly convergent almost everywhere with exponential rate \cite{FUKE96,Schratzberger:98,Meester,Broise} and has an invariant ergodic  probability measure equivalent to the Lebesgue measure which is known explicitly~\cite{ArnouxNogueira93}. The substitutive case has been  handled in \cite{Barge14,BBJS14}: Brun substitutions  have pure discrete spectrum. Applying our theory, we prove that for almost all $(x_1,x_2) \in [0,1)^2$, there is an $S$-adic shift associated with a certain (explicitly given) Brun expansion which is measurably conjugate to the translation by $(x_1,x_2)$ on the torus~$\mathbb{T}^2$. This implies that Brun substitutions yield natural codings of almost all rotations on the two-dimensional torus. The subpieces of the associated Rauzy fractals provide  (measurable) bounded remainder sets for this rotation. 

\subsection*{Motivation}
Our motivation comes on the one hand from number theory. Indeed, Rauzy fractals are known to provide fundamental domains for Kronecker translations on the torus~$\mathbb{T}^d$ (together with Markov partitions for the corresponding toral automorphisms). They are also used to obtain best approximation results for cubic fields~\cite{Hubert-Messaoudi:06}, and serve as limit sets for simultaneous Diophantine approximation for cubic extensions in terms of self-similar ellipses provided by Brun's algorithm~\cite{ito:03,Ito:07}. Using our new theory, it is now possible to reach Kronecker translations with non-algebraic parameters, which extends the usual (Pisot) algebraic framework and the scope of potential number theoretic applications considerably.
 
On the other hand, the results of the present paper extend discrete spectrum results to a much wider framework. Furthermore, our theory enables us to give explicit constructions for higher dimensional non-stationary Markov partitions  for ``non-stationary hyperbolic toral automorphisms'', in the sense of~\cite{Arnoux-Fisher:05}. In~\cite{Arnoux-Fisher:05} such non-stationary Markov partitions are defined and $2$-dimen\-sio\-nal examples are given. Our new results (including the tilings by $S$-adic Rauzy fractals) will help in the quest for a convenient symbolic representation of the Weyl chamber flow; see e.g.~\cite[Section~6]{Gorodnik}. In the case of two letters this is performed in~\cite{Arnoux-Fisher:01}, the general case is the subject of the forthcoming paper~\cite{sadic3}. Another direction of research will be the investigation of the spectrum of $S$-adic shifts. In particular, it would be interesting to explore how Host's result~\cite{Host:86} on the continuity of eigenvalues extends to this more general setting. We will come back to these subjects in a forthcoming paper.

\section{Basic definitions and notation}\label{sec:miseenscene}

\subsection{Substitutions}
A~\emph{substitution} $\sigma$ over a finite alphabet $\mathcal{A} = \{1,2,\ldots,d\}$ is an endomorphism of the free monoid~$\mathcal{A}^*$ (that is endowed with the operation of concatenation). We assume here  that all our substitutions are non-erasing, i.e., they send non-empty words to non-empty words.
The \emph{incidence matrix} (or abelianization) of~$\sigma$ is the square matrix $M_\sigma = (|\sigma(j)|_i)_{i,j\in\mathcal{A}} \in \mathbb{N}^{d\times d}$.
Here, the notation $|w|_i$ stands for the number of occurrences of the letter~$i$ in $w \in \mathcal{A}^*$, and $|w|$ will denote the length of~$w$. 
We say that $\sigma$ is \emph{unimodular} if $|\!\det M_\sigma| = 1$.
The map
\[
\mathbf{l}:\ \mathcal{A}^* \to\mathbb{N}^d, \ w \mapsto \tr{(|w|_{1},|w|_2,\ldots, |w|_{d})}
\]
is called the \emph{abelianization map}. 
Note that $\mathbf{l}(\sigma(w)) = M_\sigma \mathbf{l}(w)$ for all $w\in \mathcal{A}^*$.
A~substitution is called \emph{Pisot irreducible} if the characteristic polynomial of its incidence matrix is the minimal polynomial of a Pisot number.

\subsection{$S$-adic words and languages}
Let $\boldsymbol{\sigma} = (\sigma_n)_{n\in\mathbb{N}}$ be a sequence of substitutions over the alphabet~$\mathcal{A}$.
To keep notation concise, we set $M_n = M_{\sigma_n}$ for $n \in \mathbb{N}$, and we abbreviate products of consecutive substitutions and their incidence matrices by
\[
\sigma_{[k,\ell)} = \sigma_k \sigma_{k+1} \cdots \sigma_{\ell-1} \quad \mbox{and} \quad M_{[k,\ell)} = M_k M_{k+1} \cdots M_{\ell-1} \quad (0 \le k \le \ell).
\]

The language associated with~$\boldsymbol{\sigma}$ is defined by $\mathcal{L}_{\boldsymbol{\sigma}} = \mathcal{L}_{\boldsymbol{\sigma}}^{(0)}$, where
\[
\mathcal{L}_{\boldsymbol{\sigma}}^{(m)} = \big\{w \in \mathcal{A}^*:\, \mbox{$w$ is a factor of $\sigma_{[m,n)}(i)$ for some $i \in\mathcal{A}$, $n\in\mathbb{N}$}\big\} \qquad (m \in \mathbb{N}).
\]
Here, $w$~is a \emph{factor} of $v \in \mathcal{A}^*$ if $v \in \mathcal{A}^* w \mathcal{A}^*$. 
Furthermore, $w$~is a \emph{prefix} of~$v$ if $v \in w \mathcal{A}^*$.
Similarly, $w$~is a factor and a prefix of an infinite word $\omega \in \mathcal{A}^\mathbb{N}$ if $\omega \in \mathcal{A}^* w \mathcal{A}^\mathbb{N}$ and $\omega \in w \mathcal{A}^\mathbb{N}$, respectively. 

The sequence~$\boldsymbol{\sigma}$ is said to be \emph{algebraically irreducible} if, for each $k \in \mathbb{N}$, the characteristic polynomial of $M_{[k,\ell)}$ is irreducible for all sufficiently large~$\ell$.
The sequence~$\boldsymbol{\sigma}$ is said to be \emph{primitive} if, for each $k \in \mathbb{N}$, $M_{[k,\ell)}$ is a positive matrix for \emph{some} $\ell > k$.
This notion extends primitivity of a single substitution~$\sigma$, where $M_\sigma^\ell$ is required to be positive for some $\ell > 0$, to sequences. Note that \cite{Durand:00a,Durand:00b,Durand-Leroy-Richomme:13} use a more restrictive definition of primitive sequences of substitutions.

Following \cite{Arnoux-Mizutani-Sellami}, we say that an infinite word $\omega \in \mathcal{A}^\mathbb{N}$ is a \emph{limit word} of $\boldsymbol{\sigma} = (\sigma_n)_{n\in\mathbb{N}}$ if there is a sequence of infinite words $(\omega^{(n)})_{n\in\mathbb{N}}$ with 
\[
\omega^{(0)} = \omega, \quad \omega^{(n)} = \sigma_n\big(\omega^{(n+1)}\big) \quad \mbox{for all}\ n \in \mathbb{N},
\]
where the substitutions~$\sigma_n$ are naturally extended to infinite words.
We also say that $\omega$ is an \emph{$S$-adic limit word} with \emph{directive sequence} $\boldsymbol{\sigma}$ and $S = \{\sigma_n:\, n \in \mathbb{N}\}$.  
We can write 
\[
\omega = \lim_{n\to\infty} \sigma_{[0,n)}(i_n),
\] 
where $i_n$ denotes the first letter of~$\omega^{(n)}$, provided that $\lim_{n\to\infty} |\sigma_{[0,n)}(i_n)| = \infty$ (which holds in particular when $\boldsymbol{\sigma}$ is primitive). 
In case that~$\boldsymbol{\sigma}$ is a periodic sequence, there exists a limit word~$\omega$ such that $\omega^{(n)} = \omega$ for some $n \ge 1$, i.e., $\omega$~is the fixed point of the substitution~$\sigma_{[0,n)}$. 
We will refer to this case as the \emph{periodic case}. 

Note that we do not require $S$ to be finite since we want to include $S$-adic shifts issued from (multiplicative) multidimensional continued fraction expansions.
For more on $S$-adic sequences, see e.g.\ \cite{Berthe-Delecroix,Durand-Leroy-Richomme:13,Arnoux-Mizutani-Sellami}.

\subsection{Symbolic dynamics, $S$-adic shifts, and $S$-adic graphs}
An infinite word~$\omega$ is said to be \emph{recurrent} if each factor of~$\omega$ occurs infinitely often in~$\omega$.
It is is said to be \emph{uniformly recurrent} if each factor occurs at an infinite number of positions with bounded gaps. The recurrence function $R(n)$ of a uniformly recurrent word~$\omega$ is defined for any~$n$ as the smallest positive integer~$k$ for which every factor of size~$k$ of~$\omega$ contains every factor of size~$n$. An infinite word~$\omega$ is said to be \emph{linearly recurrent} if there exists a constant~$C$ such that $R(n) \leq Cn$, for all~$n$. 

The \emph{shift operator}~$\Sigma$ maps $(\omega_n)_{n\in\mathbb{N}}$ to  $(\omega_{n+1})_{n\in\mathbb{N}}$.
A~dynamical system $(X,\Sigma)$ is a \emph{shift space} if $X$ is a closed shift invariant set of infinite words over a finite alphabet, with the product topology of the discrete topology. 
The system $(X,\Sigma)$ is \emph{minimal} if every non-empty closed shift invariant subset equals the whole set; it is called \emph{uniquely ergodic} if there exists a unique shift invariant probability measure on~$X$.
The \emph{symbolic dynamical system generated by an infinite word~$\omega$} is defined as~$(X_\omega,\Sigma)$, where $X_\omega = \overline{\{\Sigma^n(\omega) :\, n \in \mathbb{N}\}}$ is the closure of the $\Sigma$-orbit of~$\omega$.
This system is \emph{minimal} if and only if $\omega$ is uniformly recurrent \cite[Proposition~4.7]{Queffelec:10}. 

Let $\mu$ be a shift invariant measure defined on $(X,\Sigma)$. A~measurable eigenfunction of the system $(X,\Sigma,\mu)$ with associated eigenvalue
$\lambda \in \mathbb{R}$ is an $L^2(X,\mu)$ function that satisfies $f(\Sigma^n(\omega)) = e^{2\pi i\lambda n} f(\omega)$ for all $n \in \mathbb{N}$ and $\omega \in X$. The system $(X,\Sigma)$ is said to be \emph{weakly mixing} if there are no non-trivial measurable eigenvalues. It has \emph{pure discrete spectrum} if $L^2(X,\mu)$ is spanned by the measurable eigenfunctions. 

In the present paper we consider two types of symbolic dynamical systems in which the previous definitions make sense. Namely,  \emph{$S$-adic shifts} and  edge shifts associated with \emph{$S$-adic graphs}.   They will be defined now.

Let $S$ be a  set of substitutions, $S$ can be finite or infinite. The \emph{$S$-adic shift} or \emph{$S$-adic system} with directive sequence~$\boldsymbol{\sigma}\in S^{\mathbb{N}}$ is $(X_{\boldsymbol{\sigma}}, \Sigma)$, where $X_{\boldsymbol{\sigma}}$ denotes the set of infinite words~$\omega$ such that each factor of~$\omega$ is an element of~$\mathcal{L}_{\boldsymbol{\sigma}}$.
If $\boldsymbol{\sigma}$ is primitive, then one checks that $(X_{\boldsymbol{\sigma}}, \Sigma) = (X_\omega,\Sigma)$ for any limit word~$\omega$ of~$\boldsymbol{\sigma}$; see e.g. \cite[Theorem 5.2]{Berthe-Delecroix}.

Let $S$ be  a set of substitutions and let $G=(V,E)$ be a strongly connected directed graph with set of vertices $V$ and set of edges $E$.  The graph $G$ may be an infinite graph with multiple edges. Let $\tau: E \to S$ be a map that associates a substitution $\tau(e)=\sigma_e$ with each edge $e\in E$ and call $(G,\tau)$ an \emph{$S$-adic graph} (see~\cite[Section~3.3]{Berthe-Delecroix}).  Let $s(e)$ and $r(e)$ be the source and the range of an edge $e\in E$. Then with $G$  we associate the edge shift $(E_G,\Sigma)$ with
\[
E_G = \{ 
(\gamma_n) \in E^\mathbb{N} \;:\;r(\gamma_n)=s(\gamma_{n+1}) \hbox{ for each }n\in \mathbb{N} 
\}.
\]
The set $D_G = \{ 
(\sigma_n)=(\tau(\gamma_n)) \;:\; (\gamma_n) \in E_G$
consists of all directive sequences corresponding to labelings of infinite walks in $G$. 
If $V$ is finite, then we will speak of an $S$-adic graph with finitely many vertices; in this case, the fact that $E$ may be infinite (which allows $S$-adic shifts with infinitely many different substitutions) constitutes the difference between the edge shift $E_G$ and a shift of finite type or a Markov system.
It is important to allow $E$ to be infinite because to define classes of $S$-adic words related to multiplicative continued fraction algorithms requires infinitely many substitutions and, hence, infinitely many edges in the $S$-adic graph (a prominent example in this context is the Jacobi-Perron algorithm, see e.g.~\cite[Section~2.3]{BBJS14}). Very often we will deal with the full shift $E_G\cong D_G=S^\mathbb{N}$ corresponding to a graph with one vertex and finitely or countably infinitely many self loops (each of which is identified with a different substitution). Note that we use the same notation for the shift map $\Sigma$ acting on~$\mathcal{A}^\mathbb{N}$ and on~$E_G$. The \emph{cylinder} of a finite sequence $(\gamma_0, \gamma_1, \ldots, \gamma_{\ell-1}) \in E^\ell$ is 
\[
\mathcal{Z}(\gamma_0, \gamma_1, \ldots, \gamma_{\ell-1}) = \big\{(\tau_n)_{n\in\mathbb{N}} \in E_G:\, (\tau_0, \tau_1, \ldots, \tau_{\ell-1}) = (\gamma_0, \gamma_1, \ldots, \gamma_{\ell-1})\big\}.
\]
In what follows we will often identify an edge $e\in E$ with the substitution $\tau(e)$, a finite walk with the associated product of substitutions, and an element of $(\gamma_n)\in E_G$ with the associated directive sequence $(\tau(\gamma_n))$. In particular, we will talk about the incidence matrix of an edge or a walk, and about primitivity, algebraic irreducibility, and the language $\mathcal{L}_{\boldsymbol{\gamma}}$ of an element $\boldsymbol{\gamma}\in E_G$.

\subsection{Balance and letter frequencies} \label{subsec:letterfreq}
A~pair of words $u, v \in \mathcal{A}^*$ with $|u| = |v|$ is \emph{$C$-balanced} if 
\[
-C \le |u|_j - |v|_j \le C \quad \mbox{for all}\ j \in \mathcal{A}.
\]
A~language~$\mathcal{L}$ is $C$-balanced if each pair of words $u, v \in \mathcal{L}$ with $|u| = |v|$ is $C$-balanced.
The language~$\mathcal{L}$ is said to be \emph{balanced} if there exists~$C$ such that $\mathcal{L}$ is $C$-balanced. (In previous works, this property was sometimes called \emph{finitely balanced}, and balancedness referred to the case  $C = 1$.)
A~(finite or infinite) word is $C$-balanced or balanced if the language of its factors has this property.

Note that the language  of a Pisot irreducible substitution is balanced; see e.g.~\cite{Adamczewski:03,Adamdis}.

The \emph{frequency} of a letter $i \in \mathcal{A}$ in $\omega \in \mathcal{A}^\mathbb{N}$ is defined as $f_i = \lim_{|p|\to\infty} |p|_i/|p|$, where the limit is taken over the prefixes~$p$ of~$\omega$, if the limit exists. 
The vector $\tr{(f_1,f_2,\ldots,f_d)}$ is then called the \emph{letter frequency vector}.
Balancedness implies the existence of letter frequencies; see~\cite{Berthe-Tijdeman:02}.

\subsection{Generalized Perron-Frobenius eigenvectors} \label{sec:gener-perr-frob}
A~natural way to endow a shift space with a shift invariant measure is to consider its factor frequencies (defined analogously as for letters). In the primitive substitutive case, letter frequencies are given by the Perron-Frobenius eigenvector.
More generally, for a sequence of matrices $(M_n)_{n\in\mathbb{N}}$, we have by \cite[pp.~91--95]{Furstenberg:60} that
\begin{equation} \label{e:topPF}
\bigcap_{n\in\mathbb{N}} M_{[0,n)}\, \mathbb{R}^d_+ = \mathbb{R}_+ \mathbf{u} \quad \mbox{for some positive vector}\ \mathbf{u} \in \mathbb{R}_+^d,
\end{equation}
provided there are indices $k_1 < \ell_1 \le k_2 < \ell_2 \le \cdots$ and a positive matrix~$B$ such that $B = M_{[k_1,\ell_1)} = M_{[k_2,\ell_2)} = \cdots$.
In particular, \eqref{e:topPF} holds for the sequence of incidence matrices of a  primitive and recurrent sequence of substitutions $\boldsymbol{\sigma} = (\sigma_n)_{n\in\mathbb{N}}$ (even if $S$ is infinite).
We call~$\mathbf{u}$ a \emph{generalized right eigenvector} of~$\boldsymbol{\sigma}$.
Note that \eqref{e:topPF} is called \emph{topological Perron-Frobenius condition} in~\cite{Fisher:09}.
In particular, the letter frequency vector $\mathbf{u}=\tr{(f_1,f_2,\ldots,f_d)}$ is a generalized right eigenvector when $\omega$ is a limit word of a primitive and recurrent sequence of substitutions.

\subsection{Lyapunov exponents and Pisot condition} \label{sec:lyap-expon-pisot}
Let $S$ be a finite or  infinite set of substitutions with invertible incidence matrices and let $(G,\tau)$ be an $S$-adic graph with associated edge shift $(E_G, \Sigma, \nu)$, where $\nu$ is a (ergodic) probability measure. With each $\boldsymbol{\gamma} = (\gamma_n)_{n\in\mathbb{N}} \in E_G$, associate the linear \emph{cocycle} operator $A(\boldsymbol{\gamma}) = \tr{\!M}_0$ (where $M_0$ is the incidence matrix of the substitution~$\sigma_0=\tau(\gamma_0)$ associated with the first edge $\gamma_0$ of the walk $\boldsymbol{\gamma}$). Assume that this cocycle is \emph{log-integrable} in the sense that
\[
\int_{E_G} \log\max\{     \|A(x) \|,  \|A(x)^{-1} \| \} d\nu(x) < \infty
\]
(this condition is always satisfied if $G$ is a finite graph, that is, when  $G$  has finitely many edges). Then the \emph{Lyapunov exponents} $\theta_1, \theta_2, \ldots, \theta_d$ of $(E_G, \Sigma, \nu)$ are recursively defined by 
\begin{align}
\theta_1 + \theta_2 + \cdots + \theta_k & = \lim_{n\to\infty} \frac{1}{n} \int_{E_G} \log \|\wedge^k \big(A(\Sigma^{n-1}(x)) \cdots A(\Sigma(x)) A(x)\big)\|\, d\nu(x) \nonumber \\
& = \lim_{n\to\infty} \frac{1}{n} \int_{E_G} \log \|\wedge^k (\tr{M}_{[0,n)})\|\, d\nu = \lim_{n\to\infty} \frac{1}{n} \int_{E_G} \log \|\wedge^k M_{[0,n)}\|\, d\nu \label{eq:transposeequal}
\end{align}
for $1 \le k \le d$, where $\wedge^k$ denotes the $k$-fold wedge product.
Here and in the following, $\|\cdot\|$ denotes the maximum norm~$\|\cdot\|_\infty$. 
Following \cite[\S 6.3]{Berthe-Delecroix}, we say that $(E_G, \Sigma, \nu)$ satisfies the \emph{Pisot condition} if 
\[
\theta_1 > 0 > \theta_2 \ge \theta_3 \ge \cdots \ge \theta_d.
\]

\subsection{Natural codings and bounded remainder sets}\label{sec:coding}
Let $\Lambda$ be a full-rank lattice in~$\mathbb{R}^d$ and $T_\mathbf{t}: \mathbb{R}^d/\Lambda \to \mathbb{R}^d/\Lambda$, $\mathbf{x} \mapsto \mathbf{x} + \mathbf{t}$ a given toral translation. Let $R\subset\mathbb{R}^d$ be a fundamental domain for $\Lambda$ and $\tilde T_\mathbf{t}:R\to R$ the mapping induced by $T_\mathbf{t}$ on $R$. If $R = R_1 \cup \cdots \cup R_k$ is a partition of $R$ 
(up to measure zero) such that for each $1\le i\le k$ the restriction~$\tilde T_\mathbf{t}|_{R_i}$  is given by the translation ${\mathbf x}\mapsto{\mathbf x}+{\mathbf t}_i$ for some~${\mathbf t}_i\in\mathbb{R}^d$, and $\omega$ is the coding of a point $\mathbf{x} \in R$ with respect to this partition, we call~$\omega$ a \emph{natural coding} of $T_\mathbf{t}$.
A~symbolic dynamical system $(X,\Sigma)$ is a \emph{natural coding} of $(\mathbb{R}^d/\Lambda, T_\mathbf{t})$ if $(X,\Sigma)$ and $(\mathbb{R}^d/\Lambda, T_\mathbf{t})$ are measurably conjugate and every element of~$X$ is a natural coding of the orbit of some point of the $d$-dimensional torus $\mathbb{R}^d/\Lambda$ (with respect to some fixed partition).

A~subset~$A$ of $\mathbb{R}^d/\Lambda$ with Lebesgue measure~$\lambda(A)$ is said to be a~\emph{bounded remainder set} for the translation~$T_\mathbf{t}$ if there exists $C > 0$ such that, for  a.e.\ $x \in  \mathbb{R}^d/\Lambda$,
\[
|\#\{n  < N:\, T _\mathbf{t}^n(x) \in A \} -  N  \lambda (A)/\lambda(R) |  <  C \qquad \mbox{for all}\ N \in \mathbb{N}.
\]

Observe that if $(X,\Sigma)$ is a natural coding of a minimal translation $(\mathbb{R}^d/\Lambda, T _\mathbf{t})$ with balanced language, then the elements of its associated partition are bounded remainder sets \cite[Proposition~7]{Adamczewski:03}. Moreover, $A$~is a bounded remainder set if it is an atom of a partition that gives rise to a natural coding of a translation whose induced mapping on~$A$ is again a (particular) translation; see \cite{Rauzy:84} (we also refer to \cite{Ferenczi92} for an analogous characterization of bounded remainder sets). 

\subsection{(Multiple) tilings} \label{sec:multiple-tilings}
We call a collection~$\mathcal{K}$ of compact subsets of a Euclidean space~$\mathcal{E}$ a \emph{multiple tiling} of~$\mathcal{E}$ if each element of~$\mathcal{K}$ is the closure of its interior and if there exists a positive integer~$m$ such that almost every point of~$\mathcal{E}$ (with respect to the Lebesgue measure) is contained in exactly $m$ elements of~$\mathcal{K}$. 
The integer~$m$ is called the \emph{covering degree} of the multiple tiling~$\mathcal{K}$. 
If $m=1$, then $\mathcal{K}$ is called a \emph{tiling} of~$\mathcal{E}$.
A~point in~$\mathcal{E}$ is called \emph{$m$-exclusive} if it is contained in the interior of exactly $m$ tiles of~$\mathcal{K}$; it is called \emph{exclusive} if $m = 1$.

\subsection{Rauzy fractals} \label{subsec:FR}
For a vector $\mathbf{w} \in \mathbb{R}^d \setminus \{\mathbf{0}\}$, let 
\[
\mathbf{w}^\bot = \{\mathbf{x} \in \mathbb{R}^d:\, \langle \mathbf{w}, \mathbf{x} \rangle = 0\}
\]
be the hyperplane orthogonal to~$\mathbf{w}$ containing the origin, equipped with the $({d\!-\!1})$-dimensional Lebesgue measure~$\lambda_\mathbf{w}$. 
In particular, for $\mathbf{1} = \tr{(1,\ldots,1)}$, $\mathbf{1}^\bot$~is the hyperplane of vectors whose entries sum up to~$0$.

The \emph{Rauzy fractal} (in the representation space~$\mathbf{w}^\bot$, $\mathbf{w} \in \mathbb{R}_{\ge0}^d \setminus \{\mathbf{0}\}$) associated with a sequence of substitutions $\boldsymbol{\sigma} = (\sigma_n)_{n\in\mathbb{N}}$ over the alphabet~$\mathcal{A}$ with generalized right eigenvector~$\mathbf{u}$ is 
\[
\mathcal{R}_\mathbf{w} = \overline{\{\pi_{\mathbf{u},\mathbf{w}}\, \mathbf{l}(p):\, p \in \mathcal{A}^*,\ \mbox{$p$ is a prefix of a limit word of $\boldsymbol{\sigma}$}\}},
\]
where $\pi_{\mathbf{u},\mathbf{w}}$ denotes the projection along the direction of~$\mathbf{u}$ onto~$\mathbf{w}^\bot$.
The Rauzy fractal has natural \emph{subpieces} (or \emph{subtiles}) defined by
\[
\mathcal{R}_\mathbf{w}(i) = \overline{\{\pi_{\mathbf{u},\mathbf{w}}\, \mathbf{l}(p):\, p \in \mathcal{A}^*,\ \mbox{$p\hspace{.1em}i$ is a prefix of a limit word of $\boldsymbol{\sigma}$}\}},
\]
We set $\mathcal{R} = \mathcal{R}_\mathbf{1}$ and $\mathcal{R}(i) = \mathcal{R}_\mathbf{1}(i)$. 

If $\omega\in\mathcal{A}^\mathbb{N}$ then $\{\mathbf{l}(p):\, \mbox{$p$ is a prefix of $\omega$}\}$ can be regarded as the set of vertex points of the \emph{broken line} corresponding  to $\omega$ (see e.g.\ \cite[Section~5.2.2]{CANTBST}). The Rauzy fractal $\mathcal{R}_\mathbf{w}$ is the closure of the projection of the vertices of all broken lines corresponding to a limit word. When $\boldsymbol{\sigma}$ is a primitive, algebraically irreducible, and recurrent sequence of substitutions with balanced language~$\mathcal{L}_{\boldsymbol{\sigma}}$, then it follows from Proposition~\ref{p:strongconvergence} below that it is sufficient to take a single (arbitrary) limit word in the definition of the Rauzy fractal.  

The \emph{Rauzy boxes} (or suspensions of the Rauzy fractals) are
\[
\widehat{\mathcal{R}}_\mathbf{w}(i) = \big\{x\, (\mathbf{e}_i - \pi_{\mathbf{u},\mathbf{w}}\, \mathbf{e}_i) - \mathbf{y}:\, x \in [0,1),\ \mathbf{y} \in \mathcal{R}_\mathbf{w}(i)\big\},
\]
where $\mathbf{e}_i = \mathbf{l}(i)$ denotes the $i$-th standard unit vector in~$\mathbb{R}^d$.

\subsection{Discrete hyperplanes and collections of tiles} \label{sec:discr-hyperpl-coll}
Let $\boldsymbol{\sigma}$ be a sequence of substitutions over the alphabet~$\mathcal{A}$ with generalized right eigenvector~$\mathbf{u}$.
For any vector $\mathbf{w} \in \mathbb{R}_{\ge0}^d \setminus \{\mathbf{0}\}$, we consider the collections of tiles 
\[
\mathcal{C}_\mathbf{w} = \big\{\pi_{\mathbf{u},\mathbf{w}}\, \mathbf{x} + \mathcal{R}_\mathbf{w}(i):\, [\mathbf{x},i] \in \Gamma(\mathbf{w})\big\} \quad \mbox{and} \quad \widehat{\mathcal{C}}_{\mathbf{w}} = \big\{\mathbf{z} + \widehat{\mathcal{R}}_{\mathbf{w}}(i):\, i\in\mathcal{A},\, \mathbf{z}\in \mathbb{Z}^d\big\},
\]
where 
\[
\Gamma(\mathbf{w}) = \big\{[\mathbf{x}, i] \in \mathbb{Z}^d \times \mathcal{A}:\, 0 \le \langle \mathbf{w}, \mathbf{x}\rangle < \langle \mathbf{w}, \mathbf{e}_i \rangle\big\}
\]
is the \emph{discrete hyperplane}\footnote{A~geometric interpretation can be given to the notation $[\mathbf{x},i] \in \mathbb{Z}^d \times \mathcal{A}$ by setting
$[\mathbf{x},i] = \{\mathbf{x} + \sum_{j\in\mathcal{A},\, j \neq i} \lambda_j  \mathbf{e}_j:\, \lambda_j \in [0,1],\ j \in \mathcal{A}\}$, which turns $\Gamma(\mathbf{w})$ into a \emph{stepped hyperplane}.} approximating~$\mathbf{w}^\bot$.
We endow $\Gamma(\mathbf{w})$ with a product metric of the distance induced by $||\cdot||=||\cdot||_\infty$ on $\mathbb{Z}^d$ and some metric on~$\mathcal{A}$. This notion of discrete hyperplane corresponds to the notion of standard discrete hyperplane in discrete geometry; see~\cite{Reveilles:91}. 

In the particular case $\mathbf{w} = \mathbf{1}$, the collection
\[
\mathcal{C}_\mathbf{1} = \{\mathbf{x} + \mathcal{R}(i):\, \mathbf{x} \in \mathbb{Z}^d \cap \mathbf{1}^\bot,\, i \in \mathcal{A}\}
\]
consists of the translations of (the subtiles of) the Rauzy fractal by vectors in the lattice $\mathbb{Z}^d \cap \mathbf{1}^\bot$. 
The collection~$\mathcal{C}_\mathbf{1} $ generalizes the periodic tiling introduced for unimodular Pisot (irreducible) substitutions.
For particular vectors~$\mathbf{v}$ that will be specified in Section~\ref{subsec:choicev}, the collection~$\mathcal{C}_\mathbf{v}$ generalizes the corresponding aperiodic tiling that is obtained in the Pisot case by taking for $\mathbf{v}$ a left  Perron-Frobenius eigenvector of~$M_\sigma$; see e.g.~\cite{Ito-Rao:06}. 

We also recall the formalism of \emph{dual substitutions} introduced in~\cite{Arnoux-Ito:01}.
For $[\mathbf{x}, i] \in \mathbb{Z}^d \times \mathcal{A}$ and a unimodular substitution $\sigma$ on~$\mathcal{A}$, let
\begin{equation}\label{eq:dualsubst}
E_1^*(\sigma)[\mathbf{x}, i] = \big\{[M_\sigma^{-1} (\mathbf{x} + \mathbf{l}(p)), j]:\, \mbox{$j \in \mathcal{A}$, $p\in\mathcal{A}^*$ such that $p\hspace{.1em}i$ is a prefix of $\sigma(j)$}\big\}.
\end{equation}
We will recall basic properties of $E_1^*$ in Section~\ref{sec:dualsubst}.
In order to make this formalism work, we assume that our substitutions are unimodular. Observe that a non-unimodular theory in the Pisot substitutive case has also been developed; see e.g.\ \cite{MineThus} and the references therein.

\subsection{Coincidences and geometric finiteness}
A~sequence of substitutions $\boldsymbol{\sigma} = (\sigma_n)_{n\in\mathbb{N}}$ satisfies the \emph{strong coincidence condition} if there is $\ell \in \mathbb{N}$ such that, for each pair $(j_1,j_2) \in \mathcal{A} \times \mathcal{A}$, there are $i \in \mathcal{A}$ and $p_1, p_2 \in \mathcal{A}^*$ with $\mathbf{l}(p_1) = \mathbf{l}(p_2)$ such that $\sigma_{[0,\ell)}(j_1) \in p_1\hspace{.1em}i\hspace{.1em}\mathcal{A}^*$ and $\sigma_{[0,\ell)}(j_2) \in p_2\hspace{.1em}i\hspace{.1em}\mathcal{A}^*$. 
As in the periodic case, this condition will ensure that the subtiles~$\mathcal{R}(i)$ are disjoint in measure and, hence, define an exchange of domains on~$\mathcal{R}$
(see Proposition \ref{p:strongcoincidence}; the same conclusion is true for a suffix version of strong coincidence, see Remark~\ref{rem:-}).

We say that $\boldsymbol{\sigma} = (\sigma_n)_{n\in\mathbb{N}}$ satisfies the \emph{geometric coincidence condition} if for each $R > 0$ there is $\ell \in \mathbb{N}$ such that, for all $n \ge \ell$, $E_1^*(\sigma_{[0,n)})[\mathbf{0},i_n]$ contains a ball of radius~$R$ of the discrete hyperplane $\Gamma(\tr{(M_{[0,n)})}\, \mathbf{1})$ for some $i_n \in \mathcal{A}$. This condition can be seen as an $S$-adic  dual analogue to the geometric coincidence condition (or super-coincidence condition) in \cite{Barge-Kwapisz:06,Ito-Rao:06,CANTBST}, which provides a tiling criterion.  Recall that  the periodic tiling yields the isomorphism with a toral translation and  thus pure discrete spectrum.
This criterion is a coincidence type condition in the same vein as the various coincidence conditions introduced in the usual Pisot framework; see e.g.\ \cite{Solomyak:97,AL11}.
In Proposition~\ref{p:gcc}, we give a variant of the geometric coincidence condition that can be checked algorithmically; see also Proposition~\ref{p:gccvariant}.

A~more restrictive condition is the \emph{geometric finiteness property}  stating that for each $R > 0$ there is $\ell \in \mathbb{N}$ such that $\bigcup_{i\in\mathcal{A}} E_1^*(\sigma_{[0,n)})[\mathbf{0},i]$ contains the ball $\{[\mathbf{x},i] \in \Gamma(\tr{(M_{[0,n)})}\, \mathbf{1}):\, \|\mathbf{x}\| \le R\}$ for all $n \ge \ell$. This implies that $\bigcup_{i\in\mathcal{A}} E_1^*(\sigma_{[0,n)})[\mathbf{0},i]$ generates a whole discrete plane if $n\to\infty$, and that $\mathbf{0}$ is an inner point of  the Rauzy fractal; see Proposition~\ref{p:gccvariant}. This condition is a geometric variant of the  finiteness property in the framework of beta-numeration~\cite{FS92}.

\section{Main results}\label{sec:mainresults}

\subsection{General results on $S$-adic shifts}
Theorem~\ref{t:1}, our first result, sets the stage for all the subsequent results. It gives a variety of properties of $S$-adic shifts $(X_{\boldsymbol{\sigma}},\Sigma)$ under general conditions. Indeed, the set $S$ of unimodular substitutions from which the directive sequence $\boldsymbol{\sigma}$ is formed may be finite or  infinite in this theorem. Primitivity and  algebraic irreducibility are the analogs of primitivity and irreducibility (of the characteristic polynomial of the incidence  matrix)  of a substitution $\sigma$ in the periodic case. To guarantee minimality of $(X_{\boldsymbol{\sigma}},\Sigma)$ in the $S$-adic setting, we require the directive sequence~$\boldsymbol{\sigma}$ to be primitive; to guarantee unique ergodicity, in our setting we also assume recurrence on top of this (see the proof of Lemma~\ref{lem:uniquelyergodic}). Moreover, we need to have balancedness of the language~$\mathcal{L}_{\boldsymbol{\sigma}}$ to ensure that the associated Rauzy fractal~$\mathcal{R}$ is bounded. To endow~$\mathcal{R}$ with a convenient subdivision structure (replacing the graph-directed self-affine structure of the periodic case), uniform balancedness properties of the ``desubstituted'' languages~$\mathcal{L}_{\boldsymbol{\sigma}}^{(n)}$ are needed for infinitely many (but not all)~$n$. 
These assumptions are not very restrictive in the sense that they will enable us to prove metric results valid for almost all sequences of $S$-adic shifts under the Pisot condition as specified in Theorem~\ref{t:3}.

\begin{theorem} \label{t:1}
Let $S$ be a finite or  infinite set of unimodular substitutions over the finite alphabet~$\mathcal{A}$ and let $\boldsymbol{\sigma} = (\sigma_n)_{n\in\mathbb{N}} \in S^{\mathbb{N}}$ be a primitive and algebraically irreducible directive sequence. Assume that there is $C > 0$ such that for each $\ell \in \mathbb{N}$, there is $n \ge 1$ with $(\sigma_{n},\ldots,\sigma_{n+\ell-1}) = (\sigma_{0},\ldots,\sigma_{\ell-1})$ and the language $\mathcal{L}_{\boldsymbol{\sigma}}^{(n+\ell)}$ is $C$-balanced.
Then the following results are true.
\renewcommand{\theenumi}{\roman{enumi}}
\begin{enumerate}
\itemsep1ex
\item \label{i:11}
The $S$-adic shift $(X_{\boldsymbol{\sigma}},\Sigma)$ is minimal and uniquely ergodic with unique invariant measure~$\mu$. 
\item \label{i:12}
Each subtile~$\mathcal{R}(i)$, $i \in \mathcal{A}$, of the Rauzy fractal~$\mathcal{R}$ is a compact set that is the closure of its interior; its boundary has zero Lebesgue measure~$\lambda_{\mathbf{1}}$.
\item \label{i:13}
The collection~$\mathcal{C}_\mathbf{1}$ forms a multiple tiling of~$\mathbf{1}^\bot$, and the $S$-adic shift $(X_{\boldsymbol{\sigma}},\Sigma,\mu)$ admits as a factor (with finite fiber) a translation on the torus~$\mathbb{T}^{d-1}$. As a consequence, it is not weakly mixing.
\item \label{i:14}
If $\boldsymbol{\sigma}$ satisfies the strong coincidence condition, then the subtiles~$\mathcal{R}(i)$, $i \in \mathcal{A}$, are mutually disjoint in measure, and the $S$-adic shift $(X_{\boldsymbol{\sigma}},\Sigma,\mu)$ is measurably conjugate  to an exchange of domains on~$\mathcal{R}$. 
\item\label{i:15}
The collection~$\mathcal{C}_\mathbf{1}$ forms a tiling of~$\mathbf{1}^\bot$ if and only if $\boldsymbol{\sigma}$ satisfies the geometric coincidence condition. 
\end{enumerate}
If moreover $\mathcal{C}_\mathbf{1}$ forms a tiling of~$\mathbf{1}^\bot$, then also the following results hold.
\begin{enumerate} \setcounter{enumi}{5}
\itemsep1ex
\item \label{i:16}
The $S$-adic shift $(X_{\boldsymbol{\sigma}},\Sigma,\mu)$ is measurably conjugate to a translation $T$ on the torus~$\mathbb{T}^{d-1}$; in particular, its 
measure-theoretic spectrum is purely discrete. 
\item\label{i:17}
Each $\omega\in X_{\boldsymbol{\sigma}}$ is a natural coding of the toral translation~$T$ with respect to the partition $\{\mathcal{R}(i):\,i \in \mathcal{A}\}$.
\item\label{i:18}
The set $\mathcal{R}(i)$ is a bounded remainder set for the toral translation~$T$ for each $i\in\mathcal{A}$.
\end{enumerate}
\end{theorem}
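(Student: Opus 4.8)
The plan is to work through the eight assertions in their stated order, letting each rest on its predecessors, and to isolate the two genuinely $S$-adic difficulties: the absence of a single self-affine set equation (to be compensated by the recurrence hypothesis) and the proof that the subtile boundaries are Lebesgue-null. First I would settle (\ref{i:11}). Primitivity of $\boldsymbol{\sigma}$ forces every limit word to be uniformly recurrent, whence minimality of $(X_{\boldsymbol{\sigma}},\Sigma)$; unique ergodicity I would obtain from the balancedness and recurrence built into the hypothesis, which yield convergence of the letter (and then factor) frequencies, as in Lemma~\ref{lem:uniquelyergodic}. The generalized right eigenvector $\mathbf{u}$ of Section~\ref{sec:gener-perr-frob} coincides up to scaling with the letter-frequency vector, and $C$-balancedness says precisely that $\mathbf{l}(p)-|p|\,\mathbf{u}$ stays bounded along prefixes $p$; since $\pi_{\mathbf{u},\mathbf{1}}$ annihilates the $\mathbf{u}$-direction, the set defining $\mathcal{R}$ is bounded, so each $\mathcal{R}(i)$ is compact. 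This is the content I would draw from the strong-convergence statement (Proposition~\ref{p:strongconvergence}), which also guarantees that a single limit word suffices to define the fractal.

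The crux is (\ref{i:12}), and here I would lean on the recurrence assumption to replace self-similarity. For infinitely many $n$ the prefix block $(\sigma_n,\dots,\sigma_{n+\ell-1})$ repeats $(\sigma_0,\dots,\sigma_{\ell-1})$ with a uniform balance constant $C$ on $\mathcal{L}_{\boldsymbol{\sigma}}^{(n+\ell)}$; this lets me write subdivision (set) equations for the $\mathcal{R}(i)$ in terms of the desubstituted fractals at these return levels, with uniformly controlled distortion. Iterating these equations along the recurrence times produces a graph-directed contraction on the boundary whose ratios are uniformly bounded away from~$1$, which is exactly what is needed to force $\lambda_{\mathbf{1}}(\partial\mathcal{R}(i))=0$; that each $\mathcal{R}(i)$ is the closure of its interior then follows from nonemptiness of the interior together with the subdivision. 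I expect this measure-zero boundary argument (Proposition~\ref{p:boundary}) to be the principal obstacle, precisely because the matrices $M_n$ vary and no single linear map governs the renormalization --- the uniform constant $C$ along the recurrent subsequence is the feature that rescues the contraction estimate.

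With (\ref{i:12}) in hand the remaining parts are more structural. For (\ref{i:13}) I would show that the suspended boxes $\widehat{\mathcal{R}}_{\mathbf{1}}(i)$, translated over $\mathbb{Z}^d$, cover $\mathbb{R}^d$ with bounded multiplicity (using the stepped-hyperplane description of $\Gamma(\mathbf{1})$ and the null boundaries), and project this to a multiple tiling $\mathcal{C}_{\mathbf{1}}$ of $\mathbf{1}^\bot$ of some finite covering degree $m$; projecting the broken-line vertices modulo the lattice $\mathbb{Z}^d\cap\mathbf{1}^\bot$ then yields a factor map onto a translation $T$ of $\mathbb{T}^{d-1}$, whose nontrivial eigenfunctions exclude weak mixing. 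Part (\ref{i:14}) I would derive from the strong coincidence condition in the standard way: equal-abelianization common prefixes after $\sigma_{[0,\ell)}$ force the $\mathcal{R}(i)$ to overlap only in a set of measure zero (Proposition~\ref{p:strongcoincidence}), turning the shift into a measurable domain exchange on $\mathcal{R}$.

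Finally, for (\ref{i:15}) I would identify the covering degree $m$ with the number of tiles through a generic point and show, via the dual substitutions $E_1^*(\sigma_{[0,n)})$, that geometric coincidence is equivalent to the existence of an exclusive point, that is, to $m=1$ (Proposition~\ref{p:gcc}). Once $\mathcal{C}_{\mathbf{1}}$ is a genuine tiling, the factor map of (\ref{i:13}) becomes almost everywhere injective, hence a measurable conjugacy to the toral translation $T$, yielding pure discrete spectrum (\ref{i:16}); because this conjugacy carries the partition $\{\mathcal{R}(i):\,i\in\mathcal{A}\}$ to the coding partition, each $\omega\in X_{\boldsymbol{\sigma}}$ is a natural coding (\ref{i:17}); and the bounded-remainder property (\ref{i:18}) then follows from balancedness of $\mathcal{L}_{\boldsymbol{\sigma}}$ via \cite[Proposition~7]{Adamczewski:03}.
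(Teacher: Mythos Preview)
Your overall architecture matches the paper's closely: (\ref{i:11}) from minimality/unique ergodicity lemmas, (\ref{i:12}) as the hard step, (\ref{i:13})--(\ref{i:18}) structural consequences, with (\ref{i:18}) via \cite[Proposition~7]{Adamczewski:03}. Where you diverge is in the mechanism for~(\ref{i:12}), and there is a real gap.

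The set equations \eqref{e:setequationkl} relate $\mathcal{R}^{(k)}_\mathbf{w}$ to $\mathcal{R}^{(\ell)}_\mathbf{w}$, but these live on \emph{different} hyperplanes $(\mathbf{w}^{(k)})^\bot$ and $(\mathbf{w}^{(\ell)})^\bot$, and there is in general no reason for the hyperplanes $(\tr{(M_{[0,n)})}\,\mathbf{1})^\bot$ to converge. Your phrase ``uniformly controlled distortion'' hides the actual difficulty: to compare the fractals along the recurrence times one needs a \emph{recurrent left eigenvector}~$\mathbf{v}$, i.e.\ a vector with $\mathbf{v}^{(n_k)}/\|\mathbf{v}^{(n_k)}\|\to\mathbf{v}$ along a subsequence (Lemma~\ref{l:findv}). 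This, packaged together with the other hypotheses as Property PRICE (Definition~\ref{def:star}, shown to follow from the theorem's hypotheses in Lemma~\ref{lem:th1star}), is what makes $\mathcal{R}^{(n_k)}_\mathbf{v}\to\mathcal{R}_\mathbf{v}$ in Hausdorff distance (Proposition~\ref{p:close}). Only then can the paper run its boundary argument (Proposition~\ref{p:boundary}), which is not a ``graph-directed contraction'' but a counting estimate: one shows via Lemma~\ref{l:interiornk} that at each recurrence time some subtile lies strictly in the interior, so the fraction $D_{0,n}(i,j)/C_{0,n}(i,j)$ of boundary-touching pieces decays geometrically. The convergence $\mathcal{R}^{(n_k)}_\mathbf{v}\to\mathcal{R}_\mathbf{v}$ is used again for the multiple tiling (Lemma~\ref{l:mtiling}), which is proved first for~$\mathcal{C}_\mathbf{v}$ and only then transferred to all $\mathcal{C}_\mathbf{w}$ including $\mathbf{w}=\mathbf{1}$ (Proposition~\ref{p:independentmultiple}); your proposed route through the suspended boxes $\widehat{\mathcal{R}}_\mathbf{1}(i)$ goes in the opposite direction to the paper's Proposition~\ref{p:tilingRd} and would still need the same ingredients. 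Without the recurrent left eigenvector the ``iteration along recurrence times'' you describe has no fixed target space to iterate in, and the argument does not close.
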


Note that the assumptions in Theorem~\ref{t:1} obviously imply that the sequence~$\boldsymbol{\sigma}$ is recurrent.

\begin{remark}\label{rem:w}
We will prove in Propositions~\ref{p:independentmultiple} and~\ref{p:tilingRd} that, under the conditions of Theorem~\ref{t:1}, for each $\mathbf{w} \in \mathbb{R}^d_{\ge0} \setminus \{\mathbf{0}\}$ the collection~$\mathcal{C}_\mathbf{w}$ forms a multiple tiling of~$\mathbf{w}^\bot$ with covering degree~$m$ not depending on~$\mathbf{w}$, and $\widehat{\mathcal{C}}_\mathbf{w}$ forms a multiple (lattice) tiling of~$\mathbb{R}^d$ with the same covering degree~$m$. 

In particular, if $m = 1$, then $\bigcup_{i\in\mathcal{A}} \widehat{\mathcal{R}}_\mathbf{w}(i)$ is a fundamental domain of $\mathbb{R}^d / \mathbb{Z}^d$. This will be the key result for defining non-stationary Markov partitions associated with two-sided Pisot $S$-adic systems (e.g., two-sided directive sequences in the framework of natural extensions of continued fraction algorithms), that we will investigate in the forthcoming paper~\cite{sadic3}.
The vector~$\mathbf{w}$ is then given by a sequence $(\sigma_n)_{n<0}$.

Moreover, taking $\mathbf{w} = \mathbf{e}_i$, we obtain that each subtile $\mathcal{R}(i)$ tiles periodically.
This result seems to be new even in the periodic case. 
\end{remark}

\begin{theorem}\hspace{-.5em}\footnote{With the new proof presented in this manuscript, the set of vertices of $G$ can be infinite. This means that every shift can be represented as $(E_G,\Sigma)$ for some graph $G$. Since we do not want this manuscript to differ too much from the one published in {\it Ann.\ Inst.\ Fourier (Grenoble)}, we have kept the presentation using $S$-adic graphs.}\label{t:3}
Let $S$ be a finite or  infinite set of unimodular substitutions and let $(G,\tau)$ be an $S$-adic graph with associated edge shift $(E_G, \Sigma, \nu)$. Assume that this shift is ergodic, the cocycle $A$ is log-integrable, and that it satisfies the Pisot condition. Assume further that $\nu$ assigns positive measure to each (non-empty) cylinder, and that  there exists a cylinder corresponding to a substitution with positive incidence matrix.
Then, for the directive sequence $\boldsymbol{\sigma}=(\tau(\gamma_n))$ of $\nu$-almost every walk $\boldsymbol{\gamma}=(\gamma_n) \in E_G$,
\renewcommand{\theenumi}{\roman{enumi}}
\begin{enumerate}
\itemsep1ex
\item Assertions (\ref{i:11})--(\ref{i:15}) of Theorem~\ref{t:1} hold;
\item Assertions (\ref{i:16})--(\ref{i:18}) of Theorem~\ref{t:1} hold provided that the collection $\mathcal{C}_\mathbf{1}$ associated with~$\boldsymbol{\sigma}$ forms a tiling of $\mathbf{1}^\bot$.
\end{enumerate}
\end{theorem}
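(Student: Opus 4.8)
The plan is to show that for $\nu$-almost every walk $\boldsymbol{\gamma}$ the directive sequence $\boldsymbol{\sigma} = (\tau(\gamma_n))$ satisfies the three hypotheses of Theorem~\ref{t:1} — primitivity, algebraic irreducibility, and the existence of a uniform constant~$C$ realizing the balanced-prefix-recurrence condition; once this is done, assertions (\ref{i:11})--(\ref{i:15}) are exactly the conclusion of the first part of Theorem~\ref{t:1}, and (\ref{i:16})--(\ref{i:18}) follow from its second part under the additional tiling hypothesis, so that part~(ii) of the statement is immediate. Since $V$ is finite and the cocycle~$A$ is log-integrable, the Oseledets and Furstenberg--Kesten theorems apply to $(E_G,\Sigma,\nu)$ and the Lyapunov exponents of Section~\ref{sec:lyap-expon-pisot} are well defined; the Pisot condition $\theta_1 > 0 > \theta_2 \ge \cdots \ge \theta_d$ will be the analytic engine behind balancedness.

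First I would establish primitivity and recurrence from ergodicity. By hypothesis there is a cylinder $\mathcal{Z}$, say corresponding to a block of length~$q$, whose associated substitution has a positive incidence matrix~$P$, and $\nu(\mathcal{Z}) > 0$; by the Birkhoff ergodic theorem (or Poincaré recurrence) $\nu$-almost every walk meets $\mathcal{Z}$ infinitely often, so $P$ occurs as a factor at infinitely many positions. Writing a product $M_{[k,\ell)}$ so as to straddle two such occurrences at positions $a_1 < a_2$, with $\ell = a_2+q$, gives $M_{[k,\ell)} = M_{[k,a_1)}\, P\, M_{[a_1+q,a_2)}\, P$; using that the incidence matrix of a composition of non-erasing substitutions has no zero column (so $P\, M_{[a_1+q,a_2)}\, P$ is positive) together with unimodularity (so $M_{[k,a_1)}$ is invertible, hence has no zero row), one obtains $M_{[k,\ell)} > 0$, which proves primitivity for every~$k$. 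The same infinite-recurrence argument applied to the cylinder $\mathcal{Z}(\gamma_0,\ldots,\gamma_{\ell-1})$ (of positive measure by hypothesis) shows that each prefix block $(\sigma_0,\ldots,\sigma_{\ell-1})$ reappears infinitely often, supplying the matching part $(\sigma_n,\ldots,\sigma_{n+\ell-1}) = (\sigma_0,\ldots,\sigma_{\ell-1})$ and, together with primitivity, the recurrence of~$\boldsymbol{\sigma}$.

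The heart of the argument, and the step I expect to be the main obstacle, is the balancedness condition with a single constant~$C$ valid for all~$\ell$. Here the Pisot condition enters: since $\theta_2 < 0$, the contraction of the cocycle on the stable Oseledets space yields strong convergence of the broken lines (Proposition~\ref{p:strongconvergence}), whence for $\nu$-almost every walk the language $\mathcal{L}_{\boldsymbol{\sigma}}$ is finitely balanced. Quantitatively, the width of the Rauzy fractal attached to $\Sigma^m\boldsymbol{\gamma}$ is controlled by the Oseledets constant at time~$m$, which is tempered but not uniformly bounded; I would therefore introduce, for a threshold~$C$, the set $G_C$ of walks whose tail language is $C$-balanced and note that, by the negativity of~$\theta_2$, one has $\nu(G_C) \to 1$ as $C \to \infty$. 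The delicate point is to synchronize a prefix return with an entry into $G_C$ using one and the same~$C$ for every~$\ell$: fixing $C$ so that $\nu(G_C)$ is large, I would apply Poincaré recurrence to the set $\mathcal{Z}(\gamma_0,\ldots,\gamma_{\ell-1}) \cap \Sigma^{-\ell} G_C$ to produce a time~$n$ realizing both the prefix match and $C$-balancedness of $\mathcal{L}_{\boldsymbol{\sigma}}^{(n+\ell)}$. Securing positivity of this intersection for the fixed~$C$ and \emph{all}~$\ell$, so as to obtain a genuinely $\ell$-independent constant, is the technical core, where the interplay between the Lyapunov/Pisot estimates and the recurrence of the directive sequence must be handled with care (and where the assumption that every cylinder has positive measure is used).

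It remains to verify algebraic irreducibility for $\nu$-almost every walk. I would deduce it from the irreducibility of the cocycle: primitivity and the presence of a positive incidence matrix force the semigroup generated by the incidence matrices occurring with positive measure to act strongly irreducibly and proximally, the Pisot condition providing a simple dominant exponent with positive eigendirection~$\mathbf{u}$ as in \eqref{e:topPF}. A strongly irreducible cocycle admits no nontrivial rational invariant subspace along $\nu$-almost every orbit, which excludes nontrivial rational factorizations of the characteristic polynomials of $M_{[k,\ell)}$ for all large~$\ell$, giving algebraic irreducibility. With primitivity, algebraic irreducibility, and the balanced-recurrence condition all established $\nu$-almost everywhere, Theorem~\ref{t:1} applies and yields assertions (\ref{i:11})--(\ref{i:15}); assertions (\ref{i:16})--(\ref{i:18}) then follow from the final part of Theorem~\ref{t:1} whenever $\mathcal{C}_\mathbf{1}$ forms a tiling of~$\mathbf{1}^\bot$, completing part~(ii).
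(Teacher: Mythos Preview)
Your overall plan matches the paper's: verify primitivity, algebraic irreducibility, and the uniform $C$-balanced recurrence condition $\nu$-almost everywhere, then invoke Theorem~\ref{t:1}. However, two of your three verifications have genuine gaps.

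\medskip

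\textbf{The balancedness step is incomplete.} You correctly set up the target, namely $\nu\big(\mathcal{Z}(\gamma_0,\ldots,\gamma_{\ell-1}) \cap \Sigma^{-\ell} G_C\big) > 0$ for a \emph{single} $C$ and all~$\ell$, and you correctly flag this as the technical core --- but you do not actually solve it. Knowing only that $\nu(G_C)\to 1$ is not enough, because as $\ell$ grows the cylinder $\mathcal{Z}(\gamma_0,\ldots,\gamma_{\ell-1})$ has measure tending to zero, so a naive union/intersection bound gives nothing. The paper's key observation, which you are missing entirely, is that the \emph{finite vertex set} of~$G$ saves the day: the set $\Sigma^{\ell}\mathcal{Z}(\gamma_0,\ldots,\gamma_{\ell-1})$ depends only on the vertex of~$G$ at which the path $(\gamma_0,\ldots,\gamma_{\ell-1})$ terminates, so there are only finitely many such sets over all~$\ell$ and all admissible prefixes. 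One then chooses $C$ so that $\nu(E_{G,C})$ is large enough that $E_{G,C}$ meets each of these finitely many sets in positive measure; Poincar\'e recurrence does the rest. Without invoking the finiteness of~$V$ you cannot close this argument.

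\medskip

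\textbf{The algebraic irreducibility argument is not valid as stated.} Strong irreducibility (or proximality) of the semigroup generated by the incidence matrices says there is no common proper invariant subspace; it does \emph{not} imply that individual products $M_{[k,\ell)}$ have irreducible characteristic polynomials --- a strongly irreducible semigroup can easily contain matrices with reducible characteristic polynomials. The paper instead argues directly from the Pisot condition via Oseledets (Lemma~\ref{l:Pisot}): since $\theta_2<0$, all but the top singular value of $M_{[k,\ell)}$ tend to zero, so the image of the unit sphere is an ellipsoid collapsing onto the direction $(M_{[0,k)})^{-1}\mathbf{u}$; any real eigenvector of modulus $\ge 1$ must then lie near that positive direction, hence be the Perron--Frobenius one, and any non-real eigenvalue of modulus $\ge 1$ would force two orthogonal unit vectors to have images of norm $\ge 1$, which is impossible in such an ellipsoid. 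Thus for large~$\ell$ exactly one eigenvalue has modulus $\ge 1$; unimodularity rules out zero eigenvalues; and a monic integer polynomial with one root $>1$, the rest $<1$ in modulus, and nonzero constant term is irreducible (any proper integer factor without the dominant root would have constant term of modulus $<1$, hence $0$). This is the argument you need to supply.
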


\begin{remark}\label{rem:t:3}
The setting of Theorem~\ref{t:3} covers the (additive) Arnoux-Rauzy and Brun algorithms (see Sections~\ref{sec:arnoux-rauzy-words} and~\ref{sec:brun-words-natural}; recall that the assumption that the cocycle $A$ is log-integrable is always satisfied when the $S$-adic graph is finite), but also includes many multiplicative continued fraction algorithms (which correspond to infinite sets~$S$). Most prominently, according to \cite{Perron:07} (see also \cite[Proposition~8]{SCHWEIGER}) the admissible sequences of the Jacobi-Perron algorithm can be represented by an $S$-adic graph with finitely many vertices and log-integrability of the associated cocycle is proved in~\cite{Lagarias}. For the two-dimensional case an associated (infinite) set of substitutions can be found for instance in~\cite{BBJS14} (this can easily be generalized to higher dimensions). Also, the acceleration of the Arnoux-Rauzy algorithm  together with the  invariant measure proposed in~\cite{AvilaHubSkrip} fits into the framework of Theorem~\ref{t:3}.
\end{remark}

We think that the conditions of Theorem~\ref{t:1} are enough to get a tiling of~$\mathbf{1}^\bot$ by~$\mathcal{C}_\mathbf{1}$ and, hence, measurable conjugacy  of $(X_{\boldsymbol{\sigma}},\Sigma)$ to a toral translation. 
This extension of the well-known \emph{Pisot substitution conjecture} to the $S$-adic setting is made precise in the following statement.
(Here, we also replace uniform balancedness of~$\mathcal{L}_{\boldsymbol{\sigma}}^{(n+\ell)}$ by the weaker condition that $\mathcal{L}_{\boldsymbol{\sigma}}$ is balanced.) Note that  the word ``Pisot'' does not occur
in the statement of the conjecture but the generalization of the Pisot hypothesis is provided by  the balancedness assumption. 

\begin{conjecture}[$S$-adic Pisot conjecture] \label{c:1}
Let $S$ be a finite or  infinite set of unimodular substitutions over the finite alphabet~$\mathcal{A}$ 
and let $\boldsymbol{\sigma}\in S^\mathbb{N}$ be a primitive, algebraically irreducible, and recurrent directive sequence with balanced language~$\mathcal{L}_{\boldsymbol{\sigma}}$. 
Then $\mathcal{C}_\mathbf{1}$ forms a tiling of~$\mathbf{1}^\bot$, and the $S$-adic shift $(X_{\boldsymbol{\sigma}},\Sigma,\mu)$ is measurably conjugate to a translation on the torus~$\mathbb{T}^{d-1}$; in particular, its measure-theoretic spectrum is purely discrete. 
\end{conjecture}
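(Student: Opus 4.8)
The plan is to reduce the conjecture, via the machinery already established, to a single coincidence-type statement and then to attack that statement by an $S$-adic renormalization argument. First I would re-establish assertions~(\ref{i:11})--(\ref{i:15}) of Theorem~\ref{t:1} under the present, weaker hypothesis that $\mathcal{L}_{\boldsymbol{\sigma}}$ is balanced, rather than under uniform $C$-balancedness of the desubstituted languages $\mathcal{L}_{\boldsymbol{\sigma}}^{(n+\ell)}$. Global balancedness already yields boundedness of~$\mathcal{R}$ and the existence of letter frequencies, hence the generalized right eigenvector~$\mathbf{u}$; the only place where uniform balancedness entered was the subdivision structure of the Rauzy fractal, and the recurrence of~$\boldsymbol{\sigma}$ should let one recover a sufficient subdivision along the infinitely many return times to the initial block. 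Granting this, Theorem~\ref{t:1} gives that $\mathcal{C}_\mathbf{1}$ is a multiple tiling of~$\mathbf{1}^\bot$ of some covering degree~$m$ and that $(X_{\boldsymbol{\sigma}},\Sigma,\mu)$ factors onto the torus translation~$T$ on~$\mathbb{T}^{d-1}$ with fibres of cardinality~$m$. By assertion~(\ref{i:15}), the conjecture is then \emph{equivalent} to the single assertion $m=1$, after which assertions~(\ref{i:16})--(\ref{i:18}) supply the asserted measurable conjugacy to~$T$ and the pure discrete spectrum.

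To prove $m=1$ I would exploit Remark~\ref{rem:w}: by Propositions~\ref{p:independentmultiple} and~\ref{p:tilingRd} the covering degree of~$\mathcal{C}_\mathbf{w}$ is independent of~$\mathbf{w}$, so it suffices to produce one exclusive point for any convenient choice of~$\mathbf{w}$. The natural candidate is the origin, and the natural tool is the $S$-adic renormalization furnished by the dual substitutions: the patches $E_1^*(\sigma_{[0,n)})[\mathbf{0},i_n]$ sit inside the discrete hyperplane $\Gamma(\tr{(M_{[0,n)})}\mathbf{1})$ and, projected onto the contracting space, converge geometrically by the strong convergence of Proposition~\ref{p:strongconvergence}. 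The aim would be to show that these patches eventually contain balls of arbitrary radius, which is exactly the geometric coincidence condition (and, in its stronger form, the geometric finiteness property of Proposition~\ref{p:gccvariant}, which in addition places~$\mathbf{0}$ in the interior of~$\mathcal{R}$). Concretely, I would try to bound the ``defects'' of the generated stepped plane uniformly in terms of the balancedness constant~$C$, and then use primitivity to propagate full coverage outward as $n\to\infty$, checking the combinatorial variant of the condition supplied by Proposition~\ref{p:gcc}.

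The hard part is precisely this last step, and I do not expect it to go through unconditionally from primitivity, algebraic irreducibility, and balancedness alone: the periodic case of the conjecture is the classical Pisot substitution conjecture, which remains open. The structural difficulties are intrinsic. There is no dominant eigenvector, so one cannot run the algebraic coincidence arguments that control overlaps through the Pisot conjugates; and there is no self-similarity, so one cannot iterate a single contraction to a fixed point forcing a coincidence. Balancedness bounds the size of the Rauzy fractal and guarantees the multiple-tiling picture, but it offers no a priori mechanism driving the covering degree down to~$1$. For this reason the realistic outcome of the programme is the \emph{conditional} equivalence of assertion~(\ref{i:15}) together with the \emph{metric} statement of Theorem~\ref{t:3}: rather than deducing coincidence abstractly, one verifies the geometric coincidence condition for the explicit Arnoux--Rauzy and Brun families and then promotes it to an almost-everywhere statement by combining the Pisot (negative second Lyapunov exponent) hypothesis with a.e.\ strong-convergence results in the spirit of Avila--Delecroix. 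An unconditional proof of Conjecture~\ref{c:1} in full generality would require a genuinely new idea for extracting coincidence from balancedness, which is why it is stated as a conjecture rather than a theorem.
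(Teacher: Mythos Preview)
This statement is a \emph{conjecture} in the paper, not a theorem; the paper offers no proof and explicitly presents it as an open extension of the classical Pisot substitution conjecture (see Remark~\ref{rem:LR}). You correctly recognize this in your final paragraph, and your outline of the reduction to the geometric coincidence condition via Theorem~\ref{t:1}~(\ref{i:15}) and Proposition~\ref{p:independentmultiple} is exactly the structural picture the paper sets up. So there is nothing to compare against, and your honest conclusion that the hard step is genuinely open is the right one.

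One caution about your first reduction step: you propose to recover assertions~(\ref{i:11})--(\ref{i:15}) of Theorem~\ref{t:1} under mere balancedness of~$\mathcal{L}_{\boldsymbol{\sigma}}$, claiming that uniform $C$-balancedness of the desubstituted languages enters only in the subdivision structure. This is more optimistic than the paper warrants. The uniform condition~\ref{defC} in Property~PRICE is used essentially in Proposition~\ref{p:close} (Hausdorff convergence of the Rauzy fractals $\mathcal{R}^{(n_k)}_\mathbf{v}$ to~$\mathcal{R}_\mathbf{v}$), which in turn underpins Lemma~\ref{l:interiornk} and the crucial Proposition~\ref{p:boundary} establishing that the boundaries have measure zero. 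Without that, even the multiple-tiling assertion~(\ref{i:13}) is not available. So already the weakening from Theorem~\ref{t:1}'s hypotheses to those of the conjecture is a non-trivial gap, separate from (and in addition to) the coincidence problem you correctly flag as the main obstruction.
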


\begin{remark}\label{rem:LR}
It would already be interesting to get this conjecture for sequences $\boldsymbol{\sigma}$ with linearly recurrent limit word $\omega$. In view of \cite[Proposition~1.1]{Durand:00b} this would entail to prove Conjecture~\ref{c:1} for $S$-adic shifts that are topologically conjugate to \emph{proper $S$-adic shifts}. A \emph{proper $S$-adic shift} is an $S$-adic shift for which $S$ is a set of \emph{proper} substitutions. Recall that a substitution $\sigma$ is \emph{proper} if there are letters $r,l\in \mathcal{A}$ such that the word $\sigma(a)$ starts with $l$ and ends with $r$ for each $a\in \mathcal{A}$. It is shown in \cite[Proposition~25]{DHS:99} that in the substitutive case we always have linear recurrence (indeed, a substitutive dynamical system is always topologically conjugate to a proper substitutive system, see~\cite[Section~5]{DHS:99}). Thus even this special case contains the classical Pisot substitution conjecture. 

We are able to prove that linearly recurrent Arnoux-Rauzy words with recurrent  directive   sequence  give rise to $S$-adic shifts that have pure discrete spectrum (see Corollary~\ref{cor:AR}).

In this context it would be also of interest to generalize Barge's result~\cite{Barge15,Barge14}, where pure discrete spectrum is proved for a large class of substitutive systems characterized by certain combinatorial properties (including beta substitutions), to the $S$-adic setting. We have provided a proof of the two-letter alphabet version of Conjecture~\ref{c:1} with the additional assumptions on uniform balancedness of Theorem~\ref{t:1} in \cite{BMST:15}.
\end{remark}

We work here with the $\mathbb{Z}$-action provided by the $S$-adic shift. However, under the assumptions of Theorem \ref{t:1} (with the balancedness assumption playing a crucial role), our results also apply to the $\mathbb{R}$-action of the associated tiling space (such as investigated e.g.\ in~\cite{CS:03}), according to~\cite{Sadun:15}.

\subsection{Arnoux-Rauzy words and the conjecture of Arnoux and Rauzy}\label{sec:arnoux-rauzy-words}

For certain sets $S$ of substitutions, we get the assertions of Theorems~\ref{t:1} and~\ref{t:3} unconditionally for a large collection of directive sequences in~$S^\mathbb{N}$. Arnoux and Rauzy~\cite{Arnoux-Rauzy:91} proposed a generalization of Sturmian words to three letters (which initiated an important literature around so-called  episturmian words, see e.g.\ \cite{Berstel:07}). They proved that these \emph{Arnoux-Rauzy words}  can be expressed as $S$-adic words if $S = \{\alpha_i:\, i \in \mathcal{A}\}$ is the set of \emph{Arnoux-Rauzy substitutions} over $\mathcal{A}=\{1,2,3\}$ defined by
\begin{equation}\label{eq:AR}
\alpha_i:\ i \mapsto i,\ j \mapsto ji\ \mbox{for}\ j \in \mathcal{A} \setminus \{i\}\qquad (i\in\mathcal{A})\,.
\end{equation}
It was conjectured since the early nineties (see e.g.~\cite[p.~1267]{Cassaigne-Ferenczi-Zamboni:00} or \cite[Section~3.3]{Berthe-Ferenczi-Zamboni:05}) that each Arnoux-Rauzy word is a natural coding of a translation on the torus. Cassaigne et al.~\cite{Cassaigne-Ferenczi-Zamboni:00} provided a counterexample to this conjecture by constructing unbalanced Arnoux-Rauzy words (unbalanced words cannot come from natural codings by a result of Rauzy~\cite{Rauzy:84}).
Moreover, Cassaigne et al.~\cite{Cassaigne-Ferenczi-Messaoudi:08} even showed that there exist Arnoux-Rauzy words $\omega$ on three letters such that $(X_\omega,\Sigma)$ is weakly mixing (w.r.t.\ the unique $\Sigma$-invariant probability measure on~$X_\omega$).

To our knowledge, positive examples for this conjecture so far existed only in the periodic case; cf.~\cite{Berthe-Jolivet-Siegel:12,Barge-Stimac-Williams:13}. The metric result in Theorem~\ref{t:3} allows us to prove  the following theorem which confirms the conjecture of Arnoux and Rauzy  almost everywhere.

\begin{theorem} \label{t:5}
Let $S$ be the  set of Arnoux-Rauzy substitutions over three letters and consider the shift $(S^\mathbb{N},\Sigma,\nu)$ for some shift invariant ergodic probability measure~$\nu$ which assigns positive measure to each cylinder. Then $(S^\mathbb{N},\Sigma,\nu)$ satisfies the Pisot condition. Moreover,  for $\nu$-almost all sequences $\boldsymbol{\sigma} \in S^\mathbb{N}$ the collection~$\mathcal{C}_\mathbf{1}$ forms a tiling, the $S$-adic shift $(X_{\boldsymbol{\sigma}},\Sigma)$ is measurably conjugate to a translation on the torus~$\mathbb{T}^2$, and
the words in~$X_{\boldsymbol{\sigma}}$ form natural codings of this translation.
\end{theorem}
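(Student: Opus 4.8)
The plan is to derive Theorem~\ref{t:5} from the metric statement Theorem~\ref{t:3}, so the bulk of the work is to verify the hypotheses of the latter for $S=\{\alpha_1,\alpha_2,\alpha_3\}$. I would realize $(S^\mathbb{N},\Sigma,\nu)$ as the edge shift of the $S$-adic graph with one vertex and three self-loops labelled by $\alpha_1,\alpha_2,\alpha_3$; then $E_G=S^\mathbb{N}$, $d=3$, and the relevant representation vector is $\mathbf{w}=\mathbf{1}$. This graph is finite, so the cocycle $A(\boldsymbol{\gamma})=\tr{M}_0$ is automatically log-integrable, and computing the three incidence matrices shows $|\!\det M_{\alpha_i}|=1$, i.e.\ $S$ is unimodular. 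Ergodicity of $\nu$ and positivity of $\nu$ on cylinders are assumed; a positive incidence matrix is furnished by the length-three cylinder $(\alpha_1,\alpha_2,\alpha_3)$, since
\[
M_{\alpha_1}M_{\alpha_2}M_{\alpha_3}=\begin{pmatrix}4&3&2\\2&2&1\\1&1&1\end{pmatrix}
\]
is strictly positive and this cylinder has positive $\nu$-measure. Thus only the Pisot condition remains to be checked among the hypotheses of Theorem~\ref{t:3}.

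For the Pisot condition I would use unimodularity together with the external input of Avila and Delecroix. Since $|\!\det M_{[0,n)}|=1$, the top exterior power satisfies $\|\wedge^3 M_{[0,n)}\|=1$, whence by \eqref{eq:transposeequal} we get $\theta_1+\theta_2+\theta_3=0$. The negativity $\theta_2<0$ of the second Lyapunov exponent of the Arnoux--Rauzy cocycle is exactly the theorem of Avila and Delecroix~\cite{AD13}, valid for ergodic measures charging all cylinders such as our~$\nu$. Because $\theta_3\le\theta_2<0$ and the three exponents sum to zero, $\theta_1=-(\theta_2+\theta_3)>0$, giving $\theta_1>0>\theta_2\ge\theta_3$. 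This is the Pisot condition---the first assertion of Theorem~\ref{t:5}---and feeding it into Theorem~\ref{t:3}(i) yields assertions (\ref{i:11})--(\ref{i:15}) of Theorem~\ref{t:1} for $\nu$-almost every $\boldsymbol{\sigma}$; in particular $\mathcal{C}_\mathbf{1}$ is almost surely a multiple tiling of $\mathbf{1}^\bot$, and by (\ref{i:15}) it is a genuine tiling precisely when $\boldsymbol{\sigma}$ satisfies the geometric coincidence condition.

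The main obstacle is therefore to establish the geometric coincidence condition for $\nu$-almost every $\boldsymbol{\sigma}$. I would prove it through the combinatorics of the dual substitutions $E_1^*(\alpha_i)$ of~\eqref{eq:dualsubst}, aiming at a ``seed-growing'' lemma: one fixes a small fully filled patch (seed) of the stepped plane and checks, directly from the explicit action of each $E_1^*(\alpha_i)$, that applying any of the three dual maps keeps the image filled around the seed and, because a primitive directive sequence uses all three letters cofinally, eventually enlarges the filled radius without bound. Granting this, the patch $E_1^*(\sigma_{[0,n)})[\mathbf{0},i_n]$ contains a ball of radius~$R$ in $\Gamma(\tr{(M_{[0,n)})}\,\mathbf{1})$ for all large~$n$, which is the geometric coincidence condition; this is most cleanly organized through the algorithmically checkable reformulations of Propositions~\ref{p:gcc} and~\ref{p:gccvariant}, with primitivity of $\nu$-almost every sequence (a consequence of ergodicity and positivity of $\nu$ on cylinders, together with the positive block above) supplying the cofinal occurrences of all letters. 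I expect the delicate point to be proving that the region stays \emph{filled}---not merely that the dual maps generate the whole discrete plane somewhere---uniformly enough that the growing patch outruns the drift of the representation plane $\Gamma(\tr{(M_{[0,n)})}\,\mathbf{1})$ as $n\to\infty$.

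With the geometric coincidence condition in hand, assertion (\ref{i:15}) turns $\mathcal{C}_\mathbf{1}$ into a tiling of $\mathbf{1}^\bot$ for $\nu$-almost every $\boldsymbol{\sigma}$, and Theorem~\ref{t:3}(ii) then delivers assertions (\ref{i:16})--(\ref{i:18}): the shift $(X_{\boldsymbol{\sigma}},\Sigma,\mu)$ is measurably conjugate to a translation on $\mathbb{T}^{d-1}=\mathbb{T}^2$, every $\omega\in X_{\boldsymbol{\sigma}}$ is a natural coding of that translation for the partition $\{\mathcal{R}(i):i\in\mathcal{A}\}$, and each $\mathcal{R}(i)$ is a bounded remainder set. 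These are exactly the conclusions of Theorem~\ref{t:5}. The entire argument rests on two inputs beyond the general machinery: the Lyapunov estimate of~\cite{AD13} for the Pisot condition, and the seed-growing analysis of the Arnoux--Rauzy dual substitutions for the tiling---the latter being the genuine difficulty.
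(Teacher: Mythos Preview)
Your approach matches the paper's: verify the hypotheses of Theorem~\ref{t:3} (finite graph, unimodularity, positive cylinder, Pisot via~\cite{AD13}), then establish the tiling for $\nu$-almost every~$\boldsymbol{\sigma}$ through the combinatorics of the dual substitutions, and read off the conclusions.

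The one place where the paper is sharper is the tiling step. Rather than aiming at the geometric coincidence condition directly (a ball inside a \emph{single} $E_1^*(\sigma_{[0,n)})[\mathbf{0},i_n]$), the paper invokes Proposition~\ref{p:gccvariant} in its negative-coincidence form (Remark~\ref{rem:-2}), which splits tiling into two easier pieces: (a)~\emph{negative strong coincidence}, which is a one-line observation since every $\alpha_i(j)$ ends in the letter~$i$ (Lemma~\ref{lem:strongAR}); and (b)~the \emph{geometric finiteness property}, i.e.\ the \emph{union} $\bigcup_{i\in\mathcal{A}} E_1^*(\sigma_{[0,n)})[\mathbf{0},i]$ contains arbitrarily large balls of the discrete plane around~$\mathbf{0}$. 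Piece~(b) is exactly your seed-growing lemma, and the paper imports it from \cite[Theorem~4.7]{Berthe-Jolivet-Siegel:12} (Proposition~\ref{lem:superAR}) rather than reproving it. So your outline is correct, but you are working harder than necessary: the coincidence half comes for free, and only the weaker union version of the ball-filling has to be established.
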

As an example of   measure satisfying the assumptions of    Theorem \ref{t:5},  consider the   measure of maximal entropy for the suspension
flow of the Rauzy gasket    constructed  in \cite{AvilaHubSkripbis} (see also \cite{AvilaHubSkrip}).
Using Theorem~\ref{t:1} we are also able to provide a (uncountable) class of non-substitutive Arnoux-Rauzy words that give rise to translations on the torus $\mathbb{T}^{2}$.
To this end we introduce a terminology that comes from  the associated Arnoux-Rauzy continued fraction algorithm (which was also defined in \cite{Arnoux-Rauzy:91}).
A directive sequence  $\boldsymbol{\sigma}=(\sigma_n)\in S^\mathbb{N}$   that  contains each $\alpha_i$ ($i=1,2,3$) infinitely often
is said to have {\it bounded  weak partial quotients} if there is $h \in \mathbb{N}$ such that $\sigma_n = \sigma_{n+1} = \cdots = \sigma_{n+h}$ does not hold for any $n \in \mathbb{N}$,  and {\it bounded strong partial quotients}  if every  substitution in the directive sequence  $\boldsymbol{\sigma}$ occurs  with bounded gap.   We also recall   that  a directive sequence $\boldsymbol{\sigma}$   is said to be recurrent  if each factor of~$\boldsymbol{\sigma}$ occurs infinitely often in~$\boldsymbol{\sigma}$.

\begin{theorem} \label{t:4}
Let $S=\{\alpha_1,\alpha_2,\alpha_3\}$ be the set of Arnoux-Rauzy substitutions over three letters. Let $\boldsymbol{\sigma} \in S^\mathbb{N}$ be given and assume that it contains each of the substitutions  $\alpha_1,\alpha_2,\alpha_3$ at least once. If $\boldsymbol{\sigma}$ is recurrent and has bounded weak partial quotients, then the collection~$\mathcal{C}_\mathbf{1}$ forms a tiling, the $S$-adic shift $(X_{\boldsymbol{\sigma}},\Sigma)$ is measurably conjugate to a translation on the torus~$\mathbb{T}^2$, and the words in~$X_{\boldsymbol{\sigma}}$ form natural codings of this translation. 
\end{theorem}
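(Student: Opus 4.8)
The plan is to derive Theorem~\ref{t:4} from Theorem~\ref{t:1}, so that the whole argument reduces to checking, for every directive sequence $\boldsymbol{\sigma}\in S^\mathbb{N}$ of the prescribed form, the standing hypotheses of Theorem~\ref{t:1} together with the geometric coincidence condition. Since $d=3$, the relevant torus is $\mathbb{T}^{d-1}=\mathbb{T}^2$, and each $\alpha_i$ is unimodular (its incidence matrix is unipotent, hence of determinant~$1$), so $S$ is a finite set of unimodular substitutions. Primitivity is immediate from the hypothesis that each $\alpha_i$ occurs infinitely often: writing $M_{\alpha_i}\mathbf{x}=\mathbf{x}+(\sum_{j\neq i}x_j)\,\mathbf{e}_i$ one sees that any product $M_{[k,\ell)}$ whose factor word contains all three substitutions is strictly positive, and by hypothesis such windows exist for every~$k$. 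For algebraic irreducibility I would use that, again by the infinitely-often assumption, $M_{[k,\ell)}$ contains every~$\alpha_i$ once $\ell$ is large; its characteristic polynomial is then a monic cubic with constant term~$\pm1$, and a short computation rules out the only possible rational roots~$\pm1$, giving irreducibility for all sufficiently large~$\ell$.

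The heart of the matter is the balance-type hypothesis of Theorem~\ref{t:1}. Recurrence of $\boldsymbol{\sigma}$ supplies, for every~$\ell$, an $n\ge1$ with $(\sigma_n,\ldots,\sigma_{n+\ell-1})=(\sigma_0,\ldots,\sigma_{\ell-1})$, which is exactly the prefix-reappearance demanded there, so it remains to produce one constant~$C$ making every desubstituted language $\mathcal{L}_{\boldsymbol{\sigma}}^{(m)}$ $C$-balanced. Balancedness of such a language is tantamount to boundedness of the corresponding Rauzy fractal, i.e.\ to convergence of the series $\sum_{k\ge0}\pi_{\mathbf{u},\mathbf{1}}\,M_{[0,k)}\,\mathbf{l}(s_k)$ of projected contributions of the bounded remainders~$s_k$ arising in the $S$-adic decomposition of a prefix (and analogously with shifted data for general~$m$); thus I must bound these series uniformly in~$m$. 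Here the two combinatorial hypotheses act in tandem: a single $\alpha_i$ is unipotent, so a long run $\alpha_i^k$ inflates the transverse part of $M_{[0,k)}$ polynomially in~$k$ — precisely the mechanism behind the unbalanced Arnoux-Rauzy words of Cassaigne et al.~\cite{Cassaigne-Ferenczi-Zamboni:00} — and bounding all runs by~$h$ limits this inflation to a bounded factor, while the recurrence of $\boldsymbol{\sigma}$ together with the fact that every $\alpha_i$ recurs forces a geometric contraction of the transverse direction whose rate dominates the lengths of the intervening non-contracting blocks. Since every shift $\Sigma^m\boldsymbol{\sigma}$ again satisfies these same hypotheses, the bound is uniform in~$m$, and assertions (\ref{i:11})--(\ref{i:14}) of Theorem~\ref{t:1} follow.

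It remains to verify the geometric coincidence condition, which by assertion~(\ref{i:15}) is equivalent to $\mathcal{C}_\mathbf{1}$ being a tiling. This I would obtain from the dual substitutions: iterating $E_1^*(\sigma_{[0,n)})$ on a single face $[\mathbf{0},i_n]$ produces patches of the discrete hyperplane $\Gamma(\tr{(M_{[0,n)})}\,\mathbf{1})$ that grow to contain balls of any prescribed radius~$R$ once $n$ is large, the growth being guaranteed by primitivity (each $\alpha_i$ recurring infinitely often). This patch-generation argument for Arnoux-Rauzy dual maps is in the spirit of the discrete-plane generation results cited in the introduction, the only new feature being that it must be run along a non-stationary product of substitutions rather than along the iterates of a single one. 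Once the geometric coincidence condition is established, Theorem~\ref{t:1}(\ref{i:16})--(\ref{i:18}) deliver the measurable conjugacy of $(X_{\boldsymbol{\sigma}},\Sigma)$ to a translation on~$\mathbb{T}^2$, the fact that every $\omega\in X_{\boldsymbol{\sigma}}$ is a natural coding of this translation for the partition $\{\mathcal{R}(i):i\in\mathcal{A}\}$, and the bounded remainder set property of the subtiles, which together make up the full conclusion of the theorem.

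I expect the uniform balance step to be the main obstacle. Bounded runs alone do not suffice — for instance $(\alpha_1\alpha_2)^n$ has all runs of length one, yet its incidence matrix has an eigenvalue equal to~$1$ and therefore fails to contract one transverse direction — and the real work is to combine the bounded-weak-partial-quotient hypothesis with recurrence so as to control the geometric contraction rate against the (possibly unbounded) lengths of such non-contracting blocks, thereby converting purely combinatorial data into the quantitative, $m$-uniform balance estimate on which the entire application of Theorem~\ref{t:1} rests.
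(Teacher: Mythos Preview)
Your overall plan---verify the hypotheses of Theorem~\ref{t:1} and then establish the tiling---is exactly the paper's route, and your treatment of unimodularity, primitivity and algebraic irreducibility is fine. But the two technical steps you flag as ``the heart of the matter'' are genuine gaps, and in both places the paper proceeds differently and more simply than you suggest.

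\medskip

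\textbf{Uniform balance.} Your heuristic about contraction rates dominating non-contracting blocks is not a proof, and your diagnosis of what is needed is slightly off. You write that ``the real work is to combine the bounded-weak-partial-quotient hypothesis with recurrence'', but recurrence plays no role in the balance estimate: the paper invokes Proposition~\ref{p:1} (taken from \cite{Berthe-Cassaigne-Steiner}), which says that \emph{bounded weak partial quotients together with each $\alpha_i$ occurring infinitely often} already force $\mathcal{L}_{\boldsymbol{\sigma}}^{(n)}$ to be $(2h{+}1)$-balanced for \emph{every}~$n$. Your example $(\alpha_1\alpha_2)^n$ violates not recurrence but the ``each $\alpha_i$ infinitely often'' hypothesis; it is the presence of $\alpha_3$, not recurrence, that kills the eigenvalue~$1$. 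Recurrence is used only to provide the prefix-reappearance clause of Theorem~\ref{t:1}, exactly as you say earlier. The combinatorial argument behind Proposition~\ref{p:1} is nontrivial and is not something you can replace by a soft contraction estimate.

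\medskip

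\textbf{Tiling.} You propose to show directly that $E_1^*(\sigma_{[0,n)})[\mathbf{0},i_n]$ contains large balls for a \emph{single} face, i.e.\ the geometric coincidence condition itself, with ``the growth being guaranteed by primitivity''. Primitivity alone does not give this. The paper instead uses Proposition~\ref{p:gccvariant}: it shows that the \emph{union} $\bigcup_{i}E_1^*(\sigma_{[0,n)})[\mathbf{0},i]$ contains arbitrarily large balls (this is Proposition~\ref{lem:superAR}, taken from \cite{Berthe-Jolivet-Siegel:12}), and combines this with the (negative) strong coincidence condition, which for Arnoux--Rauzy is trivial since every $\alpha_i(j)$ ends in~$i$ (Lemma~\ref{lem:strongAR}). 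You never mention strong coincidence, yet it is precisely what bridges the gap between ``the union covers'' and ``a single subtile covers exclusively''.
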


Note that examples of uniformly  balanced words    (for which $\omega^{(n)}$ is $C$-balanced for each~$n$) for the $S$-adic shifts generated by  Arnoux-Rauzy substitutions are provided  in \cite{Berthe-Cassaigne-Steiner}. In particular, boundedness of the  strong partial quotients provides a nice characterization of  linear recurrence for Arnoux-Rauzy words (see Proposition~\ref{prop:LR} below). We believe that linear recurrence of an Arnoux-Rauzy word is enough to obtain pure discrete spectrum. However, in view of Proposition~\ref{prop:LR} our Theorem~\ref{t:4} yields this result only subject to the additional assumption of recurrence on the directive sequence. This conditional result is the content of the following corollary.

\begin{corollary}\label{cor:AR}
Any linearly recurrent Arnoux-Rauzy word~$\omega$ with recurrent directive sequence  generates a symbolic dynamical system $(X_{\omega}, \Sigma)$ that has  pure discrete spectrum.
\end{corollary}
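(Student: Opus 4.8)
The plan is to reduce the statement to Theorem~\ref{t:4}, by rephrasing the hypotheses on $\omega$ in the combinatorial language of partial quotients of its directive sequence, and then to read off pure discrete spectrum from the resulting conjugacy to a toral translation. Write $\boldsymbol{\sigma} = (\sigma_n)_{n\in\mathbb{N}} \in S^\mathbb{N}$ for the directive sequence of~$\omega$, where $S = \{\alpha_1,\alpha_2,\alpha_3\}$ is the set of Arnoux-Rauzy substitutions in~\eqref{eq:AR}.

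First I would translate linear recurrence into a syndeticity statement. By Proposition~\ref{prop:LR}, the Arnoux-Rauzy word~$\omega$ is linearly recurrent if and only if $\boldsymbol{\sigma}$ has bounded strong partial quotients, that is, each~$\alpha_i$ occurs in~$\boldsymbol{\sigma}$ with bounded gap; in particular each~$\alpha_i$ occurs infinitely often. Since a product of Arnoux-Rauzy substitution matrices over any window containing all three letters is a positive matrix, the bounded-gap property yields, for every~$k$, some~$\ell>k$ with $M_{[k,\ell)}$ positive, so that $\boldsymbol{\sigma}$ is primitive. By primitivity, $(X_\omega,\Sigma) = (X_{\boldsymbol{\sigma}},\Sigma)$, and we may henceforth work with the $S$-adic shift attached to~$\boldsymbol{\sigma}$.

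Next I would check that bounded strong partial quotients imply bounded weak partial quotients. Indeed, suppose $\sigma_n = \sigma_{n+1} = \cdots = \sigma_{n+L} = \alpha_i$ is a run of a single substitution. Then for $j \neq i$ the substitution~$\alpha_j$ does not occur among the indices $n,\dots,n+L$; since, by the strong condition, each~$\alpha_j$ occurs with gap bounded by a fixed constant, $L$~is bounded independently of~$n$. This furnishes the constant~$h$ in the definition of bounded weak partial quotients. Combined with the standing assumption that the directive sequence is recurrent and with the fact that each~$\alpha_i$ occurs infinitely often, this puts all three hypotheses of Theorem~\ref{t:4} in place.

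Finally, Theorem~\ref{t:4} applies and gives that $\mathcal{C}_\mathbf{1}$ forms a tiling of~$\mathbf{1}^\bot$ and that $(X_{\boldsymbol{\sigma}},\Sigma)$ is measurably conjugate to a translation on~$\mathbb{T}^2$; since $\mathcal{C}_\mathbf{1}$ is a tiling, assertion~(\ref{i:16}) of Theorem~\ref{t:1} yields pure discrete spectrum (equivalently, a toral translation has pure discrete spectrum, and measurable conjugacy preserves the spectral type). As $(X_\omega,\Sigma) = (X_{\boldsymbol{\sigma}},\Sigma)$, this proves the corollary. The genuinely substantial inputs here are Proposition~\ref{prop:LR} and Theorem~\ref{t:4}, both taken as given; within the corollary itself the only step needing an argument is the passage from bounded strong to bounded weak partial quotients, and the main conceptual point is the role played by the recurrence assumption, which is precisely what is required to invoke Theorem~\ref{t:4} (and which the authors expect to be removable).
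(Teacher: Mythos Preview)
Your proof is correct and follows precisely the approach of the paper, which states only that the corollary is a direct consequence of Proposition~\ref{prop:LR} together with Theorem~\ref{t:4}. You have merely unpacked the implicit step (bounded strong partial quotients $\Rightarrow$ bounded weak partial quotients) that the paper leaves to the reader.
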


It is well known that Arnoux-Rauzy words can be defined also for $d>3$ letters (see e.g.~\cite{Berthe-Cassaigne-Steiner}). To apply our theory to these classes of words and prove the results of this section in this more general setting it would be necessary to extend the combinatorial results from \cite{Berthe-Jolivet-Siegel:12} to higher dimensions. Although this should be possible we expect it to be very tedious already in the case of 4 letters.

\subsection{Brun words and natural codings of rotations with linear complexity}\label{sec:brun-words-natural}

Let $\Delta_2:=\{(x_1,x_2)\in\mathbb{R}^2\,:\, 0\le x_1\le x_2 \le 1\}$ be equipped with the Lebesgue measure $\lambda_{2}$.
Brun~\cite{BRUN} devised a generalized continued fraction algorithm for vectors $(x_1,x_2)\in\Delta_2$. This algorithm (in its ``projectivized'' additive form) is defined by the mapping $T_{\rm Brun}: \Delta_2\to\Delta_2$, 
\begin{equation}\label{eq:brunmap}
T_{\rm Brun}: (x_1,x_2) \mapsto 
\begin{cases}
\left(\frac{x_1}{1-x_2},\frac{x_2}{1-x_2}\right), & \hbox{for } x_2 \le \frac12, \\
\left(\frac{x_1}{x_2},\frac{1-x_2}{x_2}\right), & \hbox{for } \frac12 \le x_2 \le 1-x_1,\\
\left(\frac{1-x_2}{x_2},\frac{x_1}{x_2}\right), & \hbox{for }1-x_1 \le x_2 \leq 1;
\end{cases}
\end{equation}
for later use, we define $B(i)$ to be the set of $(x_1,x_2)\in\Delta_2$ meeting the restriction in the $i$-th line of \eqref{eq:brunmap}, for $1\le i\le 3$. An easy computation shows that the linear version of this algorithm is defined for vectors $\mathbf{w}^{(0)}=(w_1^{(0)},w_2^{(0)},w_3^{(0)})$ with $0\le w_1^{(0)} \le w_2^{(0)} \le w_3^{(0)}$ by the recurrence $M_{i_n}\mathbf{w}^{(n)}=\mathbf{w}^{(n-1)}$, where  $M_{i_n}$ is chosen among the matrices 
\begin{equation}\label{eq:brunmatrices}
\begin{pmatrix}1&0&0\\0&1&0\\0&1&1 \end{pmatrix},\quad
\begin{pmatrix}1&0&0\\0&0&1\\0&1&1 \end{pmatrix},\quad
\begin{pmatrix}0&1&0\\0&0&1\\1&0&1 \end{pmatrix}
\end{equation}
according to the magnitude of $w_3^{(n-1)}-w_2^{(n-1)}$ compared to $w_1^{(n-1)}$ and~$w_2^{(n-1)}$. 
More precisely, we have $T_{\rm Brun}\big(w_1^{(n-1)}/w_3^{(n-1)},w_2^{(n-1)}/w_3^{(n-1)}\big) = \big(w_1^{(n)}/w_3^{(n)},w_2^{(n)}/w_3^{(n)}\big)$.
We associate $S$-adic words with this algorithm by defining the \emph{Brun substitutions}
\begin{equation}\label{eq:brun}
\beta_1 : \begin{cases} 1 \mapsto 1  \\ 2 \mapsto 23 \\ 3 \mapsto 3 \end{cases} \quad
\beta_2 : \begin{cases} 1 \mapsto 1  \\ 2 \mapsto 3 \\ 3 \mapsto 23 \end{cases} \quad
\beta_3 : \begin{cases} 1 \mapsto 3  \\ 2 \mapsto 1 \\ 3 \mapsto 23 \end{cases}
\end{equation}
whose incidence matrices coincide with the three matrices in \eqref{eq:brunmatrices} associated with Brun's algorithm. Examples of uniformly  balanced words    (for which $\omega^{(n)}$ is $C$-balanced for each~$n$) for the $S$-adic shifts generated by  Brun substitutions are provided in  \cite{Delecroix-Hejda-Steiner}. We prove the following result on the related $S$-adic words.

\begin{theorem} \label{t:6}
Let $S = \{\beta_1,\beta_2,\beta_3\}$ be the set of Brun substitutions over  three letters, and consider the shift $(S^\mathbb{N},\Sigma,\nu)$ for some shift invariant ergodic probability measure~$\nu$ that assigns positive measure to each cylinder. Then $(S^\mathbb{N},\Sigma,\nu)$ satisfies the Pisot condition. Moreover,  for $\nu$-almost all sequences $\boldsymbol{\sigma} \in S^\mathbb{N}$ the collection~$\mathcal{C}_\mathbf{1}$ forms a tiling, the $S$-adic shift $(X_{\boldsymbol{\sigma}},\Sigma)$ is measurably conjugate to a translation on the torus~$\mathbb{T}^2$, and
the words in~$X_{\boldsymbol{\sigma}}$ form natural codings of this translation.\end{theorem}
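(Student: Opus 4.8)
The plan is to derive Theorem~\ref{t:6} from the metric statement of Theorem~\ref{t:3}, so that the main task is to verify its hypotheses for $S=\{\beta_1,\beta_2,\beta_3\}$ and the given shift $(S^\mathbb{N},\Sigma,\nu)$. The associated $S$-adic graph has a single vertex and three self-loops, hence finitely many vertices and finitely many edges, so the cocycle $A(\boldsymbol{\gamma})=\tr{M}_0$ is automatically log-integrable. A short computation gives $|\det M_{\beta_i}|=1$ for each~$i$, so the $\beta_i$ are unimodular, and one checks directly that $M_{\beta_3}^{\,4}$ is strictly positive; the cylinder of $\beta_3\beta_3\beta_3\beta_3$ therefore corresponds to a substitution with positive incidence matrix. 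Since $\nu$ is ergodic and charges every cylinder, $\nu$-almost every directive sequence $\boldsymbol{\sigma}$ contains this word infinitely often and is thus primitive and recurrent. All hypotheses of Theorem~\ref{t:3} except the Pisot condition are therefore in place.

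Next I would prove the Pisot condition $\theta_1>0>\theta_2\ge\theta_3$. Unimodularity yields $\theta_1+\theta_2+\theta_3=\lim_{n}\frac1n\int_{E_G}\log|\det M_{[0,n)}|\,d\nu=0$, and $\theta_1>0$ follows from primitivity, the recurrent appearance of the positive factor $M_{\beta_3}^{4}$ forcing exponential growth of the largest singular value along $\nu$-a.e.\ walk. The decisive point is $\theta_2<0$. By Oseledets' theorem $\theta_2$ is the $\nu$-a.e.\ exponential rate of the second singular value $s_2(M_{[0,n)})$; since $s_1 s_2 s_3=|\det M_{[0,n)}|=1$ and $s_1$ grows exponentially, $\theta_2<0$ is equivalent to the assertion that the products $M_{[0,n)}$ become projectively rank one at an exponential rate, i.e.\ to exponential strong convergence of Brun's algorithm. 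For the Lebesgue-equivalent invariant measure of \cite{ArnouxNogueira93} this is precisely the content of \cite{FUKE96,Schratzberger:98,Meester,Broise}. To obtain it for an \emph{arbitrary} ergodic $\nu$ of full support I would replace these measure-specific inputs by a quantitative, measure-independent matrix estimate balancing the exponential growth of $s_1$ against the projective (Hilbert-metric) contraction produced by each occurrence of the positive word, forcing $s_2\to0$ exponentially; the Birkhoff ergodic theorem then promotes the positive frequency of this word into the desired exponential decay, so that $\theta_2<0$ for every such~$\nu$.

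With the Pisot condition established, Theorem~\ref{t:3}(i) yields assertions (\ref{i:11})--(\ref{i:15}) of Theorem~\ref{t:1} for $\nu$-a.e.\ $\boldsymbol{\sigma}$; in particular $\mathcal{C}_\mathbf{1}$ is a multiple tiling and, by~(\ref{i:15}), it is a genuine tiling exactly when $\boldsymbol{\sigma}$ satisfies the geometric coincidence condition. It remains to verify the latter almost surely, whereupon Theorem~\ref{t:3}(ii) supplies the measurable conjugacy to a translation on~$\mathbb{T}^2$, the natural coding property, and the bounded remainder sets of (\ref{i:16})--(\ref{i:18}). Here I would use the algorithmically checkable reformulation of the geometric coincidence condition in Proposition~\ref{p:gcc} (and Proposition~\ref{p:gccvariant}) together with the fact that the \emph{substitutive} Brun systems already possess a coincidence and pure discrete spectrum \cite{Barge14,BBJS14}. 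Concretely I would exhibit a finite word $w$ in the $\beta_i$ for which the dual substitution $E_1^*(w)$ maps a seed tile to a patch of the discrete hyperplane containing a strictly larger ball. Since the patches $E_1^*(\sigma_{[0,n)})[\mathbf{0},i]$ grow monotonically along a primitive sequence, each occurrence of $w$ enlarges the central patch by a definite amount; as $\nu$ charges the cylinder of $w$, it occurs infinitely often for $\nu$-a.e.\ $\boldsymbol{\sigma}$, so for every $R$ the patch eventually contains a ball of radius $R$ and the geometric coincidence condition (indeed geometric finiteness) holds.

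The main obstacle I anticipate is the uniformity in the measure: both $\theta_2<0$ and the geometric coincidence must hold for \emph{every} fully supported ergodic~$\nu$, whereas the strong-convergence literature is tied to the Lebesgue-equivalent Brun measure and the coincidence results of \cite{Barge14,BBJS14} to periodic (substitutive) sequences. The crux is therefore to replace these measure- or period-specific inputs by purely combinatorial, measure-independent estimates on the Brun matrices and their dual patches, which the full-support hypothesis and the Birkhoff ergodic theorem then activate for $\nu$-almost every directive sequence.
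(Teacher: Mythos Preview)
Your overall strategy matches the paper's: verify the hypotheses of Theorem~\ref{t:3}, then upgrade the multiple tiling to a tiling via a coincidence/finiteness argument. The structure is correct, and your choice of $\beta_3^4$ for a positive factor is fine (the paper uses a different short product, but any positive one works).

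There is, however, a genuine gap in your treatment of the Pisot condition. Projective (Hilbert-metric) contraction coming from a recurrent positive block only yields \emph{weak} convergence of the cones $M_{[0,n)}\mathbb{R}_+^d$; it does not by itself force $s_2(M_{[0,n)})\to 0$. For $3\times 3$ unimodular products one has $s_1 s_2 s_3=1$, so exponential growth of $s_1$ forces $s_2 s_3\to 0$, but nothing prevents $s_2$ from staying bounded while $s_3\to 0$ rapidly. Thus the ``balancing'' argument you sketch cannot yield $\theta_2<0$ without a further, algorithm-specific input. The paper does not attempt such an estimate: it invokes \cite[Theorem~1]{AD13}, a result of Avila--Delecroix that establishes the Pisot condition for the Brun (and Arnoux--Rauzy) cocycle with respect to \emph{every} ergodic invariant probability measure, not only the Lebesgue-equivalent one. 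This is precisely the measure-uniform statement you flag as missing, and it is the key external ingredient.

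For the tiling part, your idea is morally right but the paper makes it concrete in a slightly different way. Rather than searching for a single word~$w$ whose dual enlarges the central patch, the paper first observes the \emph{negative} strong coincidence $\beta_3\beta_i(j)\in\mathcal{A}^*3$ for all $i,j$ (Lemma~\ref{lem:strongB}), and then obtains geometric finiteness from \cite[Theorem~5.4\,(1)]{BBJS14} after conjugating the~$\beta_i$ to the transposed Brun substitutions $\sigma_i^{\mathrm{Br}}$ via Lemma~\ref{l:relBrun}; this shows that $\bigcup_{i}E_1^*(\sigma_{[0,n)})[\mathbf{0},i]$ contains arbitrarily large balls whenever $\beta_3$ occurs infinitely often. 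Proposition~\ref{p:gccvariant} (in its negative-coincidence form, Remark~\ref{rem:-2}) then gives the tiling. Your monotone-growth heuristic is not automatic: one really needs the concrete combinatorial analysis of the dual patches carried out in~\cite{BBJS14}.
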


We will now show that this result implies that the $S$-adic shifts associated with Brun's algorithm provide a natural coding of almost all rotations on the torus~$\mathbb{T}^2$. Indeed, by the (weak) convergence of Brun's algorithm for almost all $(x_1,x_2)\in\Delta_2$ (w.r.t.\ to the two-dimensional Lebesgue measure; see e.g.\ \cite{BRUN}), there is a bijection $\Phi$ defined for almost all $(x_1,x_2)\in\Delta_2$ that makes the diagram
\begin{equation}\label{CDPhi}
\begin{CD}
\Delta_2 @>T_{\rm Brun}>> \Delta_2 \\
@VV \Phi V  @ VV \Phi V \\
S^{\mathbb{N}} @>\Sigma>> S^\mathbb{N}
\end{CD}
\end{equation}
commutative and that provides a measurable conjugacy between $(\Delta_2,T_{\rm Brun},\lambda_2)$ and $(S^{\mathbb{N}} ,\Sigma,\nu)$; the measure $\nu$ is specified in the proof of Theorem~\ref{t:7}.

\begin{theorem}\label{t:7}
For almost all $(x_1,x_2) \in \Delta_2$, the $S$-adic shift $(X_{\boldsymbol{\sigma}},\Sigma)$ with $\boldsymbol{\sigma} = \Phi(x_1,x_2)$ is measurably conjugate to the translation by $\big(\frac{x_1}{1+x_1+x_2},\frac{x_2}{1+x_1+x_2}\big)$ on~$\mathbb{T}^2$; then each $\omega\in X_{\boldsymbol{\sigma}}$ is a natural coding for this translation, $\mathcal{L}_{\boldsymbol{\sigma}}$~is balanced, and the subpieces of the Rauzy fractal  provide bounded remainder sets for this translation.
\end{theorem}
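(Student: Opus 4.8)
The plan is to deduce Theorem~\ref{t:7} from Theorem~\ref{t:6} by constructing the conjugacy~$\Phi$ explicitly and identifying the parameter of the resulting toral translation. First I would establish the commutative diagram~\eqref{CDPhi}. For almost every $(x_1,x_2)\in\Delta_2$, Brun's algorithm produces an infinite sequence of matrices $M_{i_1},M_{i_2},\ldots$ from~\eqref{eq:brunmatrices}, and I would set $\Phi(x_1,x_2)=\boldsymbol{\sigma}=(\beta_{i_{n+1}})_{n\in\mathbb{N}}$, reading off the Brun substitutions whose incidence matrices realize the algorithm's steps. The fact that $T_{\rm Brun}$ acts as the shift $\Sigma$ on these coding sequences is immediate from the definition of the algorithm, so the diagram commutes; weak convergence almost everywhere (see \cite{BRUN}) guarantees that $\Phi$ is well-defined and bijective a.e. The measure~$\nu$ is the pushforward $\Phi_*\lambda_2$ (suitably normalized), which is shift-invariant, ergodic, and—by strong convergence of Brun's algorithm together with the explicit invariant density of~\cite{ArnouxNogueira93}—assigns positive measure to every cylinder, so the hypotheses of Theorem~\ref{t:6} are met.

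Next I would invoke Theorem~\ref{t:6} to obtain, for $\nu$-almost every~$\boldsymbol{\sigma}$, that $\mathcal{C}_\mathbf{1}$ tiles, that $(X_{\boldsymbol{\sigma}},\Sigma)$ is measurably conjugate to some translation~$T$ on~$\mathbb{T}^2$, and that the words in~$X_{\boldsymbol{\sigma}}$ are natural codings. Transporting this through~$\Phi$, the system $(\Delta_2,T_{\rm Brun},\lambda_2)$ is measurably conjugate to $(X_{\boldsymbol{\sigma}},\Sigma)$ and hence to the toral translation. The balancedness of~$\mathcal{L}_{\boldsymbol{\sigma}}$ follows from the uniform balancedness results for Brun $S$-adic words in~\cite{Delecroix-Hejda-Steiner}, and then the bounded remainder property of the subpieces~$\mathcal{R}(i)$ follows directly from assertion~(\ref{i:18}) of Theorem~\ref{t:1} (equivalently from \cite[Proposition~7]{Adamczewski:03}, given balancedness and the natural-coding structure).

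The genuinely nontrivial step is to identify the translation vector as exactly $\big(\tfrac{x_1}{1+x_1+x_2},\tfrac{x_2}{1+x_1+x_2}\big)$. The translation~$T$ on~$\mathbb{T}^{d-1}=\mathbb{T}^2$ produced by Theorem~\ref{t:1}(\ref{i:16}) is a translation on $\mathbf{1}^\bot/(\mathbb{Z}^d\cap\mathbf{1}^\bot)$ whose direction is governed by the generalized right eigenvector~$\mathbf{u}$, i.e.\ by the letter frequency vector of the limit word. I would compute this frequency vector: it equals the normalized generalized Perron--Frobenius direction $\bigcap_n M_{[0,n)}\mathbb{R}_+^3=\mathbb{R}_+\mathbf{u}$ from~\eqref{e:topPF}, which by the definition of the projectivized Brun algorithm is precisely the input vector $\mathbf{w}^{(0)}$ recovered from its Brun expansion—namely proportional to $(x_1,x_2,1)$ since $T_{\rm Brun}(w_1/w_3,w_2/w_3)=(x_1,x_2)$ pulls back to the direction with $w_1^{(0)}/w_3^{(0)}=x_1$, $w_2^{(0)}/w_3^{(0)}=x_2$. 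Normalizing so that the coordinates sum to~$1$ gives $\mathbf{u}=\tfrac{1}{1+x_1+x_2}(x_1,x_2,1)$. The translation on the torus is then read off as the image of $\mathbf{e}_i-\pi_{\mathbf{u},\mathbf{1}}\mathbf{e}_i$ under the natural coordinatization of $\mathbf{1}^\bot$ by~$\mathbb{T}^2$, and a direct computation of the return vectors (the differences ${\mathbf t}_i$ of the domain exchange of assertion~(\ref{i:14})) shows these reduce modulo the lattice to the single class $\big(\tfrac{x_1}{1+x_1+x_2},\tfrac{x_2}{1+x_1+x_2}\big)$. The main obstacle is carrying out this coordinate bookkeeping carefully—matching the abstract torus $\mathbb{T}^{d-1}$ arising from $\mathbf{1}^\bot/(\mathbb{Z}^d\cap\mathbf{1}^\bot)$ to the standard $\mathbb{T}^2$ and checking that the induced translation vector agrees with the stated rational expression in the $x_i$—rather than any deep dynamical point, since the conjugacy itself is already furnished by Theorem~\ref{t:6}.
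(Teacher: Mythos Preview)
Your overall strategy matches the paper's: push Theorem~\ref{t:6} through the conjugacy~$\Phi$ and identify the translation vector via the frequency vector $\mathbf{u}\propto(x_1,x_2,1)$. There are, however, two substantive gaps.

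First, the measure. You set $\nu=\Phi_*\lambda_2$, but $\lambda_2$ is \emph{not} $T_{\rm Brun}$-invariant, so $\Phi_*\lambda_2$ is not shift-invariant and Theorem~\ref{t:6} cannot be applied to it. The paper instead takes $\nu=m\Phi^{-1}$, where $m$ is the absolutely continuous $T_{\rm Brun}$-invariant probability measure with density $h(x_1,x_2)=\tfrac{12}{\pi^2 x_1(1+x_2)}$ from~\cite{ArnouxNogueira93}. This $\nu$ is shift-invariant and ergodic, and every cylinder has positive $\nu$-measure because Brun cylinders in~$\Delta_2$ have positive Lebesgue (hence positive~$m$-) measure. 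Theorem~\ref{t:6} then yields the conclusion for $\nu$-a.e.\ $\boldsymbol{\sigma}$, i.e.\ for $m$-a.e.\ $(x_1,x_2)$, and the final step is to observe that $m$ and $\lambda_2$ are equivalent, so ``$m$-a.e.'' coincides with ``Lebesgue-a.e.''. This transfer is precisely the role of the explicit density and is missing from your argument.

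Second, balancedness. The reference~\cite{Delecroix-Hejda-Steiner} only exhibits \emph{examples} of uniformly balanced Brun words; it does not prove balancedness for almost every directive sequence. In the paper, balancedness of~$\mathcal{L}_{\boldsymbol{\sigma}}$ for $\nu$-a.e.\ $\boldsymbol{\sigma}$ is part of what Theorem~\ref{t:3} (via Lemma~\ref{l:DC} and the Pisot condition) delivers, and Theorem~\ref{t:6} rests on that. Balancedness, natural coding, and bounded remainder sets then all follow simultaneously from Theorem~\ref{t:1}(\ref{i:16})--(\ref{i:18}) once $\mathcal{C}_\mathbf{1}$ tiles; no external balancedness input is needed.

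Two smaller points. The sentence ``$(\Delta_2,T_{\rm Brun},\lambda_2)$ is measurably conjugate to $(X_{\boldsymbol{\sigma}},\Sigma)$'' conflates the parameter shift $(S^\mathbb{N},\Sigma,\nu)$ with the individual $S$-adic shift $(X_{\boldsymbol{\sigma}},\Sigma)$; these are different objects. And for the translation vector, the paper reads it off directly from Proposition~\ref{p:rotate} with $j=3$: the translation on $\mathbf{1}^\bot/\Lambda$ is by $\pi_{\mathbf{u},\mathbf{1}}\,\mathbf{e}_3=u_1(\mathbf{e}_3-\mathbf{e}_1)+u_2(\mathbf{e}_3-\mathbf{e}_2)$, which under the isomorphism $\mathbf{1}^\bot/\Lambda\simeq\mathbb{T}^2$ becomes $(u_1,u_2)=\big(\tfrac{x_1}{1+x_1+x_2},\tfrac{x_2}{1+x_1+x_2}\big)$. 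Your expression $\mathbf{e}_i-\pi_{\mathbf{u},\mathbf{1}}\mathbf{e}_i$ is the component along~$\mathbf{u}$, not the translation vector in~$\mathbf{1}^\bot$.
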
 

This result has the following consequence.

\begin{corollary}\label{cor:8}
For almost all $\mathbf{t} \in \mathbb{T}^2$, there is $(x_1,x_2) \in \Delta_2$ such that the $S$-adic shift $(X_{\boldsymbol{\sigma}},\Sigma)$ with $\boldsymbol{\sigma} = \Phi(x_1,x_2)$ is measurably conjugate to the translation by~$\mathbf{t}$ on~$\mathbb{T}^2$. Moreover, the words in~$X_{\boldsymbol{\sigma}}$ form natural codings of the translation by~$\mathbf{t}$.
\end{corollary}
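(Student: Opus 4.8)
The plan is to deduce this from Theorem~\ref{t:7} by exploiting the invariance of the set of realizable translation vectors under the group of toral automorphisms. Write $\psi(x_1,x_2) = \big(\frac{x_1}{1+x_1+x_2},\frac{x_2}{1+x_1+x_2}\big)$ for the translation vector furnished by Theorem~\ref{t:7}, and let $\Delta_2^{\mathrm{good}} \subseteq \Delta_2$ be the $\lambda_2$-conull set of points $(x_1,x_2)$ for which the conclusion of that theorem holds. The map $\psi$ is the restriction of the projective normalization $(x_1,x_2,1)\mapsto (x_1,x_2,1)/(1+x_1+x_2)$ to the chart $y_3 = 1-y_1-y_2$; on the interior of~$\Delta_2$ it is a $C^\infty$ diffeomorphism onto the interior of the triangle $\mathcal{D} = \{(y_1,y_2):\,0\le y_1\le y_2,\ y_1+2y_2\le 1\}$, with Jacobian bounded away from~$0$ and~$\infty$ on compacta. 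Hence $\psi$ maps $\lambda_2$-null sets to null sets and conversely, so $\mathcal{G} := \psi(\Delta_2^{\mathrm{good}})$ is a conull subset of~$\mathcal{D}$; in particular $\mathcal{G}$ has positive Lebesgue measure in~$\mathbb{T}^2$.

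The second ingredient is the elementary observation that if $A \in \mathrm{GL}_2(\mathbb{Z})$, then the toral automorphism $\mathbf{x}\mapsto A\mathbf{x}$ of $\mathbb{T}^2 = \mathbb{R}^2/\mathbb{Z}^2$ conjugates the translation by~$\mathbf{s}$ to the translation by~$A\mathbf{s}$, and carries the partition $\{\mathcal{R}(i)\}$ into $\{A\,\mathcal{R}(i)\}$, on each atom of which the induced map is again a translation. Consequently, if for some good $(x_1,x_2)$ the shift $(X_{\boldsymbol{\sigma}},\Sigma)$ with $\boldsymbol{\sigma} = \Phi(x_1,x_2)$ is measurably conjugate to the translation by $\mathbf{s} = \psi(x_1,x_2)$ and its words are natural codings, then the same shift is measurably conjugate to the translation by $A\mathbf{s}$ with words forming natural codings of the latter. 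Thus the set of vectors $\mathbf{t}\in\mathbb{T}^2$ for which the corollary holds contains $\bigcup_{A\in \mathrm{GL}_2(\mathbb{Z})} A\,\mathcal{G}$.

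It remains to show that this countable union is conull, and this is where the main work lies. First I would invoke ergodicity of the $\mathrm{GL}_2(\mathbb{Z})$-action on $(\mathbb{T}^2,\lambda)$: the union $\bigcup_A A\,\mathcal{G}$ is manifestly $\mathrm{GL}_2(\mathbb{Z})$-invariant and has positive measure, and since the action contains a hyperbolic (Anosov) automorphism, which is already ergodic, every invariant set of positive measure is conull. Alternatively, and more explicitly, one can avoid ergodicity: the coordinate permutations of the simplex $\{y_1+y_2+y_3=1,\ y_i\ge0\}$ act in the chart as elements of $\mathrm{GL}_2(\mathbb{Z})$ and carry the six orderings of $(y_1,y_2,y_3)$ onto six copies of $\mathcal{D}$ tiling the triangle $\{y_1,y_2\ge 0,\ y_1+y_2\le 1\}$, while the automorphism $-\mathrm{Id}$ maps this triangle onto the complementary triangle $\{y_1+y_2\ge 1\}$ in the unit square; these twelve automorphisms already cover $\mathbb{T}^2$ up to a null set. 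Either way $\bigcup_A A\,\mathcal{G}$ is conull, so for almost every $\mathbf{t}\in\mathbb{T}^2$ we may pick $A\in \mathrm{GL}_2(\mathbb{Z})$ and a good $(x_1,x_2)$ with $A\,\psi(x_1,x_2) \equiv \mathbf{t} \pmod{\mathbb{Z}^2}$, and the preceding paragraph yields the desired conjugacy and natural coding. The only delicate points to verify carefully are that $\psi$ is genuinely measure-nonsingular (so that $\mathcal{G}$ stays conull in~$\mathcal{D}$) and that the natural-coding structure survives the automorphism~$A$; both are routine once set up.
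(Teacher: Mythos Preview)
Your argument is correct, and at heart it is the same idea as the paper's: once Theorem~\ref{t:7} produces a conull set of translation vectors in the small triangle~$\mathcal{D}$, one spreads this over all of~$\mathbb{T}^2$ using toral automorphisms (which conjugate translations to translations and carry the Rauzy partition to another translation partition, preserving the natural-coding property).

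The packaging differs slightly. The paper does not invoke Theorem~\ref{t:7} alone: it goes back to Proposition~\ref{p:rotate} and chooses three different base letters~$j$, which in coordinates yields the three vectors $(u_1,u_2)$, $(u_1,u_3)$, $(u_2,u_3)$ with $u_i=x_i/(1+x_1+x_2)$ (putting $x_3=1$). This already covers a triangle of half the torus, and then only the two obvious symmetries $(t_1,t_2)\mapsto(t_2,t_1)$ and $(t_1,t_2)\mapsto(1-t_1,1-t_2)$ are needed to finish. You instead take the single vector $(u_1,u_2)$ from Theorem~\ref{t:7} and compensate with more automorphisms --- either via ergodicity of the $\mathrm{GL}_2(\mathbb{Z})$-action, or by your explicit tiling by twelve $\mathrm{GL}_2(\mathbb{Z})$-images of~$\mathcal{D}$. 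Your explicit route is in fact cleaner than appealing to ergodicity, since it identifies concretely which automorphism to use for a given~$\mathbf{t}$. Either way the content is the same: the paper's ``different~$j$'' amounts precisely to applying the simplex permutations you describe, since $(u_1,u_3)=\big(u_1,1-u_1-u_2\big)$ etc.\ are $\mathrm{GL}_2(\mathbb{Z})$-affine images of $(u_1,u_2)$.
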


We believe that the codings mentioned in Theorem~\ref{t:7} have linear factor complexity, that is, for each such coding, there is $C>0$ such that the number of its factors of length~$n$ is less than~$Cn$. Indeed, S.~Labb\'e and J.~Leroy informed us that they are currently working on a proof of the fact that $S$-adic words with $S = \{\beta_1,\beta_2,\beta_3\}$ have linear  factor complexity \cite{LabLer}. 
We thus get bounded remainder sets whose characteristic infinite words have linear factor complexity, contrarily to the examples provided e.g.\ in~\cite{Chevallier,Grepstad-Lev}.

\section{Convergence properties}\label{sec:conv}
In this section, we show that the Rauzy fractal~$\mathcal{R}$ corresponding to a sequence~$\boldsymbol{\sigma}$ is bounded if $\mathcal{L}_{\boldsymbol{\sigma}}$ is balanced. Moreover, we prove that under certain conditions the letter  frequency vector of an $S$-adic word has rationally independent entries and give a criterion that ensures the strong convergence of the matrix products $M_{[0,n)}$ to one single direction (defined by  a generalized  right eigenvector provided by the letter frequency vector). All these results will be needed in the sequel.

Throughout this section $S$ is a (finite or  infinite)  set of substitutions over the finite alphabet~$\mathcal{A}$ and $\boldsymbol{\sigma} \in S^{\mathbb{N}}$ is a directive sequence.

\subsection{Boundedness of the Rauzy fractal}
Recall that the Rauzy fractal~$\mathcal{R}$ is the closure of the projection of the vertices of the broken lines defined by limit words of~$\boldsymbol{\sigma}$; see Section~\ref{subsec:FR}. Therefore, $\mathcal{R}$~is compact if and only if the broken lines remain at bounded distance from the generalized right eigendirection~$\mathbb{R} \mathbf{u}$.
The following result shows that this is equivalent with balancedness and establishes a connection between the degree of balancedness and the diameter of~$\mathcal{R}$; see also \cite[Proposition~7]{Adamczewski:03} and~\cite[Lemma~3]{Delecroix-Hejda-Steiner}.
Recall that $\|\cdot\|$ denotes the maximum norm. 

\begin{lemma} \label{l:bounded}
Let $\boldsymbol{\sigma}$ be a primitive sequence of substitutions with generalized right eigenvector~$\mathbf{u}$. 
Then $\mathcal{R}$ is bounded if and only if $\mathcal{L}_{\boldsymbol{\sigma}}$ is balanced. 
If $\mathcal{L}_{\boldsymbol{\sigma}}$ is $C$-balanced, then $\mathcal{R} \subset [-C,C]^d \cap \mathbf{1}^\bot$.
\end{lemma}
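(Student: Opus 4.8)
The plan is to prove the two claims together by relating balancedness of $\mathcal{L}_{\boldsymbol{\sigma}}$ to the boundedness of the projected broken-line vertices, using the fact that these projections are exactly the points $\pi_{\mathbf{u},\mathbf{1}}\,\mathbf{l}(p)$ for prefixes~$p$ of limit words. The key elementary observation is that for the projection $\pi_{\mathbf{u},\mathbf{1}}$ along $\mathbf{u}$ onto $\mathbf{1}^\bot$, one has $\pi_{\mathbf{u},\mathbf{1}}\,\mathbf{l}(p) = \mathbf{l}(p) - \frac{\langle \mathbf{1},\mathbf{l}(p)\rangle}{\langle \mathbf{1},\mathbf{u}\rangle}\,\mathbf{u} = \mathbf{l}(p) - |p|\,\mathbf{f}$, where $\mathbf{f} = \mathbf{u}/\langle\mathbf{1},\mathbf{u}\rangle$ is the normalized generalized right eigenvector (the letter frequency vector), since $\langle\mathbf{1},\mathbf{l}(p)\rangle = |p|$. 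Thus the $i$-th coordinate of the projected vertex is precisely $|p|_i - |p|\,f_i$, the deviation of the letter count from its expected value.

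First I would establish the forward direction ($\mathcal{L}_{\boldsymbol{\sigma}}$ balanced $\Rightarrow$ $\mathcal{R}$ bounded) with the explicit bound. Assuming $C$-balancedness, I would show $|\,|p|_i - |p|\,f_i\,| \le C$ for every prefix~$p$ and every $i\in\mathcal{A}$. To see this, note that the frequency $f_i$ is an accumulation point of ratios $|q|_i/|q|$ over long factors~$q$; comparing~$p$ against suitable factors~$q$ of the same length via the $C$-balance inequality $-C \le |p|_i - |q|_i \le C$ and dividing by~$|p|$ while letting~$|q|$ grow along the defining sequence for~$f_i$ yields $|\,|p|_i/|p| - f_i\,| \le C/|p|$, i.e.\ $|\,|p|_i - |p|f_i\,| \le C$. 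By the coordinate identity above, this gives $\|\pi_{\mathbf{u},\mathbf{1}}\,\mathbf{l}(p)\|_\infty \le C$, and since the projected vertices all lie in $\mathbf{1}^\bot$, taking the closure yields $\mathcal{R} \subset [-C,C]^d \cap \mathbf{1}^\bot$, which is compact. This simultaneously proves boundedness and the stated inclusion.

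For the converse ($\mathcal{R}$ bounded $\Rightarrow$ $\mathcal{L}_{\boldsymbol{\sigma}}$ balanced), I would argue contrapositively: if $\mathcal{L}_{\boldsymbol{\sigma}}$ is not balanced, then for every~$C$ there exist factors $u,v$ of equal length with $|\,|u|_j - |v|_j\,| > C$ for some~$j$. Using primitivity and minimality, every factor of $\mathcal{L}_{\boldsymbol{\sigma}}$ occurs in some limit word, so by recurrence one can realize~$u$ and~$v$ as segments of a single limit word~$\omega$, hence as differences of two prefixes. The projected vertices at the endpoints of these two occurrences differ in the $j$-th coordinate by $|u|_j - |v|_j$ (the length-proportional term $|p|f_j$ cancels because~$u$ and~$v$ have equal length), so the diameter of $\{\pi_{\mathbf{u},\mathbf{1}}\,\mathbf{l}(p)\}$ exceeds~$C$. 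As~$C$ is arbitrary, $\mathcal{R}$ is unbounded, contradicting the hypothesis.

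The main obstacle I anticipate is the converse direction, specifically the step of realizing an arbitrary unbalanced pair $(u,v)$ of equal-length factors as a \emph{difference of two prefixes of one limit word}. The projection formula only controls differences of counts over prefixes, whereas unbalancedness is stated for arbitrary factors; bridging this requires care with primitivity and recurrence to concatenate the relevant factors inside a single limit word (or at least to place them so that the telescoping cancellation of the frequency term is legitimate). One must also handle the subtlety that the frequency vector $\mathbf{f}$ exists and coincides with the normalized generalized right eigenvector~$\mathbf{u}$ — this is where primitivity (via the topological Perron–Frobenius property of Section~\ref{sec:gener-perr-frob}) is essential, guaranteeing that $\pi_{\mathbf{u},\mathbf{1}}$ projects along the correct direction so that the cancellation argument goes through cleanly.
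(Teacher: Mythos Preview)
Your approach is essentially the paper's: both directions hinge on the identity $\pi_{\mathbf{u},\mathbf{1}}\,\mathbf{l}(p)=\mathbf{l}(p)-|p|\,\mathbf{f}$ and on writing factors as differences of prefixes. Two points need tightening. In the forward direction, the phrase ``dividing by $|p|$ while letting $|q|$ grow'' is incoherent since you have fixed $|q|=|p|$; what actually works (and is what the paper does) is to cut a limit word into consecutive blocks $v_0,v_1,\ldots$ of length~$|p|$, use $C$-balancedness to get $\|\pi_{\mathbf{u},\mathbf{1}}\mathbf{l}(v_k)-\pi_{\mathbf{u},\mathbf{1}}\mathbf{l}(p)\|\le C$, average over~$k$, and pass to the limit using that the frequency vector lies in~$\mathbb{R}\mathbf{u}$. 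In the converse, your conclusion ``the diameter exceeds~$C$'' is off by a constant: a factor is the difference of \emph{two} prefixes, so $\|\pi_{\mathbf{u},\mathbf{1}}\mathbf{l}(u)\|\le 2C$ and likewise for~$v$, whence (since $\mathbf{l}(u)-\mathbf{l}(v)\in\mathbf{1}^\bot$) one gets $\|\mathbf{l}(u)-\mathbf{l}(v)\|\le 4C$; note also that you need not place $u$ and $v$ in the \emph{same} limit word, which removes the obstacle you flagged.
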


 \begin{proof}
Assume first that $\mathcal{R}$ is bounded. 
Then there exists $C$ such that $\|\pi_{\mathbf{u},\mathbf{1}}\, \mathbf{l}(p)\| \le C$ for all prefixes $p$ of limit words of~$\boldsymbol{\sigma}$.
Let $u, v \in \mathcal{L}_{\boldsymbol{\sigma}}$ with $|u| = |v|$.
By the primitivity of $\boldsymbol{\sigma}$, $u$~and $v$ are factors of a limit word, hence, $\|\pi_{\mathbf{u},\mathbf{1}}\, \mathbf{l}(u)\|,\, \|\pi_{\mathbf{u},\mathbf{1}}\, \mathbf{l}(v)\| \le 2C$.
As $\mathbf{l}(u) - \mathbf{l}(v) \in \mathbf{1}^\bot$, we obtain
\[
\|\mathbf{l}(u) - \mathbf{l}(v)\| = \|\pi_{\mathbf{u},\mathbf{1}}\, (\mathbf{l}(u) - \mathbf{l}(v))\| \le \|\pi_{\mathbf{u},\mathbf{1}}\, \mathbf{l}(u)\| + \|\pi_{\mathbf{u},\mathbf{1}}\, \mathbf{l}(v)\| \le 4C,
\]
i.e., $\mathcal{L}_{\boldsymbol{\sigma}}$ is $4C$-balanced.

Assume now that $\mathcal{L}_{\boldsymbol{\sigma}}$ is $C$-balanced and let $p$ be a prefix of a limit word~$\omega$.
Write $\omega$ as concatenation of words~$v_k$, $k \in \mathbb{N}$, with $|v_k| = |p|$. 
Then $C$-balancedness yields $\|\pi_{\mathbf{u},\mathbf{1}}\, \mathbf{l}(v_k) - \pi_{\mathbf{u},\mathbf{1}}\, \mathbf{l}(p)\| \le C$ for all $k \in \mathbb{N}$, thus $\|\frac1n \sum_{k=0}^{n-1} \pi_{\mathbf{u},\mathbf{1}}\, \mathbf{l}(v_k) - \pi_{\mathbf{u},\mathbf{1}}\, \mathbf{l}(p)\| \le C$ for all $n \in \mathbb{N}$.
As $M_{[0,n)}\, \mathbf{e}_i = \mathbf{l}(\sigma_{[0,n)}(i)) \in \mathbf{l}(\mathcal{L}_{\boldsymbol{\sigma}})$ for all $n \in \mathbb{N}$, $i \in \mathcal{A}$, the letter frequency vector of~$\omega$ (which exists because of balancedness~\cite{Berthe-Tijdeman:02}) is in~$\mathbb{R} \mathbf{u}$. 
Therefore, we have $\lim_{n\to\infty} \frac1n \sum_{k=0}^{n-1} \mathbf{l}(v_k) \in \mathbb{R} \mathbf{u}$, hence $\lim_{n\to\infty} \frac1n \sum_{k=0}^{n-1} \pi_{\mathbf{u},\mathbf{1}}\, \mathbf{l}(v_k) = \mathbf{0}$, and consequently 
\begin{equation}\label{lem41end}
\|\pi_{\mathbf{u},\mathbf{1}}\, \mathbf{l}(p)\| = \|\lim_{n\to\infty} \frac1n \sum_{k=0}^{n-1} \pi_{\mathbf{u},\mathbf{1}}\, \mathbf{l}(v_k) - \pi_{\mathbf{u},\mathbf{1}}\, \mathbf{l}(p)\| \le C. \qedhere
\end{equation}
 \end{proof}

\subsection{Irrationality and strong convergence}
In the periodic case with a unimodular irreducible Pisot substitution~$\sigma$, the  incidence matrix~$M_\sigma$ has an expanding right eigenline and a contractive right hyperplane (that is orthogonal to an expanding left eigenvector), i.e., the matrix~$M_\sigma$ contracts the space~$\mathbb{R}^d$ towards the expanding eigenline.
Moreover, irreducibility implies that the coordinates of the expanding eigenvector are rationally independent.
These properties are crucial for proving that the Rauzy fractal~$\mathcal{R}$ has positive measure and induces a (multiple) tiling of the representation space~$\mathbf{1}^\bot$.
In the $S$-adic setting, the cones $M_{[0,n)}\, \mathbb{R}_+^d$ converge ``weakly'' to the direction of the generalized right eigenvector~$\mathbf{u}$; see Section~\ref{sec:gener-perr-frob}. We give a criterion for $\mathbf{u}$ to have rationally independent coordinates in Lemma~\ref{l:independent}. As the weak convergence of $M_{[0,n)}\, \mathbb{R}_+^d$ to~$\mathbf{u}$ is not sufficient for our purposes, in Proposition~\ref{p:strongconvergence} we will provide a strong convergence property.

\begin{lemma} \label{l:independent}
Let $\boldsymbol{\sigma}$ be an algebraically irreducible sequence of substitutions with generalized right eigenvector~$\mathbf{u}$ and balanced language~$\mathcal{L}_{\boldsymbol{\sigma}}$.
Then the coordinates of~$\mathbf{u}$ are rationally independent.
\end{lemma}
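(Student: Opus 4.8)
The plan is to argue by contradiction. Suppose the coordinates of $\mathbf{u}$ are rationally dependent, i.e., there is a nonzero vector $\mathbf{c} \in \mathbb{Z}^d$ with $\langle \mathbf{c}, \mathbf{u}\rangle = 0$; I may assume $d \ge 2$, the case $d=1$ being trivial. The heart of the argument is to turn this \emph{single} linear relation into a \emph{uniform} bound on the sequence of transported integer vectors $\mathbf{c}^{(n)} := \tr{M}_{[0,n)}\,\mathbf{c} \in \mathbb{Z}^d$, and then to exploit algebraic irreducibility through a pigeonhole argument.

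First I would show that $\langle \mathbf{c}, \mathbf{l}(w)\rangle$ is bounded uniformly over all $w \in \mathcal{L}_{\boldsymbol{\sigma}}$. Since $\langle \mathbf{c}, \mathbf{u}\rangle = 0$ and $\pi_{\mathbf{u},\mathbf{1}}$ is the projection along~$\mathbf{u}$, one has $\langle \mathbf{c}, \mathbf{l}(p)\rangle = \langle \mathbf{c}, \pi_{\mathbf{u},\mathbf{1}}\,\mathbf{l}(p)\rangle$ for every prefix~$p$ of a limit word; by Lemma~\ref{l:bounded} the projected prefixes lie in $\mathcal{R} \subset [-C,C]^d$, so $|\langle \mathbf{c}, \mathbf{l}(p)\rangle| \le C\,\|\mathbf{c}\|_1$. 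Comparing an arbitrary $w \in \mathcal{L}_{\boldsymbol{\sigma}}$ of length $N$ with the length-$N$ prefix $p_N$ of a limit word and using $C$-balancedness ($\|\mathbf{l}(w) - \mathbf{l}(p_N)\|_\infty \le C$) then gives $|\langle \mathbf{c}, \mathbf{l}(w)\rangle| \le 2C\,\|\mathbf{c}\|_1 =: K$ for all $w \in \mathcal{L}_{\boldsymbol{\sigma}}$. Applying this to the words $\sigma_{[0,n)}(i) \in \mathcal{L}_{\boldsymbol{\sigma}}$ and noting that the $i$-th coordinate of $\mathbf{c}^{(n)}$ equals $\langle \mathbf{c}, M_{[0,n)}\mathbf{e}_i\rangle = \langle \mathbf{c}, \mathbf{l}(\sigma_{[0,n)}(i))\rangle$, I conclude $\|\mathbf{c}^{(n)}\|_\infty \le K$ for every~$n$. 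Thus all $\mathbf{c}^{(n)}$ lie in the finite set $\mathbb{Z}^d \cap [-K,K]^d$; moreover each $\mathbf{c}^{(n)} \ne \mathbf{0}$ because the matrices are unimodular, so $\tr{M}_{[0,n)}$ is invertible.

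By pigeonhole some value $\mathbf{b} \neq \mathbf{0}$ is attained by $\mathbf{c}^{(n)}$ for infinitely many~$n$. Fix such an~$n_1$; by algebraic irreducibility (applied with $k = n_1$) the characteristic polynomial of $M_{[n_1,\ell)}$ is irreducible for all large~$\ell$, so I may pick another occurrence $n_2 > n_1$, large enough, for which the characteristic polynomial $\chi$ of $M_{[n_1,n_2)}$ is irreducible. Since $\mathbf{c}^{(n_2)} = \tr{M}_{[n_1,n_2)}\,\mathbf{c}^{(n_1)}$ and $\mathbf{c}^{(n_1)} = \mathbf{c}^{(n_2)} = \mathbf{b}$, the nonzero vector~$\mathbf{b}$ satisfies $\tr{M}_{[n_1,n_2)}\,\mathbf{b} = \mathbf{b}$, so $1$ is an eigenvalue of $M_{[n_1,n_2)}$ and hence a rational root of~$\chi$. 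As $\chi$ is irreducible of degree $d \ge 2$, it has no rational root --- a contradiction. Therefore no such~$\mathbf{c}$ exists.

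The main obstacle, and the only genuinely $S$-adic point, is the uniform boundedness step: in the substitutive case one has an honest eigenvector and the relation propagates through a single matrix, whereas here $\mathbf{u}$ is not an eigenvector of any $M_{[0,n)}$. What saves the argument is that balancedness (via Lemma~\ref{l:bounded}) forces the whole broken line, and hence every $\langle \mathbf{c}, \mathbf{l}(w)\rangle$, to stay bounded, confining the transported relations $\mathbf{c}^{(n)}$ to a finite set; recurrence of the relation under the forward cocycle then produces the forbidden eigenvalue~$1$. I would finally double-check the two places where the standing hypotheses enter silently: primitivity, to guarantee that a limit word (and thus the prefixes~$p_N$) exists and that $\mathcal{L}_{\boldsymbol{\sigma}}$ is its language, and unimodularity, to ensure $\mathbf{c}^{(n)} \neq \mathbf{0}$.
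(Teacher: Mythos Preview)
Your proof is correct and follows essentially the same route as the paper's: assume a nonzero integer vector orthogonal to~$\mathbf{u}$, use balancedness to bound the transported vectors $\tr{M}_{[0,n)}\mathbf{c}$, pigeonhole to find a repetition, and derive the eigenvalue~$1$ of some $M_{[k,\ell)}$ in contradiction with algebraic irreducibility. The only nuance is your appeal to unimodularity to ensure $\mathbf{c}^{(n)}\neq\mathbf{0}$: this is not among the lemma's hypotheses (Section~4 does not impose it), and the paper instead deduces regularity of $M_{[0,k)}$ directly from algebraic irreducibility (a singular $M_{[0,k)}$ would force $0$ as an eigenvalue of every $M_{[0,\ell)}$, $\ell\ge k$), which is a cleaner way to close this step.
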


\begin{proof}
Suppose that $\mathbf{u}$ has rationally dependent coordinates, i.e., there is $\mathbf{x} \in \mathbb{Z}^d \setminus \{\mathbf{0}\}$ such that $\langle \mathbf{x}, \mathbf{u}  \rangle= 0$.
Then $\langle \tr{(M_{[0,n)})}\, \mathbf{x},  \mathbf{e}_i  \rangle = \langle \mathbf{x},  M_{[0,n)}\, \mathbf{e}_i \rangle = \langle \mathbf{x},  \mathbf{l}(\sigma_{[0,n)}(i)) \rangle$ is bounded (uniformly in~$n$) for each $i \in \mathcal{A}$, by the balancedness of~$\mathcal{L}_{\boldsymbol{\sigma}}$; cf.\ the proof of Lemma~\ref{l:bounded}. Therefore, $\tr{(M_{[0,n)})}\, \mathbf{x} \in \mathbb{Z}^d$ is bounded, and there is $k \in \mathbb{N}$ such that $\tr{(M_{[0,\ell)})}\, \mathbf{x} = \tr{(M_{[0,k)})}\, \mathbf{x}$ for infinitely many $\ell > k$. The matrix $M_{[0,k)}$ is regular since otherwise $M_{[0,\ell)}$ would have the eigenvalue~$0$ for all $\ell \ge k$, contradicting algebraic irreducibility. Thus $\tr{(M_{[0,k)})}\, \mathbf{x} \not= \mathbf{0}$ is an eigenvector of~$\tr{(M_{[k,\ell)})}$ to the eigenvalue~$1$, contradicting that $M_{[k,\ell)}$ has irreducible characteristic polynomial for large~$\ell$.
\end{proof}

\begin{proposition} \label{p:strongconvergence}
Let $\boldsymbol{\sigma} = (\sigma_n)_{n\in\mathbb{N}}$ be a primitive, algebraically irreducible, and recurrent  sequence of substitutions with balanced language~$\mathcal{L}_{\boldsymbol{\sigma}}$. 
Then
\begin{equation} \label{e:Lsn}
\lim_{n\to\infty} \sup\big\{ \|\pi_{\mathbf{u},\mathbf{1}} M_{[0,n)}\,  \mathbf{l}(v)\|:\,  v \in \mathcal{L}_{\boldsymbol{\sigma}}^{(n)}\big\} = 0.
\end{equation}
In particular, 
\begin{equation} \label{e:contraction}
\lim_{n\to\infty} \pi_{\mathbf{u},\mathbf{1}} M_{[0,n)}\, \mathbf{e}_i = \mathbf{0} \quad \mbox{for all}\ i \in \mathcal{A}.
\end{equation}
\end{proposition}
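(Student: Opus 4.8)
The plan is to reduce the uniform estimate \eqref{e:Lsn} to the decay of a single scalar quantity and to prove that decay. For $n\in\mathbb N$ set
\[
s_n = \sup\big\{\|\pi_{\mathbf u,\mathbf 1}\,M_{[0,n)}\,\mathbf l(v)\|:\ v\in\mathcal L_{\boldsymbol\sigma}^{(n)}\big\}.
\]
First I would record that $s_n<\infty$: since $M_{[0,n)}\mathbf l(v)=\mathbf l(\sigma_{[0,n)}(v))$ and $\sigma_{[0,n)}(v)\in\mathcal L_{\boldsymbol\sigma}$ whenever $v\in\mathcal L_{\boldsymbol\sigma}^{(n)}$, balancedness together with Lemma~\ref{l:bounded} gives $\|\pi_{\mathbf u,\mathbf 1}\,\mathbf l(w)\|\le 2C$ for every factor $w\in\mathcal L_{\boldsymbol\sigma}$ (writing $w$ as the difference of two prefixes of a limit word), whence $s_n\le 2C$ for all $n$. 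As \eqref{e:Lsn} is exactly the assertion $s_n\to0$, and taking $v=i\in\mathcal A\subset\mathcal L_{\boldsymbol\sigma}^{(n)}$ (a single letter, a factor by primitivity) turns \eqref{e:contraction} into a consequence of \eqref{e:Lsn}, the whole statement reduces to $\lim_{n\to\infty}s_n=0$.

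Next I would show that $(s_n)$ is non-increasing. For $v\in\mathcal L_{\boldsymbol\sigma}^{(n+1)}$, say a factor of $\sigma_{[n+1,m)}(i)$, the word $\sigma_n(v)$ is a factor of $\sigma_n\sigma_{[n+1,m)}(i)=\sigma_{[n,m)}(i)$, so $\sigma_n(v)\in\mathcal L_{\boldsymbol\sigma}^{(n)}$, and $M_{[0,n+1)}\mathbf l(v)=M_{[0,n)}\mathbf l(\sigma_n(v))$. Hence the family of vectors defining $s_{n+1}$ is contained in the one defining $s_n$, so $s_{n+1}\le s_n$. Therefore $s_n$ decreases to a limit $s_\infty\ge0$, and it remains to rule out $s_\infty>0$.

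For this I would first clarify the geometry. By \eqref{e:topPF} (valid here by primitivity and recurrence) the cones $M_{[0,n)}\mathbb R_+^d$ decrease to the single ray $\mathbb R_+\mathbf u$; writing $M_{[0,n)}\mathbf l(v)=L'\hat{\mathbf y}$ with $L'=|\sigma_{[0,n)}(v)|=\langle\mathbf 1,M_{[0,n)}\mathbf l(v)\rangle$ and $\hat{\mathbf y}$ in the simplex $\{\langle\mathbf 1,\cdot\rangle=1\}$, one computes $\pi_{\mathbf u,\mathbf 1}M_{[0,n)}\mathbf l(v)=L'(\hat{\mathbf y}-\hat{\mathbf u})$, where $\hat{\mathbf u}=\mathbf u/\langle\mathbf 1,\mathbf u\rangle$ and $\|\hat{\mathbf y}-\hat{\mathbf u}\|\to0$ uniformly in $v$. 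So each projection is a product of a factor $L'\to\infty$ and a factor tending to $0$, whose size is kept $\le 2C$ by balancedness. The difficulty I anticipate is that this weak (projective) convergence of the cones, even with the uniform bound $s_n\le2C$, does \emph{not} by itself give $s_\infty=0$: it would do so only if $M_{[0,n)}$ contracted the fixed transverse space $\mathbf 1^\bot$, i.e.\ if the second Lyapunov exponent were negative, which is not hypothesized here. Algebraic irreducibility must enter.

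I would therefore exclude $s_\infty>0$ by an integer-vector argument in the spirit of Lemma~\ref{l:independent}. The sets $X_n=\overline{\{\pi_{\mathbf u,\mathbf 1}M_{[0,n)}\mathbf l(v):v\in\mathcal L_{\boldsymbol\sigma}^{(n)}\}}$ are nested compact subsets of $\mathbf 1^\bot$ with $\max_{X_n}\|\cdot\|=s_n$, so $s_\infty>0$ produces a nonzero $\mathbf z^*\in\bigcap_n X_n$. Each $M_{[0,n)}\mathbf l(v)=\mathbf l(\sigma_{[0,n)}(v))$ is an integer vector, and by balancedness these integer points lie in a bounded tube around the line $\mathbb R\mathbf u$, whose direction is irrational by Lemma~\ref{l:independent}. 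Exploiting the recurrence of $\boldsymbol\sigma$ — the initial block reappears along $p_1<p_2<\cdots$ with $\sigma_{[p_k,p_k+\ell_k)}=\sigma_{[0,\ell_k)}$ and $\ell_k\to\infty$ — I would aim to convert the persistence of $\mathbf z^*$ into a nonzero rational subspace of $\mathbf 1^\bot$ left invariant (up to a modulus-one eigenvalue) by a recurring product $\tr{(M_{[0,\ell_k)})}$, which stabilizes along a subsequence and yields an eigenvector to the eigenvalue $1$ of some $\tr{(M_{[k,\ell)})}$, contradicting algebraic irreducibility exactly as at the end of the proof of Lemma~\ref{l:independent}. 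I expect this conversion — turning the mere numerical survival $s_\infty>0$ into a genuine, rational $M$-invariant transverse structure that algebraic irreducibility can forbid — to be the main obstacle of the proof.
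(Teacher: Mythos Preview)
Your reduction is sound and nicely organised: the bound $s_n\le 2C$, the monotonicity $s_{n+1}\le s_n$ (via $\sigma_n(v)\in\mathcal L_{\boldsymbol\sigma}^{(n)}$ for $v\in\mathcal L_{\boldsymbol\sigma}^{(n+1)}$), and the extraction of a nonzero $\mathbf z^\ast\in\bigcap_n X_n$ when $s_\infty>0$ are all correct. The gap is precisely where you locate it yourself: you do not actually carry out the conversion of $\mathbf z^\ast\ne\mathbf 0$ into a contradiction with algebraic irreducibility, and I do not see a way to make your sketch work. The point $\mathbf z^\ast$ is a real vector in~$\mathbf 1^\bot$; it is approximated by projections $\pi_{\mathbf u,\mathbf 1}\,\mathbf l(w)$ of integer vectors~$\mathbf l(w)$, but these integer vectors run off to infinity along~$\mathbb R\mathbf u$ (their lengths~$|w|$ are unbounded), and since $\pi_{\mathbf u,\mathbf 1}(\mathbb Z^d)$ is dense in~$\mathbf 1^\bot$ (by Lemma~\ref{l:independent}) no rational or integer structure is attached to~$\mathbf z^\ast$. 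Unlike in Lemma~\ref{l:independent}, there is no fixed integer vector whose images under $\tr{(M_{[0,n)})}$ stay bounded; you only have a real accumulation point of projected images. Producing from this an integer eigenvector of some~$\tr{(M_{[k,\ell)})}$ to a root of unity would require an additional idea that is not in your outline.

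The paper does not argue by contradiction. Its key device is a \emph{translation--inclusion} argument: with $\omega$ a limit word and $\mathcal S_n$ the set of projected prefixes of $\sigma_{[0,n)}(i_n)$ (where $i_n$ is the first letter of~$\omega^{(n)}$), one has $\mathcal S_n\to\tilde{\mathcal R}$ in Hausdorff metric and $\pi_{\mathbf u,\mathbf 1}M_{[0,n)}\mathbf l(p)+\mathcal S_n\subset\tilde{\mathcal R}$ whenever $p\,i_n$ is a prefix of~$\omega^{(n)}$. Any accumulation point~$\mathbf t^\ast$ of such translations would then satisfy $\mathbf t^\ast+\tilde{\mathcal R}\subset\tilde{\mathcal R}$, forcing $\mathbf t^\ast=\mathbf 0$ by boundedness; this gives uniform smallness for those special prefixes. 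Recurrence and a Cantor diagonal argument produce a fixed collection of prefixes~$p$ whose abelianisations span~$\mathbb R^d$ (this is exactly where algebraic irreducibility enters, via Lemma~\ref{l:independent}: otherwise the lattice they generate would be proper, contradicting balancedness). Along the recurrence subsequence one therefore has $\pi_{\mathbf u,\mathbf 1}M_{[0,n_k)}\to 0$ on all of~$\mathbb R^d$, and a final primitivity step extends this to all large~$n$ and all prefixes of~$\omega^{(n)}$. In short: the paper builds convergence constructively on a spanning set, whereas your proposal tries to derive a nonexistent rational obstruction from a single real limit point.
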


Note that \eqref{e:contraction} is the \emph{strong convergence} property used in the theory of multidimensional continued fraction algorithms; see e.g.\ \cite[Definition~19]{SCHWEIGER}.

\begin{proof}
First note that \eqref{e:contraction} follows from~\eqref{e:Lsn} since $i \in \mathcal{L}_{\boldsymbol{\sigma}}^{(n)}$ for all $i \in \mathcal{A}$, $n \in \mathbb{N}$, by primitivity.

Let $\omega$ be a limit word of~$\boldsymbol{\sigma}$. 
Then, again by primitivity, for each $v \in \mathcal{L}_{\boldsymbol{\sigma}}^{(n)}$ we have $\mathbf{l}(v) = \mathbf{l}(p) - \mathbf{l}(q)$ for some prefixes $p, q$ of~$\omega^{(n)}$. 
Therefore, it is sufficient to prove that
\begin{equation} \label{e:Lsn2}
\lim_{n\to\infty} \sup\big\{ \|\pi_{\mathbf{u},\mathbf{1}} M_{[0,n)}\,  \mathbf{l}(p)\|:\,  \mbox{$p$ is a prefix of $\omega^{(n)}$}\big\} = 0.
\end{equation}

Choose $\varepsilon >0$ arbitrary but fixed. For all $n \in \mathbb{N}$, let $i_n$ be the first letter of $\omega^{(n)}$ and set
\begin{align*}
\mathcal{S}_n & = \{\pi_{\mathbf{u},\mathbf{1}}\, \mathbf{l}(p):\, \mbox{$p$ is a prefix of $\sigma_{[0,n)}(i_n)$}\} \\
\tilde{\mathcal{R}} & = \overline{\{\pi_{\mathbf{u},\mathbf{1}}\, \mathbf{l}(p):\, \mbox{$p$ is a prefix of $\omega$}\}}.
\end{align*}
Then $\lim_{n\to\infty} \mathcal{S}_n = \tilde{\mathcal{R}}$ (in Hausdorff metric) and $\pi_{\mathbf{u},\mathbf{1}} M_{[0,n)}\, \mathbf{l}(p) + \mathcal{S}_n \subset \tilde{\mathcal{R}}$ for all $p \in \mathcal{A}^*$ such that $p\, i_n$ is a prefix of~$\omega^{(n)}$. These two facts yield that 
\begin{equation}\label{uniform}
\|\pi_{\mathbf{u},\mathbf{1}} M_{[0,n)}\, \mathbf{l}(p)\| < \varepsilon 
\end{equation}
for all $p \in \mathcal{A}^*$ such that $p\hspace{.1em}i_n$  is a prefix of $\omega^{(n)}$ for $n$ large enough.
For $p \in \mathcal{A}^*$, let $N(p) = \{{n \in \mathbb{N}}:\, \mbox{$p\hspace{.1em}i_n$ is a prefix of $\omega^{(n)}$}\}$. If $N(p)$ is infinite, then \eqref{uniform} immediately implies that
\begin{equation}\label{Npinfinite}
\lim_{n\in N(p),\,n\to\infty} \pi_{\mathbf{u},\mathbf{1}} M_{[0,n)}\, \mathbf{l}(p) =\mathbf{0}.
\end{equation}
Our next aim is to find a set of prefixes~$p$ spanning~$\mathbb{R}^d$ that all yield an infinite set~$N(p)$. 

By recurrence of $(\sigma_n)_{n\in\mathbb{N}}$, there is an increasing sequence of integers $(n_k)_{k\in\mathbb{N}}$ such that 
\begin{equation}\label{sigmank}
(\sigma_{n_k}, \sigma_{n_k+1}, \ldots, \sigma_{n_k+k-1}) = (\sigma_0, \sigma_1, \ldots, \sigma_{k-1})
\end{equation}
for all $k \in \mathbb{N}$.
Using a Cantor diagonal argument we can choose a sequence of letters $(j_\ell)_{\ell\in\mathbb{N}}$ such that, for each $\ell \in \mathbb{N}$, we have that
\begin{equation}\label{jell1}
(i_{n_k},i_{n_k+1}, i_{n_k+2}, \ldots, i_{n_k+\ell}) = (j_0,j_1, j_2, \ldots, j_\ell)
\end{equation}
holds for infinitely many $k \in \mathbb{N}$; denote the set of these~$k$ by~$K_\ell$. By the definition of~$i_n$, we have that $\sigma_{n-1}(i_n) \in i_{n-1} \mathcal{A}^*$. For $k \in K_\ell$, we gain thus
\begin{equation}\label{jell2}
\sigma_{\ell-1}(j_\ell) = \sigma_{n_k+\ell-1}(j_\ell) = \sigma_{n_k+\ell-1}(i_{n_k+\ell}) \in i_{n_k+\ell-1} \mathcal{A}^* = j_{\ell-1} \mathcal{A}^*.
\end{equation}
 
Let $P_\ell$ be the set of all $p \in \mathcal{A}^*$ such that $p\hspace{.1em}j_0$ is a prefix of $\sigma_{[0,\ell)}(j_\ell)$. Then, \eqref{jell2} implies that $P_0 \subset P_1 \subset \cdots$. 
Consider the lattice $L \subset \mathbb{Z}^d$ generated by $\bigcup_{\ell\in\mathbb{N}} \mathbf{l}(P_\ell)$. 
The set $\bigcup_{\ell\in\mathbb{N}} \mathbf{l}(P_\ell)$ contains arbitrarily large vectors. 
Therefore, if the lattice~$L$ does not have full rank, then the rational independence of the coordinates of~$\mathbf{u}$ (Lemma~\ref{l:independent}) implies that the maximal distance of elements of $\bigcup_{\ell\in\mathbb{N}} \mathbf{l}(P_\ell)$ from the line $\mathbb{R}\mathbf{u}$ is unbounded. Since $P_\ell \subset \mathcal{L}_{\boldsymbol{\sigma}}$, this contradicts the fact that $\mathcal{L}_{\boldsymbol{\sigma}}$ is balanced; cf.~\eqref{lem41end} in the proof of Lemma~\ref{l:bounded}.
Hence, there is $\ell \in \mathbb{N}$ such that $\mathbf{l}(P_\ell)$ contains a basis of~$\mathbb{R}^d$.

We now fix~$\ell$ such that $\mathbf{l}(P_\ell)$ contains a basis of~$\mathbb{R}^d$.
If $p \in P_\ell$, i.e., if $p\hspace{.1em}j_0$ is a prefix of~$\sigma_{[0,\ell)}(j_\ell)$, then \eqref{sigmank} and~\eqref{jell1} imply that $p\hspace{.1em}j_0\, ({=}p\hspace{.1em}i_{n_k})$ is a prefix of~$\omega^{(n_k)}$ for all $k \in K_\ell$, thus ${\{n_k:\, k \in K_\ell\}} \subset N(p)$, 
which shows that $N(p)$ is infinite. Therefore we may apply \eqref{Npinfinite} to obtain that 
\[
\lim_{k\in K_\ell,\,k\to\infty} \pi_{\mathbf{u},\mathbf{1}} M_{[0,n_k)}\, \mathbf{l}(p) = \lim_{k\in N(p),\,k\to\infty} \pi_{\mathbf{u},\mathbf{1}} M_{[0,n_k)}\, \mathbf{l}(p) =\mathbf{0}.
\]
Since $\mathbf{l}(P_\ell)$ contains a basis of~$\mathbb{R}^d$, this yields that 
\begin{equation} \label{eq:st2}
\lim_{k\in K_\ell,\,k\to\infty} \pi_{\mathbf{u},\mathbf{1}} M_{[0,n_k)}\, \mathbf{x} = \mathbf{0} \mbox{  for all } \mathbf{x} \in \mathbb{R}^d.
\end{equation}

Let $h \in \mathbb{N}$ be such that $M_{[0,h)}$ is a positive matrix.
Then there is a finite set $Q \subset \mathcal{A}^*$ such that, for each $i \in \mathcal{A}$, $q\hspace{.1em}j_0$ is a prefix of $\sigma_{[0,h)}(i)$ for some $q \in Q$. 
Thus, for all sufficiently large $k \in K_\ell$, 
\begin{itemize}
\item[(i)] $\|\pi_{\mathbf{u},\mathbf{1}} M_{[0,n_k)}\, \mathbf{l}(p)\| < \varepsilon$ for all $p \in \mathcal{A}^*$ such that $p\hspace{.1em}j_0 = p\hspace{.1em}i_{n_k}$ is a prefix of~$\omega^{(n_k)}$, using~\eqref{uniform},
\item[(ii)] and $\|\pi_{\mathbf{u},\mathbf{1}} M_{[0,n_k)}\, \mathbf{l}(q)\| < \varepsilon$ for all $q \in Q$, using \eqref{eq:st2} and the fact that $Q$ is finite.
\end{itemize}

Finally, let $p$ be a prefix of~$\omega^{(n)}$, $n \ge  n_k+h$.
Choose $i \in \mathcal{A}$ in a way that $\sigma_{[n_k,n)}(p)\sigma_{[n_k,n_k+h)}(i) = \sigma_{[n_k,n)}(p)\sigma_{[0,h)}(i)$ is a prefix of~$\omega^{(n_k)}$.
Then $\sigma_{[n_k,n)}(p)\hspace{.1em}q\hspace{.1em}j_0 = \sigma_{[n_k,n)}(p)\hspace{.1em}q\hspace{.1em}i_{n_k}$ is a prefix of~$\omega^{(n_k)}$ for some $q \in Q$. Therefore, by (i) we have $\|\pi_{\mathbf{u},\mathbf{1}} M_{[0,n_k)} \mathbf{l}({q})\|< \varepsilon$ and (ii) implies that
\[
\|\pi_{\mathbf{u},\mathbf{1}} M_{[0,n)}\, \mathbf{l}(p) + \pi_{\mathbf{u},\mathbf{1}} M_{[0,n_k)}\, \mathbf{l}(q)\| = \|\pi_{\mathbf{u},\mathbf{1}} M_{[0,n_k)} \, \mathbf{l}(\sigma_{[n_k,n)}(p)\hspace{.1em}q)\| < \varepsilon,
\]
if $k \in K_\ell$ is sufficiently large.
Combining these two inequalities yields that $\|\pi_{\mathbf{u},\mathbf{1}} M_{[0,n)}\, \mathbf{l}(p)\| < 2\varepsilon$ for all prefixes~$p$ of~$\omega^{(n)}$, if $n \in \mathbb{N}$ is sufficiently large. 
As $\varepsilon$ was chosen arbitrary, this proves~\eqref{e:Lsn2} and thus the proposition.
\end{proof}

\begin{remark}
The assumption of algebraic irreducibility cannot be omitted in Proposition~\ref{p:strongconvergence}.
E.g., taking the primitive substitution $\sigma_n(1) = 121$, $\sigma_n(2) = 212$ for all~$n$, we have $M_{[0,n)} = \begin{pmatrix}2&1\\1&2\end{pmatrix}^n$, $\mathbf{u} = \tr(1,1)$, thus $\pi_{\mathbf{u},\mathbf{1}} M_{[0,n)} \mathbf{l}(1) = \tr(1/2,-1/2)$ and $\pi_{\mathbf{u},\mathbf{1}} M_{[0,n)} \mathbf{l}(2) = \tr(-1/2,1/2)$ for all~$n$; the limit words are the periodic words $1212\cdots$ and $2121\cdots$, hence, $\mathcal{L}_{\boldsymbol{\sigma}}$ is clearly balanced.
\end{remark}

\section{Set equations for Rauzy fractals and the recurrent left eigenvector}\label{sec:dual}
The classical Rauzy fractal associated with a unimodular Pisot substitution $\sigma$ can be defined in terms of the dual substitution $E_1^*(\sigma)$ given in~\eqref{eq:dualsubst}. This dual substitution acts on the discrete hyperplane~$\Gamma(\mathbf{v})$ of the stable hyperplane~$\mathbf{v}^\bot$ of~$\sigma$; cf.\ e.g.~\cite{Arnoux-Ito:01}. Carrying this over to a sequence~$\boldsymbol{\sigma}\in S^{\mathbb{N}}$ requires considering an infinite sequence of hyperplanes~$(\mathbf{w}^{(n)})^\bot$, where, for each $n \in \mathbb{N}$, the dual substitution $E_1^*(\sigma_n)$ of $\sigma_n$ maps $\Gamma(\mathbf{w}^{(n)})$ to~$\Gamma(\mathbf{w}^{(n+1)})$. In Section~\ref{sec:dualsubst}, we formalize these concepts and relate them to the Rauzy fractals defined in Section~\ref{subsec:FR}. We  first define Rauzy fractals on any hyperplane~$\mathbf{w}^\bot$, $\mathbf{w} \in \mathbb{R}_{\ge0}^d \setminus \{\mathbf{0}\}$, in order to obtain set equations that reflect the combinatorial properties of $S$-adic words geometrically.
In Section \ref{subsec:choicev}, we specify the vector~$\mathbf{w}$ by defining a ``recurrent left eigenvector''~$\mathbf{v}$. This vector will allow us to obtain an associated sequence of hyperplanes $(\mathbf{v}^{(n)})^\bot$ such that the Rauzy fractals defined on these hyperplanes converge w.r.t.\ the Hausdorff metric; see Proposition~\ref{p:close}. It is this convergence property that will later enable us to derive topological as well as tiling properties of our ``$S$-adic Rauzy fractals''.

Throughout this section we assume that $S$ is a finite or  infinite set of unimodular substitutions over the finite alphabet~$\mathcal{A}$ and $\boldsymbol{\sigma} \in S^{\mathbb{N}}$ is a directive sequence.

\subsection{The dual substitution and set equations}\label{sec:dualsubst}  We now give some properties of the dual substitution $E_1^*(\sigma)$ defined in \eqref{eq:dualsubst}. Let  $\mathbf{u}$ be a generalized right eigenvector, $\mathbf{w} \in \mathbb{R}_{\ge0}^d \setminus \{\mathbf{0}\}$. 
To simplify notation, we use the abbreviations
\begin{equation} \label{e:projabb}
\pi_{\mathbf{u},\mathbf{w}}^{(n)} = \pi_{(M_{[0,n)})^{-1}\mathbf{u},\tr{(M_{[0,n)})}\,\mathbf{w}} \qquad (n \in \mathbb{N}).
\end{equation}
Note that $\pi_{\mathbf{u},\mathbf{w}}^{(0)} =\pi_{\mathbf{u},\mathbf{w}}$.
Moreover, we set
\[
\mathbf{w}^{(n)} = \tr(M_{[0,n)})\, \mathbf{w} \qquad (n \in \mathbb{N}).
\]

The dual substitution~$E_1^*(\sigma)$ can be extended to subsets of discrete hyperplanes in the obvious way. 
Moreover, by direct calculation, one obtains that $E_1^*(\sigma \tau) = E_1^*(\tau) E_1^*(\sigma)$; cf.~\cite{Arnoux-Ito:01}. 
The following lemma contains further relevant properties of~$E_1^*$.

\begin{lemma} \label{l:e1star}
Let $\boldsymbol{\sigma} = (\sigma_n)\in S^{\mathbb{N}}$ be a sequence of unimodular substitutions. Then for all $k < \ell$, we~have
\renewcommand{\theenumi}{\roman{enumi}}
\begin{enumerate}
\itemsep1ex
\item \label{62i}
$M_{[k,\ell)}\, (\mathbf{w}^{(\ell)})^\bot = (\mathbf{w}^{(k)})^\bot$,
\item \label{62ii}
$E_1^*(\sigma_{[k,\ell)})\, \Gamma(\mathbf{w}^{(k)}) = \Gamma(\mathbf{w}^{(\ell)})$,
\item \label{62iii}
for distinct $[\mathbf{x},i], [\mathbf{x}',i'] \in \Gamma(\mathbf{w}^{(k)})$, the sets $E_1^*(\sigma_{[k,\ell)})[\mathbf{x},i]$ and $E_1^*(\sigma_{[k,\ell)})[\mathbf{x}',i']$ are disjoint.
\end{enumerate}
\end{lemma}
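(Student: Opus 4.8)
The plan is to reduce all three assertions to a single statement about \emph{one} unimodular substitution. Since $\sigma_{[k,\ell)}$ is itself unimodular with incidence matrix $M_{\sigma_{[k,\ell)}} = M_{[k,\ell)}$, and since $\mathbf{w} \in \mathbb{R}^d_{\ge0}$ together with the nonnegativity of incidence matrices forces $\mathbf{w}^{(k)} = \tr{(M_{[0,k)})}\,\mathbf{w} \in \mathbb{R}^d_{\ge0}\setminus\{\mathbf{0}\}$ and $\mathbf{w}^{(\ell)} = \tr{(M_{[k,\ell)})}\,\mathbf{w}^{(k)}$, it suffices to treat an arbitrary unimodular substitution $\sigma$ (in place of $\sigma_{[k,\ell)}$, with $M := M_\sigma = M_{[k,\ell)}$) together with an arbitrary $\mathbf{w} \in \mathbb{R}^d_{\ge0}\setminus\{\mathbf{0}\}$ (in place of $\mathbf{w}^{(k)}$), comparing $\Gamma(\mathbf{w})$ with $\Gamma(\tr{M}\,\mathbf{w})$. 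With this reduction no use of the contravariance $E_1^*(\sigma\tau)=E_1^*(\tau)E_1^*(\sigma)$ is needed, although one could alternatively argue by induction on $\ell-k$.

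\textbf{Assertion (i).} Here I would use only the adjoint relation $\langle\mathbf{w}, M\mathbf{x}\rangle = \langle\tr{M}\,\mathbf{w}, \mathbf{x}\rangle$: if $\mathbf{x}\in(\tr{M}\,\mathbf{w})^\bot$ then $\langle\mathbf{w}, M\mathbf{x}\rangle = 0$, so $M(\tr{M}\,\mathbf{w})^\bot \subseteq \mathbf{w}^\bot$; as $M$ is invertible both sides are $(d-1)$-dimensional subspaces, hence equal. Substituting $\mathbf{w}\mapsto\mathbf{w}^{(k)}$, $M\mapsto M_{[k,\ell)}$, and $\tr{M}\,\mathbf{w}\mapsto\mathbf{w}^{(\ell)}$ gives assertion~(i).

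\textbf{Assertion (ii).} The core is an inner-product computation. For $[\mathbf{x},i]\in\Gamma(\mathbf{w})$ and a letter $j$ with $\sigma(j)=p\,i\,s$ (so that $p\,i$ is a prefix of $\sigma(j)$), the image face $[M^{-1}(\mathbf{x}+\mathbf{l}(p)),j]$ satisfies
\[
\langle\tr{M}\,\mathbf{w},\,M^{-1}(\mathbf{x}+\mathbf{l}(p))\rangle = \langle\mathbf{w},\mathbf{x}\rangle + \langle\mathbf{w},\mathbf{l}(p)\rangle, \qquad \langle\tr{M}\,\mathbf{w},\mathbf{e}_j\rangle = \langle\mathbf{w},\mathbf{l}(\sigma(j))\rangle.
\]
Feeding in $0\le\langle\mathbf{w},\mathbf{x}\rangle<\langle\mathbf{w},\mathbf{e}_i\rangle$, the nonnegativity of $\mathbf{w}$, and $\mathbf{l}(\sigma(j))=\mathbf{l}(p)+\mathbf{e}_i+\mathbf{l}(s)$ verifies the two defining inequalities of $\Gamma(\tr{M}\,\mathbf{w})$, giving $E_1^*(\sigma)\Gamma(\mathbf{w})\subseteq\Gamma(\tr{M}\,\mathbf{w})$. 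For the reverse inclusion I would take $[\mathbf{y},j]\in\Gamma(\tr{M}\,\mathbf{w})$, set $t=\langle\mathbf{w},M\mathbf{y}\rangle\in[0,\langle\mathbf{w},\mathbf{l}(\sigma(j))\rangle)$, write $\sigma(j)=c_1\cdots c_m$, and cut at the first index $r$ with $\langle\mathbf{w},\mathbf{l}(c_1\cdots c_r)\rangle>t$; then $p=c_1\cdots c_{r-1}$, $i=c_r$, $\mathbf{x}=M\mathbf{y}-\mathbf{l}(p)$ yields a face $[\mathbf{x},i]\in\Gamma(\mathbf{w})$ with $[\mathbf{y},j]\in E_1^*(\sigma)[\mathbf{x},i]$, establishing surjectivity.

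\textbf{Assertion (iii) and the main obstacle.} For disjointness, suppose $[\mathbf{y},j]$ lies in both $E_1^*(\sigma)[\mathbf{x},i]$ and $E_1^*(\sigma)[\mathbf{x}',i']$, so $M\mathbf{y}=\mathbf{x}+\mathbf{l}(p)=\mathbf{x}'+\mathbf{l}(p')$ with $p\,i$ and $p'\,i'$ both prefixes of the \emph{same} word $\sigma(j)$. Writing $t=\langle\mathbf{w},M\mathbf{y}\rangle$, membership in $\Gamma(\mathbf{w})$ forces $\langle\mathbf{w},\mathbf{l}(p)\rangle\le t<\langle\mathbf{w},\mathbf{l}(p\,i)\rangle$ and the analogous bounds for $p'\,i'$. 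As two prefixes of one word are comparable, if $p\,i\neq p'\,i'$ I may assume $|p\,i|\le|p'|$, so that $p\,i$ is a prefix of $p'$; monotonicity of $\langle\mathbf{w},\mathbf{l}(\cdot)\rangle$ along prefixes (using $\mathbf{w}\ge0$) then gives $\langle\mathbf{w},\mathbf{l}(p\,i)\rangle\le\langle\mathbf{w},\mathbf{l}(p')\rangle\le t<\langle\mathbf{w},\mathbf{l}(p\,i)\rangle$, a contradiction. Hence $p\,i=p'\,i'$, so $i=i'$ and $\mathbf{x}=\mathbf{x}'$, contradicting distinctness. The individual steps are short, so the one point genuinely requiring care is that $\mathbf{w}$ is only assumed nonnegative: the height $r\mapsto\langle\mathbf{w},\mathbf{l}(c_1\cdots c_r)\rangle$ is merely nondecreasing, so intermediate zero-weight letters must not be allowed to spoil the cut in~(ii) or the strict comparison in~(iii). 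The observation that rescues both is that $[\mathbf{x},i]\in\Gamma(\mathbf{w})$ already forces $\langle\mathbf{w},\mathbf{e}_i\rangle>0$ (otherwise $0\le\langle\mathbf{w},\mathbf{x}\rangle<\langle\mathbf{w},\mathbf{e}_i\rangle$ is impossible), so the ``active'' letters always carry strictly positive weight, and only the strict inequalities, which survive the nondecreasing heights, are ever invoked.
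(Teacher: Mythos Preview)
Your proof is correct. The reduction to a single unimodular substitution is sound, the inner-product computations for (ii) are accurate in both directions, and the prefix-comparison argument for (iii) is clean; your remark that $[\mathbf{x},i]\in\Gamma(\mathbf{w})$ forces $\langle\mathbf{w},\mathbf{e}_i\rangle>0$ correctly handles the only subtle point (zero-weight letters when $\mathbf{w}$ has vanishing coordinates).

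The paper takes a different route: it proves (i) in one line from $\mathbf{w}^{(\ell)}=\tr{(M_{[k,\ell)})}\,\mathbf{w}^{(k)}$ and then disposes of (ii) and (iii) by citing \cite[Theorem~1]{Fernique:06}, where exactly these facts about $E_1^*$ acting on discrete hyperplanes are established. Your argument is essentially what one would find upon unpacking that reference (or the analogous statements in Arnoux--Ito), so the two proofs agree in substance; yours is simply self-contained where the paper is not. The benefit of your write-up is that it makes explicit the role of $\mathbf{w}\ge\mathbf{0}$ and the strict inequality $\langle\mathbf{w},\mathbf{e}_i\rangle>0$, which is precisely the content needed later when the paper handles the degenerate case $\langle\mathbf{v}^{(k)},\mathbf{e}_i\rangle=0$ separately in Proposition~\ref{p:disjoint}.
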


\begin{proof}
The first assertion follows directly from the fact that $\mathbf{w}^{(\ell)} = \tr{(M_{[k,\ell)})}\, \mathbf{w}^{(k)}$.
By the same fact, the other assertions are special cases of \cite[Theorem~1]{Fernique:06}.
\end{proof}

We need the following auxiliary result on the projections~$\pi_{\mathbf{u},\mathbf{w}}^{(n)}$.

\begin{lemma}\label{lem:projections} 
Let $\boldsymbol{\sigma} = (\sigma_n)\in S^{\mathbb{N}}$ be a sequence of unimodular substitutions. Then for all $n\in \mathbb{N}$, we~have
\begin{equation*}
\pi_{\mathbf{u},\mathbf{w}}^{(n)}\, M_n = M_n\, \pi_{\mathbf{u},\mathbf{w}}^{(n+1)}.
\end{equation*}
\end{lemma}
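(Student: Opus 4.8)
The plan is to reduce the statement to the observation that the invertible map $M_n$ intertwines the two oblique projections by carrying the direct-sum splitting at level $n+1$ onto the one at level~$n$. First I would record that, for vectors $\mathbf{v},\mathbf{w}$ with $\langle \mathbf{w}, \mathbf{v}\rangle \neq 0$, the projection along~$\mathbf{v}$ onto~$\mathbf{w}^\bot$ acts by
\[
\pi_{\mathbf{v},\mathbf{w}}(\mathbf{x}) = \mathbf{x} - \frac{\langle \mathbf{w}, \mathbf{x}\rangle}{\langle \mathbf{w}, \mathbf{v}\rangle}\, \mathbf{v}.
\]
Applying this with $\mathbf{v} = (M_{[0,n)})^{-1}\mathbf{u}$ and the hyperplane $(\mathbf{w}^{(n)})^\bot = (\tr{(M_{[0,n)})}\,\mathbf{w})^\bot$, the adjointness identity $\langle \tr{(M_{[0,n)})}\,\mathbf{w}, \mathbf{x}\rangle = \langle \mathbf{w}, M_{[0,n)}\,\mathbf{x}\rangle$ shows that the relevant denominator equals $\langle \mathbf{w}, \mathbf{u}\rangle$ for \emph{every}~$n$; since $\mathbf{u}$ is a positive vector and $\mathbf{w} \in \mathbb{R}_{\ge0}^d \setminus \{\mathbf{0}\}$, this is strictly positive, so all the projections $\pi_{\mathbf{u},\mathbf{w}}^{(n)}$ are well defined. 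I would also record the cocycle relations $M_{[0,n+1)} = M_{[0,n)}\, M_n$ and $(M_{[0,n+1)})^{-1} = M_n^{-1}\,(M_{[0,n)})^{-1}$.

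The core step is then to verify that $M_n$ maps each summand of the splitting $\mathbb{R}^d = (\mathbf{w}^{(n+1)})^\bot \oplus \mathbb{R}\,(M_{[0,n+1)})^{-1}\mathbf{u}$ onto the corresponding summand of $\mathbb{R}^d = (\mathbf{w}^{(n)})^\bot \oplus \mathbb{R}\,(M_{[0,n)})^{-1}\mathbf{u}$. For the kernel directions this is immediate, since $M_n\,(M_{[0,n+1)})^{-1}\mathbf{u} = M_n M_n^{-1}(M_{[0,n)})^{-1}\mathbf{u} = (M_{[0,n)})^{-1}\mathbf{u}$. For the hyperplanes it follows from Lemma~\ref{l:e1star}(\ref{62i}) applied with $(k,\ell) = (n,n+1)$, namely $M_n\,(\mathbf{w}^{(n+1)})^\bot = (\mathbf{w}^{(n)})^\bot$ (equivalently, from $\mathbf{w}^{(n+1)} = \tr{M_n}\,\mathbf{w}^{(n)}$ together with adjointness). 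Writing an arbitrary $\mathbf{x} \in \mathbb{R}^d$ as $\mathbf{x} = \mathbf{y} + t\,(M_{[0,n+1)})^{-1}\mathbf{u}$ with $\mathbf{y} = \pi_{\mathbf{u},\mathbf{w}}^{(n+1)}(\mathbf{x}) \in (\mathbf{w}^{(n+1)})^\bot$, I would apply $M_n$ and use the two preceding facts to see that $M_n\mathbf{y} \in (\mathbf{w}^{(n)})^\bot$ is exactly the $\pi_{\mathbf{u},\mathbf{w}}^{(n)}$-component of $M_n\mathbf{x}$, whence $\pi_{\mathbf{u},\mathbf{w}}^{(n)} M_n \mathbf{x} = M_n \mathbf{y} = M_n\, \pi_{\mathbf{u},\mathbf{w}}^{(n+1)} \mathbf{x}$, as claimed.

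This argument is purely linear-algebraic, and I do not expect a genuine obstacle; the only points requiring care are that multiplication by $M_n$ sends the projection direction $(M_{[0,n+1)})^{-1}\mathbf{u}$ to $(M_{[0,n)})^{-1}\mathbf{u}$ (so the inverse matrices compose correctly), and that the hyperplane is the orthogonal complement of $\tr{(M_{[0,n)})}\,\mathbf{w}$ rather than of $M_{[0,n)}\,\mathbf{w}$, so that adjointness of $M_n$ places the transpose correctly under the pairing. As an alternative, one can substitute the explicit matrix form $\pi_{\mathbf{u},\mathbf{w}}^{(n)} = I - \tfrac{1}{\langle\mathbf{w},\mathbf{u}\rangle}\,\big((M_{[0,n)})^{-1}\mathbf{u}\big)\,\big(\tr{\mathbf{w}}\,M_{[0,n)}\big)$ into both sides of the identity and check equality directly using the cocycle relations; both routes conclude in a few lines.
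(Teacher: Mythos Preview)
Your proof is correct and follows essentially the same route as the paper: both arguments rest on the observation that $M_n$ carries the splitting $\mathbb{R}^d = (\mathbf{w}^{(n+1)})^\bot \oplus \mathbb{R}\,(M_{[0,n+1)})^{-1}\mathbf{u}$ onto $\mathbb{R}^d = (\mathbf{w}^{(n)})^\bot \oplus \mathbb{R}\,(M_{[0,n)})^{-1}\mathbf{u}$, using Lemma~\ref{l:e1star}(\ref{62i}) for the hyperplanes and the cocycle relation for the lines. The paper phrases this by noting that the conjugate $M_n^{-1}\,\pi_{\mathbf{u},\mathbf{w}}^{(n)}\,M_n$ is idempotent with the correct kernel and image, hence equals $\pi_{\mathbf{u},\mathbf{w}}^{(n+1)}$; your decomposition of an arbitrary $\mathbf{x}$ is just the unpacking of the same fact.
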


\begin{proof}
Consider the linear mapping $M_n^{-1} \pi_{\mathbf{u},\mathbf{w}}^{(n)}\, M_n$. This mapping is idempotent, its kernel is $M_n^{-1} \mathbb{R}\, (M_{[0,n)})^{-1} \mathbf{u} = \mathbb{R}\, (M_{[0,n+1)})^{-1} \mathbf{u}$, and by Lemma~\ref{l:e1star}~(\ref{62i}) its image is~$(\mathbf{w}^{(n+1)})^\bot$. Thus $M_n^{-1} \pi_{\mathbf{u},\mathbf{w}}^{(n)}\, M_n$ is the projection to~$(\mathbf{w}^{(n+1)})^\bot$ along $(M_{[0,n+1)})^{-1} \mathbf{u}$.
\end{proof}

The following lemma gives an alternative definition of~$\mathcal{R}(i)$.

\begin{lemma}\label{ref:alterdef}
Let $\boldsymbol{\sigma}=(\sigma_n)_{n\in\mathbb{N}}\in S^{\mathbb{N}}$ be a primitive, algebraically irreducible, and recurrent sequence of  unimodular substitutions with balanced language~$\mathcal{L}_{\boldsymbol{\sigma}}$.
For each $i \in \mathcal{A}$ we have
\[
\mathcal{R}(i) = \lim_{n\to\infty} \pi_{\mathbf{u},\mathbf{1}} M_{[0,n)}\, E_1^*(\sigma_{[0,n)})[\mathbf{0},i],
\]
where each $[\mathbf{y},j] \in E_1^*(\sigma_{[0,n)})[\mathbf{0},i]$ is identified with its first component $\mathbf{y} \in \mathbb{Z}^d$ and the limit is taken with respect to the Hausdorff metric.
\end{lemma}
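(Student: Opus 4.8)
The plan is to first identify the set on the right-hand side explicitly, and then to establish the Hausdorff limit through two uniform one-sided estimates, the delicate one being a consequence of the strong convergence property of Proposition~\ref{p:strongconvergence}.

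First I would unravel the dual substitution. Since $\sigma_{[0,n)}$ is itself a unimodular substitution with incidence matrix $M_{[0,n)}$, definition~\eqref{eq:dualsubst} (equivalently, the composition rule $E_1^*(\sigma\tau) = E_1^*(\tau)E_1^*(\sigma)$ iterated) gives
\[
E_1^*(\sigma_{[0,n)})[\mathbf{0},i] = \big\{[M_{[0,n)}^{-1}\mathbf{l}(p),j]:\, j\in\mathcal{A},\ p\in\mathcal{A}^*,\ p\hspace{.1em}i \text{ is a prefix of } \sigma_{[0,n)}(j)\big\}.
\]
Applying $\pi_{\mathbf{u},\mathbf{1}}M_{[0,n)}$ to the first components and using $M_{[0,n)}M_{[0,n)}^{-1}=\mathrm{Id}$, the set in question becomes
\[
U_n(i) := \big\{\pi_{\mathbf{u},\mathbf{1}}\,\mathbf{l}(p):\, j\in\mathcal{A},\ p\hspace{.1em}i \text{ is a prefix of } \sigma_{[0,n)}(j)\big\}.
\]
It thus suffices to prove that $U_n(i)\to\mathcal{R}(i)$ in the Hausdorff metric, that is, $\sup_{y\in\mathcal{R}(i)}d(y,U_n(i))\to0$ and $\sup_{x\in U_n(i)}d(x,\mathcal{R}(i))\to0$. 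Recall that $\mathcal{R}(i)\subseteq\mathcal{R}$ is bounded by Lemma~\ref{l:bounded} and closed by definition, hence compact, and that its defining generators $\pi_{\mathbf{u},\mathbf{1}}\mathbf{l}(p)$, with $p\hspace{.1em}i$ a prefix of a fixed limit word~$\omega$ (a single one suffices here by Proposition~\ref{p:strongconvergence}), are dense in it.

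For the inner approximation, I fix such a generator $g=\pi_{\mathbf{u},\mathbf{1}}\mathbf{l}(p)$. Since $\sigma_{[0,n)}(i_n)$ is a prefix of~$\omega$ and $|\sigma_{[0,n)}(i_n)|\to\infty$ by primitivity, for all large~$n$ the word $p\hspace{.1em}i$ is a prefix of $\sigma_{[0,n)}(i_n)$, whence $g\in U_n(i)$. Covering the compact set $\mathcal{R}(i)$ by finitely many $\varepsilon$-balls centred at generators and letting all of these finitely many generators enter the sets $U_n(i)$ simultaneously for $n$ large then yields $\sup_{y\in\mathcal{R}(i)}d(y,U_n(i))\to0$.

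The outer approximation is the main point, and it is where the $S$-adic strong convergence enters. I take $x=\pi_{\mathbf{u},\mathbf{1}}\mathbf{l}(p)\in U_n(i)$, so that $p\hspace{.1em}i$ is a prefix of $\sigma_{[0,n)}(j)$ for some~$j$, and let $v$ be the prefix of $\omega^{(n)}$ preceding its first occurrence of the letter~$j$ (which exists by primitivity). Since $\omega=\sigma_{[0,n)}(\omega^{(n)})$, the word $\sigma_{[0,n)}(v)\,p\,i$ is a prefix of~$\omega$, so $\pi_{\mathbf{u},\mathbf{1}}\mathbf{l}(\sigma_{[0,n)}(v)\,p)\in\mathcal{R}(i)$. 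From $\mathbf{l}(\sigma_{[0,n)}(v)\,p)=M_{[0,n)}\mathbf{l}(v)+\mathbf{l}(p)$ I obtain
\[
x = \pi_{\mathbf{u},\mathbf{1}}\mathbf{l}(\sigma_{[0,n)}(v)\,p) - \pi_{\mathbf{u},\mathbf{1}}M_{[0,n)}\,\mathbf{l}(v),
\]
a difference of an element of $\mathcal{R}(i)$ and an error term. As $v\in\mathcal{L}_{\boldsymbol{\sigma}}^{(n)}$, Proposition~\ref{p:strongconvergence}, specifically~\eqref{e:Lsn}, bounds $\|\pi_{\mathbf{u},\mathbf{1}}M_{[0,n)}\mathbf{l}(v)\|$ by a quantity $\varepsilon_n\to0$ that is uniform in~$v$, hence $d(x,\mathcal{R}(i))\le\varepsilon_n$ and so $\sup_{x\in U_n(i)}d(x,\mathcal{R}(i))\to0$. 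Combined with the inner approximation, this establishes the claimed Hausdorff limit. The subtlety to handle carefully is exactly the uniformity in this last estimate: the prefix~$v$ depends on both $n$ and $j$ and may be long, so the argument genuinely needs the uniform bound of~\eqref{e:Lsn} over the entire desubstituted language~$\mathcal{L}_{\boldsymbol{\sigma}}^{(n)}$, not merely a pointwise strong-convergence statement.
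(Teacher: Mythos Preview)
Your proof is correct and follows essentially the same approach as the paper: both first unravel $E_1^*(\sigma_{[0,n)})[\mathbf{0},i]$ to identify the set $U_n(i)$, then establish the inner approximation by noting that prefixes of a limit word eventually lie in~$U_n(i)$, and finally establish the outer approximation by prepending $\sigma_{[0,n)}(v)$ for a suitable prefix~$v$ of~$\omega^{(n)}$ and invoking the uniform strong-convergence estimate~\eqref{e:Lsn} of Proposition~\ref{p:strongconvergence}. Your write-up is in fact slightly more explicit than the paper's about the uniformity needed in the last step.
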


\begin{proof}
By the definition of $E_1^*(\sigma_{[0,n)})$ in~\eqref{eq:dualsubst}, we have
\[
\pi_{\mathbf{u},\mathbf{1}} M_{[0,n)}\, E_1^*(\sigma_{[0,n)})[\mathbf{0},i] = \{\pi_{\mathbf{u},\mathbf{1}}\, \mathbf{l}(p):\, p \in \mathcal{A}^*,\ \mbox{$p\hspace{.1em}i$ is a prefix of $\sigma_{[0,n)}(j)$ for some $j \in \mathcal{A}$}\}.
\]
If $p\hspace{.1em}i$ is a prefix of a limit word, we have thus $\pi_{\mathbf{u},\mathbf{1}}\, \mathbf{l}(p) \in \pi_{\mathbf{u},\mathbf{1}} M_{[0,n)}\, E_1^*(\sigma_{[0,n)})[\mathbf{0},i]$ for all sufficiently large~$n$, hence $\mathcal{R}(i) \subset \lim_{n\to\infty} \pi_{\mathbf{u},\mathbf{1}} M_{[0,n)}\, E_1^*(\sigma_{[0,n)})[\mathbf{0},i]$.

On the other hand, choose a limit word~$\omega$.
Then for each~$n$ and for each $j \in \mathcal{A}$, there is a prefix~$p$ of $\omega^{(n)}$ such that $\omega$ starts with $\sigma_{[0,n)}({p}\hspace{.1em}j)$.
Since $\|\pi_{\mathbf{u},\mathbf{1}} \mathbf{l}(\sigma_{[0,n)}({p}))\|$ is small for large~$n$ by Proposition~\ref{p:strongconvergence}, we obtain that $\pi_{\mathbf{u},\mathbf{1}} M_{[0,n)}\, E_1^*(\sigma_{[0,n)})[\mathbf{0},i]$ is close to $\mathcal{R}(i)$ for large~$n$.
\end{proof}

We now associate with a directive sequence $\boldsymbol{\sigma} = (\sigma_n)$ a sequence of Rauzy fractals~$\mathcal{R}^{(n)}_\mathbf{w}$ obtained by taking projections of each ``desubstituted'' limit word~$\omega^{(n)}$  to $(\tr{(M_{[0,n)})}\, \mathbf{w})^\bot$ along the direction $(M_{[0,n)})^{-1} \mathbf{u}$, which is the generalized right eigenvector of the shifted sequence $(\sigma_{m+n})_{m\in\mathbb{N}}$.
 
For $\mathbf{w} \in \mathbb{R}_{\ge0}^d \setminus \{\mathbf{0}\}$, let $\mathcal{R}^{(n)}_\mathbf{w} = \bigcup_{i\in\mathcal{A}} \mathcal{R}^{(n)}_\mathbf{w}(i)$ with
\begin{equation} \label{e:defRn}
\mathcal{R}^{(n)}_\mathbf{w}(i) = \overline{\{\pi_{\mathbf{u},\mathbf{w}}^{(n)}\, \mathbf{l}(p):\, p \in \mathcal{A}^*,\ \mbox{$p\hspace{.1em}i$ is a prefix of $\omega^{(n)}$},\ \mbox{$\sigma_{[0,n)}(\omega^{(n)})$ is a limit word of $\boldsymbol{\sigma}$}\}}.
\end{equation}
Note that $\mathcal{R}^{(0)}_\mathbf{w}(i) = \mathcal{R}_\mathbf{w}(i)$. With the above notation, $\mathcal{R}^{(n)}_\mathbf{w}$ lives on the hyperplane~$(\mathbf{w}^{(n)})^\bot$.

Similarly to Lemma~\ref{l:bounded}, we can give explicit bounds for these subtiles.

\begin{lemma} \label{l:convhull2}
Let $\mathbf{w} \in \mathbb{R}_{\ge 0}^d \setminus \{\mathbf{0}\}$.
If $\mathcal{L}_{\boldsymbol{\sigma}}^{(n)}$ is $C$-balanced, then $\mathcal{R}^{(n)}_\mathbf{w} \subset \pi_{\mathbf{u},\mathbf{w}}^{(n)} \big([-C,C]^d \cap \mathbf{1}^\bot\big)$.
\end{lemma}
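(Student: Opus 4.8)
The plan is to reduce the statement to Lemma~\ref{l:bounded} applied to the \emph{shifted} directive sequence, exploiting the fact that the two projections involved differ only in their target hyperplane but share the same projection direction.

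First I would set $\boldsymbol{\sigma}^{(n)} = (\sigma_{m+n})_{m\in\mathbb{N}}$ and record three facts. Its language is $\mathcal{L}_{\boldsymbol{\sigma}^{(n)}} = \mathcal{L}_{\boldsymbol{\sigma}}^{(n)}$ (since $(\boldsymbol{\sigma}^{(n)})_{[0,k)} = \sigma_{[n,n+k)}$), hence it is $C$-balanced by hypothesis; its generalized right eigenvector is $\mathbf{u}^{(n)} := (M_{[0,n)})^{-1}\mathbf{u}$, which follows from~\eqref{e:topPF} together with $M_{[0,n+m)} = M_{[0,n)}\,M_{[n,n+m)}$; and $\omega^{(n)}$ is a limit word of $\boldsymbol{\sigma}^{(n)}$, witnessed by the tail $(\omega^{(n+m)})_{m\ge0}$ of the defining sequence of $\omega$. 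Assuming, as everywhere that $\mathbf{u}$ is invoked, that $\boldsymbol{\sigma}$ (hence $\boldsymbol{\sigma}^{(n)}$) is primitive, the last assertion of Lemma~\ref{l:bounded} applied to $\boldsymbol{\sigma}^{(n)}$ then gives, for every prefix $p$ of $\omega^{(n)}$,
\[
\pi_{\mathbf{u}^{(n)},\mathbf{1}}\,\mathbf{l}(p) \in [-C,C]^d \cap \mathbf{1}^\bot,
\]
where $\pi_{\mathbf{u}^{(n)},\mathbf{1}}$ is the projection onto $\mathbf{1}^\bot$ along $\mathbf{u}^{(n)}$ (membership in $\mathbf{1}^\bot$ is automatic, as $\mathbf{1}^\bot$ is the image of this projection).

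The key observation I would then use concerns the two projections. By the definition~\eqref{e:projabb}, $\pi_{\mathbf{u},\mathbf{w}}^{(n)} = \pi_{(M_{[0,n)})^{-1}\mathbf{u},\,\mathbf{w}^{(n)}}$ projects onto $(\mathbf{w}^{(n)})^\bot$ along the \emph{same} direction $\mathbf{u}^{(n)}$, so $\pi_{\mathbf{u},\mathbf{w}}^{(n)}$ and $\pi_{\mathbf{u}^{(n)},\mathbf{1}}$ have identical kernel $\mathbb{R}\mathbf{u}^{(n)}$. Since $\mathbf{l}(p) - \pi_{\mathbf{u}^{(n)},\mathbf{1}}\,\mathbf{l}(p) \in \mathbb{R}\mathbf{u}^{(n)} = \ker \pi_{\mathbf{u},\mathbf{w}}^{(n)}$, applying $\pi_{\mathbf{u},\mathbf{w}}^{(n)}$ yields the factorisation $\pi_{\mathbf{u},\mathbf{w}}^{(n)} = \pi_{\mathbf{u},\mathbf{w}}^{(n)}\circ\pi_{\mathbf{u}^{(n)},\mathbf{1}}$, so that
\[
\pi_{\mathbf{u},\mathbf{w}}^{(n)}\,\mathbf{l}(p) = \pi_{\mathbf{u},\mathbf{w}}^{(n)}\big(\pi_{\mathbf{u}^{(n)},\mathbf{1}}\,\mathbf{l}(p)\big) \in \pi_{\mathbf{u},\mathbf{w}}^{(n)}\big([-C,C]^d \cap \mathbf{1}^\bot\big),
\]
using the bound from the previous step. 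As $[-C,C]^d \cap \mathbf{1}^\bot$ is compact and $\pi_{\mathbf{u},\mathbf{w}}^{(n)}$ is linear, the image $\pi_{\mathbf{u},\mathbf{w}}^{(n)}([-C,C]^d\cap\mathbf{1}^\bot)$ is compact, hence closed. The inclusion therefore survives the closure in the definition~\eqref{e:defRn} of $\mathcal{R}^{(n)}_\mathbf{w}(i)$, and taking the finite union over $i\in\mathcal{A}$ gives $\mathcal{R}^{(n)}_\mathbf{w} \subset \pi_{\mathbf{u},\mathbf{w}}^{(n)}([-C,C]^d\cap\mathbf{1}^\bot)$.

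The three facts about $\boldsymbol{\sigma}^{(n)}$ are essentially bookkeeping; the only point that deserves care is the identification of $\mathbf{u}^{(n)}=(M_{[0,n)})^{-1}\mathbf{u}$ as the generalized right eigenvector of the shifted sequence. This is precisely what guarantees that $\pi_{\mathbf{u},\mathbf{w}}^{(n)}$ and $\pi_{\mathbf{u}^{(n)},\mathbf{1}}$ project along a common direction, validating the factorisation and allowing one to transport the $\mathbf{1}^\bot$-bound of Lemma~\ref{l:bounded} onto the skew representation hyperplane $(\mathbf{w}^{(n)})^\bot$; I expect this to be the main (albeit mild) obstacle.
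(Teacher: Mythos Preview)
Your proposal is correct and follows essentially the same approach as the paper's own proof. The paper simply states that Lemma~\ref{l:bounded} gives $\pi_{(M_{[0,n)})^{-1}\mathbf{u},\mathbf{1}}\,\mathcal{R}^{(n)}_\mathbf{w} \subset [-C,C]^d \cap \mathbf{1}^\bot$ and then applies~$\pi_{\mathbf{u},\mathbf{w}}^{(n)}$; your version makes the same two steps explicit, spelling out the identification of $(M_{[0,n)})^{-1}\mathbf{u}$ as the right eigenvector of the shifted sequence and the factorisation $\pi_{\mathbf{u},\mathbf{w}}^{(n)} = \pi_{\mathbf{u},\mathbf{w}}^{(n)}\circ\pi_{(M_{[0,n)})^{-1}\mathbf{u},\mathbf{1}}$ that the paper uses implicitly.
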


\begin{proof}
By Lemma~\ref{l:bounded}, we have $\pi_{(M_{[0,n)})^{-1}\mathbf{u},\mathbf{1}}\, \mathcal{R}^{(n)}_\mathbf{w} \subset [-C,C]^d \cap \mathbf{1}^\bot$.
Projecting by~$\pi_{\mathbf{u},\mathbf{w}}^{(n)}$, we obtain the result.
\end{proof}

The following lemma shows that the Rauzy fractals~$\mathcal{R}^{(n)}_\mathbf{w}$ mapped back via $M_{[0,n)}$ to the representation space~$\mathbf{w}^\bot$ tend to be smaller and smaller.

\begin{lemma} \label{l:smallsubtiles}
Let $\boldsymbol{\sigma} = (\sigma_n)\in S^{\mathbb{N}}$ be a primitive, algebraically irreducible, and recurrent sequence of unimodular substitutions with balanced language $\mathcal{L}_{\boldsymbol{\sigma}}$, and let $\mathbf{w} \in \mathbb{R}_{\ge 0}^d \setminus \{\mathbf{0}\}$. Then 
\[
\lim_{n\to\infty} M_{[0,n)} \mathcal{R}^{(n)}_\mathbf{w} = \{\mathbf{0}\}.
\]
\end{lemma}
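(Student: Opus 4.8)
The plan is to reduce the statement to the strong convergence property of Proposition~\ref{p:strongconvergence}, which controls the projection~$\pi_{\mathbf{u},\mathbf{1}}$ onto~$\mathbf{1}^\bot$, and then to transfer this control to the projection~$\pi_{\mathbf{u},\mathbf{w}}^{(n)}$ featuring in the definition~\eqref{e:defRn} of~$\mathcal{R}_\mathbf{w}^{(n)}$. First I would intertwine the projections with the matrix products: iterating the identity $\pi_{\mathbf{u},\mathbf{w}}^{(n)}\, M_n = M_n\, \pi_{\mathbf{u},\mathbf{w}}^{(n+1)}$ of Lemma~\ref{lem:projections} over $n = 0, 1, \ldots$ gives $M_{[0,n)}\, \pi_{\mathbf{u},\mathbf{w}}^{(n)} = \pi_{\mathbf{u},\mathbf{w}}\, M_{[0,n)}$ for every~$n$ (recall $\pi_{\mathbf{u},\mathbf{w}}^{(0)} = \pi_{\mathbf{u},\mathbf{w}}$). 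Hence, whenever $p \in \mathcal{A}^*$ is such that $p\hspace{.1em}i$ is a prefix of~$\omega^{(n)}$, a generator of $M_{[0,n)}\, \mathcal{R}_\mathbf{w}^{(n)}(i)$ can be rewritten as
\[
M_{[0,n)}\, \pi_{\mathbf{u},\mathbf{w}}^{(n)}\, \mathbf{l}(p) = \pi_{\mathbf{u},\mathbf{w}}\, M_{[0,n)}\, \mathbf{l}(p).
\]

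Since each subtile is the closure of the set of such generators and the empty prefix yields~$\mathbf{0}$ (so that $\mathbf{0}$ belongs to every set $M_{[0,n)}\, \mathcal{R}_\mathbf{w}^{(n)}$), it suffices to bound $\|\pi_{\mathbf{u},\mathbf{w}}\, M_{[0,n)}\, \mathbf{l}(p)\|$ by a quantity tending to~$0$ uniformly in $p$ and $i$; the Hausdorff distance between $M_{[0,n)}\, \mathcal{R}_\mathbf{w}^{(n)}$ and~$\{\mathbf{0}\}$ is then at most this quantity.

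Next I would feed the relevant prefixes into Proposition~\ref{p:strongconvergence}. By primitivity, $\omega^{(n)}$ is a limit word of the shifted directive sequence $(\sigma_{n+k})_{k\in\mathbb{N}}$, so any prefix~$p$ of~$\omega^{(n)}$ is a prefix, hence a factor, of $\sigma_{[n,m)}(i_m)$ for all sufficiently large~$m$; thus $p \in \mathcal{L}_{\boldsymbol{\sigma}}^{(n)}$. Consequently, setting $\varepsilon_n = \sup\big\{\|\pi_{\mathbf{u},\mathbf{1}}\, M_{[0,n)}\, \mathbf{l}(v)\|:\, v \in \mathcal{L}_{\boldsymbol{\sigma}}^{(n)}\big\}$, we have $\|\pi_{\mathbf{u},\mathbf{1}}\, M_{[0,n)}\, \mathbf{l}(p)\| \le \varepsilon_n$ uniformly in~$p$, and $\varepsilon_n \to 0$ by~\eqref{e:Lsn}.

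The only genuinely new ingredient, and the point I expect to be the main obstacle, is the passage from the projection onto~$\mathbf{1}^\bot$ to the projection onto~$\mathbf{w}^\bot$, as Proposition~\ref{p:strongconvergence} is stated only for~$\pi_{\mathbf{u},\mathbf{1}}$. The key observation is that $\pi_{\mathbf{u},\mathbf{w}}$ and $\pi_{\mathbf{u},\mathbf{1}}$ are both projections along the common direction~$\mathbf{u}$, so they share the kernel~$\mathbb{R}\mathbf{u}$ and therefore satisfy $\pi_{\mathbf{u},\mathbf{w}} = \pi_{\mathbf{u},\mathbf{w}} \circ \pi_{\mathbf{u},\mathbf{1}}$ (indeed $\pi_{\mathbf{u},\mathbf{1}}\mathbf{z}$ differs from~$\mathbf{z}$ by a multiple of~$\mathbf{u}$, which $\pi_{\mathbf{u},\mathbf{w}}$ annihilates). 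Writing $\|\pi_{\mathbf{u},\mathbf{w}}\|$ for the operator norm of the fixed linear map~$\pi_{\mathbf{u},\mathbf{w}}$, this yields
\[
\|\pi_{\mathbf{u},\mathbf{w}}\, M_{[0,n)}\, \mathbf{l}(p)\| = \|\pi_{\mathbf{u},\mathbf{w}}\, \pi_{\mathbf{u},\mathbf{1}}\, M_{[0,n)}\, \mathbf{l}(p)\| \le \|\pi_{\mathbf{u},\mathbf{w}}\|\, \varepsilon_n
\]
uniformly in $p$ and~$i$. The constant $\|\pi_{\mathbf{u},\mathbf{w}}\|$ is finite and independent of~$n$, because $\mathbf{w} \in \mathbb{R}_{\ge0}^d \setminus \{\mathbf{0}\}$ together with the strict positivity of~$\mathbf{u}$ forces $\langle \mathbf{w}, \mathbf{u}\rangle > 0$, so that $\mathbf{w}^\bot$ is transverse to~$\mathbb{R}\mathbf{u}$ and the projection is well defined. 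Since the right-hand side tends to~$0$, combining with the first step gives $\lim_{n\to\infty} M_{[0,n)}\, \mathcal{R}_\mathbf{w}^{(n)} = \{\mathbf{0}\}$ in the Hausdorff metric, as claimed.
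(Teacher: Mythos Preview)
Your proof is correct and follows essentially the same approach as the paper: both use the intertwining relation $M_{[0,n)}\,\pi_{\mathbf{u},\mathbf{w}}^{(n)} = \pi_{\mathbf{u},\mathbf{w}}\,M_{[0,n)}$ from Lemma~\ref{lem:projections}, the factorisation $\pi_{\mathbf{u},\mathbf{w}} = \pi_{\mathbf{u},\mathbf{w}}\,\pi_{\mathbf{u},\mathbf{1}}$, and then apply Proposition~\ref{p:strongconvergence}. Your write-up is more detailed (making explicit that prefixes of~$\omega^{(n)}$ lie in~$\mathcal{L}_{\boldsymbol{\sigma}}^{(n)}$ and that the operator norm $\|\pi_{\mathbf{u},\mathbf{w}}\|$ is finite), but the argument is the same.
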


\begin{proof}
As $M_{[0,n)} \pi_{\mathbf{u},\mathbf{w}}^{(n)} = \pi_{\mathbf{u},\mathbf{w}}\, M_{[0,n)}$ by Lemma~\ref{lem:projections} and $\pi_{\mathbf{u},\mathbf{w}} = \pi_{\mathbf{u},\mathbf{w}}\, \pi_{\mathbf{u},\mathbf{1}}$, we conclude that $M_{[0,n)} \pi_{\mathbf{u},\mathbf{w}}^{(n)}\, \mathbf{l}(p) = \pi_{\mathbf{u},\mathbf{w}}\, \pi_{\mathbf{u},\mathbf{1}}\, M_{[0,n)}\, \mathbf{l}({p})$ for all prefixes~$p$ of~$\omega^{(n)}$. 
Now, the result follows from Proposition~\ref{p:strongconvergence}.
\end{proof}

For the Rauzy fractals $\mathcal{R}^{(n)}_\mathbf{w}$, we obtain a hierarchy of set equations, which replaces the self-affine structure present in the periodic case. As $\mathcal{R}^{(n)}_\mathbf{w}$ lives on the  hyperplane~$(\mathbf{w}^{(n)})^\bot$, the decomposition below involves Rauzy fractals living in different hyperplanes.

\begin{proposition} \label{p:setequation}
Let $\boldsymbol{\sigma} = (\sigma_n)\in S^{\mathbb{N}}$ be a sequence of unimodular substitutions with generalized right eigenvector~$\mathbf{u}$.
Then for each $[\mathbf{x},i]\in\mathbb{Z}^d\times \mathcal{A}$ and all $k < \ell$, we have the set equation
\begin{equation}\label{e:setequationkl}
\pi_{\mathbf{u},\mathbf{w}}^{(k)}\, \mathbf{x}+ \mathcal{R}^{(k)}_\mathbf{w}(i) = \bigcup_{[\mathbf{y},j] \in E_1^*(\sigma_{[k,\ell)})[\mathbf{x},i]} M_{[k,\ell)}\big(\pi_{\mathbf{u},\mathbf{w}}^{(\ell)}\, \mathbf{y} +  \mathcal{R}^{(\ell)}_\mathbf{w}(j)\big).
\end{equation}
\end{proposition}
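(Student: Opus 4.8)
The plan is to reduce the set equation \eqref{e:setequationkl} to a purely combinatorial identity between prefixes, after stripping away the linear-algebraic bookkeeping carried by the projections $\pi_{\mathbf{u},\mathbf{w}}^{(\cdot)}$ and the matrices $M_{[k,\ell)}$. First I would record the two algebraic ingredients. Iterating Lemma~\ref{lem:projections} over the single steps $k,k+1,\ldots,\ell-1$ yields the commutation relation $\pi_{\mathbf{u},\mathbf{w}}^{(k)}\,M_{[k,\ell)} = M_{[k,\ell)}\,\pi_{\mathbf{u},\mathbf{w}}^{(\ell)}$ (these projections are well defined, since $\langle \mathbf{w}^{(n)}, (M_{[0,n)})^{-1}\mathbf{u}\rangle = \langle \mathbf{w},\mathbf{u}\rangle \neq 0$ as $\mathbf{u}$ is positive and $\mathbf{w}\in\mathbb{R}^d_{\ge0}\setminus\{\mathbf{0}\}$). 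Second, since $\sigma_{[k,\ell)}$ is itself a unimodular substitution with incidence matrix $M_{[k,\ell)}$, the definition \eqref{eq:dualsubst} applied to it reads $E_1^*(\sigma_{[k,\ell)})[\mathbf{x},i] = \big\{[M_{[k,\ell)}^{-1}(\mathbf{x}+\mathbf{l}(q)),j] : j\in\mathcal{A},\ q\,i \mbox{ is a prefix of } \sigma_{[k,\ell)}(j)\big\}$, which is consistent with $E_1^*(\sigma\tau)=E_1^*(\tau)E_1^*(\sigma)$.

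Next I would expand a single term of the right-hand side. For $[\mathbf{y},j]=[M_{[k,\ell)}^{-1}(\mathbf{x}+\mathbf{l}(q)),j]$, the commutation relation gives $M_{[k,\ell)}\,\pi_{\mathbf{u},\mathbf{w}}^{(\ell)}\,\mathbf{y} = \pi_{\mathbf{u},\mathbf{w}}^{(k)}\,M_{[k,\ell)}\,\mathbf{y} = \pi_{\mathbf{u},\mathbf{w}}^{(k)}(\mathbf{x}+\mathbf{l}(q))$, while, using $M_{[k,\ell)}\,\mathbf{l}(p')=\mathbf{l}(\sigma_{[k,\ell)}(p'))$ together with the same relation, $M_{[k,\ell)}\,\mathcal{R}^{(\ell)}_\mathbf{w}(j)$ becomes the closure of $\{\pi_{\mathbf{u},\mathbf{w}}^{(k)}\,\mathbf{l}(\sigma_{[k,\ell)}(p')) : p'\,j \mbox{ is a prefix of a level-}\ell\mbox{ desubstituted limit word}\}$. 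Since $M_{[k,\ell)}$ is a linear homeomorphism and translations are homeomorphisms, both commute with closure; as $E_1^*(\sigma_{[k,\ell)})[\mathbf{x},i]$ is finite, the finite union on the right commutes with closure as well. Hence it suffices to prove the identity of the un-closed generating sets; after cancelling the common translation by $\pi_{\mathbf{u},\mathbf{w}}^{(k)}\mathbf{x}$ and using $\mathbf{l}(\sigma_{[k,\ell)}(p'))+\mathbf{l}(q)=\mathbf{l}(\sigma_{[k,\ell)}(p')\,q)$, everything follows once I establish the word-level identity
\[
\big\{\mathbf{l}(P) : P\,i \mbox{ is a prefix of } \omega^{(k)}\big\} = \big\{\mathbf{l}(\sigma_{[k,\ell)}(p')\,q) : p'\,j \mbox{ is a prefix of } \omega^{(\ell)},\ q\,i \mbox{ is a prefix of } \sigma_{[k,\ell)}(j)\big\},
\]
where $\omega^{(k)},\omega^{(\ell)}$ range over the (compatible) desubstituted limit words at the respective levels; indeed, applying $\pi_{\mathbf{u},\mathbf{w}}^{(k)}$ to this identity yields exactly the projected equality needed.

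The combinatorial heart is the prefix-desubstitution identity behind this display. The crucial structural fact is $\omega^{(k)}=\sigma_{[k,\ell)}(\omega^{(\ell)})$, obtained by iterating $\omega^{(n)}=\sigma_n(\omega^{(n+1)})$. Granting this, the inclusion $\supseteq$ is immediate: if $p'\,j$ is a prefix of $\omega^{(\ell)}$ and $q\,i$ a prefix of $\sigma_{[k,\ell)}(j)$, then $\sigma_{[k,\ell)}(p')\,\sigma_{[k,\ell)}(j)$ is a prefix of $\sigma_{[k,\ell)}(\omega^{(\ell)})=\omega^{(k)}$, whence $\sigma_{[k,\ell)}(p')\,q\,i$ is a prefix of $\omega^{(k)}$. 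For $\subseteq$, any prefix $P\,i$ of $\omega^{(k)}=\sigma_{[k,\ell)}(\omega^{(\ell)})$ determines, since the substitution is non-erasing, a unique block $\sigma_{[k,\ell)}(j)$ (with $j$ the letter of $\omega^{(\ell)}$ whose image contains the position of the displayed occurrence of $i$) inside which $i$ falls; writing $p'$ for the prefix of $\omega^{(\ell)}$ preceding this letter and $q\,i$ for the initial segment of $\sigma_{[k,\ell)}(j)$ up to and including that occurrence gives the decomposition $P=\sigma_{[k,\ell)}(p')\,q$. This unique-decomposition statement is precisely the prefix analogue of the covering and disjointness properties recorded in Lemma~\ref{l:e1star}~(\ref{62ii}) and~(\ref{62iii}).

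I expect the main obstacle to be a bookkeeping subtlety rather than a deep difficulty: one must verify that the family of level-$k$ desubstituted limit words is exactly $\sigma_{[k,\ell)}$ applied to the family of level-$\ell$ ones, so that the two sides of the reduced identity range over matching words. Reading ``$\sigma_{[0,n)}(\omega^{(n)})$ is a limit word'' as ``$\omega^{(n)}$ is a desubstituted limit word'' (equivalently, $\omega^{(n)}$ is a limit word of the shifted directive sequence $(\sigma_{n+m})_{m\in\mathbb{N}}$), this amounts to the set identity $\Omega^{(k)}=\sigma_{[k,\ell)}(\Omega^{(\ell)})$, where $\Omega^{(n)}$ denotes the level-$n$ desubstituted limit words. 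The inclusion $\sigma_{[k,\ell)}(\Omega^{(\ell)})\subseteq\Omega^{(k)}$ is clear, and the reverse one follows by reading off the level-$\ell$ term $\omega^{(\ell)}$ from a desubstitution sequence of $\omega^{(k)}$. Care is needed here to avoid invoking injectivity (``recognizability'') of $\sigma_{[k,\ell)}$ on infinite words, which need not hold in this generality; phrasing everything through desubstituted limit words sidesteps this, since the level-to-level compatibility is then built into the definition.
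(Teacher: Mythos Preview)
Your proof is correct and follows essentially the same route as the paper: both arguments hinge on the unique prefix decomposition $P=\sigma_{[k,\ell)}(p')\,q$ with $p'j$ a prefix of $\omega^{(\ell)}$ and $qi$ a prefix of $\sigma_{[k,\ell)}(j)$, combined with the commutation relation $\pi_{\mathbf{u},\mathbf{w}}^{(k)}M_{[k,\ell)}=M_{[k,\ell)}\pi_{\mathbf{u},\mathbf{w}}^{(\ell)}$ from Lemma~\ref{lem:projections}. You are more explicit than the paper about two bookkeeping points---that closure commutes with the finite union and linear bijection, and that the families of desubstituted limit words at levels $k$ and $\ell$ match via $\Omega^{(k)}=\sigma_{[k,\ell)}(\Omega^{(\ell)})$---whereas the paper fixes a single limit word $\omega$ and leaves these verifications implicit.
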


\begin{proof}
Let $\omega$ be a limit word.
Each prefix~$p$ of~$\omega^{(k)}$ has a unique decomposition $p = \sigma_{[k,\ell)}(\tilde{p})\, q$ with $\tilde{p}\hspace{.1em}j$ a prefix of~$\omega^{(\ell)}$ and $q$ a proper prefix of $\sigma_{[k,\ell)}(j)$. 
Since $\mathbf{l}(\sigma_{[k,\ell)}(\tilde{p})) = M_{[k,\ell)}\, \mathbf{l}(\tilde{p})$, Lemma~\ref{lem:projections} implies that 
$\pi_{\mathbf{u},\mathbf{w}}^{(k)}\, \mathbf{l}(p) = \pi_{\mathbf{u},\mathbf{w}}^{(k)}\, \mathbf{l}(q) +  M_{[k,\ell)}\, \pi_{\mathbf{u},\mathbf{w}}^{(\ell)}\, \mathbf{l}(\tilde{p})$. 
We gain that
\[
\big\{ \pi_{\mathbf{u},\mathbf{w}}^{(k)}\, \mathbf{l}(p):\, \mbox{$p\hspace{.1em}i$ is a prefix of $\omega^{(k)}$}\big\} = \hspace{-1em} \bigcup_{\substack{q\in\mathcal{A}^*,\, j\in \mathcal{A}:\\[.2ex] \sigma_{[k,\ell)}(j)\in qi\mathcal{A}^*}} \hspace{-1em} \pi_{\mathbf{u},\mathbf{w}}^{(k)}\, \mathbf{l}(q) + M_{[k,\ell)}\, \big\{ \pi_{\mathbf{u},\mathbf{w}}^{(\ell)}\, \mathbf{l}(\tilde{p}):\, \mbox{$\tilde{p}\hspace{.1em}j$ is a prefix of $\omega^{(\ell)}$}\big\}.
\]
By the definition of $E_1^*(\sigma_{[k,\ell)})$, taking closures and translating by $\pi_{\mathbf{u},\mathbf{w}}^{(k)}\, \mathbf{x}$ yields the result.
\end{proof}

\subsection{Recurrent left eigenvector } \label{subsec:choicev}

In the case of a single substitution~$\sigma$, choosing $\mathbf{w} = \mathbf{v}$, where $\tr{\mathbf{v}}$ is the Perron-Frobenius left eigenvector of~$M_\sigma$, the set equations give a graph-directed iterated function system for the subtiles $\mathcal{R}_\mathbf{v}(i)$; see~\cite{CANTBST}. 
For $\boldsymbol{\sigma} = (\sigma_n)\in S^{\mathbb{N}}$, the Rauzy fractals~$\mathcal{R}^{(n)}_\mathbf{w}$ are different from~$\mathcal{R}^{(0)}_\mathbf{w}$ and even live on different hyperplanes~$(\mathbf{w}^{(n)})^\bot$. Thus, in general \eqref{e:setequationkl} is an infinite system of set equations. Also, the construction of an analog of the left Perron-Frobenius eigenvector needs some work. 
Contrary to the cones $M_{[0,n)}\, \mathbb{R}_+^d$, there is no reason for the cones $\tr{(M_{[0,n)})}\, \mathbb{R}_+^d$ to be nested.
Therefore, the intersection of these cones does not define a generalized left eigenvector of~$\boldsymbol{\sigma}$ and cannot be used to get a stable space. 
However, for a suitable choice of~$\mathbf{v}$, we have a subsequence $(n_k)_{k\in\mathbb{N}}$ such that the directions of $\mathbf{v}^{(n_k)} = \tr{(M_{[0,n_k)})}\, \mathbf{v}$ tend to that of~$\mathbf{v}$; in this case $\mathbf{v}$ is called a \emph{recurrent left eigenvector}.
Using the assumptions of Theorem~\ref{t:1}, we can even guarantee that $\mathcal{R}^{(n_k)}_\mathbf{v}$ converges to~$\mathcal{R}_\mathbf{v}$ in Hausdorff limit for a suitable choice of $(n_k)$. 

The following lemma shows that, under the assumptions of primitivity and recurrence, one can easily exhibit recurrent left eigenvectors~$\mathbf{v}$. The precise statement involving a subsequence of a given sequence $(n_k)_{k\in\mathbb{N}}$ will be useful in the proof of Lemma~\ref{lem:th1star}.

\begin{lemma} \label{l:findv}
Let $\boldsymbol{\sigma} = (\sigma_n)\in S^{\mathbb{N}}$ be a primitive and recurrent sequence of substitutions and $(n_k)$ a strictly increasing sequence of non-negative integers.
Then there is $\mathbf{v} \in \mathbb{R}_{\ge0}^d \setminus \{\mathbf{0}\}$ such that 
\begin{equation}\label{eq:recurrentcandidate}
\lim_{k\in K,\,k\to\infty} \frac{\mathbf{v}^{(n_k)}}{\|\mathbf{v}^{(n_k)}\|} = \lim_{k\in K,\,k\to\infty}\frac{\tr{(M_{[0,n_k)})}\mathbf{v}}{\|\tr{(M_{[0,n_k)})}\mathbf{v}\|}= \mathbf{v}
\end{equation} 
for some infinite set $K \subset \mathbb{N}$. Such a vector $\mathbf{v}$ is called a \emph{recurrent left eigenvector}.
\end{lemma}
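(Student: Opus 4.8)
The plan is to realize $\mathbf{v}$ as a nonnegative Perron eigenvector of a suitable accumulation point of the normalized transpose products $\tr{(M_{[0,n_k)})}$, and then to read off the convergence \eqref{eq:recurrentcandidate} from the continuity of the projective action $A \mapsto A\mathbf{v}/\|A\mathbf{v}\|$ at such an eigenvector. The one delicate point is to choose the normalization so that this accumulation matrix has \emph{strictly positive} spectral radius; a naive normalization by the largest entry may produce a spectrally degenerate (e.g.\ nilpotent) limit, and this is exactly where primitivity must be used.

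First I would invoke primitivity to fix $h \in \mathbb{N}$ with $M_{[0,h)}$ strictly positive, discarding the finitely many indices with $n_k < h$. Setting $P = \tr{(M_{[0,h)})}$ (a strictly positive matrix) and factoring $\tr{(M_{[0,n_k)})} = \tr{(M_{[h,n_k)})}\,P$, I would normalize the \emph{complementary} factor: let $c_k$ be the largest entry of $\tr{(M_{[h,n_k)})}$, which is positive because non-erasing substitutions have incidence matrices without zero columns, a property preserved under products, and set $Q_k = \tr{(M_{[h,n_k)})}/c_k$. The $Q_k$ are nonnegative with largest entry~$1$, hence bounded, so by compactness there is an infinite set $K \subset \mathbb{N}$ along which $Q_k \to Q$ for some nonnegative $Q \neq \mathbf{0}$. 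Consequently $N_k := \tr{(M_{[0,n_k)})}/c_k = Q_k P \to QP =: N$ along~$K$.

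The crucial step is that $\rho(N) > 0$. Since $Q \neq \mathbf{0}$, some row of $Q$ is nonzero, and multiplying such a row by the strictly positive matrix $P$ yields a strictly positive row of $N = QP$; in particular some diagonal entry $N_{ii}$ is positive. As $N_{ii} \le \rho(N)$ for a nonnegative matrix, this forces $\rho(N) > 0$. By the Perron--Frobenius theorem for nonnegative matrices I may then pick $\mathbf{v} \in \mathbb{R}_{\ge 0}^d \setminus \{\mathbf{0}\}$, normalized so that $\|\mathbf{v}\| = 1$, with $N\mathbf{v} = \rho(N)\,\mathbf{v}$; in particular $N\mathbf{v} \neq \mathbf{0}$.

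Finally I would conclude by continuity. Because $N_k \to N$ and $N\mathbf{v} \neq \mathbf{0}$, we have $N_k\mathbf{v} \to N\mathbf{v} \neq \mathbf{0}$, and the scalar $c_k$ cancels in the normalization, giving
\[
\frac{\mathbf{v}^{(n_k)}}{\|\mathbf{v}^{(n_k)}\|} = \frac{\tr{(M_{[0,n_k)})}\,\mathbf{v}}{\|\tr{(M_{[0,n_k)})}\,\mathbf{v}\|} = \frac{N_k\mathbf{v}}{\|N_k\mathbf{v}\|} \longrightarrow \frac{N\mathbf{v}}{\|N\mathbf{v}\|} = \mathbf{v}
\]
as $k \to \infty$ in~$K$, which is precisely \eqref{eq:recurrentcandidate}. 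The main obstacle, and the reason this is not a one-line compactness argument, is exactly the non-degeneracy $\rho(N) > 0$: since the entries of $M_{[0,n_k)}$ may grow at very different rates in different directions, the spectral radius of $M_{[0,n_k)}$ need not be comparable to its largest entry, so normalizing $\tr{(M_{[0,n_k)})}$ directly can kill the relevant eigenvalue in the limit. Splitting off the positive block $P$ supplied by primitivity and normalizing only the complementary product is what guarantees a strictly positive row, hence a positive spectral radius, for the accumulation matrix~$N$.
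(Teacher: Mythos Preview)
Your argument is correct. The normalization trick---splitting off the positive block $P=\tr{(M_{[0,h)})}$ and normalizing only the complementary factor---is exactly what is needed to guarantee a non-nilpotent limit matrix, and the rest (Perron--Frobenius plus continuity of the projective action at a non-null image) goes through as you wrote.

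Your route is genuinely different from the paper's. The paper uses \emph{recurrence} to extend $(M_n)_{n\ge0}$ to a two-sided sequence $(M_n)_{n\in\mathbb{Z}}$, then shows via the Birkhoff--Hilbert projective metric that the diameters of the cones $\tr{(M_{[0,n_k)})}\,\mathbb{R}^d_{\ge0}$ shrink to zero, and finally extracts $\mathbf{v}$ as an accumulation direction of these cones. You instead take an accumulation point of the normalized matrices themselves and invoke Perron--Frobenius. Two points are worth noting. First, you never use recurrence; primitivity alone (one positive block) suffices for your argument, so you have proved the lemma under a weaker hypothesis. Second, the paper's approach yields the stronger intermediate conclusion that the cone diameters tend to zero (so \emph{every} direction in the limiting cone would serve as~$\mathbf{v}$), whereas your approach produces one specific~$\mathbf{v}$; for the purposes of this lemma and its later uses, however, your conclusion is enough.
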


\begin{proof}
Recall that a non-negative matrix is non-expanding and positive matrix is contractive w.r.t.\ the Hilbert metric on the projective space~$\mathbb{P}(\mathbb{R}^{d-1})$ (see \cite{Birkhoff} or \cite[Appendix~A]{Fisher:09} for details on this metric). Thus primitivity and recurrence imply that the diameter of the cones $\tr{(M_{[0,n_k)})}\, \mathbb{R}_{\ge0}^d$ converges to zero. By the compactness of $\mathbb{P}(\mathbb{R}^{d-1})$, we can choose an infinite set $K \subset \mathbb{N}$ such that $\bigcap_{k\in K} \tr{(M_{[0,n_k)})}\, \mathbb{R}^d_{\ge 0} = \mathbb{R}_{\ge0} \mathbf{v}$ for some $\mathbf{v} \in \mathbb{R}_{\ge0}^d \setminus \{\mathbf{0}\}$. For this choice of $\mathbf{v}$, \eqref{eq:recurrentcandidate} holds.
\end{proof}

In the sequel, we will work with directive sequences that satisfy a list of conditions gathered in the following Property PRICE (which stands for Primitivity, Recurrence, algebraic Irreducibility, $C$-balancedness, and recurrent left Eigenvector). 
By Lemma~\ref{lem:th1star} below, this property is a consequence of the assumptions of Theorem~\ref{t:1}. 
Nevertheless, we prefer referring to Property PRICE because we will frequently use the sequences $(n_k)$, $(\ell_k)$, and the recurrent left eigenvector~$\mathbf{v}$ involved in the definition.

\begin{definition}[Property PRICE]\label{def:star}
Assume that $S$ is a finite or  infinite  set of substitutions over the finite alphabet~$\mathcal{A}$.
We say that a directive sequence $\boldsymbol{\sigma} = (\sigma_n)\in S^{\mathbb{N}}$ has Property \emph{PRICE} w.r.t.\ the strictly increasing sequences $(n_k)_{k\in\mathbb{N}}$ and $(\ell_k)_{k\in\mathbb{N}}$ and a vector $\mathbf{v} \in \mathbb{R}_{\ge0}^d \setminus \{\mathbf{0}\}$ if the following conditions hold\footnote{Note that (P) doesn't merely mean that the matrix $B$ occurs infinitely often but that it has to occur at the end of the recurring blocks defined in (R). So (P) doesn't follow from (R).}.
\begin{itemize}
\labitem{(P)}{defP}
There exists $h \in \mathbb{N}$ and a positive matrix~$B$ such that $M_{[\ell_k-h,\ell_k)} = B$ for all $k \in \mathbb{N}$.
\labitem{(R)}{defR}
We have $(\sigma_{n_k}, \sigma_{n_k+1}, \ldots,\sigma_{n_k+\ell_k-1}) = (\sigma_0, \sigma_1, \ldots,\sigma_{\ell_k-1})$ for all $k\in\mathbb{N}$.
\labitem{(I)}{defI}
The directive sequence~$\boldsymbol{\sigma}$ is algebraically irreducible.
\labitem{(C)}{defC}
There is $C > 0$ such that $\mathcal{L}_{\boldsymbol{\sigma}}^{(n_k+\ell_k)}$ is $C$-balanced for all $k\in\mathbb{N}$.
\labitem{(E)}{defE}
We have $\lim_{k\to\infty} \mathbf{v}^{(n_k)}/\|\mathbf{v}^{(n_k)}\|= \mathbf{v}$.
\end{itemize}
We also simply say that $\boldsymbol{\sigma}$ satisfies Property PRICE if the five conditions hold for some not explicitly specified strictly increasing sequences $(n_k)_{k\in\mathbb{N}}$ and $(\ell_k)_{k\in\mathbb{N}}$ and some $\mathbf{v} \in \mathbb{R}_{\ge0}^d \setminus \{\mathbf{0}\}$.
\end{definition}

Note that Properties~\ref{defP}, \ref{defR} and~\ref{defC} in Definition~\ref{def:star} imply that $\boldsymbol{\sigma}$ is a primitive and recurrent directive sequence with balanced language~$\mathcal{L}_{\boldsymbol{\sigma}}$, and \ref{defE} means that $\mathbf{v}$ is a recurrent left eigenvector.

The conditions of the following lemma are (apart from unimodularity, which we do not need here) that of Theorem~\ref{t:1}.

\begin{lemma} \label{lem:th1star}
Let $\boldsymbol{\sigma} = (\sigma_n)\in S^{\mathbb{N}}$ be a primitive and algebraically irreducible sequence of substitutions over the finite alphabet~$\mathcal{A}$. Assume that there is $C > 0$ such that for each $\ell \in \mathbb{N}$, there is $n \ge 1$ with $(\sigma_n, \ldots, \sigma_{n+\ell-1}) = (\sigma_0, \ldots, \sigma_{\ell-1})$ and $\mathcal{L}_{\boldsymbol{\sigma}}^{(n+\ell)}$ is $C$-balanced.
Then Property PRICE holds. 
\end{lemma}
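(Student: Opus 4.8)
The plan is to build the data $(n_k)$, $(\ell_k)$ and the vector $\mathbf{v}$ witnessing Property PRICE directly from the hypotheses. Algebraic irreducibility is assumed outright, so \ref{defI} is immediate, and the recurrence of $\boldsymbol{\sigma}$ (every prefix reoccurs at some position $\ge 1$) is exactly the stated hypothesis; together with the assumed primitivity this will let me invoke Lemma~\ref{l:findv} at the very end to secure~\ref{defE}. The real content is therefore to produce strictly increasing $(n_k)$, $(\ell_k)$ with $\ell_k \to \infty$ that realize~\ref{defP}, \ref{defR} and~\ref{defC} simultaneously.

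First I would fix the block for~\ref{defP}. By primitivity (applied at index~$0$) there is $h \in \mathbb{N}$ with $B := M_{[0,h)}$ a positive matrix. I then set
\[
\mathcal{E} = \{\ell \ge h :\ (\sigma_{\ell - h}, \ldots, \sigma_{\ell - 1}) = (\sigma_0, \ldots, \sigma_{h-1})\},
\]
so that $M_{[\ell - h, \ell)} = M_{[0,h)} = B$ for every $\ell \in \mathcal{E}$; thus any $\ell_k \in \mathcal{E}$ satisfies~\ref{defP}, and the relevant block sits at the end of the prefix of length $\ell_k$ (hence at the end of its recurring copy under~\ref{defR}), exactly as the footnote to Definition~\ref{def:star} demands. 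The key claim is that $\mathcal{E}$ is unbounded, which I would prove by a dichotomy on the recurrence positions $n(\ell) \ge 1$ supplied by the hypothesis (at which the length-$\ell$, hence the length-$h$, prefix reoccurs, so $n(\ell)+h \in \mathcal{E}$): if the $n(\ell)$ are unbounded then $\{n(\ell)+h\}\subseteq\mathcal{E}$ is unbounded; if they stay bounded then some fixed $p$ equals $n(\ell)$ for infinitely many $\ell \to \infty$, forcing $\sigma_{p+j}=\sigma_j$ for all $j$, i.e.\ periodicity of $\boldsymbol{\sigma}$ with period $p$, and then the length-$h$ prefix reoccurs at every multiple of $p$, so $\mathcal{E}$ is unbounded anyway.

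Next I would realize~\ref{defR} and~\ref{defC} along an increasing sequence of positions. For each $\ell \in \mathcal{E}$ the hypothesis gives $n(\ell) \ge 1$ with $(\sigma_{n(\ell)}, \ldots, \sigma_{n(\ell)+\ell-1}) = (\sigma_0, \ldots, \sigma_{\ell-1})$ and with $\mathcal{L}_{\boldsymbol{\sigma}}^{(n(\ell)+\ell)}$ being $C$-balanced. Choosing $\ell_k \in \mathcal{E}$ strictly increasing and setting $n_k = n(\ell_k)$, I apply a monotone-subsequence argument to the integers $(n_k)$: if unbounded, I thin to a strictly increasing subsequence (thinning $(\ell_k)$ to match); if bounded, a constant subsequence $n_k \equiv p$ with $\ell_k \to \infty$ again forces periodicity with period $p$, and then replacing $n_k$ by $n_k + m_k p$ with $m_k$ strictly increasing keeps~\ref{defR} valid (by periodicity of $\boldsymbol{\sigma}$) and keeps~\ref{defC} valid (since periodicity gives $\mathcal{L}_{\boldsymbol{\sigma}}^{(n+\ell)} = \mathcal{L}_{\boldsymbol{\sigma}}^{(n+\ell+p)}$). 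Either way I obtain strictly increasing $(n_k)$, $(\ell_k)$ with $\ell_k \to \infty$ for which~\ref{defP}, \ref{defR} and~\ref{defC} all hold. Finally, applying Lemma~\ref{l:findv} to $(n_k)$ yields $\mathbf{v} \in \mathbb{R}_{\ge0}^d\setminus\{\mathbf{0}\}$ and an infinite $K\subseteq\mathbb{N}$ with $\mathbf{v}^{(n_k)}/\|\mathbf{v}^{(n_k)}\| \to \mathbf{v}$ along $k \in K$; restricting and relabeling along $K$ preserves \ref{defP}, \ref{defR}, \ref{defC} (which held for every $k$) and adds~\ref{defE}, so Property PRICE holds. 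The main obstacle is the middle step: the recurrence positions handed to us are not a priori large, so making $(n_k)$ strictly increasing while preserving the rigid link between $n_k$ and the balancedness level $n_k+\ell_k$ in~\ref{defC} is precisely what the periodicity dichotomy resolves; everything else is bookkeeping.
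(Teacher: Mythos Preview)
Your proof is correct and follows the same skeleton as the paper's: \ref{defI} by assumption, \ref{defP} from primitivity together with the recurrence that the hypothesis implies, \ref{defR} and \ref{defC} directly from the hypothesis, and \ref{defE} by passing to a subsequence via Lemma~\ref{l:findv}. Where you differ from the paper is that you actually verify that $(n_k)$ can be taken strictly increasing, using the periodicity dichotomy: the paper simply writes ``there is an associated sequence $(n_k)$'' and then thins, but the hypothesis only hands you one $n\ge 1$ per $\ell$, with no control on its size, so your extra step (bounded $n_k$ forces periodicity, which then lets you translate by multiples of the period without disturbing \ref{defR} or \ref{defC}) genuinely fills a detail the paper elides.
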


\begin{proof}
First observe that~\ref{defI} holds by assumption. By primitivity of~$\boldsymbol{\sigma}$, we can choose $\ell_0$ and~$h$ in a way that $M_{[\ell_0-h,\ell_0)}$ is positive. 
As the assumptions of the lemma imply that $\boldsymbol{\sigma}$ is recurrent, there exists a strictly increasing sequence $(\ell_k)$ of non-negative integers such that \ref{defP} holds. By assumption, there is an associated sequence $(n_k)$ of non-negative integers such that \ref{defR} and \ref{defC} hold. In view of Lemma~\ref{l:findv}, we can choose appropriate subsequences of $(\ell_k)$ and $(n_k)$, again called $(\ell_k)$ and~$(n_k)$, such that \ref{defE} holds. As taking subsequences doesn't affect \ref{defP}, \ref{defR}, \ref{defI}, and~\ref{defC}, this proves the lemma.
\end{proof}

We will use the following simple observation. 

\begin{lemma} \label{l:shiftedPRICE}
Assume that the directive sequence $\boldsymbol{\sigma} = (\sigma_n)_{n\in\mathbb{N}}\in S^{\mathbb{N}}$ has Property PRICE w.r.t.\ the sequences $(n_k)_{k\in\mathbb{N}}$ and $(\ell_k)_{k\in\mathbb{N}}$ and the vector~$\mathbf{v}$.  
Then for each $h \in \mathbb{N}$ there is $k_0 \in \mathbb{N}$ such that the shifted sequence $(\sigma_{n+h})_{n\in\mathbb{N}}$ has Property PRICE w.r.t.\ the sequences $(n_{k+k_0})_{k\in\mathbb{N}}$ and $(\ell_{k+k_0}{-}h)_{k\in\mathbb{N}}$, and the vector~$\mathbf{v}^{(h)}$.  
\end{lemma}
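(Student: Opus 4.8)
The plan is to verify the five conditions (P)--(E) of Property PRICE directly for the shifted sequence $\boldsymbol{\sigma}' = (\sigma_{n+h})_{n\in\mathbb{N}}$, after choosing $k_0$ once and for all so large that all the index shifts below yield legitimate (non-negative, strictly increasing) parameters. Throughout I would write $\sigma'_n = \sigma_{n+h}$ and $M'_n = M_{n+h}$, so that $M'_{[a,b)} = M_{[a+h,b+h)}$, and I would record the language identity $\mathcal{L}_{\boldsymbol{\sigma}'}^{(m)} = \mathcal{L}_{\boldsymbol{\sigma}}^{(m+h)}$, which will drive condition (C). The candidate parameters are $n'_k = n_{k+k_0}$, $\ell'_k = \ell_{k+k_0} - h$, and recurrent left eigenvector $\mathbf{v}' = \mathbf{v}^{(h)}$. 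To avoid a clash of notation I would denote by $h_{\mathrm P}$ the integer appearing in the original condition (P), so that $M_{[\ell_k - h_{\mathrm P},\ell_k)} = B$; this is unrelated to the shift~$h$, and necessarily $h_{\mathrm P}\ge 1$ since $B$ is positive. I would then fix $k_0$ so that $\ell_{k_0} \ge h + h_{\mathrm P}$, which is possible because $(\ell_k)$ is strictly increasing. This single inequality guarantees $\ell'_k \ge 1$ and $\ell'_k - h_{\mathrm P} \ge 0$ for every $k \ge 0$, and it makes the recurrence block at index $k+k_0$ long enough to cover the first~$h$ letters of~$\boldsymbol{\sigma}$.

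For (R) I would invoke the original recurrence at index $k+k_0$, namely $\sigma_{n_{k+k_0}+i} = \sigma_i$ for $0 \le i < \ell_{k+k_0}$; setting $i = j+h$ for $0 \le j < \ell'_k$ gives $\sigma'_{n'_k + j} = \sigma_{n_{k+k_0}+h+j} = \sigma_{h+j} = \sigma'_j$, which is exactly (R) for $\boldsymbol{\sigma}'$. For (P) the same recurrence, applied to the window of length $h_{\mathrm P}$ ending at $\ell_{k+k_0}$, yields $M'_{[\ell'_k - h_{\mathrm P}, \ell'_k)} = M_{[\ell_{k+k_0}-h_{\mathrm P}, \ell_{k+k_0})} = B$, so (P) holds with the same block length $h_{\mathrm P}$ and the same positive matrix~$B$. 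Condition (I) is inherited immediately and for every base index: algebraic irreducibility of $\boldsymbol{\sigma}$ at base index $k+h$ translates, under $M'_{[k,\ell)} = M_{[k+h,\ell+h)}$, into algebraic irreducibility of $\boldsymbol{\sigma}'$ at base index~$k$. Condition (C) follows from $\mathcal{L}_{\boldsymbol{\sigma}'}^{(n'_k+\ell'_k)} = \mathcal{L}_{\boldsymbol{\sigma}}^{(n_{k+k_0}+\ell_{k+k_0})}$, which is $C$-balanced by the original (C), so the same constant $C$ works.

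The only step requiring a little care is (E). Here I would first compute $\mathbf{v}'^{(n)} = \tr{(M'_{[0,n)})}\,\mathbf{v}^{(h)} = \tr{(M_{[0,n+h)})}\,\mathbf{v} = \mathbf{v}^{(n+h)}$, so that $\mathbf{v}'^{(n'_k)} = \mathbf{v}^{(n_{k+k_0}+h)}$. Using the recurrence block, whose length exceeds~$h$ by the choice of~$k_0$, I would identify $M_{[n_{k+k_0},\,n_{k+k_0}+h)} = M_{[0,h)}$, whence $\mathbf{v}^{(n_{k+k_0}+h)} = \tr{(M_{[0,h)})}\,\mathbf{v}^{(n_{k+k_0})}$. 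Since $(\mathbf{v}^{(n_k)}/\|\mathbf{v}^{(n_k)}\|)_k$ converges to $\mathbf{v}$ by the original (E), its subsequence indexed by $k+k_0$ converges to the same limit, and applying the fixed invertible linear map $\tr{(M_{[0,h)})}$ and renormalizing (using $\tr{(M_{[0,h)})}\mathbf{v} = \mathbf{v}^{(h)} \neq \mathbf{0}$) shows that $\mathbf{v}'^{(n'_k)}/\|\mathbf{v}'^{(n'_k)}\|$ converges to the direction of~$\mathbf{v}^{(h)}$. This is (E) for $\boldsymbol{\sigma}'$ with recurrent left eigenvector~$\mathbf{v}^{(h)}$, read as always in (E) up to the positive normalization making it a unit vector.

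The main obstacle, really the only thing beyond routine index bookkeeping, is getting the identification $M_{[n_{k+k_0},n_{k+k_0}+h)} = M_{[0,h)}$ right: it is precisely what lets the shift~$h$ commute past the recurrence and turns the limit $\mathbf{v}$ into $\mathbf{v}^{(h)}$, and it is exactly where the choice of $k_0$ large enough that the recurrence block at index $k+k_0$ has length greater than~$h$ is used. The accompanying bookkeeping pitfall is keeping the shift parameter~$h$ carefully distinct from the (P)-parameter~$h_{\mathrm P}$, since both are called $h$ in the ambient notation; once $k_0$ is fixed by the single inequality $\ell_{k_0}\ge h+h_{\mathrm P}$, the remaining verifications are direct.
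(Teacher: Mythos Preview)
Your proof is correct; the paper itself does not give a proof, merely labeling the lemma a ``simple observation,'' so your careful verification of (P)--(E) with the single choice $\ell_{k_0}\ge h+h_{\mathrm P}$ is exactly the intended routine check. The one cosmetic point you already flag --- that (E) is literally a convergence to a unit vector, so the stated eigenvector should be read as $\mathbf{v}^{(h)}/\|\mathbf{v}^{(h)}\|$ --- is harmless and does not affect any later use of the lemma.
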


Property PRICE implies the following uniform convergence result for the projections~$\pi_{\mathbf{u},\mathbf{v}}^{(n_k)}$.

\begin{lemma}\label{lem:projectionconvergence}
Assume that the directive sequence $\boldsymbol{\sigma}\in S^{\mathbb{N}}$ has Property PRICE w.r.t.\ the sequences $(n_k)$ and $(\ell_k)$ and the vector~$\mathbf{v}$.  
Then
\[
\lim_{k\to\infty} \max \big\{\|\pi_{\mathbf{u},\mathbf{v}}^{(n_k)}\, \mathbf{x} - \pi_{\mathbf{u},\mathbf{v}}\, \mathbf{x}\|:\, \|\mathbf{x}\| \le \max _{i\in \mathcal{A}} \|M_{[0,\ell_k)} \mathbf{e}_i\|,\, \|\pi_{\mathbf{u},\mathbf{v}}\, \mathbf{x}\| \le 1\big\} = 0.
\]
In particular, $\pi_{\mathbf{u},\mathbf{v}}^{(n_k)} \to \pi_{\mathbf{u},\mathbf{v}}$ for $k\to\infty$ in compact-open topology.
\end{lemma}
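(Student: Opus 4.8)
The plan is to write both oblique projections in closed form, peel off the behaviour on a fixed compact set from the behaviour in the one expanding direction~$\mathbf{u}$, and reduce the whole statement to a single rate estimate that I then settle with the strong convergence property of Proposition~\ref{p:strongconvergence}.

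First I would record the elementary facts. For $\mathbf{a},\mathbf{b}$ with $\langle\mathbf{b},\mathbf{a}\rangle\neq0$ one has $\pi_{\mathbf{a},\mathbf{b}}\mathbf{x}=\mathbf{x}-\frac{\langle\mathbf{b},\mathbf{x}\rangle}{\langle\mathbf{b},\mathbf{a}\rangle}\mathbf{a}$, and $\pi_{\mathbf{a},\mathbf{b}}$ depends only on the directions of~$\mathbf{a}$ and~$\mathbf{b}$. Writing $\mathbf{u}^{(n)}=(M_{[0,n)})^{-1}\mathbf{u}$ and $\mathbf{v}^{(n)}=\tr{(M_{[0,n)})}\,\mathbf{v}$, the identity $\langle\mathbf{v}^{(n_k)},\mathbf{u}^{(n_k)}\rangle=\langle\mathbf{v},\mathbf{u}\rangle$ gives $\pi^{(n_k)}_{\mathbf{u},\mathbf{v}}\mathbf{x}=\mathbf{x}-\frac{\langle\mathbf{v}^{(n_k)},\mathbf{x}\rangle}{\langle\mathbf{v},\mathbf{u}\rangle}\mathbf{u}^{(n_k)}$. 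By~\ref{defE} the direction of $\mathbf{v}^{(n_k)}$ tends to~$\mathbf{v}$; and since $\mathbf{u}^{(n_k)}$ and $\mathbf{u}$ are the generalized right eigenvectors of the sequence shifted by~$n_k$ and of~$\boldsymbol{\sigma}$, both lie in the cone $M_{[0,\ell_k)}\,\mathbb{R}^d_{\ge0}$ (using~\ref{defR}), whose projective diameter tends to~$0$ by primitivity as in Lemma~\ref{l:findv}; hence the direction of $\mathbf{u}^{(n_k)}$ tends to that of~$\mathbf{u}$. Consequently $\pi^{(n_k)}_{\mathbf{u},\mathbf{v}}\to\pi_{\mathbf{u},\mathbf{v}}$ in operator norm, which already yields the ``in particular'' statement about compact-open convergence.

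To get the uniform estimate I would set $R_k=\max_{i}\|M_{[0,\ell_k)}\mathbf{e}_i\|$ and decompose an admissible $\mathbf{x}$ as $\mathbf{x}=t\mathbf{u}+\mathbf{y}$ with $\mathbf{y}=\pi_{\mathbf{u},\mathbf{v}}\mathbf{x}\in\mathbf{v}^{\bot}$, so that $\|\mathbf{y}\|\le1$ and $|t|\,\|\mathbf{u}\|\le R_k+1$. Using $\pi_{\mathbf{u},\mathbf{v}}\mathbf{u}=\mathbf{0}$,
\[
(\pi^{(n_k)}_{\mathbf{u},\mathbf{v}}-\pi_{\mathbf{u},\mathbf{v}})\mathbf{x}=t\,\pi^{(n_k)}_{\mathbf{u},\mathbf{v}}\mathbf{u}+(\pi^{(n_k)}_{\mathbf{u},\mathbf{v}}-\pi_{\mathbf{u},\mathbf{v}})\mathbf{y}.
\]
The second summand is at most $\|\pi^{(n_k)}_{\mathbf{u},\mathbf{v}}-\pi_{\mathbf{u},\mathbf{v}}\|_{\mathrm{op}}\to0$, uniformly in~$\mathbf{y}$. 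A short computation with the closed form (in which the $\mathbf{v}$-dependence cancels) gives $\|\pi^{(n_k)}_{\mathbf{u},\mathbf{v}}\mathbf{u}\|\lesssim\|\hat{\mathbf{u}}^{(n_k)}-\hat{\mathbf{u}}\|$, where hats denote normalized vectors. Thus everything reduces to the single rate estimate $R_k\,\|\hat{\mathbf{u}}^{(n_k)}-\hat{\mathbf{u}}\|\to0$.

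This rate estimate is the main obstacle, and it is where the specific bound $R_k=\max_i\|M_{[0,\ell_k)}\mathbf{e}_i\|$ and strong convergence are indispensable. I expect the naive approach to fail: by balancedness the cone $M_{[0,\ell_k)}\,\mathbb{R}^d_{\ge0}$ containing both $\hat{\mathbf{u}}^{(n_k)}$ and $\hat{\mathbf{u}}$ has angular diameter only of order $1/L_k$ with $L_k=\min_i|\sigma_{[0,\ell_k)}(i)|$, and as $R_k$ is comparable to $\max_i|\sigma_{[0,\ell_k)}(i)|$ the product $R_k\,\|\hat{\mathbf{u}}^{(n_k)}-\hat{\mathbf{u}}\|$ comes out merely \emph{bounded}. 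To extract genuine decay I would invoke~\ref{defR} to write $\mathbf{u}=M_{[0,\ell_k)}\mathbf{p}_k$ and $\mathbf{u}^{(n_k)}=M_{[0,\ell_k)}\mathbf{q}_k$ (up to scalars), where $\mathbf{p}_k,\mathbf{q}_k$ are the $\langle\mathbf{1},\cdot\rangle$-normalized generalized right eigenvectors of the sequences shifted by $\ell_k$ and $n_k+\ell_k$, so that $\mathbf{q}_k-\mathbf{p}_k\in\mathbf{1}^{\bot}$ has $\ell^1$-norm at most~$2$. Since $M_{[0,\ell_k)}\mathbf{p}_k$ is parallel to~$\mathbf{u}$, the transverse part of the deviation is
\[
\pi_{\mathbf{u},\mathbf{1}}M_{[0,\ell_k)}(\mathbf{q}_k-\mathbf{p}_k)=\sum_{i}(\mathbf{q}_k-\mathbf{p}_k)_i\,\pi_{\mathbf{u},\mathbf{1}}M_{[0,\ell_k)}\mathbf{e}_i,
\]
of norm at most $2\max_i\|\pi_{\mathbf{u},\mathbf{1}}M_{[0,\ell_k)}\mathbf{e}_i\|$, which tends to~$0$ as $\ell_k\to\infty$ by the strong convergence property~\eqref{e:contraction} (applicable because primitivity, algebraic irreducibility, recurrence and balancedness are all part of Property PRICE). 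Dividing by the $\mathbf{u}$-scale $\|M_{[0,\ell_k)}\mathbf{p}_k\|$, which balancedness makes comparable to~$R_k$ (the column lengths $|\sigma_{[0,\ell_k)}(i)|$ being mutually comparable), turns this into $R_k\,\|\hat{\mathbf{u}}^{(n_k)}-\hat{\mathbf{u}}\|\lesssim\max_i\|\pi_{\mathbf{u},\mathbf{1}}M_{[0,\ell_k)}\mathbf{e}_i\|\to0$. Combining the three contributions then gives the claimed uniform limit, and the compact-open convergence follows as a special case.
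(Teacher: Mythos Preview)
Your argument is correct and follows essentially the same route as the paper's: both proofs use that $\mathbf{u}$ and $\mathbf{u}^{(n_k)}=(M_{[0,n_k)})^{-1}\mathbf{u}$ lie in the cone $M_{[0,\ell_k)}\mathbb{R}_+^d$ by~\ref{defR}, and both show that the key quantitative content is the rate estimate $R_k\cdot\mathrm{angle}(\mathbf{u}^{(n_k)},\mathbf{u})\to 0$, established via the strong convergence $\|\pi_{\mathbf{u},\mathbf{1}}M_{[0,\ell_k)}\mathbf{e}_i\|\to 0$ of Proposition~\ref{p:strongconvergence}. Your decomposition $\mathbf{x}=t\mathbf{u}+\pi_{\mathbf{u},\mathbf{v}}\mathbf{x}$ is arguably more transparent than the paper's two-step splitting $\pi_{\mathbf{u},\mathbf{v}}^{(n_k)}-\pi_{\mathbf{u},\mathbf{v}}=(\pi_{\tilde{\mathbf{u}},\mathbf{v}^{(n_k)}}-\pi_{\tilde{\mathbf{u}},\mathbf{v}})+(\pi_{\tilde{\mathbf{u}},\mathbf{v}}-\pi_{\mathbf{u},\mathbf{v}})$ with $\tilde{\mathbf{u}}=\mathbf{u}^{(n_k)}$, but the underlying computation is the same.

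One correction: the comparability of the column lengths $|\sigma_{[0,\ell_k)}(i)|$ (equivalently, $\|M_{[0,\ell_k)}\mathbf{p}_k\|\asymp R_k$) does \emph{not} follow from balancedness of~$\mathcal{L}_{\boldsymbol{\sigma}}$; balancedness only controls $\pi_{\mathbf{u},\mathbf{1}}\mathbf{l}(w)$ for words of a given length, not the relative lengths of $\sigma_{[0,\ell_k)}(i)$ for different~$i$. What you need here is property~\ref{defP}: since $M_{[\ell_k-h,\ell_k)}=B$ is a fixed positive matrix, there is $c>0$ with $\max_i\|M_{[0,\ell_k)}\mathbf{e}_i\|\le c\min_i\|M_{[0,\ell_k)}\mathbf{e}_i\|$ for all~$k$ (this is exactly how the paper argues). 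With that replacement your chain of inequalities goes through.
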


\begin{proof}
Since \ref{defP}, \ref{defR}, \ref{defI}, and \ref{defC} hold, we obtain from Proposition~\ref{p:strongconvergence} that $\|\pi_{\mathbf{u},\mathbf{1}} M_{[0,\ell_k)} \mathbf{e}_i\| \to 0$ for each $i \in \mathcal{A}$ when $k\to\infty$. Since $\pi_{\mathbf{u},\mathbf{v}} = \pi_{\mathbf{u},\mathbf{v}} \pi_{\mathbf{u},\mathbf{1}}$, this implies that $\|\pi_{\mathbf{u},\mathbf{v}} M_{[0,\ell_k)} \mathbf{e}_i\| \to 0$. As $M_{[\ell_k-h,\ell_k)} = B$ is a positive matrix (that does not depend on~$k$), there is $c > 0$ such that $\max_{i\in \mathcal{A}} \|M_{[0,\ell_k)} \mathbf{e}_i\| \le c\, \min_{i\in\mathcal{A}} \|M_{[0,\ell_k)} \mathbf{e}_i||$ for all $k \in \mathbb{N}$. Thus the cone $M_{[0,\ell_k)} \mathbb{R}_+^d$ has small diameter at ``height'' $\max_{i\in \mathcal{A}} \|M_{[0,\ell_k)}\mathbf{e}_i\|$, hence, $\pi_{\tilde{\mathbf{u}}, \mathbf{v}}\, \mathbf{x}$ is close to $\pi_{\mathbf{u},\mathbf{v}}\, \mathbf{x}$ for all $\tilde{\mathbf{u}} \in M_{[0,\ell_k)}\mathbb{R}_+^d$ and $\mathbf{x}$ in the cylinder $\|\mathbf{x}\| \le \max _{i\in \mathcal{A}} \|M_{[0,\ell_k)} \mathbf{e}_i\|$, $\|\pi_{\mathbf{u},\mathbf{v}}\, \mathbf{x}\| \le 1$. More precisely, 
\[
\pi_{\mathbf{u},\mathbf{v}}\, \mathbf{x} - \pi_{\tilde{\mathbf{u}},\mathbf{v}}\, \mathbf{x} = \pi_{\mathbf{u},\mathbf{v}}(\mathbf{x} - \pi_{\tilde{\mathbf{u}},\mathbf{v}}\, \mathbf{x}) = \pi_{\mathbf{u},\mathbf{v}}\bigg( \frac{\|\mathbf{x} - \pi_{\tilde{\mathbf{u}},\mathbf{v}}\, \mathbf{x}\|}{\|\tilde{\mathbf{u}}\|}\, \tilde{\mathbf{u}} \bigg) = \frac{\|\mathbf{x} - \pi_{\tilde{\mathbf{u}},\mathbf{v}}\, \mathbf{x}\|}{\|\tilde{\mathbf{u}}\|}\, \pi_{\mathbf{u},\mathbf{v}}\, \tilde{\mathbf{u}} 
\]
gives that 
\[
||\pi_{\mathbf{u},\mathbf{v}}\, \mathbf{x} - \pi_{\tilde{\mathbf{u}},\mathbf{v}}\, \mathbf{x}||
 \le \frac{\|\mathbf{x} - \pi_{\mathbf{u},\mathbf{v}}\, \mathbf{x}\| + \|\pi_{\mathbf{u},\mathbf{v}}\, \mathbf{x} - \pi_{\tilde{\mathbf{u}},\mathbf{v}}\, \mathbf{x}\|}{\|\tilde{\mathbf{u}}\|}\,|| \pi_{\mathbf{u},\mathbf{v}}\, \tilde{\mathbf{u}}||
\]
and, hence,
\[
\big\|\pi_{\mathbf{u},\mathbf{v}}\, \mathbf{x} - \pi_{\tilde{\mathbf{u}},\mathbf{v}}\, \mathbf{x}\big\| \le \frac{\|\mathbf{x} - \pi_{\mathbf{u},\mathbf{v}}\, \mathbf{x}\|}{\|\tilde{\mathbf{u}}\| - \|\pi_{\mathbf{u},\mathbf{v}}\, \tilde{\mathbf{u}}\|}\, \|\pi_{\mathbf{u},\mathbf{v}}\, \tilde{\mathbf{u}}\|.
\]
Thus we obtain for $\tilde{\mathbf{u}}$ and $\mathbf{x}$ with the above properties that for each $\varepsilon > 0$
\begin{align*}
\big\|\pi_{\mathbf{u},\mathbf{v}}\, \mathbf{x} - \pi_{\tilde{\mathbf{u}},\mathbf{v}}\, \mathbf{x}\big\| & \le \frac{\max _{i\in \mathcal{A}} \|M_{[0,\ell_k)} \mathbf{e}_i\|+1}{\min _{i\in \mathcal{A}} \|M_{[0,\ell_k)} \mathbf{e}_i\|-\max _{i\in \mathcal{A}} \|\pi_{\mathbf{u},\mathbf{v}} M_{[0,\ell_k)} \mathbf{e}_i\|}\, \max _{i\in \mathcal{A}} \|\pi_{\mathbf{u},\mathbf{v}} M_{[0,\ell_k)} \mathbf{e}_i\| \\
& \le 2c\, \max _{i\in \mathcal{A}} \|\pi_{\mathbf{u},\mathbf{v}} M_{[0,\ell_k)} \mathbf{e}_i\| < \varepsilon,
\end{align*}
holds for sufficiently large~$k$.
Moreover, the facts that $\lim_{k\to\infty} \mathbf{v}^{(n_k)}/\|\mathbf{v}^{(n_k)}\|= \mathbf{v}$, that $\|\pi_{\tilde{\mathbf{u}},\mathbf{v}}\, \mathbf{x}\|$ is bounded (by $1+\varepsilon$), and that $\langle \tilde{\mathbf{u}}, \mathbf{v} \rangle$ is bounded away from~$0$, yield that
\[
\big\|\pi_{\tilde{\mathbf{u}},\mathbf{v}^{(n_k)}}\, \mathbf{x} - \pi_{\tilde{\mathbf{u}},\mathbf{v}}\, \mathbf{x}\big\| < \varepsilon
\]
for sufficiently large~$k$, thus $\|\pi_{\tilde{\mathbf{u}},\mathbf{v}^{(n_k)}}\, \mathbf{x} - \pi_{\mathbf{u},\mathbf{v}}\, \mathbf{x}\| < 2 \varepsilon$.
We can choose $\tilde{\mathbf{u}} = (M_{[0,n_k)})^{-1} \mathbf{u}$ because the recurrence assertion~\ref{defR} gives $(M_{[0,n_k)})^{-1} \mathbf{u} \in M_{[0,\ell_k)} \mathbb{R}_+^d$.
As $\pi_{\mathbf{u},\mathbf{v}}^{(n_k)} = \pi_{(M_{[0,n_k)})^{-1} \mathbf{u},\mathbf{v}^{(n_k)}}$, this proves the lemma.
\end{proof}

We can now prove the following convergence result for Rauzy fractals. 

\begin{proposition} \label{p:close}
Let $S$ be a finite or  infinite  set of unimodular substitutions over a finite alphabet $\mathcal{A}$.
Assume that the sequence $\boldsymbol{\sigma} = (\sigma_n)\in S^{\mathbb{N}}$ of unimodular substitutions has Property PRICE w.r.t.\ the sequences $(n_k)$ and $(\ell_k)$ and the vector~$\mathbf{v}$. 
Then, for each $i \in \mathcal{A}$ and $\ell\in\mathbb{N}$, 
\begin{equation} \label{e:tilesconvergence}
\lim_{k\to\infty} \mathcal{R}^{(n_k+\ell)}_\mathbf{v}(i) = \mathcal{R}^{(\ell)}_\mathbf{v}(i),
\end{equation}
where the limit is taken w.r.t.\ the Hausdorff metric.
\end{proposition}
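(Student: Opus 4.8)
The plan is to fix $i\in\mathcal A$ and $\ell\in\mathbb N$ and, for all large $k$ (so that $\ell_k>\ell$), decompose both subtiles down to the recurrence depth by the set equation of Proposition~\ref{p:setequation}, exploiting that recurrence makes the two decompositions combinatorially identical. Applying \eqref{e:setequationkl} with $\mathbf w=\mathbf v$, $\mathbf x=\mathbf 0$ for the pair of levels $(\ell,\ell_k)$, and then for $(n_k+\ell,\,n_k+\ell_k)$, and using that \ref{defR} gives $\sigma_{[n_k+\ell,\,n_k+\ell_k)}=\sigma_{[\ell,\ell_k)}$ (hence the same words $\sigma_{[\ell,\ell_k)}(j)$, the same maps $E_1^*$, and $M_{[\ell,\ell_k)}=M_{[n_k+\ell,\,n_k+\ell_k)}$), together with Lemma~\ref{lem:projections} in the form $M_{[\ell,\ell_k)}\pi^{(\ell_k)}_{\mathbf u,\mathbf v}=\pi^{(\ell)}_{\mathbf u,\mathbf v}M_{[\ell,\ell_k)}$, I obtain
\begin{align*}
\mathcal R^{(\ell)}_\mathbf v(i) &= \bigcup_{(p,j)}\big(\pi_{\mathbf u,\mathbf v}^{(\ell)}\,\mathbf l(p)+M_{[\ell,\ell_k)}\,\mathcal R^{(\ell_k)}_\mathbf v(j)\big),\\
\mathcal R^{(n_k+\ell)}_\mathbf v(i) &= \bigcup_{(p,j)}\big(\pi_{\mathbf u,\mathbf v}^{(n_k+\ell)}\,\mathbf l(p)+M_{[\ell,\ell_k)}\,\mathcal R^{(n_k+\ell_k)}_\mathbf v(j)\big),
\end{align*}
where both unions run over the \emph{same} finite index set of pairs $(p,j)$ with $p\hspace{.1em}i$ a prefix of $\sigma_{[\ell,\ell_k)}(j)$. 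Since the Hausdorff distance of two finite unions indexed by the same set is bounded by the maximal Hausdorff distance of corresponding members, it suffices to compare, for each fixed $(p,j)$, the translation vectors $\pi^{(\ell)}_{\mathbf u,\mathbf v}\mathbf l(p)$ versus $\pi^{(n_k+\ell)}_{\mathbf u,\mathbf v}\mathbf l(p)$ and the contracted tiles $M_{[\ell,\ell_k)}\mathcal R^{(\ell_k)}_\mathbf v(j)$ versus $M_{[\ell,\ell_k)}\mathcal R^{(n_k+\ell_k)}_\mathbf v(j)$.

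The translations are handled by projection convergence. By Lemma~\ref{l:shiftedPRICE} the shifted sequence $(\sigma_{n+\ell})_{n\in\mathbb N}$ has Property PRICE with recurrent left eigenvector $\mathbf v^{(\ell)}$, and a direct computation identifies $\pi^{(\ell+m)}_{\mathbf u,\mathbf v}$ with its level-$m$ projection in the representation space $\mathbf v^{(\ell)}$. Hence Lemma~\ref{lem:projectionconvergence} applied to this shifted sequence gives $\pi^{(n_k+\ell)}_{\mathbf u,\mathbf v}\to\pi^{(\ell)}_{\mathbf u,\mathbf v}$ uniformly on the cylinder that contains all the vectors $\mathbf l(p)$ occurring above, because $\|\mathbf l(p)\|\le\max_j\|M_{[\ell,\ell_k)}\mathbf e_j\|$ is exactly the cylinder radius and $\|\pi^{(\ell)}_{\mathbf u,\mathbf v}\mathbf l(p)\|$ is bounded by balancedness of $\mathcal L^{(\ell)}_{\boldsymbol\sigma}$. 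Thus $\max_{(p,j)}\|\pi^{(\ell)}_{\mathbf u,\mathbf v}\mathbf l(p)-\pi^{(n_k+\ell)}_{\mathbf u,\mathbf v}\mathbf l(p)\|\to0$.

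It then remains to show that both contracted families shrink to $\{\mathbf 0\}$ in Hausdorff metric. For $M_{[\ell,\ell_k)}\mathcal R^{(\ell_k)}_\mathbf v(j)$ this is immediate from Lemma~\ref{l:smallsubtiles} applied to $(\sigma_{n+\ell})_{n}$. The second family is the crux, and here I would write a point of $M_{[\ell,\ell_k)}\mathcal R^{(n_k+\ell_k)}_\mathbf v(j)$ as $\pi^{(n_k+\ell)}_{\mathbf u,\mathbf v}\mathbf l(\sigma_{[\ell,\ell_k)}(p'))$ for $p'$ a prefix of $\omega^{(n_k+\ell_k)}$, and split $\mathbf l(p')=|p'|\,\mathbf f+\mathbf r'$, where $\mathbf f$ is the normalized letter frequency vector of $\omega^{(n_k+\ell_k)}$ and $\|\mathbf r'\|\le C$ by \ref{defC} and Lemma~\ref{l:bounded}. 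Since $\mathbf f$ is parallel to $(M_{[0,n_k+\ell_k)})^{-1}\mathbf u$, the vector $M_{[\ell,\ell_k)}\mathbf f=M_{[n_k+\ell,\,n_k+\ell_k)}\mathbf f$ is parallel to the projection direction $(M_{[0,n_k+\ell)})^{-1}\mathbf u$ of $\pi^{(n_k+\ell)}_{\mathbf u,\mathbf v}$ and is annihilated, so the point equals $\pi^{(n_k+\ell)}_{\mathbf u,\mathbf v}M_{[\ell,\ell_k)}\mathbf r'$ with $\mathbf r'$ uniformly bounded. Strong convergence for the shifted sequence (Proposition~\ref{p:strongconvergence}, in the form \eqref{e:contraction}, composed with the fixed projection onto $(\mathbf v^{(\ell)})^\bot$ and extended by linearity to the basis $\{\mathbf e_i\}$) gives $\pi^{(\ell)}_{\mathbf u,\mathbf v}M_{[\ell,\ell_k)}\mathbf r'\to\mathbf 0$ uniformly over $\|\mathbf r'\|\le C$; combining this with the uniform projection convergence of the previous paragraph (to replace $\pi^{(\ell)}_{\mathbf u,\mathbf v}$ by $\pi^{(n_k+\ell)}_{\mathbf u,\mathbf v}$ on the bounded vectors $M_{[\ell,\ell_k)}\mathbf r'$) yields $\pi^{(n_k+\ell)}_{\mathbf u,\mathbf v}M_{[\ell,\ell_k)}\mathbf r'\to\mathbf 0$ uniformly, so this second family also shrinks to $\{\mathbf 0\}$.

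The main obstacle is exactly this last step: the tiles $\mathcal R^{(n_k+\ell_k)}_\mathbf v(j)$ live on the \emph{moving} hyperplane $(\mathbf v^{(n_k+\ell_k)})^\bot$ rather than on $(\mathbf v^{(\ell_k)})^\bot$, so Lemma~\ref{l:smallsubtiles} does not apply to them directly, and making the contraction uniform in $k$ is what forces the use of all of Property PRICE, in particular of \ref{defE} (through the shifted PRICE of Lemma~\ref{l:shiftedPRICE}, which underlies both the projection convergence and the frequency-direction identification). Once both contracted families are shown to shrink to $\{\mathbf 0\}$, each pair of corresponding summands has Hausdorff distance at most $\|\pi^{(\ell)}_{\mathbf u,\mathbf v}\mathbf l(p)-\pi^{(n_k+\ell)}_{\mathbf u,\mathbf v}\mathbf l(p)\|$ plus the two vanishing radii; taking the maximum over the finite index set then gives \eqref{e:tilesconvergence}.
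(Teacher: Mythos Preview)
Your proof is correct and follows essentially the same approach as the paper's: decompose both $\mathcal{R}^{(\ell)}_\mathbf{v}(i)$ and $\mathcal{R}^{(n_k+\ell)}_\mathbf{v}(i)$ via the set equation down to the recurrence depth, use \ref{defR} to identify the index sets, use Lemma~\ref{lem:projectionconvergence} for the translation vectors, Lemma~\ref{l:smallsubtiles} for the first contracted family, and the combination of $C$-balancedness (\ref{defC}), Proposition~\ref{p:strongconvergence}, and Lemma~\ref{lem:projectionconvergence} for the second family. The only organizational differences are that the paper first treats the case $\ell=0$ and then reduces general~$\ell$ to it via Lemma~\ref{l:shiftedPRICE}, whereas you carry general~$\ell$ throughout; and for the second family the paper invokes Lemma~\ref{l:convhull2} directly to get $\mathcal{R}^{(n_k+\ell_k)}_\mathbf{v}(j)\subset\pi^{(n_k+\ell_k)}_{\mathbf{u},\mathbf{v}}([-C,C]^d\cap\mathbf{1}^\bot)$, while your frequency-vector splitting $\mathbf{l}(p')=|p'|\mathbf{f}+\mathbf{r}'$ with $\|\mathbf{r}'\|\le C$ is just a rederivation of the same fact.
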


\begin{proof}
We first prove the result for $\ell=0$.
For each $\varepsilon > 0$ and each sufficiently large $k \in \mathbb{N}$, the following inequalities hold:
\renewcommand{\theenumi}{\roman{enumi}}
\begin{enumerate}
\itemsep1ex
\item \label{i:close1}
$\mathrm{diam} \big(M_{[0,\ell_k)} \mathcal{R}^{(\ell_k)}_\mathbf{v}(j)\big) < \varepsilon$ for each $j \in \mathcal{A}$,
\item \label{i:close2}
$\mathrm{diam} \big(M_{[0,\ell_k)} \mathcal{R}^{(n_k+\ell_k)}_\mathbf{v}(j)\big) < \varepsilon$ for each $j \in \mathcal{A}$,
\item \label{i:close3}
$\|\pi_{\mathbf{u},\mathbf{v}}^{(n_k)}\, M_{[0,\ell_k)}\, \mathbf{x} - \pi_{\mathbf{u},\mathbf{v}}\, M_{[0,\ell_k)}\, \mathbf{x}\| < \varepsilon$ for each $[\mathbf{x},j] \in E_1^*(\sigma_{[0,\ell_k)})[\mathbf{0},i]$.
\end{enumerate}
Inequality~(\ref{i:close1}) follows from Lemma~\ref{l:smallsubtiles}. To prove~(\ref{i:close2}), note first that, as $\mathcal{L}_{\boldsymbol{\sigma}}^{(n_k+\ell_k)}$ is $C$-balanced,
\[
M_{[0,\ell_k)} \mathcal{R}^{(n_k+\ell_k)}_\mathbf{v}(j) \subset M_{[0,\ell_k)}\, \pi_{\mathbf{u},\mathbf{v}}^{(n_k+\ell_k)} \big([-C,C]^d \cap \mathbf{1}^\bot\big) = \pi_{\mathbf{u},\mathbf{v}}^{(n_k)} M_{[0,\ell_k)} \big([-C,C]^d \cap \mathbf{1}^\bot\big)
\]
by  Lemmas~\ref{lem:projections} and~\ref{l:convhull2}.
For $\mathbf{y} \in M_{[0,\ell_k)}\, [-C,C]^d$ with sufficiently large~$k$, we have $\|\pi_{\mathbf{u},\mathbf{v}}\, \mathbf{y}\| < \varepsilon/2$ by Proposition~\ref{p:strongconvergence} and $\|\pi_{\mathbf{u},\mathbf{v}}^{(n_k)}\, \mathbf{y} - \pi_{\mathbf{u},\mathbf{v}}\, \mathbf{y}\| < \varepsilon/2$ by Lemma~\ref{lem:projectionconvergence}, where we have used that $\|\mathbf{y}\| \le C \sum_{j\in\mathcal{A}} \|M_{[0,\ell_k)}\, \mathbf{e}_j\|$. 
This implies that $\|\pi_{\mathbf{u},\mathbf{v}}^{(n_k)}\, \mathbf{y}\| < \varepsilon$, and (\ref{i:close2}) follows.
Finally, (\ref{i:close3}) is a consequence of Lemma~\ref{lem:projectionconvergence} because the definition of $E_1^*$ in~(\ref{eq:dualsubst}) yields for $[\mathbf{x},j] \in E_1^*(\sigma_{[0,\ell_k)})[\mathbf{0},i]$ that $M_{[0,\ell_k)}\, \mathbf{x} = \mathbf{l}(p)$ for some prefix~$p$ of $\sigma_{[0,\ell_k)}(j)$, $j \in \mathcal{A}$, hence $\|M_{[0,\ell_k)}\, \mathbf{x}\| \le \max_{j\in\mathcal{A}} \|M_{[0,\ell_k)}\, \mathbf{e}_j\|$ and $\|\pi_{\mathbf{u},\mathbf{v}}\, M_{[0,\ell_k)}\, \mathbf{x}\|$ is bounded by the balancedness of~$\mathcal{L}_{\boldsymbol{\sigma}}$.

By~\eqref{e:setequationkl}, we have
\[
\mathcal{R}_\mathbf{v}(i) = \bigcup_{[\mathbf{x},j] \in E_1^*(\sigma_{[0,\ell_k)})[\mathbf{0},i]} \big(\pi_{\mathbf{u},\mathbf{v}} M_{[0,\ell_k)}\, \mathbf{x} + M_{[0,\ell_k)} \mathcal{R}^{(\ell_k)}_\mathbf{v}(j)\big)
\]
and
\[
\mathcal{R}^{(n_k)}_\mathbf{v}(i) = \bigcup_{[\mathbf{x},j] \in E_1^*(\sigma_{[n_k,n_k+\ell_k)})[\mathbf{0},i]} \big(\pi_{\mathbf{u},\mathbf{v}}^{(n_k)}\, M_{[n_k,n_k+\ell_k)}\, \mathbf{x} + M_{[n_k,n_k+\ell_k)} \mathcal{R}^{(n_k+\ell_k)}_\mathbf{v}(j)\big).
\]
As $\sigma_{[n_k,n_k+\ell_k)} = \sigma_{[0,\ell_k)}$ and $M_{[n_k,n_k+\ell_k)} = M_{[0,\ell_k)}$, the result for the case $\ell=0$ now follows from (\ref{i:close1})--(\ref{i:close3}) by an application of the triangle inequality.

The case of $\ell > 0$ is equivalent to proving that $\lim_{k\to\infty} \mathcal{R}^{(n_k)}_{\mathbf{v}^{(\ell)}}(i) = \mathcal{R}_{\mathbf{v}^{(\ell)}}(i)$ for the Rauzy fractals defined by the shifted sequence $(\sigma_{n+\ell})_{n\in\mathbb{N}}$.
It is thus an immediate consequence of Lemma~\ref{l:shiftedPRICE}.
\end{proof}

\section{Some properties of Rauzy fractals} \label{sec:results}

In this section, we introduce the collections~$\mathcal{C}^{(n)}_\mathbf{w}$ of translates of $\mathcal{R}_\mathbf{w}^{(n)}(i)$, $i \in \mathcal{A}$, and prove  their covering properties. Moreover, we show that under certain conditions the set~$\mathcal{R}(i)$ is the closure of its interior and $\partial\mathcal{R}(i)$ has measure zero for each $i\in \mathcal{A}$; the proof of the latter property is the main task of this section. In the substitutive case, the proofs of the analogous results are based on the graph-directed iterated function system satisfied by the subtiles of the Rauzy fractal; see e.g.~\cite{CANTBST}. Since we do not have a graph-directed structure in our case, we rely on the infinite family of set equations in~\eqref{e:setequationkl}.

Again, throughout this section we assume that $S$ is a finite or  infinite set of unimodular substitutions over the finite alphabet~$\mathcal{A}$ and $\boldsymbol{\sigma} \in S^{\mathbb{N}}$ is a directive sequence.

\subsection{Covering properties}
For $\mathbf{w} \in \mathbb{R}^d_{\ge 0}\setminus\{\mathbf{0}\}$ and $n \in \mathbb{N}$, define the collection of tiles in $(\mathbf{w}^{(n)})^\bot$
\[
\mathcal{C}^{(n)}_\mathbf{w} = \{\pi_{\mathbf{u},\mathbf{w}}^{(n)}\, \mathbf{x} + \mathcal{R}^{(n)}_\mathbf{w}(i):\, [\mathbf{x},i] \in \Gamma(\mathbf{w}^{(n)})\}\,,
\]
where $\mathcal{R}^{(n)}_\mathbf{w}(i)$ are the Rauzy fractals defined in~\eqref{e:defRn} and $\pi_{\mathbf{u},\mathbf{w}}^{(n)}$ is as in~\eqref{e:projabb}.
Note that $\mathcal{C}^{(0)}_\mathbf{w} = \mathcal{C}_\mathbf{w}$. 

The following simple lemma will be used frequently in the sequel.

\begin{lemma} \label{l:covering}
Let $\boldsymbol{\sigma} = (\sigma_n)\in S^{\mathbb{N}}$ be a sequence of unimodular substitutions with generalized right eigenvector~$\mathbf{u}$, let $\mathbf{w} \in \mathbb{R}_{\ge0}^d \setminus \{\mathbf{0}\}$, and $k < \ell$.
If $\mathbf{z} \in (\mathbf{w}^{(k)})^\bot$ lies in $m$ distinct tiles of~$\mathcal{C}^{(k)}_\mathbf{w}$, then $(M_{[k,\ell)})^{-1} \mathbf{z}$ lies in at least~$m$ distinct tiles of~$\mathcal{C}^{(\ell)}_\mathbf{w}$.
If moreover there are distinct $[\mathbf{y},j], [\mathbf{y}',j'] \in E_1^*(\sigma_{[k,\ell)})[\mathbf{x},i]$, with $[\mathbf{x},i] \in \Gamma(\mathbf{w}^{(k)})$, such that $(M_{[k,\ell)})^{-1} \mathbf{z} \in {\big(\pi_{\mathbf{u},\mathbf{w}}^{(\ell)}\, \mathbf{y} + \mathcal{R}^{(\ell)}_\mathbf{w}(j)\big)} \cap {\big(\pi_{\mathbf{u},\mathbf{w}}^{(\ell)}\, \mathbf{y}' + \mathcal{R}^{(\ell)}_\mathbf{w}(j')\big)}$, then $(M_{[k,\ell)})^{-1} \mathbf{z}$ lies in at least $m+1$ distinct tiles of~$\mathcal{C}^{(\ell)}_\mathbf{w}$.
\end{lemma}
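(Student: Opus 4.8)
The plan is to exploit the set equation \eqref{e:setequationkl} together with the injectivity property of the dual substitution $E_1^*$ stated in Lemma~\ref{l:e1star}~(\ref{62iii}). The key observation is that \eqref{e:setequationkl} expresses each tile $\pi_{\mathbf{u},\mathbf{w}}^{(k)}\,\mathbf{x} + \mathcal{R}^{(k)}_\mathbf{w}(i)$ of $\mathcal{C}^{(k)}_\mathbf{w}$ as the image under $M_{[k,\ell)}$ of a union of tiles $\pi_{\mathbf{u},\mathbf{w}}^{(\ell)}\,\mathbf{y} + \mathcal{R}^{(\ell)}_\mathbf{w}(j)$ of $\mathcal{C}^{(\ell)}_\mathbf{w}$, indexed by $[\mathbf{y},j] \in E_1^*(\sigma_{[k,\ell)})[\mathbf{x},i]$. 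Applying $(M_{[k,\ell)})^{-1}$ to these tiles therefore transports covering multiplicity from level~$k$ to level~$\ell$ in a controlled way.

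First I would set up the correspondence between tiles. Suppose $\mathbf{z} \in (\mathbf{w}^{(k)})^\bot$ lies in $m$ distinct tiles $\pi_{\mathbf{u},\mathbf{w}}^{(k)}\,\mathbf{x}_1 + \mathcal{R}^{(k)}_\mathbf{w}(i_1),\ \ldots,\ \pi_{\mathbf{u},\mathbf{w}}^{(k)}\,\mathbf{x}_m + \mathcal{R}^{(k)}_\mathbf{w}(i_m)$ of $\mathcal{C}^{(k)}_\mathbf{w}$, where the pairs $[\mathbf{x}_s,i_s] \in \Gamma(\mathbf{w}^{(k)})$ are pairwise distinct for $1 \le s \le m$. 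For each such $s$, the set equation \eqref{e:setequationkl} guarantees that there is some $[\mathbf{y}_s,j_s] \in E_1^*(\sigma_{[k,\ell)})[\mathbf{x}_s,i_s]$ with $\mathbf{z} \in M_{[k,\ell)}\big(\pi_{\mathbf{u},\mathbf{w}}^{(\ell)}\,\mathbf{y}_s + \mathcal{R}^{(\ell)}_\mathbf{w}(j_s)\big)$, that is, $(M_{[k,\ell)})^{-1}\mathbf{z} \in \pi_{\mathbf{u},\mathbf{w}}^{(\ell)}\,\mathbf{y}_s + \mathcal{R}^{(\ell)}_\mathbf{w}(j_s)$. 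By Lemma~\ref{l:e1star}~(\ref{62ii}) each $[\mathbf{y}_s,j_s]$ lies in $\Gamma(\mathbf{w}^{(\ell)})$, so these are genuine tiles of $\mathcal{C}^{(\ell)}_\mathbf{w}$.

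The main obstacle, and the crux of the argument, is to show that the pairs $[\mathbf{y}_1,j_1],\ldots,[\mathbf{y}_m,j_m]$ are pairwise distinct, so that $(M_{[k,\ell)})^{-1}\mathbf{z}$ lies in at least $m$ distinct tiles of $\mathcal{C}^{(\ell)}_\mathbf{w}$. This is exactly where Lemma~\ref{l:e1star}~(\ref{62iii}) enters: for distinct $[\mathbf{x}_s,i_s] \neq [\mathbf{x}_t,i_t]$ in $\Gamma(\mathbf{w}^{(k)})$, the sets $E_1^*(\sigma_{[k,\ell)})[\mathbf{x}_s,i_s]$ and $E_1^*(\sigma_{[k,\ell)})[\mathbf{x}_t,i_t]$ are disjoint, so their respective selected elements $[\mathbf{y}_s,j_s]$ and $[\mathbf{y}_t,j_t]$ cannot coincide. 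This yields the first assertion.

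For the second assertion, suppose in addition that there are two \emph{distinct} pairs $[\mathbf{y},j],[\mathbf{y}',j'] \in E_1^*(\sigma_{[k,\ell)})[\mathbf{x},i]$ with $[\mathbf{x},i]\in\Gamma(\mathbf{w}^{(k)})$ and $(M_{[k,\ell)})^{-1}\mathbf{z}$ lying in both $\pi_{\mathbf{u},\mathbf{w}}^{(\ell)}\,\mathbf{y} + \mathcal{R}^{(\ell)}_\mathbf{w}(j)$ and $\pi_{\mathbf{u},\mathbf{w}}^{(\ell)}\,\mathbf{y}' + \mathcal{R}^{(\ell)}_\mathbf{w}(j')$. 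I would argue that at most one of these two tiles can already appear among the $m$ tiles $\pi_{\mathbf{u},\mathbf{w}}^{(\ell)}\,\mathbf{y}_s + \mathcal{R}^{(\ell)}_\mathbf{w}(j_s)$ produced above. Indeed, $\mathbf{z}$ lies in the tile $\pi_{\mathbf{u},\mathbf{w}}^{(k)}\,\mathbf{x} + \mathcal{R}^{(k)}_\mathbf{w}(i)$ of $\mathcal{C}^{(k)}_\mathbf{w}$ (since both $[\mathbf{y},j]$ and $[\mathbf{y}',j']$ lie in its $E_1^*$-image and contribute to the right-hand side of \eqref{e:setequationkl}), so $[\mathbf{x},i]$ is one of the $m$ pairs, say $[\mathbf{x}_1,i_1]$; the selection procedure chose a single $[\mathbf{y}_1,j_1]\in E_1^*(\sigma_{[k,\ell)})[\mathbf{x}_1,i_1]$, which equals at most one of $[\mathbf{y},j],[\mathbf{y}',j']$. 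Hence at least one of these two distinct tiles is new relative to the collection $\{[\mathbf{y}_s,j_s]\}_{s=1}^m$ (which is already pairwise distinct by the injectivity just established), giving at least $m+1$ distinct tiles of $\mathcal{C}^{(\ell)}_\mathbf{w}$ containing $(M_{[k,\ell)})^{-1}\mathbf{z}$. This completes the argument.
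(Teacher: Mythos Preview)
Your proof is correct and follows exactly the approach of the paper, which cites the set equations~\eqref{e:setequationkl} together with Lemma~\ref{l:e1star}~(\ref{62ii}) and~(\ref{62iii}) and calls the result an ``immediate consequence''. You have simply spelled out the details the paper leaves implicit. One small remark on the second part: your claim that ``$[\mathbf{x},i]$ is one of the $m$ pairs'' tacitly assumes that $\mathbf{z}$ lies in \emph{exactly} $m$ tiles of~$\mathcal{C}^{(k)}_\mathbf{w}$; if it lies in more, the conclusion follows trivially from the first part, so you may assume without loss of generality that the listed $m$ tiles are all of them, and then your argument goes through verbatim.
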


\begin{proof}
This is an immediate consequence of the set equations~\eqref{e:setequationkl}, the fact that $E_1^*(\sigma_{[k,\ell)})[\mathbf{x},i] \subset \Gamma(\mathbf{w}^{(\ell)})$ for $[\mathbf{x},i] \in \Gamma(\mathbf{w}^{(k)})$ by Lemma~\ref{l:e1star}~(\ref{62ii}) and that $E_1^*(\sigma_{[k,\ell)})[\mathbf{x},i] \cap E_1^*(\sigma_{[k,\ell)})[\mathbf{x}',i'] = \emptyset$ for distinct $[\mathbf{x},i], [\mathbf{x}',i'] \in \Gamma(\mathbf{w}^{(k)})$ by Lemma~\ref{l:e1star}~(\ref{62iii}).
\end{proof}

In particular, Lemma~\ref{l:covering} implies that the covering degree of~$\mathcal{C}^{(n)}_\mathbf{w}$ is less than or equal to that of~$\mathcal{C}^{(n+1)}_\mathbf{w}$, where the \emph{covering degree} of a collection of sets~$\mathcal{K}$ in a Euclidean space~$\mathcal{E}$ is the maximal number~$m$ such that each point of~$\mathcal{E}$ lies in at least $m$ distinct elements of~$\mathcal{K}$. (For locally finite multiple tilings, this agrees with the definition of the covering degree in Section~\ref{sec:multiple-tilings}.)

\begin{proposition} \label{p:covering}
Let $S$ be a finite or  infinite  set of unimodular substitutions over a finite alphabet and
let $\boldsymbol{\sigma} = (\sigma_n)_{n\in\mathbb{N}}\in S^{\mathbb{N}}$ be a primitive, algebraically irreducible, and recurrent directive sequence with balanced language~$\mathcal{L}_{\boldsymbol{\sigma}}$.
Then for each $n \in \mathbb{N}$ and $\mathbf{w} \in \mathbb{R}_{\ge0}^d \setminus \{\mathbf{0}\}$, the collection of tiles~$\mathcal{C}^{(n)}_\mathbf{w}$ covers~$(\mathbf{w}^{(n)})^\bot$ with finite covering degree. For fixed~$\mathbf{w}$, the covering degree of~$\mathcal{C}^{(n)}_\mathbf{w}$ increases monotonically with~$n$.
\end{proposition}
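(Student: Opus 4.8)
The plan is to prove the two easy assertions first and then concentrate on the covering property, which is the genuine content. The monotonicity of the covering degree is immediate: it is exactly the first statement of Lemma~\ref{l:covering}, which shows that a point lying in $m$ tiles of $\mathcal{C}^{(k)}_\mathbf{w}$ forces $(M_{[k,\ell)})^{-1}\mathbf{z}$ to lie in at least $m$ tiles of $\mathcal{C}^{(\ell)}_\mathbf{w}$. For the finiteness of the covering degree I would first note that all subtiles $\mathcal{R}^{(n)}_\mathbf{w}(i)$ are uniformly bounded: since $\mathcal{L}_{\boldsymbol{\sigma}}$ is balanced, Lemma~\ref{l:convhull2} confines them to $\pi^{(n)}_{\mathbf{u},\mathbf{w}}\big([-C,C]^d\cap\mathbf{1}^\bot\big)$, so each has diameter at most some $D_n<\infty$. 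Next I would observe that the base points $\{\pi^{(n)}_{\mathbf{u},\mathbf{w}}\mathbf{x}:[\mathbf{x},i]\in\Gamma(\mathbf{w}^{(n)})\}$ form a uniformly discrete set: the projection direction $(M_{[0,n)})^{-1}\mathbf{u}$ is transverse to $(\mathbf{w}^{(n)})^\bot$ because $\langle\mathbf{w}^{(n)},(M_{[0,n)})^{-1}\mathbf{u}\rangle=\langle\mathbf{w},\mathbf{u}\rangle>0$, while $\Gamma(\mathbf{w}^{(n)})$ is a bounded slab of integer points, and $\pi^{(n)}_{\mathbf{u},\mathbf{w}}$ is injective on $\mathbb{Z}^d$ by the rational independence of the coordinates of $(M_{[0,n)})^{-1}\mathbf{u}$ (Lemma~\ref{l:independent}, transported by the unimodular $(M_{[0,n)})^{-1}$). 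Bounded tiles sitting on a uniformly discrete point set give a locally finite collection; hence every point lies in only finitely many tiles, the covering degree is finite, and the union $\bigcup\mathcal{C}^{(n)}_\mathbf{w}$ is closed.

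It remains to prove that $\mathcal{C}^{(n)}_\mathbf{w}$ actually covers $(\mathbf{w}^{(n)})^\bot$, which I regard as the main obstacle, since we lack the graph-directed self-affine structure of the substitutive case and must substitute for it the infinite family of set equations together with strong convergence. By replacing $\boldsymbol{\sigma}$ with the shifted sequence $(\sigma_{m+n})_{m\in\mathbb{N}}$ (which inherits primitivity, algebraic irreducibility, recurrence and balancedness, and whose Rauzy fractals are the $\mathcal{R}^{(n)}_\mathbf{w}$) it suffices to treat $n=0$. The key comparison is with the genuine tiling of $(\mathbf{w}^{(\ell)})^\bot$ by the projected unit faces $\pi^{(\ell)}_{\mathbf{u},\mathbf{w}}[\mathbf{y},j]$, $[\mathbf{y},j]\in\Gamma(\mathbf{w}^{(\ell)})$, viewed as stepped-hyperplane faces (standard discrete geometry, \cite{Reveilles:91,Fernique:06}). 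Pushing this tiling forward by the linear isomorphism $M_{[0,\ell)}\colon(\mathbf{w}^{(\ell)})^\bot\to\mathbf{w}^\bot$ yields a tiling of $\mathbf{w}^\bot$. Using Lemma~\ref{lem:projections} in the form $M_{[0,\ell)}\pi^{(\ell)}_{\mathbf{u},\mathbf{w}}=\pi_{\mathbf{u},\mathbf{w}}M_{[0,\ell)}$, each inflated face has corner $\pi_{\mathbf{u},\mathbf{w}}M_{[0,\ell)}\mathbf{y}$ and diameter at most $\sum_{k\neq j}\|\pi_{\mathbf{u},\mathbf{w}}M_{[0,\ell)}\mathbf{e}_k\|$, which tends to $0$ by the strong convergence of Proposition~\ref{p:strongconvergence} (via $\pi_{\mathbf{u},\mathbf{w}}=\pi_{\mathbf{u},\mathbf{w}}\pi_{\mathbf{u},\mathbf{1}}$). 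Simultaneously, the set equation \eqref{e:setequationkl} of Proposition~\ref{p:setequation} expresses each level-$0$ tile $\pi_{\mathbf{u},\mathbf{w}}\mathbf{x}+\mathcal{R}_\mathbf{w}(i)$ as the union, over $[\mathbf{y},j]\in E_1^*(\sigma_{[0,\ell)})[\mathbf{x},i]$, of the inflated pieces $M_{[0,\ell)}\big(\pi^{(\ell)}_{\mathbf{u},\mathbf{w}}\mathbf{y}+\mathcal{R}^{(\ell)}_\mathbf{w}(j)\big)$, each of which also clusters near $\pi_{\mathbf{u},\mathbf{w}}M_{[0,\ell)}\mathbf{y}$ with diameter tending to $0$ by Lemmas~\ref{l:convhull2} and~\ref{l:smallsubtiles}.

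I would then combine these facts as follows. Fix $\mathbf{z}\in\mathbf{w}^\bot$. For each $\ell$, the inflated unit faces tile $\mathbf{w}^\bot$, so $\mathbf{z}$ lies in the closure of one of them, whence $\|\mathbf{z}-\pi_{\mathbf{u},\mathbf{w}}M_{[0,\ell)}\mathbf{y}\|\le\varepsilon_\ell$ with $\varepsilon_\ell\to0$ for some $[\mathbf{y},j]\in\Gamma(\mathbf{w}^{(\ell)})$. By Lemma~\ref{l:e1star}~(\ref{62ii}) there is a unique $[\mathbf{x},i]\in\Gamma(\mathbf{w})$ with $[\mathbf{y},j]\in E_1^*(\sigma_{[0,\ell)})[\mathbf{x},i]$, and the corresponding tile $\pi_{\mathbf{u},\mathbf{w}}\mathbf{x}+\mathcal{R}_\mathbf{w}(i)\in\mathcal{C}_\mathbf{w}$ contains the small inflated piece lying within $\varepsilon_\ell$ of $\pi_{\mathbf{u},\mathbf{w}}M_{[0,\ell)}\mathbf{y}$. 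Hence the distance from $\mathbf{z}$ to $\bigcup\mathcal{C}_\mathbf{w}$ is at most $2\varepsilon_\ell\to0$, and since $\bigcup\mathcal{C}_\mathbf{w}$ is closed by the local finiteness established above, $\mathbf{z}$ is covered. As $\mathbf{z}$ was arbitrary, $\mathcal{C}_\mathbf{w}$ covers $\mathbf{w}^\bot$, and the general $n$ follows by the shift reduction; together with the monotonicity from Lemma~\ref{l:covering} this yields the proposition. The delicate point throughout is that both the unit-face tiling and the Rauzy collection must be shown to degenerate to the same point configuration under inflation, so that strong convergence and the set equations jointly play the role that self-similarity plays in the periodic case.
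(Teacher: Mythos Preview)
Your covering argument at level $n=0$ is correct and essentially mirrors the paper's: both show that $\bigcup\mathcal{C}_\mathbf{w}$ is dense in $\mathbf{w}^\bot$ via strong convergence plus the set equations, and then use local finiteness to pass from dense to closed. The only real difference is how density is reached. The paper observes directly that the point set $\{\pi_{\mathbf{u},\mathbf{w}}\,\mathbf{y}:\mathbf{y}\in\mathbb{Z}^d,\ 0\le\langle\mathbf{w},\mathbf{y}\rangle\}$ is dense in~$\mathbf{w}^\bot$ by the rational independence of~$\mathbf{u}$ (Lemma~\ref{l:independent}), and that the inflated lattice points $M_{[0,\ell)}\pi^{(\ell)}_{\mathbf{u},\mathbf{w}}\mathbf{x}$ exhaust this set as $\ell\to\infty$. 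You instead invoke the fact that the projected unit faces of $\Gamma(\mathbf{w}^{(\ell)})$ tile $(\mathbf{w}^{(\ell)})^\bot$, so that after inflation every point of $\mathbf{w}^\bot$ lies within a shrinking face and hence near a corner. This sidesteps the explicit appeal to Lemma~\ref{l:independent}, at the cost of importing the stepped-surface tiling fact; both routes are short and valid.

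There is one genuine gap: your shift reduction asserts that the sequence $(\sigma_{m+n})_{m\in\mathbb{N}}$ ``inherits \ldots\ balancedness'', i.e.\ that $\mathcal{L}_{\boldsymbol{\sigma}}^{(n)}$ is balanced. This is \emph{not} part of the hypotheses and does not follow from balancedness of~$\mathcal{L}_{\boldsymbol{\sigma}}$ alone; Proposition~\ref{p:strongconvergence} only controls $\pi_{\mathbf{u},\mathbf{1}}M_{[0,n)}\mathbf{l}(v)$, not $\pi_{(M_{[0,n)})^{-1}\mathbf{u},\mathbf{1}}\mathbf{l}(v)$. Consequently you cannot apply Lemma~\ref{l:convhull2} at level~$n>0$ to bound $\mathcal{R}^{(n)}_\mathbf{w}(i)$. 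The paper avoids this by proving covering and finite degree only for $n=0$ (where balancedness of~$\mathcal{L}_{\boldsymbol{\sigma}}$ suffices), then obtaining covering for $n>0$ from the monotonicity you already have, and finite degree for $n>0$ from the crude inductive bound: the covering degree of $\mathcal{C}^{(n+1)}_\mathbf{w}$ is at most $\max_{i}\sum_{j}|\sigma_n(j)|_i$ times that of~$\mathcal{C}^{(n)}_\mathbf{w}$ (which follows from the set equation~\eqref{e:setequationkl} and Lemma~\ref{l:e1star}~(\ref{62ii})). Replace your shift reduction by this inductive step and the proof is complete.
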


\begin{proof}
By the set equations~\eqref{e:setequationkl} and Lemma~\ref{l:e1star}~(\ref{62ii}), we have
\begin{equation} \label{e:Cv}
\bigcup_{\mathcal{T}\in\mathcal{C}_\mathbf{w}} \mathcal{T} = \bigcup_{[\mathbf{x},i] \in \Gamma(\mathbf{w})} \big(\pi_{\mathbf{u},\mathbf{w}}\, \mathbf{x} + \mathcal{R}_\mathbf{w}(i)\big) = \bigcup_{[\mathbf{x},i] \in \Gamma(\mathbf{w}^{(n)})} M_{[0,n)}\, \big(\pi_{\mathbf{u},\mathbf{w}}^{(n)}\, \mathbf{x} + \mathcal{R}^{(n)}_\mathbf{w}(i)\big)
\end{equation}
for each $n \in \mathbb{N}$.
Moreover, $\mathbf{w}^{(n)} = \tr{(M_{[0,n)})}\, \mathbf{w}$ and $M_{[0,n)}\, \mathbb{Z}^d = \mathbb{Z}^d$ (by unimodularity) imply that
\begin{align*}
\{M_{[0,n)}\, \pi_{\mathbf{u},\mathbf{w}}^{(n)}\, \mathbf{x}:\, [\mathbf{x},i] \in \Gamma(\mathbf{w}^{(n)})\} & = \{\pi_{\mathbf{u},\mathbf{w}}\, M_{[0,n)}\, \mathbf{x}:\, \mathbf{x} \in \mathbb{Z}^d,\, 0 \le \langle \mathbf{w}^{(n)}, \mathbf{x} \rangle < \max_{i\in\mathcal{A}} \langle \mathbf{w}^{(n)}, \mathbf{e}_i \rangle\} \\
& = \{\pi_{\mathbf{u},\mathbf{w}}\, \mathbf{y}:\, \mathbf{y} \in \mathbb{Z}^d,\, 0 \le \langle \mathbf{w}, \mathbf{y} \rangle < \max_{i\in\mathcal{A}} \langle \mathbf{w}, M_{[0,n)}\, \mathbf{e}_i \rangle\}.
\end{align*}
As $\mathbf{u}$ has rationally independent coordinates by Lemma~\ref{l:independent}, the set $\{\pi_{\mathbf{u},\mathbf{w}}\, \mathbf{y}:\, \mathbf{y} \in \mathbb{Z}^d,\, 0 \le \langle \mathbf{w}, \mathbf{y} \rangle\}$ is dense in $\mathbf{w}^\bot$. 
Observing that $\lim_{n\to\infty} \max_{i\in\mathcal{A}} \langle \mathbf{w}, M_{[0,n)}\, \mathbf{e}_i \rangle = \infty$ by the primitivity of $(\sigma_n)_{n\in\mathbb{N}}$, we obtain that 
\[
\lim_{n\to\infty} \{M_{[0,n)}\, \pi_{\mathbf{u},\mathbf{w}}^{(n)}\, \mathbf{x}:\, [\mathbf{x},i] \in \Gamma(\mathbf{w}^{(n)})\} = \overline{\{\pi_{\mathbf{u},\mathbf{w}}\, \mathbf{y}:\, \mathbf{y} \in \mathbb{Z}^d,\, 0 \le \langle \mathbf{w}, \mathbf{y} \rangle\}} = \mathbf{w}^\bot,
\]
where the limit is taken with respect to the Hausdorff metric. 
Since $\lim_{n\to\infty} M_{[0,n)} \mathcal{R}^{(n)}_\mathbf{w}(i) = \{\mathbf{0}\}$ by Lemma~\ref{l:smallsubtiles}, this implies together with \eqref{e:Cv} that $\overline{\bigcup_{\mathcal{T}\in\mathcal{C}_\mathbf{w}} \mathcal{T}} = \mathbf{w}^\bot$.
As $\mathcal{C}_\mathbf{w}$ is a locally finite collection of compact sets, this proves that $\mathcal{C}_\mathbf{w}$ covers~$\mathbf{w}^\bot$ and, hence, $\mathcal{C}^{(n)}_\mathbf{w}$ covers~$(\mathbf{w}^{(n)})^\bot$.

As $\pi_{\mathbf{u},\mathbf{w}}\, \Gamma(\mathbf{w})$ is uniformly discrete in~$\mathbf{w}^\bot$ and the elements of~$\mathcal{C}_\mathbf{w}$ are translations of the subtiles~$\mathcal{R}_\mathbf{w}(i)$, which are compact by Lemma~\ref{l:bounded}, $\mathcal{C}^{(0)}_\mathbf{w}$~has finite covering degree.
By Lemma~\ref{l:covering}, the covering degree of~$\mathcal{C}^{(n)}_\mathbf{w}$ is a monotonically increasing function in~$n$.
By the set equations~\eqref{e:setequationkl}, Lemma~\ref{l:e1star}~(\ref{62ii}) and the definition of~$E_1^*$ in~\eqref{eq:dualsubst}, we also see that the covering degree of~$\mathcal{C}^{(n+1)}_\mathbf{w}$ is bounded by $\max_{i\in\mathcal{A}} \sum_{j\in\mathcal{A}} |\sigma_n(j)|_i$ times the covering degree of~$\mathcal{C}^{(n)}_\mathbf{w}$.
\end{proof}

We also need the following result about locally finite compact coverings (its proof is easy).

\begin{lemma}\label{lem:intcovering}
Let $\mathcal{K}$ be a locally finite covering of~$\mathbb{R}^k$ by compact sets. If $\mathcal{K}$ has covering degree~$m$ and $\mathbf{z} \in \mathbb{R}^k$ is contained in exactly $m$ elements of~$\mathcal{K}$, then $\mathbf{z}$ is contained in the interior of each of these $m$ elements.
\end{lemma}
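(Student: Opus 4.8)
The plan is to exploit the two hypotheses---local finiteness and the fact that the covering degree is a \emph{minimum} that is attained at $\mathbf{z}$---to produce an open neighborhood of $\mathbf{z}$ that is swallowed by each of the $m$ tiles containing it. First I would name the protagonists: let $K_1, \dots, K_m \in \mathcal{K}$ be exactly the elements containing $\mathbf{z}$. By local finiteness there is an open neighborhood $U$ of $\mathbf{z}$ that meets only finitely many elements of $\mathcal{K}$; enumerate the ones meeting $U$ but \emph{not} containing $\mathbf{z}$ as $K_{m+1}, \dots, K_N$.

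The next step is to carve out a cleaner neighborhood. For each $j$ with $m < j \le N$ we have $\mathbf{z} \notin K_j$, and $K_j$ is closed (being compact), so $\mathbb{R}^k \setminus K_j$ is an open set containing $\mathbf{z}$. Intersecting, I would set
\[
W = U \cap \bigcap_{j=m+1}^{N} \big(\mathbb{R}^k \setminus K_j\big),
\]
which is an open neighborhood of $\mathbf{z}$ (a finite intersection of open sets). By construction $W \subseteq U$, so the only elements of $\mathcal{K}$ that can intersect $W$ are among $K_1, \dots, K_N$; and since $W$ avoids $K_{m+1}, \dots, K_N$, in fact \emph{no} element of $\mathcal{K}$ other than $K_1, \dots, K_m$ meets $W$.

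The final step invokes the covering degree. Every point of $W$ lies in at least $m$ distinct elements of $\mathcal{K}$; but the only elements available to cover points of $W$ are $K_1, \dots, K_m$. Hence every point of $W$ must lie in all $m$ of them, i.e.\ $W \subseteq K_i$ for each $1 \le i \le m$. Since $W$ is an open neighborhood of $\mathbf{z}$ contained in $K_i$, the point $\mathbf{z}$ lies in the interior of $K_i$, and this holds for every $i$, as desired.

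There is no serious obstacle here; the argument is elementary. The only point requiring a little care---and the reason the compactness (or at least closedness) hypothesis is used---is the passage from $U$ to $W$: one must simultaneously separate $\mathbf{z}$ from the finitely many nearby tiles that do not contain it, which is exactly what closedness of those finitely many sets provides. Local finiteness is precisely what keeps this collection finite so that the intersection defining $W$ stays open.
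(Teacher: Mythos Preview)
Your proof is correct and is precisely the standard argument one would expect. The paper itself does not supply a proof of this lemma at all, remarking only that ``its proof is easy''; your write-up fills in that gap cleanly.
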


\subsection{Interior of Rauzy fractals}
We are now in a position to show that the Rauzy fractals are the closure of their interior.

\begin{proposition} \label{p:closint}
Let $S$ be a finite or  infinite set of unimodular substitutions over a finite alphabet $\mathcal{A}$ and let $\boldsymbol{\sigma}\in S^{\mathbb{N}}$ be a primitive, algebraically irreducible, and recurrent directive sequence with balanced language~$\mathcal{L}_{\boldsymbol{\sigma}}$.
Then each~$\mathcal{R}(i)$, $i \in \mathcal{A}$, is the closure of its interior.
\end{proposition}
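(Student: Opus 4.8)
The plan is to prove the nontrivial inclusion $\mathcal{R}(i)\subseteq\overline{\mathrm{int}\,\mathcal{R}(i)}$ (the reverse is automatic since $\mathcal{R}(i)$ is compact), i.e.\ that the interior is dense in each subtile. Since $\mathcal{R}(i)$ is by definition the closure of the projected prefix points $\pi_{\mathbf{u},\mathbf{1}}\,\mathbf{l}(p)$ with $p\,i$ a prefix of a limit word, and $\overline{\mathrm{int}\,\mathcal{R}(i)}$ is closed, it suffices to produce interior points of $\mathcal{R}(i)$ arbitrarily close to any such point. I would extract these from the set equations \eqref{e:setequationkl}: for each $n$ they subdivide $\mathcal{R}(i)$ into finitely many affine images $M_{[0,n)}\big(\pi_{\mathbf{u},\mathbf{1}}^{(n)}\,\mathbf{y}+\mathcal{R}^{(n)}_\mathbf{1}(j)\big)$ of the desubstituted subtiles, and by Lemma~\ref{l:smallsubtiles} the diameters of these pieces tend to~$0$. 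So the piece $P_n$ containing a given projected prefix point shrinks down to it, and provided $P_n$ has nonempty interior it supplies interior points of $\mathcal{R}(i)$ within distance $\mathrm{diam}\,P_n\to0$ (note $P_n\subseteq\mathcal{R}(i)$, and $M_{[0,n)}$ is a linear isomorphism between the hyperplanes by Lemma~\ref{l:e1star}, hence preserves interiors). Everything thus reduces to the claim that \emph{every} desubstituted subtile $\mathcal{R}^{(n)}_\mathbf{1}(i)$, for all $n$ and all $i$, has nonempty interior.

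To seed this, I would first produce one interior point. By Proposition~\ref{p:covering} the collection $\mathcal{C}_\mathbf{1}$ is a locally finite covering of $\mathbf{1}^\bot$ by the compact subtiles $\mathcal{R}(i)$ (compactness from Lemma~\ref{l:bounded}), with finite covering degree $m\ge1$. By maximality of $m$ some point lies in exactly $m$ tiles, and Lemma~\ref{lem:intcovering} then places it in the interior of each; in particular some subtile $\mathcal{R}(i_0)$ has nonempty interior.

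The crux, and the step I expect to be the main obstacle, is upgrading this single interior point to \emph{all} subtiles at \emph{all} levels, since we lack the graph-directed self-affine structure of the substitutive case and the desubstituted languages $\mathcal{L}^{(n)}_{\boldsymbol{\sigma}}$ need not be uniformly balanced. My idea is to transport interiors along the set equations in both directions. Fix a target pair $(n,i)$ and, by primitivity, choose $\ell>n$ with $M_{[n,\ell)}$ positive. Writing $\mathcal{R}(i_0)$ via \eqref{e:setequationkl} (with $k=0$, $\mathbf{x}=\mathbf{0}$) as a finite union of the closed sets $M_{[0,\ell)}\big(\pi_{\mathbf{u},\mathbf{1}}^{(\ell)}\,\mathbf{y}+\mathcal{R}^{(\ell)}_\mathbf{1}(j)\big)$, the Baire category theorem (a nonempty open subset of $\mathbf{1}^\bot$ cannot be a finite union of nowhere dense closed sets) forces one of them, hence some $\mathcal{R}^{(\ell)}_\mathbf{1}(j_\ell)$, to have nonempty interior. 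Positivity of $M_{[n,\ell)}$ means the letter $i$ occurs in $\sigma_{[n,\ell)}(j_\ell)$, so $[\mathbf{y},j_\ell]\in E_1^*(\sigma_{[n,\ell)})[\mathbf{0},i]$ for a suitable $\mathbf{y}$; the corresponding piece in the set equation for $\mathcal{R}^{(n)}_\mathbf{1}(i)$ is then an interior-preserving image of $\mathcal{R}^{(\ell)}_\mathbf{1}(j_\ell)$, so $\mathcal{R}^{(n)}_\mathbf{1}(i)$ has nonempty interior. This establishes the reduced claim, and together with the first paragraph completes the proof. A convenient feature of the Baire step is that it needs only closedness of the finitely many pieces (immediate, as they are affine images of closures), so it bypasses the level-$\ell$ covering and I never have to verify any uniform compactness of the desubstituted tile collection.
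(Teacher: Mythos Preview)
Your proof is correct and follows essentially the same route as the paper's: reduce to showing $\mathrm{int}\,\mathcal{R}^{(n)}_\mathbf{1}(j)\neq\emptyset$ for all $n,j$, then use the set equations and Lemma~\ref{l:smallsubtiles} to conclude. The only cosmetic difference is that the paper applies Baire directly to the countable covering~$\mathcal{C}^{(n)}_\mathbf{1}$ at each level~$n$ (Proposition~\ref{p:covering}) to obtain some $j$ with $\mathrm{int}\,\mathcal{R}^{(n)}_\mathbf{1}(j)\neq\emptyset$, and then uses primitivity via the set equation to reach all letters; you instead seed at level~$0$ via Lemma~\ref{lem:intcovering} and push the interior down to level~$\ell$ by applying Baire to the finite decomposition in~\eqref{e:setequationkl}, which is an equally valid (and slightly more self-contained) variant.
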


\begin{proof}
By Proposition~\ref{p:covering} and Baire's theorem, for each $n \in \mathbb{N}$, we have $\mathrm{int}(\mathcal{R}^{(n)}(i)) \ne \emptyset$ for some $i \in \mathcal{A}$. 
By the set equation in \eqref{e:setequationkl} and primitivity, we get that $\mathrm{int}(\mathcal{R}^{(n)}(i)) \ne \emptyset$ for all $i \in \mathcal{A}$, $n \in \mathbb{N}$. 
Therefore, again the set equation~\eqref{e:setequationkl} yields subdivisions of $\mathcal{R}_\mathbf{w}(i)$, $i \in \mathcal{A}$, into tiles with non-empty interior whose diameters tend to~$0$ by Lemma~\ref{l:smallsubtiles}.
This proves the result.
\end{proof}

\subsection{Boundary of Rauzy fractals} 
Our next task is to show that the boundary of~$\mathcal{R}(i)$ has zero measure for each $i \in \mathcal{A}$.
The proof of this result is quite technical and requires several preparatory lemmas. 
First, we show that each ``patch'' of $\Gamma(\mathbf{w})$ occurs relatively densely in each discrete hyperplane $\Gamma(\tilde{\mathbf{w}})$ with $\tilde{\mathbf{w}}$ sufficiently close to~$\mathbf{w}$.
Note that this  statement is a crucial step  and requires  the use of   new  ideas with respect to the substitutive case, since we lose here  the possibility of using  a classical Perron-Frobenius argument (see   e.g. in \cite{ST09,CANTBST}).

\begin{lemma}\label{lemma0b}\label{l:relativelydense}
Let $r > 0$, $\mathbf{w} \in \mathbb{R}_{\ge0}^d\setminus\{\mathbf{0}\}$, and define the patch
\[
P = \{[\mathbf{x},i] \in \Gamma(\mathbf{w}):\, \|\mathbf{x}\|\le r \}.
\]
There exist $\delta, R > 0$ such that, for each $\tilde{\mathbf{w}} \in \mathbb{R}_{\ge0}^d \setminus \{\mathbf{0}\}$ with $\|\tilde{\mathbf{w}} - \mathbf{w}\| \le \delta$ and each $[\mathbf{z},j] \in \Gamma(\tilde{\mathbf{w}})$, \begin{equation}\label{eq:samepatch}
\{[\mathbf{x},i] \in \Gamma(\tilde{\mathbf{w}}):\, \|\mathbf{x} - \mathbf{y}\| \le r\} = P + \mathbf{y}
\end{equation}
for some $\mathbf{y} \in \mathbb{Z}^d$ with $\|\mathbf{y} - \mathbf{z}\| \le R$.
\end{lemma}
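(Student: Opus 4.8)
The plan is to turn the set equality \eqref{eq:samepatch} into finitely many sign conditions on inner products, and then to satisfy those conditions by choosing $\mathbf{y}$ so that $\langle\tilde{\mathbf{w}},\mathbf{y}\rangle$ lands in a short interval that the lattice hits relatively densely.

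\emph{Reduction to sign conditions.} Writing a candidate patch point around $\mathbf{y}$ as $\mathbf{y}+\mathbf{s}$ with $\|\mathbf{s}\|\le r$, the equality \eqref{eq:samepatch} is equivalent to the biconditional $[\mathbf{y}+\mathbf{s},i]\in\Gamma(\tilde{\mathbf{w}})\Leftrightarrow[\mathbf{s},i]\in\Gamma(\mathbf{w})$ for all $i\in\mathcal{A}$ and all $\mathbf{s}\in\mathbb{Z}^d$ with $\|\mathbf{s}\|\le r$. By the definition of $\Gamma(\cdot)$ each membership splits into $\langle\cdot,\mathbf{s}\rangle\ge0$ and $\langle\cdot,\mathbf{s}-\mathbf{e}_i\rangle<0$. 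Introducing the finite set $V=\{\mathbf{s}:\|\mathbf{s}\|\le r\}\cup\{\mathbf{s}-\mathbf{e}_i:\|\mathbf{s}\|\le r,\ i\in\mathcal{A}\}$ and setting $\tau=\langle\tilde{\mathbf{w}},\mathbf{y}\rangle$, it then suffices to arrange, for every $\mathbf{d}\in V$, that $\tau+\langle\tilde{\mathbf{w}},\mathbf{d}\rangle\ge0$ holds if and only if $\langle\mathbf{w},\mathbf{d}\rangle\ge0$ (the half-open convention in $\Gamma$ makes the left condition $\langle\cdot,\mathbf{s}\rangle\ge0$ and the right condition $\langle\cdot,\mathbf{s}-\mathbf{e}_i\rangle<0$ behave identically at the threshold $0$).

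\emph{The window for $\tau$.} Let $\delta_0>0$ be the smallest nonzero value of $|\langle\mathbf{w},\mathbf{d}\rangle|$ over $\mathbf{d}\in V$, and let $M=\max_{\mathbf{d}\in V}\sum_j|d_j|$. For $\|\tilde{\mathbf{w}}-\mathbf{w}\|\le\delta$ one has $|\langle\tilde{\mathbf{w}}-\mathbf{w},\mathbf{d}\rangle|\le\delta M$, so as soon as $\delta<\delta_0/(2M)$ the sign of $\langle\tilde{\mathbf{w}},\mathbf{d}\rangle$ agrees with that of $\langle\mathbf{w},\mathbf{d}\rangle$ whenever the latter is nonzero. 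A short computation then shows that the sign conditions above hold precisely when $\tau\in[\tau_-,\tau_+)$, with $\tau_-=\max\{-\langle\tilde{\mathbf{w}},\mathbf{d}\rangle:\langle\mathbf{w},\mathbf{d}\rangle\ge0\}$ and $\tau_+=\min\{-\langle\tilde{\mathbf{w}},\mathbf{d}\rangle:\langle\mathbf{w},\mathbf{d}\rangle<0\}$. Since $\mathbf{0}\in V$ we get $0\le\tau_-\le\delta M$ and $\tau_+\ge\delta_0-\delta M$, so this interval always contains the fixed sub-interval $[\delta M,\delta_0-\delta M)$, of length at least $\delta_0-2\delta M>0$. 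The problem is thereby reduced to finding, within bounded distance $R$ of any prescribed $\mathbf{z}$, a lattice point $\mathbf{y}$ with $\langle\tilde{\mathbf{w}},\mathbf{y}\rangle\in[\tau_-,\tau_+)$.

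\emph{Relative density, uniform in $\tilde{\mathbf{w}}$.} The decisive simplification is that $\langle\tilde{\mathbf{w}},\mathbf{z}\rangle$ is bounded for every $[\mathbf{z},j]\in\Gamma(\tilde{\mathbf{w}})$, namely $0\le\langle\tilde{\mathbf{w}},\mathbf{z}\rangle<\langle\tilde{\mathbf{w}},\mathbf{e}_j\rangle\le\max_i w_i+\delta$. Hence $\mathbf{y}$ need only be reached from $\mathbf{z}$ by altering the value $\langle\tilde{\mathbf{w}},\cdot\rangle$ by a bounded amount so as to enter a window of fixed positive length. What must be shown is that lattice points realising such a value recur with a gap that is bounded \emph{uniformly} in $\tilde{\mathbf{w}}$ near~$\mathbf{w}$. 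I would obtain this by a compactness argument: were no uniform pair $(\delta,R)$ to work, one could take $\tilde{\mathbf{w}}_k\to\mathbf{w}$ and $[\mathbf{z}_k,j_k]\in\Gamma(\tilde{\mathbf{w}}_k)$ violating the conclusion for $R=k$; translating by $-\mathbf{z}_k$ and passing to a subsequence (the alphabet is finite and $\langle\tilde{\mathbf{w}}_k,\mathbf{z}_k\rangle$ is bounded) produces a limiting configuration governed by $\mathbf{w}$ together with a single intercept $\tau_*\in[0,\max_i w_i]$, in which a bounded translate of $P$ is readily exhibited and then transported back to large~$k$, giving a contradiction.

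\emph{Main obstacle.} The routine sign bookkeeping of the first two steps is not where the difficulty lies; the real work is the uniformity of the last step when $\tilde{\mathbf{w}}$ is close to a direction with rationally dependent coordinates. There the small available steps in the value line $\langle\tilde{\mathbf{w}},\cdot\rangle$ may become arbitrarily fine or vanish, so the bound $R$ has to be extracted delicately, and the half-open thresholds in the definition of $\Gamma$ force one to place the translate of $P$ in the open interior of the admissible window $[\tau_-,\tau_+)$, so that the patch match is robust and survives the passage from the limiting direction $\mathbf{w}$ back to $\tilde{\mathbf{w}}_k$. Handling these boundary and degeneracy cases is the crux of the proof.
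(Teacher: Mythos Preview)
Your first two steps are correct and essentially coincide with the paper's setup: both reduce the patch equality to the requirement that $\tau=\langle\tilde{\mathbf{w}},\mathbf{y}\rangle$ lie in a half-open interval $[\tau_-,\tau_+)$ of length uniformly bounded below. The gap is in your third step. You propose a compactness argument for the relative density, but you yourself flag the obstacle and do not resolve it: in the limit configuration at~$\mathbf{w}$, the achievable values $\{\langle\mathbf{w},\mathbf{y}'\rangle:\mathbf{y}'\in\mathbb{Z}^d\}$ may form a discrete subgroup and only meet the \emph{endpoints} of the admissible window, and such boundary solutions are exactly the ones that do not survive transport back to~$\tilde{\mathbf{w}}_k$. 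Your closing paragraph concedes that handling this is ``the crux'', so as written the proof is incomplete.

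The paper bypasses compactness with a one-line constructive observation that you are missing: the lattice vectors witnessing the window endpoints already have bounded norm. In your notation, let $\mathbf{d}^*,\mathbf{d}^{**}\in V$ realise $\tau_-=-\langle\tilde{\mathbf{w}},\mathbf{d}^*\rangle$ and $\tau_+=-\langle\tilde{\mathbf{w}},\mathbf{d}^{**}\rangle$; then $\mathbf{a}:=\mathbf{d}^*-\mathbf{d}^{**}\in\mathbb{Z}^d$ has $\|\mathbf{a}\|\le 2(r+1)$ and $\langle\tilde{\mathbf{w}},\mathbf{a}\rangle=\tau_+-\tau_-$, the \emph{exact} window length. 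Setting $\mathbf{y}=\mathbf{z}-h\,\mathbf{a}$, the condition $\langle\tilde{\mathbf{w}},\mathbf{y}\rangle\in[\tau_-,\tau_+)$ determines $h\in\mathbb{Z}$ uniquely, and since $0\le\langle\tilde{\mathbf{w}},\mathbf{z}\rangle<\|\tilde{\mathbf{w}}\|$ is bounded while $\langle\tilde{\mathbf{w}},\mathbf{a}\rangle\ge\delta_0-2\delta M>0$ is bounded below, $|h|$ is bounded by an explicit constant, giving $\|\mathbf{y}-\mathbf{z}\|\le|h|\,\|\mathbf{a}\|\le R$. No limiting procedure, no boundary cases.
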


\begin{proof}
The set $\{[\mathbf{x},i] \in \mathbb{Z}^d \times \mathcal{A}:\, \|\mathbf{x}\| \le r\}$ admits the partition $\{P, P^+, P^-\}$, with
\begin{align*}
P^+ & = \{[\mathbf{x},i] \in \mathbb{Z}^d \times \mathcal{A}:\, \|\mathbf{x}\| \le r,\, \langle \mathbf{w},\mathbf{x} \rangle \ge \langle \mathbf{w},\mathbf{e}_i \rangle\}, \\
P^- & = \{[\mathbf{x},i] \in \mathbb{Z}^d \times \mathcal{A}:\, \|\mathbf{x}\| \le r,\, \langle \mathbf{w},\mathbf{x} \rangle < 0\}.
\end{align*}

Let $\eta_1 = \min_{[\mathbf{x},i] \in P} \langle \mathbf{w}, \mathbf{e}_i - \mathbf{x} \rangle > 0$, $\eta_2 = \min_{[\mathbf{x},i] \in P^-} \langle \mathbf{w}, - \mathbf{x} \rangle > 0$, and set $\eta=\min\{\eta_1,\eta_2\}$.
Choose $\delta > 0$ such that for all $\tilde{\mathbf{w}} \in \mathbb{R}_{\ge0}^d$ with $\|\tilde{\mathbf{w}} - \mathbf{w}\| \le \delta$ we have
\begin{equation} \label{e:eta}
\min_{[\mathbf{x},i] \in P} \langle \tilde{\mathbf{w}}, \mathbf{e}_i - \mathbf{x} \rangle \ge 2\eta/3 \qquad \mbox{and} \qquad  \min_{[\mathbf{x},i] \in P^-} \langle \tilde{\mathbf{w}}, -\mathbf{x} \rangle \ge 2\eta/3,
\end{equation}
as well as 
\begin{equation} \label{e:eta_b}
\min_{[\mathbf{x},i] \in P} \langle \tilde{\mathbf{w}}, \mathbf{x} \rangle \ge -\eta/3 \qquad \mbox{and} \qquad  \min_{[\mathbf{x},i] \in P^+} \langle \tilde{\mathbf{w}}, \mathbf{x} -\mathbf{e}_i\rangle \ge - \eta/3,
\end{equation}
and set $R = 6\, (r+1)\, (\|\mathbf{w}\|+\delta) / \eta$.

Let now $[\mathbf{z},j] \in \Gamma(\tilde{\mathbf{w}})$ with $\|\tilde{\mathbf{w}} - \mathbf{w}\| \le \delta$. 
To find $\mathbf{y} \in \mathbb{Z}^d$ satisfying $\|\mathbf{y} - \mathbf{z}\| \le R$ and~\eqref{eq:samepatch}, choose $\mathbf{x}', \mathbf{x}'' \in\mathbb{Z}^d$ with $\|\mathbf{x}'\|, \|\mathbf{x}''\| \le r+1$ such that $\langle \tilde{\mathbf{w}}, \mathbf{x}' \rangle$ is equal to the smaller of the two minima in~\eqref{e:eta}, and $\langle \tilde{\mathbf{w}}, \mathbf{x}'' \rangle$ is equal to the smaller of the two minima in~\eqref{e:eta_b}; this choice is possible by the definition of the minima.
Let $\mathbf{y} = \mathbf{z} -  h\, (\mathbf{x}' + \mathbf{x}'')$ with $h \in \mathbb{Z}$ such that 
\begin{equation} \label{e:yh}
- \langle \tilde{\mathbf{w}}, \mathbf{x}'' \rangle \le \langle \tilde{\mathbf{w}}, \mathbf{z} - h\, (\mathbf{x}' + \mathbf{x}'') \rangle < \langle \tilde{\mathbf{w}}, \mathbf{x}' \rangle\,;
\end{equation}
such an $h$ exists (uniquely) since $\langle \tilde{\mathbf{w}}, \mathbf{x}' + \mathbf{x}'' \rangle \ge \eta/3 > 0$ by \eqref{e:eta} and~\eqref{e:eta_b}.

Let $[\mathbf{x},i] \in \mathbb{Z}^d \times \mathcal{A}$ with $\|\mathbf{x}\| \le r$.
By \eqref{e:yh} and the definition of $\mathbf{x}'$ and~$\mathbf{x}''$, we have
\begin{align*}
\langle \tilde{\mathbf{w}}, \mathbf{x} + \mathbf{y} \rangle < \langle \tilde{\mathbf{w}}, \mathbf{x} + \mathbf{x}' \rangle & \le \begin{cases}\langle \tilde{\mathbf{w}}, \mathbf{e}_i \rangle & \mbox{if}\ [\mathbf{x},i] \in P, \\ 0 & \mbox{if}\ [\mathbf{x},i] \in P^-,\end{cases} \\
\langle \tilde{\mathbf{w}}, \mathbf{x} + \mathbf{y} \rangle \ge \langle \tilde{\mathbf{w}}, \mathbf{x} - \mathbf{x}'' \rangle & \ge \begin{cases}0 & \mbox{if}\ [\mathbf{x},i] \in P, \\ \langle \tilde{\mathbf{w}}, \mathbf{e}_i \rangle & \mbox{if}\ [\mathbf{x},i] \in P^+,\end{cases}
\end{align*}
thus $[\mathbf{x}+\mathbf{y},i] \in \Gamma(\tilde{\mathbf{w}})$ if $[\mathbf{x},i] \in P$ and $[\mathbf{x}+\mathbf{y},i] \notin \Gamma(\tilde{\mathbf{w}})$ if $[\mathbf{x},i] \in P^- \cup P^+$, i.e., \eqref{eq:samepatch} holds. 

To show that $\|\mathbf{y} - \mathbf{z}\| \le R$, note that $\frac{\langle\tilde{\mathbf{w}},\mathbf{z}+\mathbf{x}''\rangle}{\langle\tilde{\mathbf{w}},\mathbf{x}'+\mathbf{x}''\rangle} - 1 < h \le \frac{\langle\tilde{\mathbf{w}},\mathbf{z}+\mathbf{x}''\rangle}{\langle\tilde{\mathbf{w}},\mathbf{x}'+\mathbf{x}''\rangle}$.
Using the equalities $-\eta/3 \le \langle \tilde{\mathbf{w}}, \mathbf{x}'' \rangle \le 0$ (given by~\eqref{e:eta_b} and since $[\mathbf{0},i] \in P$), $0 \le \langle \tilde{\mathbf{w}}, \mathbf{z} \rangle \le \langle \tilde{\mathbf{w}}, \mathbf{e}_j \rangle \le \|\tilde{\mathbf{w}}\| \le \|\mathbf{w}\| + \delta$, and $\langle \tilde{\mathbf{w}}, \mathbf{x}' + \mathbf{x}'' \rangle \ge \eta/3$, we obtain that $-2 < h \le 3\,(\|\mathbf{w}\| + \delta)/\eta$, thus $|h| \le 3\,(\|\mathbf{w}\| + \delta)/\eta$ and
\[
\|\mathbf{y} - \mathbf{z}\| \le |h|\, (\|\mathbf{x}'\| + \|\mathbf{x}''\|) \le 6\, (r+1)\, (\|\mathbf{w}\|+\delta) / \eta = R. \qedhere
\]
\end{proof}

\begin{lemma} \label{l:interiornk}
Assume that the sequence $\boldsymbol{\sigma} = (\sigma_n)\in S^{\mathbb{N}}$ of unimodular substitutions has Property PRICE w.r.t.\ the sequences $(n_k)$ and $(\ell_k)$ and the vector~$\mathbf{v}$.
Then there exists $\ell \in \mathbb{N}$ such that for each pair $i, j \in \mathcal{A}$, there is $[\mathbf{y},j] \in E_1^*(\sigma_{[0,\ell)})[\mathbf{0},i]$ such that
\renewcommand{\theenumi}{\roman{enumi}}
\begin{enumerate}
\itemsep1ex
\item \label{i:66i}
$M_{[0,\ell)}\, \big(\pi_{\mathbf{u},\mathbf{v}}^{(\ell)}\, \mathbf{y} + \mathcal{R}^{(\ell)}_\mathbf{v}(j)\big) \subset \mathrm{int}\big(\mathcal{R}_\mathbf{v}(i)\big)$ and 
\item \label{i:66ii}
$M_{[0,\ell)}\, \big(\pi_{\mathbf{u},\mathbf{v}}^{(n_k+\ell)}\, \mathbf{y} + \mathcal{R}^{(n_k+\ell)}_\mathbf{v}(j)\big) \subset \mathrm{int}\big(\mathcal{R}^{(n_k)}_\mathbf{v}(i)\big)$ for all sufficiently large $k \in \mathbb{N}$.
\end{enumerate}
Moreover, the covering degree of~$\mathcal{C}_\mathbf{v}^{(n)}$ is equal to that of~$\mathcal{C}_\mathbf{v}$ for all $n \in \mathbb{N}$. 
\end{lemma}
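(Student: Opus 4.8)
The plan is to prove the two inclusions and the covering-degree statement in three stages: part (\ref{i:66i}) by a soft topological argument, and part (\ref{i:66ii}) together with the \emph{moreover} assertion by transferring the local picture of the tiling $\mathcal{C}_\mathbf{v}$ to the tilings $\mathcal{C}^{(n_k)}_\mathbf{v}$.

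First I would settle (\ref{i:66i}). By the $\mathbf{v}$-analogue of Proposition~\ref{p:closint} (whose proof uses only the set equations~\eqref{e:setequationkl}, the covering Proposition~\ref{p:covering}, and the shrinking Lemma~\ref{l:smallsubtiles}, all valid for arbitrary $\mathbf{w}\in\mathbb{R}^d_{\ge0}\setminus\{\mathbf 0\}$), each subtile $\mathcal{R}_\mathbf{v}(i)$ is the closure of its nonempty interior. Fixing an interior point $\mathbf{z}_i$ with $B(\mathbf{z}_i,\rho_i)\subset\mathcal{R}_\mathbf{v}(i)$, the set equation~\eqref{e:setequationkl} decomposes $\mathcal{R}_\mathbf{v}(i)$ into the pieces $M_{[0,\ell_1)}\big(\pi_{\mathbf{u},\mathbf{v}}^{(\ell_1)}\mathbf{y}'+\mathcal{R}^{(\ell_1)}_\mathbf{v}(j')\big)$ indexed by $E_1^*(\sigma_{[0,\ell_1)})[\mathbf 0,i]$, whose diameters tend to $0$ by Lemma~\ref{l:smallsubtiles}; hence for $\ell_1$ large the piece containing $\mathbf{z}_i$ lies inside $B(\mathbf{z}_i,\rho_i)$, so inside $\mathrm{int}(\mathcal{R}_\mathbf{v}(i))$. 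To reach a prescribed type $j$ I would refine once more: choose $\ell>\ell_1$ with $M_{[\ell_1,\ell)}$ positive (primitivity), subdivide the interior piece using~\eqref{e:setequationkl} and $E_1^*(\sigma\tau)=E_1^*(\tau)E_1^*(\sigma)$, and use that positivity forces the letter $j$ to occur, producing $[\mathbf{y},j]\in E_1^*(\sigma_{[0,\ell)})[\mathbf 0,i]$ whose piece still lies in $\mathrm{int}(\mathcal{R}_\mathbf{v}(i))$. Taking the maximum of the finitely many resulting $\ell$ over all pairs $(i,j)$ and refining the already-interior pieces further yields a single $\ell$ that works simultaneously, which proves (\ref{i:66i}).

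For the \emph{moreover} statement and (\ref{i:66ii}) I would exploit that, by~\ref{defR}, the decomposition of $\mathcal{R}^{(n_k)}_\mathbf{v}(i)$ at level $n_k+\ell$ is indexed by the \emph{same} set $E_1^*(\sigma_{[0,\ell)})[\mathbf 0,i]$ once $\ell_k\ge\ell$; writing $\mathcal{P}_k=M_{[0,\ell)}\big(\pi_{\mathbf{u},\mathbf{v}}^{(n_k+\ell)}\mathbf{y}+\mathcal{R}^{(n_k+\ell)}_\mathbf{v}(j)\big)$ for the piece carried by the $[\mathbf{y},j]$ from (\ref{i:66i}), Proposition~\ref{p:close} and Lemma~\ref{lem:projectionconvergence} guarantee that $\mathcal{P}_k$ converges to the limit piece and that $\mathcal{R}^{(n_k)}_\mathbf{v}(i)\to\mathcal{R}_\mathbf{v}(i)$, so the whole local configuration of $\mathcal{C}^{(n_k)}_\mathbf{v}$ converges to that of $\mathcal{C}_\mathbf{v}$. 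To promote this to statements about \emph{interiors} I would invoke Lemma~\ref{lem:intcovering}: a point lying in exactly $m$ ($=$ covering degree) tiles of a locally finite covering is interior to each. Taking a point $\mathbf{z}^\ast$ lying in exactly $m_0$ tiles of $\mathcal{C}_\mathbf{v}$, where $m_0$ is its finite covering degree (Proposition~\ref{p:covering}), and normalising $\mathbf{v}^{(n_k)}$ (which leaves $\Gamma(\mathbf{v}^{(n_k)})$ unchanged) so that $\mathbf{v}^{(n_k)}/\|\mathbf{v}^{(n_k)}\|\to\mathbf{v}$ by~\ref{defE}, Lemma~\ref{l:relativelydense} reproduces the finite index patch carrying these $m_0$ tiles inside $\Gamma(\mathbf{v}^{(n_k)})$; the convergence above then realises a point of $\mathcal{C}^{(n_k)}_\mathbf{v}$ lying in at most $m_0$ tiles, so the covering degree of $\mathcal{C}^{(n_k)}_\mathbf{v}$ is $\le m_0$. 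With its monotonicity in $n$ (Proposition~\ref{p:covering}) this forces it to equal $m_0$ for all $n$. Finally, if the interior piece in (\ref{i:66i}) is chosen inside a minimal-multiplicity (exclusive) region of $\mathcal{C}_\mathbf{v}$, the same reproduction argument shows that every point of $\mathcal{P}_k$ lies in exactly $m_0$ tiles of $\mathcal{C}^{(n_k)}_\mathbf{v}$, and Lemma~\ref{lem:intcovering} certifies $\mathcal{P}_k\subset\mathrm{int}(\mathcal{R}^{(n_k)}_\mathbf{v}(i))$, i.e.~(\ref{i:66ii}).

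The main obstacle is precisely this transfer of \emph{topological} interiority from the limit tiling to the approximating tilings $\mathcal{C}^{(n_k)}_\mathbf{v}$: Hausdorff convergence of the individual subtiles controls neither their interiors nor the possible opening of gaps between neighbouring pieces, so one cannot argue tile by tile. The resolution combines two ingredients that are robust under the recurrence, namely Lemma~\ref{l:relativelydense}, which reproduces the exact \emph{combinatorial} neighbourhood of $\Gamma(\mathbf{v})$ inside $\Gamma(\mathbf{v}^{(n_k)})$ (so no spurious tile intrudes and no gap opens), and the covering-degree criterion of Lemma~\ref{lem:intcovering}, which converts the combinatorial data ``minimal multiplicity plus genuine covering'' into interiority. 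The delicate point to handle carefully is the existence, for every letter $i$, of an exclusive point inside $\mathrm{int}(\mathcal{R}_\mathbf{v}(i))$, needed so that the reproduced configuration around $\mathcal{P}_k$ is itself exclusive; I expect this to follow from a Baire argument (the tile boundaries are nowhere dense, being boundaries of closures of interiors) together with the repetitivity of the minimal system $(X_{\boldsymbol{\sigma}},\Sigma)$.
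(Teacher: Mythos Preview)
Your core ingredients---Lemma~\ref{lem:intcovering}, Lemma~\ref{l:relativelydense}, and the convergences from Lemma~\ref{lem:projectionconvergence} and Proposition~\ref{p:close}---are exactly those of the paper, and your derivation of the covering-degree equality is essentially the paper's argument. The structural difference is in how you pass from one letter to all pairs $(i,j)$, and this is where your acknowledged ``delicate point'' becomes a genuine gap. You prove (\ref{i:66i}) for every~$i$ by picking an arbitrary interior point of $\mathcal{R}_\mathbf{v}(i)$, but for (\ref{i:66ii}) you need that point to be $m$-exclusive in~$\mathcal{C}_\mathbf{v}$, and at this stage nothing guarantees that every subtile $\mathcal{R}_\mathbf{v}(i)$ contains an exclusive point: the non-exclusive set is closed but could a priori have nonempty interior, and your Baire/repetitivity sketch does not rule this out (note that measure-zero boundaries are only established later, in Proposition~\ref{p:boundary}, which \emph{uses} the present lemma).

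The paper sidesteps this entirely by reversing the order. It first fixes \emph{one} exclusive point~$\mathbf{z}$ of~$\mathcal{C}_\mathbf{v}$ (these exist by definition of covering degree), takes whichever tile $\pi_{\mathbf{u},\mathbf{v}}\tilde{\mathbf{x}}+\mathcal{R}_\mathbf{v}(i)$ contains it, and proves both (\ref{i:66i}) and (\ref{i:66ii}) for that single pair $(i,j)$ via your mechanism. Then it bootstraps to arbitrary pairs by a sandwich: apply Lemma~\ref{l:shiftedPRICE} to run the one-letter argument for the shifted sequence at level~$h$ (with $M_{[0,h)}$ positive), obtaining some interior piece of type~$j'$ inside $\mathcal{R}^{(h)}_\mathbf{v}(i')$; since $M_{[0,h)}$ is positive, every~$i$ contains an $i'$-piece via $E_1^*(\sigma_{[0,h)})[\mathbf 0,i]$, and since a further block $M_{[h+\ell',\ell)}$ can be taken positive, every~$j$ appears inside the $j'$-piece. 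Interiority transfers because $M_{[0,h)}$ restricted to $(\mathbf{v}^{(h)})^\bot$ is a linear isomorphism onto~$\mathbf{v}^\bot$ and hence maps open sets to open sets. This device replaces your missing ``exclusive point in every $\mathcal{R}_\mathbf{v}(i)$'' and is the one idea you are lacking.
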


\begin{proof}
We first show that (\ref{i:66i}) and~(\ref{i:66ii}) hold for some $i \in \mathcal{A}$, $\ell \in \mathbb{N}$, $[\mathbf{y},j] \in E_1^*(\sigma_{[0,\ell)})[\mathbf{0},i]$.
Let $m$ be the covering degree of~$\mathcal{C}_{\mathbf{v}}$, which is positive and finite according to Proposition~\ref{p:covering}. Let $\mathbf{z} \in \mathbf{v}^\bot$ be a point lying in exactly $m$ tiles of~$\mathcal{C}_{\mathbf{v}}$. By Lemma~\ref{lem:intcovering}, $\mathbf{z}$~lies in the interior of each of these tiles, and the same is true for some open neighborhood~$U$ of~$\mathbf{z}$.  
Let $\pi_{\mathbf{u},\mathbf{v}}\, \tilde{\mathbf{x}} + \mathcal{R}(i)$ be one of these tiles. 
By the set equation~\eqref{e:setequationkl} and Lemma~\ref{l:smallsubtiles}, there is $\ell \in \mathbb{N}$ and $[\tilde{\mathbf{y}},j] \in E_1^*(\sigma_{[0,\ell)})[\tilde{\mathbf{x}},i]$ such that 
\[
M_{[0,\ell)}\, \big(\pi_{\mathbf{u},\mathbf{v}}^{(\ell)}\, \tilde{\mathbf{y}} + \mathcal{R}^{(\ell)}_\mathbf{v}(j)\big) \subset U \subset \mathrm{int}\big(\pi_{\mathbf{u},\mathbf{v}}\, \tilde{\mathbf{x}} + \mathcal{R}_\mathbf{v}(i)\big).
\]
Shifting by $-\pi_{\mathbf{u},\mathbf{v}}\, \tilde{\mathbf{x}}$, we see that (\ref{i:66i}) holds for $\mathbf{y} = \tilde{\mathbf{y}} - M_{[0,\ell)}^{-1}\, \tilde{\mathbf{x}}$.

By Lemma~\ref{lem:projectionconvergence}, Proposition~\ref{p:close} and since $\mathbf{u} \in \mathbb{R}_+^d$, $\mathbf{v} \in \mathbb{R}_{\ge0}^d \setminus \{\mathbf{0}\}$, we may choose $r > 0$ such that, for all $k \in \mathbb{N}$, $\pi_{\mathbf{u},\mathbf{v}}^{(n_k)}\, \mathbf{x} \in \pi_{\mathbf{u},\mathbf{v}}^{(n_k)}\, U - \mathcal{R}^{(n_k)}_\mathbf{v}$ with $|\langle \mathbf{v}^{(n_k)}, \mathbf{x} \rangle| < \|\mathbf{v}^{(n_k)}\|$ implies $\|\mathbf{x}\| \le r$.
In the following, assume that $k$ is sufficiently large.
Setting $P = \{[\mathbf{x},i] \in \Gamma(\mathbf{v}):\, \|\mathbf{x}\|\le r\big\}$, Lemma~\ref{l:relativelydense} yields that there is $\mathbf{y}_k \in \mathbb{Z}^d$ such that $\{[\mathbf{x}+\mathbf{y}_k,i] \in \Gamma(\mathbf{v}^{(n_k)}):\, \|\mathbf{x}\| \le r\} = P + \mathbf{y}_k$.  
Let $[\mathbf{x}+\mathbf{y}_k,i] \in \Gamma(\mathbf{v}^{(n_k)})$ be such that 
\begin{equation} \label{e:Unk}
\pi_{\mathbf{u},\mathbf{v}}^{(n_k)} (\mathbf{y}_k + U) \cap \big(\pi_{\mathbf{u},\mathbf{v}}^{(n_k)} (\mathbf{x} + \mathbf{y}_k) + \mathcal{R}^{(n_k)}_\mathbf{v}(i)\big) \ne \emptyset.
\end{equation}
Then we have $\pi_{\mathbf{u},\mathbf{v}}^{(n_k)}\, \mathbf{x} \in \pi_{\mathbf{u},\mathbf{v}}^{(n_k)}\, U - \mathcal{R}^{(n_k)}_\mathbf{v}$ and $|\langle \mathbf{v}^{(n_k)}, \mathbf{x} \rangle| < \|\mathbf{v}^{(n_k)}\|$ because both $\langle \mathbf{v}^{(n_k)}, \mathbf{x}+\mathbf{y}_k \rangle$ and $\langle \mathbf{v}^{(n_k)}, \mathbf{y}_k \rangle$ are in $[0,\|\mathbf{v}^{(n_k)}\|)$, hence, $\|\mathbf{x}\| \le r$.
This gives that $[\mathbf{x}+\mathbf{y}_k,i] \in P + \mathbf{y}_k$, i.e., $[\mathbf{x},i] \in P$. 
By~\eqref{e:Unk} and Proposition~\ref{p:close}, $\pi_{\mathbf{u},\mathbf{v}}\, \mathbf{x} + \mathcal{R}_\mathbf{v}(i)$ must be one of the $m$ tiles of~$\mathcal{C}_\mathbf{v}$ that contain~$U$.
In particular, the covering degree of~$\mathcal{C}_\mathbf{v}^{(n_k)}$ is at most~$m$.
By Proposition~\ref{p:covering}, the covering degree is at least $m$ and, hence, equal to~$m$.
Therefore, we have $\pi_{\mathbf{u},\mathbf{v}}^{(n_k)} (\mathbf{y}_k + U) \subset \pi_{\mathbf{u},\mathbf{v}}^{(n_k)} (\mathbf{x} + \mathbf{y}_k) + \mathcal{R}^{(n_k)}_\mathbf{v}(i)$ for all $[\mathbf{x},i] \in \Gamma(\mathbf{v})$ satisfying $U \subset \pi_{\mathbf{u},\mathbf{v}}\, \mathbf{x} + \mathcal{R}_\mathbf{v}(i)$.
By Lemma~\ref{lem:projectionconvergence} and Proposition~\ref{p:close}, we get that 
\[
M_{[0,\ell)}\, \big(\pi_{\mathbf{u},\mathbf{v}}^{(n_k+\ell)}\, \tilde{\mathbf{y}} + \mathcal{R}^{(n_k+\ell)}_\mathbf{v}(j)\big) \subset \pi_{\mathbf{u},\mathbf{v}}^{(n_k)}\, U \subset \mathrm{int}\big(\pi_{\mathbf{u},\mathbf{v}}^{(n_k)}\, \tilde{\mathbf{x}} + \mathcal{R}_\mathbf{v}^{(n_k)}(i)\big)
\]
with $\ell, [\tilde{\mathbf{x}},i], [\tilde{\mathbf{y}},j]$ as in the preceding paragraph, hence, (\ref{i:66ii}) holds for $\mathbf{y} = \tilde{\mathbf{y}} - M_{[0,\ell)}^{-1}\, \tilde{\mathbf{x}}$.

To prove the statements for arbitrary $i,j\in \mathcal{A}$, choose $h \in \mathbb{N}$ such that $M_{[0,h)}$ positive. 
Applying the results from the preceding paragraphs and using Lemma~\ref{l:shiftedPRICE}, there are  $i' \in \mathcal{A}$, $\ell' \in \mathbb{N}$, and $[\mathbf{y}',j'] \in E_1^*(\sigma_{[h,h+\ell')})[\mathbf{0},i']$ such that 
\begin{equation} \label{e:62i'}
M_{[h,h+\ell')}\, \big(\pi_{\mathbf{u},\mathbf{v}}^{(h+\ell')}\, \mathbf{y}' + \mathcal{R}^{(h+\ell')}_\mathbf{v}(j')\big) \subset \mathrm{int}\big(\mathcal{R}^{(h)}_\mathbf{v}(i')\big)
\end{equation}
and, for sufficiently large~$k$,
\begin{equation} \label{e:62i''}
M_{[h,h+\ell')}\, \big(\pi_{\mathbf{u},\mathbf{v}}^{(n_k+h+\ell')}\, \mathbf{y}' + \mathcal{R}^{(n_k+h+\ell')}_\mathbf{v}(j')\big) \subset \mathrm{int}\big(\mathcal{R}^{(n_k+h)}_\mathbf{v}(i')\big).
\end{equation}
Choose $\ell > h+\ell'$ such that $M_{[h+\ell',\ell)}$ is positive.
Then for each pair $i,j \in \mathcal{A}$, there are $\mathbf{x}', \mathbf{y} \in \mathbb{Z}^d$ such that $[\mathbf{x}',i'] \in E_1^*(\sigma_{[0,h)})[\mathbf{0},i]$ and $[\mathbf{y},j] \in E_1^*(\sigma_{[h+\ell',\ell)})[\mathbf{y}' + (M_{[h,h+\ell')})^{-1} \mathbf{x}',j']$. We get that
\[
[\mathbf{y}, j] \in E_1^*(\sigma_{[h+\ell',\ell)})[\mathbf{y}' + (M_{[h,h+\ell')})^{-1} \mathbf{x}', j'] \subset E_1^*(\sigma_{[h,\ell)})[\mathbf{x}', i'] \subset E_1^*(\sigma_{[0,\ell)})[\mathbf{0},i],
\]
and (\ref{i:66i}) and~(\ref{i:66ii}) are true by \eqref{e:62i'} and~\eqref{e:62i''}, respectively.

We have seen that the covering degree of~$\mathcal{C}_\mathbf{v}^{(n_k)}$ is equal to that of~$\mathcal{C}_\mathbf{v}$ for all sufficiently large~$k$. As the covering degree increases monotonically by Proposition~\ref{p:covering}, this holds also for all~$\mathcal{C}_\mathbf{v}^{(n)}$. 
\end{proof}

We are now able to prove that the boundary of $\mathcal{R}(i)$ has zero measure for each $i \in \mathcal{A}$.

\begin{proposition} \label{p:boundary}
Let $S$ be a finite or  infinite set of unimodular substitutions over a finite alphabet $\mathcal{A}$ and let $\boldsymbol{\sigma}\in S^{\mathbb{N}}$ be a directive sequence with Property PRICE. Then $\lambda_{\mathbf{1}}(\partial (\mathcal{R}(i)) = 0$ for each $i \in \mathcal{A}$.
\end{proposition}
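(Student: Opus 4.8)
The plan is first to transfer the statement from $\mathbf{1}^\bot$ to the hyperplane $\mathbf{v}^\bot$ associated with the recurrent left eigenvector~$\mathbf{v}$ of Property PRICE, where the set equations of Section~\ref{sec:dual} and the convergence results of Proposition~\ref{p:close} are available. Since $\mathbf{u}$ is a positive vector we have $\langle\mathbf{1},\mathbf{u}\rangle>0$, so the projection $\pi_{\mathbf{u},\mathbf{v}}$ restricts to a linear isomorphism $\mathbf{1}^\bot\to\mathbf{v}^\bot$; as $\pi_{\mathbf{u},\mathbf{v}}=\pi_{\mathbf{u},\mathbf{v}}\,\pi_{\mathbf{u},\mathbf{1}}$, it maps $\mathcal{R}(i)=\mathcal{R}_\mathbf{1}(i)$ onto $\mathcal{R}_\mathbf{v}(i)$, carries boundaries to boundaries, and preserves Lebesgue null sets in both directions. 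Hence it suffices to prove $\lambda_\mathbf{v}(\partial\mathcal{R}_\mathbf{v}(i))=0$ for each~$i$. For this I would show that the covering $\mathcal{C}_\mathbf{v}$ of $\mathbf{v}^\bot$, which has finite covering degree~$m$ by Proposition~\ref{p:covering}, is in fact a multiple tiling, i.e.\ that the set of non-exclusive points is Lebesgue null. Since each $\mathcal{R}_\mathbf{v}(i)$ is the closure of its interior by Proposition~\ref{p:closint}, its boundary is contained in this non-exclusive set, so the claim would follow.

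The core is a renormalization of the covering along the recurrence times $n_k$ of Property PRICE. By the set equation~\eqref{e:setequationkl} (with $k=0$, $\ell=n_k$) together with Lemma~\ref{l:e1star}, the linear isomorphism $M_{[0,n_k)}\colon(\mathbf{v}^{(n_k)})^\bot\to\mathbf{v}^\bot$ identifies $\mathcal{C}^{(n_k)}_\mathbf{v}$ with a refinement of $\mathcal{C}_\mathbf{v}$: every tile of $\mathcal{C}_\mathbf{v}$ is the union of the images of its refinement pieces, and distinct pieces come from distinct parent tiles by the injectivity in Lemma~\ref{l:e1star}~(\ref{62iii}). By Lemma~\ref{l:interiornk} the covering degree of $\mathcal{C}^{(n_k)}_\mathbf{v}$ is again~$m$, so exclusivity is a renormalization-invariant notion. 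I would then use Lemma~\ref{l:interiornk} to produce, for each letter and each large~$k$, a refinement piece lying in the interior of its parent tile, noting that the recurrence $\sigma_{[n_k,n_k+\ell)}=\sigma_{[0,\ell)}$ makes this interior configuration reappear at every stage. Combined with Lemma~\ref{l:smallsubtiles}, which forces the diameters of the pieces $M_{[0,n_k)}\,\mathcal{R}^{(n_k)}_\mathbf{v}(j)$ to tend to~$0$, and with Proposition~\ref{p:close} and Lemma~\ref{lem:projectionconvergence}, which guarantee Hausdorff convergence of the renormalized subtiles to the limit subtiles, this produces exclusive points at arbitrarily fine scales.

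To conclude I would argue by the Lebesgue density theorem: if $\lambda_\mathbf{v}(\partial\mathcal{R}_\mathbf{v}(i))>0$ there is a density point $\mathbf{z}_0$, and it suffices to exhibit, in balls $B(\mathbf{z}_0,\rho)$ of arbitrarily small radius, an exclusive (hence interior, hence boundary-free) subset whose relative measure is bounded below by a fixed $\delta>0$, contradicting density~$1$. Here Lemma~\ref{l:relativelydense} is essential: it shows that the finite patch of $\Gamma(\mathbf{v})$ realizing the interior configuration around an exclusive point recurs relatively densely in every nearby discrete hyperplane $\Gamma(\mathbf{v}^{(n_k)})$, so that after renormalizing by $M_{[0,n_k)}$ the interior pieces supplied by Lemma~\ref{l:interiornk} are distributed with positive density at scale~$\rho$ throughout $B(\mathbf{z}_0,\rho)$.

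The main obstacle, and the reason the substitutive proof does not transfer, is precisely the absence of a single contracting matrix: one cannot invoke a Perron--Frobenius eigenvalue to obtain a self-affine graph-directed structure for the boundary together with a geometric decay of its measure. Instead the uniform lower bound $\delta$ on the density of certified interior pieces must be extracted \emph{uniformly in~$k$} by combining the constant covering degree (Lemma~\ref{l:interiornk}), the Hausdorff convergence of the renormalized subtiles (Proposition~\ref{p:close}), and the relative density of patches (Lemma~\ref{l:relativelydense}); controlling this density despite the non-similarity and the varying sizes of the maps $M_{[0,n_k)}$ is the delicate point on which the whole argument rests.
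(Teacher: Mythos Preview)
Your overall strategy—transferring to~$\mathbf{v}^\bot$ and exploiting the renormalization along the recurrence times~$n_k$—matches the paper's, and you correctly identify Lemma~\ref{l:interiornk} and Lemma~\ref{l:relativelydense} as the key inputs. However, your first paragraph proposes to deduce $\lambda_\mathbf{v}(\partial\mathcal{R}_\mathbf{v}(i))=0$ from the multiple tiling property of~$\mathcal{C}_\mathbf{v}$. In the paper this implication runs the other way: the multiple tiling statement (Lemma~\ref{l:mtiling}) explicitly \emph{uses} Proposition~\ref{p:boundary} in its proof, so you cannot invoke it here. You would therefore need an independent proof of multiple tiling, which is what your density argument is meant to supply—but that is where the real gap lies.

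Your Lebesgue-density argument requires a uniform lower bound~$\delta$ on the relative measure of ``certified interior'' pieces inside arbitrarily small balls. You correctly flag this as ``the delicate point on which the whole argument rests,'' but you do not resolve it: the maps $M_{[0,n_k)}$ are not similarities, so interior pieces of comparable combinatorial size can have wildly different Lebesgue measures and shapes, and relative density of the underlying patches in~$\Gamma(\mathbf{v}^{(n_k)})$ (Lemma~\ref{l:relativelydense}) does not by itself yield relative density of measure after applying~$M_{[0,n_k)}$. This is a genuine obstruction, not merely a missing detail.

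The paper sidesteps this difficulty by arguing with \emph{counts} rather than measures. It sets $C_{m,n}(i,j)=\#\{[\mathbf{y},j]\in E_1^*(\sigma_{[m,n)})[\mathbf{0},i]\}$ and $D_{m,n}(i,j)$ the number of those faces whose subtile meets~$\partial\mathcal{R}^{(m)}_\mathbf{v}(i)$. Lemma~\ref{l:interiornk} gives $D_{0,\ell}(i,j)\le C_{0,\ell}(i,j)-1$, hence $D_{0,\ell}\le c\,C_{0,\ell}$ with $c<1$; since subtiles of interior subtiles are again interior and the counts are multiplicative along the refinement, iterating at successive recurrence times~$n_k$ yields $D_{0,n}(i,j)\le c^{h+1}C_{0,n}(i,j)$ for arbitrarily large~$h$, i.e.\ $D_{0,n}/C_{0,n}\to0$. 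Only at the very end is this converted to a measure statement, via
\[
\lambda_\mathbf{v}(\partial\mathcal{R}_\mathbf{v}(i))\le\sum_j D_{0,n_k}(i,j)\,\lambda_\mathbf{v}\big(M_{[0,n_k)}\mathcal{R}^{(n_k)}_\mathbf{v}(j)\big),\quad
\lambda_\mathbf{v}(\mathcal{R}_\mathbf{v}(i))\ge\tfrac1m\sum_j C_{0,n_k}(i,j)\,\lambda_\mathbf{v}\big(M_{[0,n_k)}\mathcal{R}^{(n_k)}_\mathbf{v}(j)\big),
\]
together with a bound on $\max_j/\min_j\,\lambda_\mathbf{v}(M_{[0,n_k)}\mathcal{R}^{(n_k)}_\mathbf{v}(j))$ coming from primitivity. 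The point is that the geometric decay is obtained purely combinatorially, so the non-similarity of the~$M_{[0,n_k)}$ enters only through this single, easily controlled ratio—precisely the control your density approach is missing.
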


\begin{proof}
Let the sequence~$(n_k)$ and the vector~$\mathbf{v}$ be as in Definition~\ref{def:star}, and set
\begin{align*}
C_{m,n}(i,j) & = \#\big\{\mathbf{y} \in \mathbb{Z}^d:\, [\mathbf{y},j] \in E_1^*(\sigma_{[m,n)}) [\mathbf{0},i]\big\}, \\
D_{m,n}(i,j) & = \#\big\{\mathbf{y} \in \mathbb{Z}^d:\, [\mathbf{y},j] \in E_1^*(\sigma_{[m,n)}) [\mathbf{0},i],\, M_{[m,n)} \big(\pi_{\mathbf{u},\mathbf{v}}^{(n)}\, \mathbf{y} + \mathcal{R}^{(n)}_\mathbf{v}(j)\big) \cap \partial \mathcal{R}^{(m)}_\mathbf{v}(i) \ne \emptyset\big\},
\end{align*}
for $i,j \in \mathcal{A}$, $m \le n$.
Our main task is to show that 
\begin{equation} \label{e:limDC}
\lim_{n\to\infty} \frac{D_{0,n}(i,j)}{C_{0,n}(i,j)} = 0 \qquad \mbox{for all}\ i,j \in \mathcal{A}.
\end{equation} 

Let $\ell\in \mathbb{N}$ be as in the statement of Lemma~\ref{l:interiornk}. 
We thus have, for each pair $i,j \in \mathcal{A}$, at least one~$\mathbf{y}$ such that $[\mathbf{y},j] \in E_1^*(\sigma_{[0,\ell)}) [\mathbf{0},i]$ and $M_{[0,\ell)} \big(\pi_{\mathbf{u},\mathbf{v}}^{(\ell)}\, \mathbf{y} + \mathcal{R}^{(\ell)}_\mathbf{v}(j)\big) \cap \partial \mathcal{R}_\mathbf{v}(i) = \emptyset$, i.e., $D_{0,\ell}(i,j) \le C_{0,\ell}(i,j) - 1$. 
Set $c = 1 - 1/\max_{i,j\in\mathcal{A}} C_{0,\ell}(i,j) < 1$.
Since all subtiles of $M_{[0,\ell)}\, (\pi_{\mathbf{u},\mathbf{v}}^{(\ell)}\, \mathbf{y} + \mathcal{R}^{(\ell)}_\mathbf{v}(j))$ are also contained in $\mathrm{int}(\mathcal{R}_\mathbf{v}(i))$, we obtain for each $n \ge \ell$ that 
\[
D_{0,n}(i,j) \le \sum_{j'\in\mathcal{A}} D_{0,\ell}(i,j')\, C_{\ell,n}(j',j) \le c \sum_{j'\in\mathcal{A}} C_{0,\ell}(i,j')\, C_{\ell,n}(j',j) = c\, C_{0,n}(i,j).
\]
Let us refine this inequality using Lemma~\ref{l:interiornk}~(\ref{i:66ii}). 
For sufficiently large~$k$, we have $D_{n_k,n_k+\ell}(i,j) \le C_{n_k,n_k+\ell}(i,j) - 1 = C_{0,\ell}(i,j) - 1$, and each subtile $\pi_{\mathbf{u},\mathbf{v}}^{(n_k+\ell)}\, \mathbf{y} + \mathcal{R}^{(n_k+\ell)}_\mathbf{v}(j)$ that is in the interior of a subtile $\pi_{\mathbf{u},\mathbf{v}}^{(n_k)}\, \mathbf{x} + \mathcal{R}^{(n_k)}_\mathbf{v}(i')$ of~$\mathcal{R}_\mathbf{v}(i)$ is clearly also in the interior of~$\mathcal{R}_\mathbf{v}(i)$.
Thus we have
\begin{align*}
D_{0,n}(i,j) & \le \sum_{j',i',j''\in\mathcal{A}} D_{0,\ell}(i,j')\, C_{\ell,n_k}(j',i') D_{n_k,n_k+\ell}(i',j'')\, C_{n_k+\ell,n}(j'',j) \\
& \le c^2 \sum_{j',i',j''\in\mathcal{A}} C_{0,\ell}(i,j')\, C_{\ell,n_k}(j',i')\, C_{0,\ell}(i',j'')\, C_{n_k+\ell,n}(j'',j) = c^2\, C_{0,n}(i,j)
\end{align*}
for $n \ge n_k+\ell$.
A~similar argument with $h$ different values of~$n_k$ yields for each $h \in \mathbb{N}$ that $D_{0,n}(i,j) \le c^{h+1}\, C_{0,n}(i,j)$ for sufficiently large~$n$, thus \eqref{e:limDC} is true.

By Lemma~\ref{lem:projectionconvergence}, Proposition~\ref{p:close} and since $\pi_{\mathbf{u},\mathbf{v}}\, \Gamma(\mathbf{v})$ is uniformly discrete, there exists $m \in \mathbb{N}$ such that, for all $k \in \mathbb{N}$, each point of~$(\mathbf{v}^{(n_k)})^\bot$ lies in at most $m$ tiles of~$\mathcal{C}^{(n_k)}_\mathbf{v}$.
Then
\begin{align*}
\lambda_\mathbf{v} \big(\partial \mathcal{R}_\mathbf{v}(i)\big) & \le \sum_{j\in\mathcal{A}} D_{0,n}(i,j)\, \lambda_\mathbf{v} \big(M_{[0,n)}\, \mathcal{R}^{(n)}_\mathbf{v}(j)\big) \qquad \mbox{for all}\ n \in\mathbb{N}, \\
\lambda_\mathbf{v} \big(\mathcal{R}_\mathbf{v}(i)\big) & \ge \frac{1}{m} \sum_{j\in\mathcal{A}} C_{0,n_k}(i,j)\, \lambda_\mathbf{v} \big(M_{[0,n_k)}\, \mathcal{R}^{(n_k)}_\mathbf{v}(j)\big) \qquad \mbox{for all}\ k \in\mathbb{N},
\end{align*}
by the set equations~\eqref{e:setequationkl}, thus
\[
\frac{\lambda_\mathbf{v}(\partial\mathcal{R}_\mathbf{v}(i)\big)}{\lambda_\mathbf{v}(\mathcal{R}_\mathbf{v}(i))} \le \frac{m\, \sum_{j\in\mathcal{A}} D_{0,n_k}(i,j)}{\sum_{j\in\mathcal{A}} C_{0,n_k}(i,j)}\, \frac{\max_{j\in\mathcal{A}} \lambda_\mathbf{v} (M_{[0,n_k)}\, \mathcal{R}^{(n_k)}_\mathbf{v}(j))}{\min_{j\in\mathcal{A}} \lambda_\mathbf{v} (M_{[0,n_k)}\, \mathcal{R}^{(n_k)}_\mathbf{v}(j))} \qquad \mbox{for all}\ k \in\mathbb{N}.
\]
It remains to show that the latter fraction is bounded.
Let $h \in \mathbb{N}$ be such that $M_{[0,h)}$ is a positive matrix. 
For sufficiently large~$k$, we have $M_{[n_k,n_k+h)} = M_{[0,h)}$ and thus
\begin{align*}
\frac{\max_{i\in\mathcal{A}} \lambda_\mathbf{v} (M_{[0,n_k)}\, \mathcal{R}^{(n_k)}_\mathbf{v}(i))}{\min_{i\in\mathcal{A}} \lambda_\mathbf{v} (M_{[0,n_k)}\, \mathcal{R}^{(n_k)}_\mathbf{v}(i))} & \le \frac{\max_{i\in\mathcal{A}} \sum_{j\in\mathcal{A}} C_{0,h}(i,j) \max_{j\in\mathcal{A}} \lambda_\mathbf{v} (M_{[0,n_k+h)}\, \mathcal{R}^{(n_k+h)}_\mathbf{v}(j))}{\max_{j\in\mathcal{A}} \lambda_\mathbf{v} (M_{[0,n_k+h)}\, \mathcal{R}^{(n_k+h)}_\mathbf{v}(j))} \\
& = \max_{i\in\mathcal{A}} \sum_{j\in\mathcal{A}} C_{0,h}(i,j).
\end{align*}
Together with~\eqref{e:limDC}, we obtain that $\lambda_\mathbf{v}(\partial \mathcal{R}_\mathbf{v}(i)) = 0$ and, hence, $\lambda_\mathbf{1}(\partial \mathcal{R}(i)) = 0$. 
\end{proof}

We also get the following strengthening of Proposition~\ref{p:close} for the difference between~$\mathcal{R}_\mathbf{v}^{(\ell)}$ and~$\pi_{\mathbf{u},\mathbf{v}}^{(\ell)}\, \mathcal{R}_\mathbf{v}^{(n_k+\ell)}$. 
One can prove in a similar way that $\lim_{k\to\infty} \lambda_{\mathbf{v}^{(\ell)}} \big(\pi_{\mathbf{u},\mathbf{v}}^{(\ell)}\, \mathcal{R}^{(n_k+\ell)}_\mathbf{v}(i) \setminus \mathcal{R}_\mathbf{v}^{(\ell)}(i)\big) = 0$, but we do not need this result.

\begin{lemma} \label{l:close2}
Assume that the sequence $\boldsymbol{\sigma} = (\sigma_n)\in S^{\mathbb{N}}$ of unimodular substitutions has Property PRICE w.r.t.\ the sequences $(n_k)$ and $(\ell_k)$ and the vector~$\mathbf{v}$. 
Then, for each $i \in \mathcal{A}$ and $\ell \in \mathbb{N}$,
\begin{equation} \label{e:close2}
\lim_{k\to\infty} \lambda_{\mathbf{v}^{(\ell)}} \big(\mathcal{R}_\mathbf{v}^{(\ell)}(i) \setminus \pi_{\mathbf{u},\mathbf{v}}^{(\ell)}\, \mathcal{R}^{(n_k+\ell)}_\mathbf{v}(i)\big) = 0.
\end{equation}
\end{lemma}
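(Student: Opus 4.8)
The plan is to reduce the statement to an interior-containment property and then close it by a short measure-theoretic argument, the hard analytic input being supplied by the patch machinery already developed for Lemma~\ref{l:interiornk}. First I would dispose of the parameter~$\ell$: exactly as in the proof of Proposition~\ref{p:close}, the identity \eqref{e:close2} for a given~$\ell$ is nothing but the case $\ell=0$ applied to the shifted directive sequence $(\sigma_{n+\ell})_{n\in\mathbb{N}}$, which by Lemma~\ref{l:shiftedPRICE} has Property PRICE with respect to the vector~$\mathbf{v}^{(\ell)}$. Hence it suffices to prove that $\lambda_\mathbf{v}\big(\mathcal{R}_\mathbf{v}(i) \setminus \pi_{\mathbf{u},\mathbf{v}}\, \mathcal{R}^{(n_k)}_\mathbf{v}(i)\big) \to 0$. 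By Propositions~\ref{p:closint} and~\ref{p:boundary} we have $\lambda_\mathbf{v}(\mathcal{R}_\mathbf{v}(i)) = \lambda_\mathbf{v}(\mathrm{int}\,\mathcal{R}_\mathbf{v}(i))$, so by inner regularity of Lebesgue measure it is enough to establish the interior-containment property that for every compact $L \subset \mathrm{int}\,\mathcal{R}_\mathbf{v}(i)$ one has $L \subset \pi_{\mathbf{u},\mathbf{v}}\, \mathcal{R}^{(n_k)}_\mathbf{v}(i)$ for all sufficiently large~$k$. Indeed, this gives $\liminf_k \lambda_\mathbf{v}\big(\mathcal{R}_\mathbf{v}(i) \cap \pi_{\mathbf{u},\mathbf{v}}\mathcal{R}^{(n_k)}_\mathbf{v}(i)\big) \ge \lambda_\mathbf{v}(L)$ for every such~$L$, and taking the supremum over~$L$ yields the claim.

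To prove the interior-containment property I would compare the two sides through their common combinatorial skeleton. Fixing a scale~$p$ and using the recurrence assertion~\ref{defR} (so that $\sigma_{[n_k,n_k+p)}=\sigma_{[0,p)}$ and $M_{[n_k,n_k+p)}=M_{[0,p)}$ for all large~$k$) together with the intertwining $\pi_{\mathbf{u},\mathbf{v}}\,M_{[0,p)} = M_{[0,p)}\,\pi_{\mathbf{u},\mathbf{v}}^{(p)}$ from Lemma~\ref{lem:projections}, the set equation~\eqref{e:setequationkl} subdivides both $\mathcal{R}_\mathbf{v}(i)$ and $\pi_{\mathbf{u},\mathbf{v}}\mathcal{R}^{(n_k)}_\mathbf{v}(i)$ over the \emph{same} index set $E_1^*(\sigma_{[0,p)})[\mathbf{0},i]$, into the pieces $M_{[0,p)}(\pi_{\mathbf{u},\mathbf{v}}^{(p)}\mathbf{y}+\mathcal{R}^{(p)}_\mathbf{v}(j))$ and $M_{[0,p)}\pi_{\mathbf{u},\mathbf{v}}^{(p)}(\pi_{\mathbf{u},\mathbf{v}}^{(n_k+p)}\mathbf{y}+\mathcal{R}^{(n_k+p)}_\mathbf{v}(j))$ respectively. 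Proposition~\ref{p:close} and Lemma~\ref{lem:projectionconvergence} show that, for each fixed index, the second piece converges to the first in Hausdorff metric as $k\to\infty$. To turn this closeness into genuine containment of a compact $L\subset\mathrm{int}\,\mathcal{R}_\mathbf{v}(i)$, I would rerun the argument of Lemma~\ref{l:interiornk}: covering~$L$ by finitely many neighbourhoods~$U$, the relatively-dense-patch Lemma~\ref{l:relativelydense} reproduces the relevant finite patch of $\Gamma(\mathbf{v})$ inside $\Gamma(\mathbf{v}^{(n_k)})$ (up to a lattice translation~$\mathbf{y}_k$), and the uniform convergence $\pi_{\mathbf{u},\mathbf{v}}^{(n_k)}\to\pi_{\mathbf{u},\mathbf{v}}$ transfers the inclusion $U\subset\mathrm{int}\,\mathcal{R}_\mathbf{v}(i)$ to the inclusion of the corresponding region into $\mathcal{R}^{(n_k)}_\mathbf{v}(i)$, hence of~$U$ into $\pi_{\mathbf{u},\mathbf{v}}\mathcal{R}^{(n_k)}_\mathbf{v}(i)$.

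I expect this containment upgrade to be the main obstacle. Hausdorff convergence alone (Proposition~\ref{p:close}) never implies that a fixed compact subset of the interior is eventually covered, so one really has to redo the patch-density bookkeeping of Lemma~\ref{l:interiornk}, and two points need care: the mismatch between the projection directions of $\pi_{\mathbf{u},\mathbf{v}}^{(n_k)}$ and $\pi_{\mathbf{u},\mathbf{v}}$ (controlled only on compacta, through Lemma~\ref{lem:projectionconvergence}, while the translations $\mathbf{y}_k$ grow), and the fact that Lemma~\ref{l:interiornk} was stated for points of minimal covering degree, whereas here~$L$ may meet interior points of higher degree. Both issues can be absorbed by the same patch argument once $p$, and then $k$, are chosen large enough, but obtaining the bound uniformly over~$L$ is the delicate part; once it is in place, the measure-theoretic step of the first paragraph finishes the proof. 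The reverse inclusion $\lim_k\lambda_{\mathbf{v}^{(\ell)}}\big(\pi_{\mathbf{u},\mathbf{v}}^{(\ell)}\mathcal{R}^{(n_k+\ell)}_\mathbf{v}(i)\setminus\mathcal{R}_\mathbf{v}^{(\ell)}(i)\big)=0$ announced after the statement then follows symmetrically, interchanging the roles of the two fractals.
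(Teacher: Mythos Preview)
Your reduction to $\ell=0$ via Lemma~\ref{l:shiftedPRICE} matches the paper, but from there on the two arguments diverge. The paper does not try to establish any interior-containment. Instead it recycles the quantitative boundary estimate from the proof of Proposition~\ref{p:boundary}: writing $X_\varepsilon$ for the closed $\varepsilon$-neighbourhood of $X\subset\mathbf{v}^\bot$, one has
\[
\lambda_\mathbf{v}\big((\mathcal{R}_\mathbf{v}(i))_\varepsilon\setminus\mathcal{R}_\mathbf{v}(i)\big)\ \le\ \sum_{j\in\mathcal{A}} D_{0,n}(i,j)\,\lambda_\mathbf{v}\big((M_{[0,n)}\mathcal{R}^{(n)}_\mathbf{v}(j))_\varepsilon\big),
\]
since the $\varepsilon$-collar only touches those level-$n$ pieces that meet $\partial\mathcal{R}_\mathbf{v}(i)$. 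The proof of Proposition~\ref{p:boundary} already supplies an $n$ with $\sum_j D_{0,n}(i,j)\,\lambda_\mathbf{v}(M_{[0,n)}\mathcal{R}^{(n)}_\mathbf{v}(j))$ arbitrarily small; one then shrinks~$\varepsilon$ so that the fattened sum stays small, and finishes with Hausdorff convergence (Proposition~\ref{p:close}). So the paper trades the geometric patch bookkeeping you envisage for a purely measure-theoretic estimate that has essentially already been carried out.

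Your proposed route has a concrete gap at the ``rerun Lemma~\ref{l:interiornk}'' step. That lemma's counting argument works only because the chosen neighbourhood~$U$ is $m$-exclusive: knowing that the covering degree of $\mathcal{C}^{(n_k)}_\mathbf{v}$ is at most~$m$ and that the reproduced patch contributes exactly $m$ candidate tiles intersecting the translated~$U$ forces each of them to \emph{contain} it. For a general compact $L\subset\mathrm{int}\,\mathcal{R}_\mathbf{v}(i)$ the counting breaks down at points of covering degree $>m$, and at the point in the paper where Lemma~\ref{l:close2} is proved we do not yet know that such points form a null set (Lemma~\ref{l:mtiling} comes only in Section~\ref{sec:tilings}). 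Your remark that ``both issues can be absorbed by the same patch argument'' does not supply a mechanism for this; Hausdorff closeness of the matched pieces, which is what your set-equation comparison gives, never by itself yields the inclusion $L\subset\pi_{\mathbf{u},\mathbf{v}}\mathcal{R}^{(n_k)}_\mathbf{v}(i)$. If you want to salvage the strategy, you would have to restrict from the outset to $m$-exclusive open sets (whose existence is guaranteed by Lemma~\ref{l:interiornk}) and argue directly in measure rather than via arbitrary compacta---but at that point you are essentially forced back onto the $D_{0,n}/C_{0,n}$ estimate the paper uses.
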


\begin{proof}
Let $\ell = 0$, the case $\ell > 0$ then being a consequence of Lemma~\ref{l:shiftedPRICE}.
For $\varepsilon > 0$ and $X \subset \mathbf{v}^\bot$, let $X_\varepsilon = \{\mathbf{x} \in \mathbf{v}^\bot:\, \|\mathbf{x} - \mathbf{y}\| \le \varepsilon\ \mbox{for some}\ \mathbf{y} \in X\}$.
With the notation of the proof of Proposition~\ref{p:boundary}, we obtain that
\[
\lambda_\mathbf{v} \Big((\mathcal{R}_\mathbf{v}(i))_\varepsilon \setminus \mathcal{R}_\mathbf{v}(i)\Big) \le \sum_{j\in\mathcal{A}} D_{0,n}(i,j)\, \lambda_\mathbf{v} \Big(\big(M_{[0,n)} \mathcal{R}^{(n)}_\mathbf{v}(j)\big)_\varepsilon\Big).
\]
Let $\varepsilon' > 0$ be arbitrary but fixed.
By the proof of Proposition~\ref{p:boundary}, we have some $n \in \mathbb{N}$ such that  $\sum_{j\in\mathcal{A}} D_{0,n}(i,j)\, \lambda_\mathbf{v} \big(M_{[0,n)} \mathcal{R}^{(n)}_\mathbf{v}(j)\big) < \varepsilon'$. 
Choose $\varepsilon > 0$ such that 
\[
\sum_{j\in\mathcal{A}} D_{0,n}(i,j)\, \lambda_\mathbf{v} \big(\big(M_{[0,n)} \mathcal{R}^{(n)}_\mathbf{v}(j)\big)_\varepsilon\big) < \varepsilon'.
\]
This is possible since, for compact $X \subset \mathbf{v}^\bot$, we have $\bigcap_{\varepsilon>0} X_\varepsilon = X$, thus ${\lim_{\varepsilon\to0} \lambda_\mathbf{v}(X_\varepsilon) = \lambda_\mathbf{v}(X)}$.
For sufficiently large~$k$, we have $\pi_{\mathbf{u},\mathbf{v}} \mathcal{R}^{(n_k)}_\mathbf{v}(i) \subset (\mathcal{R}_\mathbf{v}(i))_\varepsilon$ by Proposition~\ref{p:close}, which implies that $\lambda_\mathbf{v} \big(\pi_{\mathbf{u},\mathbf{v}}\, \mathcal{R}^{(n_k)}_\mathbf{v}(i) \setminus \mathcal{R}_\mathbf{v}(i)\big) < \varepsilon'$. 
As the choice of~$\varepsilon'$ was arbitrary, this yields~\eqref{e:close2}.
\end{proof}

\section{Tilings and coincidences} \label{sec:tilings}

Let $S$ be a finite or  infinite set of unimodular substitutions over the finite alphabet~$\mathcal{A}$ and let $\boldsymbol{\sigma}\in S^{\mathbb{N}}$ be a directive sequence. In this section, we prove several tiling results for Rauzy fractals associated with $\boldsymbol{\sigma}$. First we show that the collections $\mathcal{C}_\mathbf{w}$ form multiple tilings under general conditions and prove that the subdivision of the Rauzy fractals induced by the set equation consists of measure disjoint pieces. In the second part we deal with various coincidence conditions that imply further measure disjointness properties of Rauzy fractals and lead to criteria for $\mathcal{C}_\mathbf{w}$ to be a tiling.  

\subsection{Tiling properties}\label{sec:tilings2}
We start this section by giving a general criterion for the collection $\mathcal{C}_\mathbf{v}$ to be a multiple tiling.

\begin{lemma} \label{l:mtiling}
Assume that the sequence~$\boldsymbol{\sigma}\in S^{\mathbb{N}}$  of unimodular substitutions has Property PRICE with recurrent left eigenvector~$\mathbf{v}$. Then the collection~$\mathcal{C}_\mathbf{v}$ forms a multiple tiling of~$\mathbf{v}^\bot$.
\end{lemma}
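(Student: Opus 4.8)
The plan is to prove that $\mathcal{C}_\mathbf{v}$ is a covering of $\mathbf{v}^\bot$ whose multiplicity equals its covering degree $m$ almost everywhere. By Proposition~\ref{p:covering} the collection $\mathcal{C}_\mathbf{v}$ is a locally finite covering of $\mathbf{v}^\bot$ with finite and positive covering degree $m$. Each of its tiles is a translate of a subtile $\mathcal{R}_\mathbf{v}(i)$, and these subtiles are closures of their interiors with $\lambda_\mathbf{v}(\partial\mathcal{R}_\mathbf{v}(i))=0$ (Propositions~\ref{p:closint} and~\ref{p:boundary}, whose proofs apply verbatim on the hyperplane~$\mathbf{v}^\bot$); in particular the ``closure of its interior'' requirement of a multiple tiling holds and the set $E$ of points lying on the boundary of some tile is $\lambda_\mathbf{v}$-null. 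Writing $\mathcal{E}_m$ for the open set of $m$-exclusive points, it therefore suffices to show $\lambda_\mathbf{v}(\mathbf{v}^\bot\setminus\mathcal{E}_m)=0$. The set $\mathcal{E}_m$ is nonempty: by definition of the covering degree some point lies in exactly $m$ tiles, and by Lemma~\ref{lem:intcovering} such a point is $m$-exclusive.

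First I would record a renormalisation inclusion. Let $\mathcal{E}_m^{(n)}\subseteq(\mathbf{v}^{(n)})^\bot$ denote the set of $m$-exclusive points of $\mathcal{C}_\mathbf{v}^{(n)}$. I claim $M_{[0,n)}\big(\mathcal{E}_m^{(n)}\big)\subseteq\mathcal{E}_m$. Indeed, if $\mathbf{z}=M_{[0,n)}\mathbf{w}$ with $\mathbf{w}\in\mathcal{E}_m^{(n)}$ were contained in more than $m$ tiles of $\mathcal{C}_\mathbf{v}$, then Lemma~\ref{l:covering} (with $k=0$, $\ell=n$) would force $\mathbf{w}=(M_{[0,n)})^{-1}\mathbf{z}$ into more than $m$ tiles of $\mathcal{C}_\mathbf{v}^{(n)}$, contradicting $\mathbf{w}\in\mathcal{E}_m^{(n)}$; hence $\mathbf{z}$ lies in exactly $m$ tiles (at least $m$ always, by Proposition~\ref{p:covering}) and is $m$-exclusive by Lemma~\ref{lem:intcovering}. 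The decisive input here is Lemma~\ref{l:interiornk}, which guarantees that the covering degree of $\mathcal{C}_\mathbf{v}^{(n)}$ is again $m$ for every $n$, so that each $\mathcal{E}_m^{(n)}$ is a nonempty open set.

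The core of the argument is to upgrade local recurrence into a scale-free lower density bound. Since the subtiles are bounded (Lemma~\ref{l:convhull2}), the multiplicity of $\mathcal{C}_\mathbf{v}$ at a point depends only on the patch of $\Gamma(\mathbf{v})$ inside a bounded window; by Lemma~\ref{l:relativelydense} (applied with $\tilde{\mathbf{w}}=\mathbf{w}=\mathbf{v}$) every such patch recurs relatively densely, so the exclusive region of $\mathcal{C}_\mathbf{v}$ recurs with bounded gaps. Applying Lemma~\ref{l:relativelydense} to $\mathbf{v}^{(n_k)}$ (which lies within the admissible $\delta$ of $\mathbf{v}$ for large $k$ by condition~\ref{defE}), together with the convergence of the projected tilings $\mathcal{C}_\mathbf{v}^{(n_k)}\to\mathcal{C}_\mathbf{v}$ in measure (Lemma~\ref{l:close2}, Proposition~\ref{p:close}, Lemma~\ref{lem:projectionconvergence}), yields a uniform constant $c_0>0$ with $\liminf_{R\to\infty}\lambda_{\mathbf{v}^{(n_k)}}\big(\mathcal{E}_m^{(n_k)}\cap B_R\big)/\lambda_{\mathbf{v}^{(n_k)}}(B_R)\ge c_0$ for all large $k$. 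Now I would push this forward by the map $M_{[0,n_k)}\colon(\mathbf{v}^{(n_k)})^\bot\to\mathbf{v}^\bot$, which by Proposition~\ref{p:strongconvergence} has operator norm tending to $0$ on the stable hyperplane (because $\|\pi_{\mathbf{u},\mathbf{1}}M_{[0,n_k)}\mathbf{e}_i\|\to0$), while leaving Lebesgue density ratios invariant. Given any ball $B\subseteq\mathbf{v}^\bot$, choosing $k$ large enough that $M_{[0,n_k)}$ contracts below the scale of $B$, the preimage $(M_{[0,n_k)})^{-1}B$ is large and hence meets $\mathcal{E}_m^{(n_k)}$ in density at least $c_0-o(1)$; since $M_{[0,n_k)}\big(\mathcal{E}_m^{(n_k)}\big)\subseteq\mathcal{E}_m$, letting $k\to\infty$ gives $\lambda_\mathbf{v}(\mathcal{E}_m\cap B)\ge c_0\,\lambda_\mathbf{v}(B)$ for every ball $B$.

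This uniform lower density bound, valid at every location and every scale, prevents any point of the closed complement $\mathbf{v}^\bot\setminus\mathcal{E}_m$ from being a Lebesgue density point of that complement, so $\lambda_\mathbf{v}(\mathbf{v}^\bot\setminus\mathcal{E}_m)=0$ by the Lebesgue density theorem. Consequently almost every point of $\mathbf{v}^\bot$ lies in exactly $m$ tiles of $\mathcal{C}_\mathbf{v}$, which is exactly the multiple tiling property. The main obstacle is the third step: unlike the substitutive case there is no self-affine Perron--Frobenius structure at our disposal, so securing exclusive regions of uniformly positive density at each renormalisation level $n_k$ forces one to use the measure convergence of the subtiles from Lemma~\ref{l:close2} (Hausdorff convergence alone would not control interiors or measures), and transporting this density to all scales relies essentially on the contraction of $M_{[0,n_k)}$ on the stable hyperplane.
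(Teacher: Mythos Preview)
Your argument uses the same ingredients as the paper's proof --- Proposition~\ref{p:covering}, Lemma~\ref{l:covering}, Lemma~\ref{l:relativelydense}, Lemma~\ref{l:interiornk}, Proposition~\ref{p:boundary}, and the strong convergence from Proposition~\ref{p:strongconvergence} --- but packages them through a Lebesgue density argument rather than the paper's direct contradiction. The paper argues as follows: let $X\subset\mathbf{v}^\bot$ be the set of points lying in at least $m+1$ tiles; if $\lambda_\mathbf{v}(X)>0$, then since boundaries are null (Proposition~\ref{p:boundary}) $X$ contains an open ball $B$; by Lemma~\ref{l:covering} every point of $(M_{[0,n_k)})^{-1}B$ lies in $\ge m+1$ tiles of $\mathcal{C}_\mathbf{v}^{(n_k)}$; by strong convergence the set $(M_{[0,n_k)})^{-1}B$ contains a ball of radius $R'$ for large~$k$; but by Lemma~\ref{l:relativelydense} and the proof of Lemma~\ref{l:interiornk} every ball of radius~$R'$ in $(\mathbf{v}^{(n_k)})^\bot$ contains a point covered exactly $m$ times --- contradiction.

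There is one genuine imprecision in your version that the paper's approach sidesteps. Your density statement is formulated for large balls $B_R$ centred at a fixed point, but the set $(M_{[0,n_k)})^{-1}B$ is an \emph{ellipsoid}, and its eccentricity is not controlled (the singular values of $M_{[0,n_k)}$ on the hyperplane need not be comparable). A lower density bound in $B_R$ as $R\to\infty$ does not by itself give a lower density bound in such ellipsoids. The fix is to observe what the combination of Lemma~\ref{l:relativelydense} and the proof of Lemma~\ref{l:interiornk} actually yields: the $m$-exclusive locus of $\mathcal{C}_\mathbf{v}^{(n_k)}$ contains, inside \emph{every} ball of fixed radius~$R'$, a translate of a fixed open set (the projected copy of~$U$). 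This uniform syndeticity gives density $\ge c_0$ in any ellipsoid whose smallest semi-axis is $\gg R'$, which is what you need. The paper's contradiction argument requires only a single $m$-exclusive point per $R'$-ball, so this issue never arises there; your route works once the density claim is stated in this stronger local form, but it is slightly more laborious than necessary.
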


\begin{proof}
Let $(n_k)$ be the strictly increasing sequence associated with~$\boldsymbol{\sigma}$ according to Definition~\ref{def:star}, let $m$ be the covering degree of~$\mathcal{C}_\mathbf{v}$, which is positive and finite by Proposition~\ref{p:covering}, and let $X$ be the set of points lying in at least $m+1$ tiles of~$\mathcal{C}_\mathbf{v}$.
We have to show that $X$ has zero measure.

By Lemma~\ref{l:interiornk}, each $(\mathbf{v}^{(n_k)})^\bot$ with sufficiently large~$k$ contains points lying in exactly $m$ tiles of~$\mathcal{C}^{(n_k)}_\mathbf{v}$.
Moreover, by Lemma~\ref{l:relativelydense}, there exists a constant $R > 0$ such that each ball of radius~$R$ in~$\Gamma(\mathbf{v}^{(n_k)})$ contains $\mathbf{y}_k$ as in the proof of Lemma~\ref{l:interiornk}.
Since $\|\mathbf{x} - \pi_{\mathbf{u},\mathbf{v}}^{(n_k)}\, \mathbf{x}\|$, with $[\mathbf{x},i] \in \Gamma(\mathbf{v}^{(n_k)})$, is bounded, we obtain that there exists $R' > 0$ such that each ball of radius $R'$ in~$(\mathbf{v}^{(n_k)})^\bot$ contains a point lying in exactly $m$ tiles of~$\mathcal{C}^{(n_k)}_\mathbf{v}$, for all sufficiently large~$k$.

On the other hand, by Lemma~\ref{l:covering}, each point in $(M_{[0,n_k)})^{-1} X \subset (\mathbf{v}^{(n_k)})^\bot$ is covered at least $m+1$ times by elements of~$\mathcal{C}_\mathbf{v}^{(n_k)}$.  
Assume that $X$ has positive measure.
Then, as the boundaries of~$\mathcal{R}(i)$ and thus of~$\mathcal{R}_\mathbf{v}(i)$ have zero measure by Proposition~\ref{p:boundary}, there are points in~$X$ that are not contained in the boundary of any element of~$\mathcal{C}_\mathbf{v}$. 
Thus $X$ contains a ball of positive diameter, and, by Proposition~\ref{p:strongconvergence}, $(M_{[0,n_k)})^{-1} X$ contains a ball of radius~$R'$ for all sufficiently large~$k$. 
This contradicts the fact that each ball of radius~$R'$ in $(\mathbf{v}^{(n_k)})^\bot$ contains a point that is covered at most $m$ times.
Therefore, $X$~has zero measure, i.e., $\mathcal{C}_\mathbf{v}$~forms a multiple tiling with covering degree~$m$.
\end{proof}

\begin{lemma} \label{l:mtilingPvn}
Assume that  the sequence~$\boldsymbol{\sigma}\in S^{\mathbb{N}}$ of unimodular substitutions has Property PRICE with recurrent left eigenvector~$\mathbf{v}$. 
Then, for each $n \in \mathbb{N}$, $\mathcal{C}^{(n)}_\mathbf{v}$~is a multiple tiling of~$(\mathbf{v}^{(n)})^\bot$, with covering degree equal to that of~$\mathcal{C}_\mathbf{v}$.
\end{lemma}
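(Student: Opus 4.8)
The plan is to deduce the statement for $\mathcal{C}^{(n)}_\mathbf{v}$ from the multiple tiling property of $\mathcal{C}_\mathbf{v}$ already established in Lemma~\ref{l:mtiling}, by passing to the shifted directive sequence. First I would observe that the shifted sequence $\boldsymbol{\sigma}' = (\sigma_{m+n})_{m\in\mathbb{N}}$ again satisfies Property PRICE: by Lemma~\ref{l:shiftedPRICE} (applied with $h = n$) it fulfills \ref{defP}, \ref{defR}, \ref{defI}, \ref{defC}, \ref{defE} with respect to the shifted sequences $(n_{k+k_0})$, $(\ell_{k+k_0}-n)$ and the recurrent left eigenvector $\mathbf{v}^{(n)} = \tr{(M_{[0,n)})}\,\mathbf{v}$, which lies in $\mathbb{R}_{\ge0}^d \setminus \{\mathbf{0}\}$ since the incidence matrices are non-negative and unimodular.

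The key step is to identify the collection $\mathcal{C}^{(n)}_\mathbf{v}$ built from $\boldsymbol{\sigma}$ with the collection $\mathcal{C}_{\mathbf{v}^{(n)}}$ built from $\boldsymbol{\sigma}'$. Here I would use that the generalized right eigenvector of $\boldsymbol{\sigma}'$ is $(M_{[0,n)})^{-1}\mathbf{u}$, so that the projection $\pi_{(M_{[0,n)})^{-1}\mathbf{u},\,\mathbf{v}^{(n)}}$ occurring in the definition of $\mathcal{C}_{\mathbf{v}^{(n)}}$ is exactly $\pi^{(n)}_{\mathbf{u},\mathbf{v}}$ by the abbreviation~\eqref{e:projabb}. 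Moreover the limit words of $\boldsymbol{\sigma}'$ are precisely the desubstituted words $\omega^{(n)}$ of the limit words $\omega$ of $\boldsymbol{\sigma}$ (an $\omega'$ with $\omega'^{(m)} = \sigma_{m+n}(\omega'^{(m+1)})$ extends to a limit word $\omega$ of $\boldsymbol{\sigma}$ via $\omega^{(j)} = \sigma_{[j,n)}(\omega')$ for $j<n$, and conversely), so comparing~\eqref{e:defRn} with the definition of the subtiles in Section~\ref{subsec:FR} shows that the subtiles $\mathcal{R}_{\mathbf{v}^{(n)}}(i)$ of $\boldsymbol{\sigma}'$ coincide with $\mathcal{R}^{(n)}_\mathbf{v}(i)$. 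As both collections are indexed by the same discrete hyperplane $\Gamma(\mathbf{v}^{(n)})$, they are \emph{equal}: $\mathcal{C}^{(n)}_\mathbf{v} = \mathcal{C}_{\mathbf{v}^{(n)}}$.

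With this identification in hand I would apply Lemma~\ref{l:mtiling} to the sequence $\boldsymbol{\sigma}'$ and its recurrent left eigenvector $\mathbf{v}^{(n)}$, which immediately yields that $\mathcal{C}^{(n)}_\mathbf{v}$ is a multiple tiling of $(\mathbf{v}^{(n)})^\bot$. The equality of covering degrees then needs no new argument: it is exactly the final assertion of Lemma~\ref{l:interiornk}, stating that the covering degree of $\mathcal{C}^{(n)}_\mathbf{v}$ equals that of $\mathcal{C}_\mathbf{v}$ for all $n \in \mathbb{N}$.

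I expect the only genuine obstacle to be the bookkeeping in the key identification, namely checking carefully that Property PRICE transfers to the shift with the eigenvector $\mathbf{v}^{(n)}$ and that the three ingredients of $\mathcal{C}^{(n)}_\mathbf{v}$ and $\mathcal{C}_{\mathbf{v}^{(n)}}$ (the projection, the subtiles, and the indexing set $\Gamma(\mathbf{v}^{(n)})$) agree on the nose. Once this matching is verified, the multiple tiling property and the value of the covering degree follow directly from Lemmas~\ref{l:mtiling} and~\ref{l:interiornk}, so no further estimates are required.
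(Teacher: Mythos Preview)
Your proposal is correct and follows essentially the same approach as the paper's proof: shift the directive sequence, invoke Lemma~\ref{l:shiftedPRICE} to retain Property PRICE with recurrent left eigenvector~$\mathbf{v}^{(n)}$, apply Lemma~\ref{l:mtiling} to obtain the multiple tiling, and cite Lemma~\ref{l:interiornk} for the equality of covering degrees. You actually make explicit the identification $\mathcal{C}^{(n)}_\mathbf{v} = \mathcal{C}_{\mathbf{v}^{(n)}}$ that the paper leaves implicit, which is a helpful clarification.
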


\begin{proof}
If $(\sigma_n)_{n\in\mathbb{N}}$ has Property PRICE w.r.t.\ the sequences $(n_k)$ and $(\ell_k)$ and the vector~$\mathbf{v}$, then there is $k_0\in\mathbb{N}$ such that $(\sigma_{m+n})_{m\in\mathbb{N}}$ has Property PRICE w.r.t.\ the sequences $(n_{k+k_0})$ and $(\ell_{k+k_0}{-}n)$ and the vector~$\mathbf{v}^{(n)}$ by Lemma~\ref{l:shiftedPRICE}, thus $\mathcal{C}^{(n)}_\mathbf{v}$~is a multiple tiling of~$(\mathbf{v}^{(n)})^\bot$ by Lemma~\ref{l:mtiling}.
By Lemma~\ref{l:interiornk}, the covering degree of~$\mathcal{C}^{(n)}_\mathbf{v}$ is equal to that of~$\mathcal{C}_\mathbf{v}$.
\end{proof}

\begin{proposition} \label{p:disjoint}
Let $S$ be a finite or  infinite set of unimodular substitutions and let $\boldsymbol{\sigma}\in S^{\mathbb{N}}$ be a directive sequence with Property PRICE.
Then the unions in the set equations \eqref{e:setequationkl} of Proposition~\ref{p:setequation} are disjoint in measure.
\end{proposition}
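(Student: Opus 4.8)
The plan is to prove the statement for the vector $\mathbf{w} = \mathbf{v}$ supplied by Property PRICE, this being the case in which the multiple-tiling machinery of Section~\ref{sec:tilings2} is available. Concretely, for fixed $k < \ell$ and $[\mathbf{x},i]$ I must show that for distinct $[\mathbf{y},j],[\mathbf{y}',j'] \in E_1^*(\sigma_{[k,\ell)})[\mathbf{x},i]$ the two corresponding tiles on the right-hand side of~\eqref{e:setequationkl} meet in a set of measure zero. Since $M_{[k,\ell)}$ is a linear isomorphism of $(\mathbf{v}^{(\ell)})^\bot$ onto $(\mathbf{v}^{(k)})^\bot$ (Lemma~\ref{l:e1star}~(\ref{62i})) and hence maps null sets to null sets, I would first strip off the outer $M_{[k,\ell)}$ and reduce to the measure-disjointness of $\pi_{\mathbf{u},\mathbf{v}}^{(\ell)}\mathbf{y}+\mathcal{R}^{(\ell)}_\mathbf{v}(j)$ and $\pi_{\mathbf{u},\mathbf{v}}^{(\ell)}\mathbf{y}'+\mathcal{R}^{(\ell)}_\mathbf{v}(j')$. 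Next I would reduce an arbitrary $[\mathbf{x},i]\in\mathbb{Z}^d\times\mathcal{A}$ to the base case $[\mathbf{0},i]$: the translation identity $E_1^*(\sigma_{[k,\ell)})[\mathbf{x},i]=E_1^*(\sigma_{[k,\ell)})[\mathbf{0},i]+M_{[k,\ell)}^{-1}\mathbf{x}$ (immediate from~\eqref{eq:dualsubst}, with $M_{[k,\ell)}^{-1}\mathbf{x}\in\mathbb{Z}^d$ by unimodularity) together with Lemma~\ref{lem:projections} shows that the pieces for $[\mathbf{x},i]$ are those for $[\mathbf{0},i]$ translated by the single vector $\pi_{\mathbf{u},\mathbf{v}}^{(\ell)}M_{[k,\ell)}^{-1}\mathbf{x}$, so measure-disjointness is preserved. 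Finally, $[\mathbf{0},i]\in\Gamma(\mathbf{v}^{(k)})$ for every $i,k$: the recurrent left eigenvector $\mathbf{v}$, being the direction of $\bigcap_{k\in K}\tr{(M_{[0,n_k)})}\mathbb{R}^d_{\ge0}$ for a primitive sequence, is strictly positive, whence $\langle\mathbf{v}^{(k)},\mathbf{e}_i\rangle=\langle\mathbf{v},\mathbf{l}(\sigma_{[0,k)}(i))\rangle>0$. Thus by Lemma~\ref{l:e1star}~(\ref{62ii}) all the pieces $\pi_{\mathbf{u},\mathbf{v}}^{(\ell)}\mathbf{y}+\mathcal{R}^{(\ell)}_\mathbf{v}(j)$, $[\mathbf{y},j]\in E_1^*(\sigma_{[k,\ell)})[\mathbf{0},i]$, are genuine, pairwise distinct tiles of $\mathcal{C}^{(\ell)}_\mathbf{v}$.

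With this reduction in place, the heart of the argument is a covering-degree count. By Lemma~\ref{l:mtilingPvn}, both $\mathcal{C}^{(k)}_\mathbf{v}$ and $\mathcal{C}^{(\ell)}_\mathbf{v}$ are multiple tilings whose covering degrees equal the common value $m$ of the covering degree of $\mathcal{C}_\mathbf{v}$. Let $N\subset(\mathbf{v}^{(\ell)})^\bot$ be the union of the image under $M_{[k,\ell)}^{-1}$ of the points of $(\mathbf{v}^{(k)})^\bot$ not lying in exactly $m$ tiles of $\mathcal{C}^{(k)}_\mathbf{v}$, together with the points of $(\mathbf{v}^{(\ell)})^\bot$ not lying in exactly $m$ tiles of $\mathcal{C}^{(\ell)}_\mathbf{v}$; since $M_{[k,\ell)}$ preserves null sets and the subtile boundaries have zero measure (Proposition~\ref{p:boundary} applied to the shifted sequences via Lemma~\ref{l:shiftedPRICE}), the set $N$ is null. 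Fix $\mathbf{z}'\in(\mathbf{v}^{(\ell)})^\bot\setminus N$ and put $\mathbf{z}=M_{[k,\ell)}\mathbf{z}'$, so that $\mathbf{z}$ lies in exactly $m$ tiles of $\mathcal{C}^{(k)}_\mathbf{v}$ and $\mathbf{z}'$ in exactly $m$ tiles of $\mathcal{C}^{(\ell)}_\mathbf{v}$. If one had $\mathbf{z}'\in\big(\pi_{\mathbf{u},\mathbf{v}}^{(\ell)}\mathbf{y}+\mathcal{R}^{(\ell)}_\mathbf{v}(j)\big)\cap\big(\pi_{\mathbf{u},\mathbf{v}}^{(\ell)}\mathbf{y}'+\mathcal{R}^{(\ell)}_\mathbf{v}(j')\big)$ for distinct $[\mathbf{y},j],[\mathbf{y}',j']\in E_1^*(\sigma_{[k,\ell)})[\mathbf{0},i]$, then the second assertion of Lemma~\ref{l:covering} would force $\mathbf{z}'$ to lie in at least $m+1$ tiles of $\mathcal{C}^{(\ell)}_\mathbf{v}$, a contradiction. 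Hence the pairwise intersection of any two such pieces is contained in $N$, so has measure zero; transporting back through $M_{[k,\ell)}$ and undoing the translation to $[\mathbf{0},i]$ yields the proposition.

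The step I expect to be the crux is the appeal to the equality of covering degrees across the two levels $k$ and $\ell$. Measure-disjointness cannot be read off from the multiple-tiling property of a single level alone: a multiple tiling of covering degree $m>1$ genuinely has overlapping tiles, so the pieces of one decomposition could a priori overlap without contradicting anything intrinsic to level $\ell$. What rules this out is that the \emph{same} degree $m$ is already realized at level $k$ (Lemma~\ref{l:mtilingPvn}), so that any unit of ``extra'' overlap created inside a single $E_1^*(\sigma_{[k,\ell)})[\mathbf{x},i]$, quantified by the refined count in the second part of Lemma~\ref{l:covering}, would push the covering degree at level $\ell$ strictly above $m$, which is impossible. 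Making this rigorous requires aligning three inputs on one full-measure set, namely the exact-$m$ covering at both levels and the vanishing of the subtile boundaries (Proposition~\ref{p:boundary}); once the relevant null sets are removed and matched up by the isomorphism $M_{[k,\ell)}$, the contradiction is immediate.
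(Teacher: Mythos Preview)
Your core argument---comparing the common covering degree $m$ of $\mathcal{C}_\mathbf{v}^{(k)}$ and $\mathcal{C}_\mathbf{v}^{(\ell)}$ (Lemma~\ref{l:mtilingPvn}) and invoking the second clause of Lemma~\ref{l:covering} to derive a contradiction---is exactly the paper's approach, and it is correct. (Your appeal to Proposition~\ref{p:boundary} is unnecessary: the multiple-tiling property from Lemma~\ref{l:mtilingPvn} already gives that the set of points not lying in exactly $m$ tiles is null.)

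There are, however, two gaps. The minor one: you only treat $\mathbf{w}=\mathbf{v}$, whereas~\eqref{e:setequationkl} and the proposition concern all $\mathbf{w}\in\mathbb{R}_{\ge0}^d\setminus\{\mathbf{0}\}$. The paper closes this in one line by observing that $\mathcal{R}^{(\ell)}_\mathbf{w}(j)=\pi_{\mathbf{u},\mathbf{w}}^{(\ell)}\,\mathcal{R}^{(\ell)}_\mathbf{v}(j)$ and that $\pi_{\mathbf{u},\mathbf{w}}^{(\ell)}$ restricted to $(\mathbf{v}^{(\ell)})^\bot$ is a linear isomorphism onto $(\mathbf{w}^{(\ell)})^\bot$, so measure-disjointness transfers.

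The more substantive gap is your claim that $[\mathbf{0},i]\in\Gamma(\mathbf{v}^{(k)})$ for \emph{every} $k$ and $i$, i.e., that $\langle\mathbf{v}^{(k)},\mathbf{e}_i\rangle>0$. Your justification---that $\mathbf{v}$ is the direction of $\bigcap_{k\in K}\tr{(M_{[0,n_k)})}\,\mathbb{R}_{\ge0}^d$ and hence strictly positive---confuses the \emph{definition} of a recurrent left eigenvector (Definition~\ref{def:star}: any $\mathbf{v}\in\mathbb{R}_{\ge0}^d\setminus\{\mathbf{0}\}$ satisfying~\ref{defE}) with one particular \emph{construction} (Lemma~\ref{l:findv}). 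Property PRICE does not force $\mathbf{v}$ to be strictly positive; even if $\mathbf{v}^{(n_k)}$ is eventually positive, its normalized limit may lie on $\partial\mathbb{R}_{\ge0}^d$. When $\langle\mathbf{v}^{(k)},\mathbf{e}_i\rangle=0$, the face $[\mathbf{0},i]$ is not in $\Gamma(\mathbf{v}^{(k)})$, your reduction breaks down, and the covering-degree argument does not apply directly. The paper handles this degenerate case by passing to a large shifted index $n_\kappa+k$ where $\mathbf{v}^{(n_\kappa+k)}\in\mathbb{R}_+^d$ (so the argument does apply there) and then pulling the measure-disjointness back to level $\ell$ via Lemma~\ref{l:close2} and Lemma~\ref{lem:projectionconvergence}. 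This approximation step is not a triviality and is missing from your proposal.
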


\begin{proof}
Let $\mathbf{v}$ be a recurrent left eigenvector as in Definition~\ref{def:star}, let $m$ be the covering degree of the multiple tilings~$\mathcal{C}^{(n)}_\mathbf{v}$, according to Lemma~\ref{l:mtilingPvn}, and $k < \ell$. 
Then the set of points in~$(\mathbf{v}^{(\ell)})^\bot$ lying in at least $m+1$ tiles of~$\mathcal{C}^{(\ell)}_\mathbf{v}$ has zero measure and each point in $(\mathbf{v}^{(k)})^\bot$ lies in at least $m$ tiles of~$\mathcal{C}^{(k)}_\mathbf{v}$.
Therefore, Lemma~\ref{l:covering} implies that the intersection of $\pi_{\mathbf{u},\mathbf{v}}^{(\ell)}\, \mathbf{y} +  \mathcal{R}^{(\ell)}_\mathbf{v}(j)$ and $\pi_{\mathbf{u},\mathbf{v}}^{(\ell)}\, \mathbf{y}' +  \mathcal{R}^{(\ell)}_\mathbf{v}(j')$ has zero measure for distinct $[\mathbf{y},j], [\mathbf{y}',j'] \in E_1^*(\sigma_{[k,\ell)})[\mathbf{x},i]$, with $[\mathbf{x},i] \in \Gamma(\mathbf{v}^{(k)})$.
By translation, this also holds for all $[\mathbf{x},i] \in \mathbb{Z}^d \times \mathcal{A}$ such that $\langle \mathbf{v}^{(k)}, \mathbf{e}_i \rangle > 0$.
Projecting by~$\pi_{\mathbf{u},\mathbf{w}}^{(\ell)}$, we obtain that $\pi_{\mathbf{u},\mathbf{w}}^{(\ell)}\, \mathbf{y} +  \mathcal{R}^{(\ell)}_\mathbf{w}(j)$ and $\pi_{\mathbf{u},\mathbf{w}}^{(\ell)}\, \mathbf{y}' +  \mathcal{R}^{(\ell)}_\mathbf{w}(j')$ are disjoint in measure for all $\mathbf{w} \in \mathbb{R}_{\ge0}^d \setminus \{\mathbf{0}\}$. 

It remains to consider the case that $\langle \mathbf{v}^{(k)}, \mathbf{e}_i \rangle = 0$.
By primitivity of~$\boldsymbol{\sigma}$, there is $h \in \mathbb{N}$ such that $\mathbf{v}^{(h)} \in \mathbb{R}_+^d$.
For sufficiently large~$\kappa$, we have thus $\mathbf{v}^{(n_\kappa+k)} \in \mathbb{R}_+^d$ and the previous paragraph implies that the intersection of $\pi_{\mathbf{u},\mathbf{v}}^{(n_\kappa+\ell)}\, \mathbf{y} +  \mathcal{R}^{(n_\kappa+\ell)}_\mathbf{v}(j)$ and $\pi_{\mathbf{u},\mathbf{v}}^{(n_\kappa+\ell)}\, \mathbf{y}' +  \mathcal{R}^{(n_\kappa+\ell)}_\mathbf{v}(j')$ has zero measure for distinct $[\mathbf{y},j], [\mathbf{y}',j'] \in E_1^*(\sigma_{[k,\ell)})[\mathbf{0},i]$.
As $\lim_{\kappa\to\infty} \pi_{\mathbf{u},\mathbf{v}}^{(\ell)}\, \pi_{\mathbf{u},\mathbf{v}}^{(n_\kappa+\ell)}\, \mathbf{y} = \pi_{\mathbf{u},\mathbf{v}}^{(\ell)}\, \mathbf{y}$ by Lemma~\ref{lem:projectionconvergence} and
$\lim_{\kappa\to\infty} \lambda_{\mathbf{v}^{(\ell)}} \big(\mathcal{R}_\mathbf{v}^{(\ell)}(j) \setminus \pi_{\mathbf{u},\mathbf{v}}^{(\ell)}\, \mathcal{R}^{(n_\kappa+\ell)}_\mathbf{v}(j)\big) = 0$ by Lemma~\ref{l:close2}, we obtain that the intersection of $\pi_{\mathbf{u},\mathbf{v}}^{(\ell)}\, \mathbf{y} +  \mathcal{R}^{(\ell)}_\mathbf{v}(j)$ and $\pi_{\mathbf{u},\mathbf{v}}^{(\ell)}\, \mathbf{y}' +  \mathcal{R}^{(\ell)}_\mathbf{v}(j')$ also has zero measure.
\end{proof}

\begin{lemma} \label{l:measure}
Let $S$ be a finite or  infinite set of unimodular substitutions.
Assume that the sequence  $\boldsymbol{\sigma}=(\sigma_n)\in S^{\mathbb{N}}$ of unimodular substitutions has Property PRICE with recurrent left eigenvector~$\mathbf{v}$.
Let $m$ be the covering degree of the multiple tiling~$\mathcal{C}_\mathbf{v}$, and identify $[\mathbf{0},i]$ with a face of the unit hypercube orthogonal to~$\mathbf{e}_i$.
Then 
\begin{equation} \label{e:lambdav}
\tr{\big(\lambda_\mathbf{v}(\mathcal{R}_\mathbf{v}(1)), \ldots, \lambda_\mathbf{v}(\mathcal{R}_\mathbf{v}(d))\big)} = m\, \tr{\big(\lambda_\mathbf{v}(\pi_{\mathbf{u},\mathbf{v}}\, [\mathbf{0},1]), \ldots, \lambda_\mathbf{v}(\pi_{\mathbf{u},\mathbf{v}}\, [\mathbf{0},d])\big)} \in \mathbb{R} \mathbf{u}\,.
\end{equation}
\end{lemma}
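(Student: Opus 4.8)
The plan is to express the two vectors in \eqref{e:lambdav} through a common linear recursion governed by the matrices $M_{[0,\ell)}$, to deduce that both lie in $\mathbb{R}\mathbf{u}$, and finally to identify the proportionality factor with the covering degree~$m$ by integrating covering multiplicities.

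First I would set $r_i^{(n)} = \lambda_{\mathbf{v}^{(n)}}(\mathcal{R}^{(n)}_\mathbf{v}(i))$ and $\mathbf{r}^{(n)} = \tr{(r_1^{(n)},\ldots,r_d^{(n)})}$, writing $\mathbf{r}=\mathbf{r}^{(0)}$. Applying $\lambda_\mathbf{v}$ to the set equation \eqref{e:setequationkl} with $k=0$, $\mathbf{x}=\mathbf{0}$ and a fixed~$\ell$, I would use three facts: the union on the right-hand side is disjoint in measure by Proposition~\ref{p:disjoint}; the map $M_{[0,\ell)}$ restricts to a linear isomorphism $(\mathbf{v}^{(\ell)})^\bot\to\mathbf{v}^\bot$ by Lemma~\ref{l:e1star}~(\ref{62i}), hence scales $\lambda_{\mathbf{v}^{(\ell)}}$ into $\lambda_\mathbf{v}$ by a single Jacobian $J_\ell>0$ independent of the tile; and, by the definition \eqref{eq:dualsubst} of $E_1^*$, distinct occurrences of~$i$ in $\sigma_{[0,\ell)}(j)$ yield distinct points, so the number of $[\mathbf{y},j]\in E_1^*(\sigma_{[0,\ell)})[\mathbf{0},i]$ with second coordinate~$j$ equals $|\sigma_{[0,\ell)}(j)|_i=(M_{[0,\ell)})_{ij}$. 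Together with translation-invariance of $\lambda_{\mathbf{v}^{(\ell)}}$ these give the recursion $\mathbf{r} = J_\ell\,M_{[0,\ell)}\,\mathbf{r}^{(\ell)}$ for every $\ell\in\mathbb{N}$.

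Since each $\mathcal{R}(i)$ is the closure of its interior (Proposition~\ref{p:closint}), the same holds for $\mathcal{R}_\mathbf{v}(i)$, so $\mathbf{r}\in\mathbb{R}_{\ge0}^d\setminus\{\mathbf{0}\}$, and $\mathbf{r}^{(\ell)}\in\mathbb{R}_{\ge0}^d$ for all~$\ell$. As $J_\ell>0$, the recursion gives $\mathbf{r}\in M_{[0,\ell)}\,\mathbb{R}_{\ge0}^d$ for every~$\ell$, whence $\mathbf{r}\in\bigcap_{\ell}M_{[0,\ell)}\,\mathbb{R}_{\ge0}^d=\mathbb{R}_{\ge0}\mathbf{u}$ by the topological Perron--Frobenius property \eqref{e:topPF}. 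For the face vector I would argue directly: the parallelepiped spanned by $\{\mathbf{e}_j\}_{j\ne i}$ and~$\mathbf{u}$ has $d$-volume $|u_i|$ (cofactor expansion along the column~$\mathbf{u}$), and it also equals $\lambda_\mathbf{v}(\pi_{\mathbf{u},\mathbf{v}}[\mathbf{0},i])$ times the distance $\langle\mathbf{v},\mathbf{u}\rangle/\|\mathbf{v}\|$ from~$\mathbf{u}$ to~$\mathbf{v}^\bot$. Hence $\lambda_\mathbf{v}(\pi_{\mathbf{u},\mathbf{v}}[\mathbf{0},i]) = \|\mathbf{v}\|\,u_i/\langle\mathbf{v},\mathbf{u}\rangle$, so the face vector is also a positive multiple of~$\mathbf{u}$. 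Writing $\mathbf{r}=\kappa\,\tr{(\lambda_\mathbf{v}(\pi_{\mathbf{u},\mathbf{v}}[\mathbf{0},1]),\ldots,\lambda_\mathbf{v}(\pi_{\mathbf{u},\mathbf{v}}[\mathbf{0},d]))}$ with $\kappa>0$ already settles the membership $\mathbf{r}\in\mathbb{R}\mathbf{u}$ asserted in \eqref{e:lambdav}.

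It remains to prove $\kappa=m$, which I expect to be the main obstacle. Here I would integrate covering multiplicities over a large box $U\subset\mathbf{v}^\bot$, setting $N_i(U)=\#\{[\mathbf{x},i]\in\Gamma(\mathbf{v}):\pi_{\mathbf{u},\mathbf{v}}\,\mathbf{x}\in U\}$. Because the subtiles have uniformly bounded diameter (Lemma~\ref{l:bounded}) and $\mathcal{C}_\mathbf{v}$ is a multiple tiling of covering degree~$m$ (Lemma~\ref{l:mtiling}, Proposition~\ref{p:covering}), Fubini gives $\sum_i N_i(U)\,\lambda_\mathbf{v}(\mathcal{R}_\mathbf{v}(i)) = m\,\lambda_\mathbf{v}(U)+O(|\partial U|)$; and since the projected faces $\pi_{\mathbf{u},\mathbf{v}}[\mathbf{x},i]$ tile $\mathbf{v}^\bot$ exactly once, $\sum_i N_i(U)\,\lambda_\mathbf{v}(\pi_{\mathbf{u},\mathbf{v}}[\mathbf{0},i]) = \lambda_\mathbf{v}(U)+O(|\partial U|)$. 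Letting $U$ exhaust $\mathbf{v}^\bot$ forces the ratio of these two sums to tend to~$m$; but the relation $\lambda_\mathbf{v}(\mathcal{R}_\mathbf{v}(i))=\kappa\,\lambda_\mathbf{v}(\pi_{\mathbf{u},\mathbf{v}}[\mathbf{0},i])$ makes that ratio identically equal to~$\kappa$, so $\kappa=m$. The delicate points are the uniform control $O(|\partial U|)=o(\lambda_\mathbf{v}(U))$ in the covering-degree integration and the justification that the stepped-hyperplane faces project onto a single tiling of~$\mathbf{v}^\bot$; both follow from the uniform discreteness of $\pi_{\mathbf{u},\mathbf{v}}\,\Gamma(\mathbf{v})$ and the bounded diameters of the tiles. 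Combining the three steps yields $\mathbf{r}=m\,\tr{(\lambda_\mathbf{v}(\pi_{\mathbf{u},\mathbf{v}}[\mathbf{0},1]),\ldots,\lambda_\mathbf{v}(\pi_{\mathbf{u},\mathbf{v}}[\mathbf{0},d]))}\in\mathbb{R}\mathbf{u}$, which is exactly \eqref{e:lambdav}.
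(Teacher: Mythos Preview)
Your proof is correct and follows essentially the same route as the paper's. The paper derives the same recursion $\mathbf{r}\in M_{[0,n)}\,\mathbb{R}_{\ge0}^d$ from Proposition~\ref{p:disjoint} and the set equations (writing $\lambda_\mathbf{v}(M_{[0,n)}\mathcal{R}^{(n)}_\mathbf{v}(j))$ instead of factoring out your Jacobian~$J_\ell$), invokes \eqref{e:topPF} to land in $\mathbb{R}\mathbf{u}$, cites \cite[Lemma~2.3]{Ito-Rao:06} for the face-vector direction where you give the volume computation directly, and then determines the constant using the same observation you formalize: the projected faces $\pi_{\mathbf{u},\mathbf{v}}(\mathbf{x}+[\mathbf{0},i])$, $[\mathbf{x},i]\in\Gamma(\mathbf{v})$, tile $\mathbf{v}^\bot$ with degree~$1$ while $\mathcal{C}_\mathbf{v}$, indexed by the same set~$\Gamma(\mathbf{v})$, tiles with degree~$m$. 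Your large-box density argument is simply an explicit version of that one-sentence comparison; both are valid and neither adds or removes any genuine idea.
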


\begin{proof}
As in the proof of \cite[Lemma~2.3]{Ito-Rao:06}, we see that $\tr{\big(\lambda_\mathbf{v}(\pi_{\mathbf{u},\mathbf{v}}\, [\mathbf{0},1]), \ldots, \lambda_\mathbf{v} (\pi_{\mathbf{u},\mathbf{v}}\, [\mathbf{0},d])\big)} \in \mathbb{R} \mathbf{u}$.
Using the set equations~\eqref{e:setequationkl} and Proposition~\ref{p:disjoint}, we obtain that 
\[
\begin{pmatrix} \lambda_\mathbf{v}(\mathcal{R}_\mathbf{v}(1)) \\ \vdots \\ \lambda_\mathbf{v}(\mathcal{R}_\mathbf{v}(d)) \end{pmatrix} = M_{[0,n)} \begin{pmatrix} \lambda_\mathbf{v}(M_{[0,n)} \mathcal{R}^{(n)}_\mathbf{v}(1)) \\ \vdots \\ \lambda_\mathbf{v}(M_{[0,n)} \mathcal{R}^{(n)}_\mathbf{v}(d)) \end{pmatrix}
\]
for all $n \in \mathbb{N}$.
Then~\eqref{e:topPF} implies that $\tr{\big(\lambda_\mathbf{v}(\mathcal{R}_\mathbf{v}(1)), \ldots, \lambda_\mathbf{v}(\mathcal{R}_\mathbf{v}(d))\big)} \in \mathbb{R} \mathbf{u}$, hence, 
\[
\big(\lambda_\mathbf{v}(\mathcal{R}_\mathbf{v}(1)), \ldots, \lambda_\mathbf{v}(\mathcal{R}_\mathbf{v}(d))\big) = r\, \big(\lambda_\mathbf{v}(\pi_{\mathbf{u},\mathbf{v}}\, [\mathbf{0},1]), \ldots, \lambda_\mathbf{v}(\pi_{\mathbf{u},\mathbf{v}}\, [\mathbf{0},d])\big)
\]
for some $r \in \mathbb{R}$. 
Now, as $\{\pi_{\mathbf{u},\mathbf{v}} (\mathbf{x} + [\mathbf{0},i]):\, [\mathbf{x},i] \in \Gamma(\mathbf{v})\}$ forms a tiling of~$\mathbf{v}^\bot$, and $\mathcal{C}_\mathbf{v}$ has covering degree~$m$, we have $r = m$.
\end{proof}

The following result seems to be new even in the periodic case: Rauzy fractals induce tilings on any given hyperplane; in particular, $\mathcal{R}_{\mathbf{e}_i}(i)$ tiles ${\mathbf{e}_i}^\bot$ periodically for each $i \in \mathcal{A}$.

\begin{proposition}\label{p:independentmultiple}
Let $S$ be a finite or  infinite set of unimodular substitutions over a finite alphabet and assume that $\boldsymbol{\sigma}\in S^{\mathbb{N}}$ has Property PRICE. Then, for each $\mathbf{w} \in \mathbb{R}^d_{\ge0} \setminus \{\mathbf{0}\}$, the collection $\mathcal{C}_\mathbf{w}$ forms a multiple tiling of~$\mathbf{w}^\bot$, with covering degree not depending on~$\mathbf{w}$.
\end{proposition}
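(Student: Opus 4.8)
The plan is to transfer the multiple tiling property, already established for the recurrent left eigenvector $\mathbf{v}$ in Lemma~\ref{l:mtiling}, to an arbitrary $\mathbf{w}\in\mathbb{R}^d_{\ge0}\setminus\{\mathbf{0}\}$, and to identify the common covering degree. Write $m$ for the covering degree of $\mathcal{C}_\mathbf{v}$. First I record the soft properties. Since $\pi_{\mathbf{u},\mathbf{w}}=\pi_{\mathbf{u},\mathbf{w}}\,\pi_{\mathbf{u},\mathbf{1}}$ and $\mathbf{u}\in\mathbb{R}^d_+$, the restriction $\pi_{\mathbf{u},\mathbf{w}}|_{\mathbf{1}^\bot}\colon\mathbf{1}^\bot\to\mathbf{w}^\bot$ is a linear isomorphism carrying $\mathcal{R}(i)$ to $\mathcal{R}_\mathbf{w}(i)$; hence each $\mathcal{R}_\mathbf{w}(i)$ is the closure of its interior by Proposition~\ref{p:closint}, and $\lambda_\mathbf{w}(\partial\mathcal{R}_\mathbf{w}(i))=0$ by Proposition~\ref{p:boundary}. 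The same isomorphism shows that the ratio $\lambda_\mathbf{w}(\mathcal{R}_\mathbf{w}(i))/\lambda_\mathbf{w}(\pi_{\mathbf{u},\mathbf{w}}[\mathbf{0},i])$ equals $\lambda_\mathbf{1}(\mathcal{R}(i))/\lambda_\mathbf{1}(\pi_{\mathbf{u},\mathbf{1}}[\mathbf{0},i])$, independently of $\mathbf{w}$; evaluating at $\mathbf{w}=\mathbf{v}$ and invoking Lemma~\ref{l:measure} gives $\lambda_\mathbf{w}(\mathcal{R}_\mathbf{w}(i))=m\,\lambda_\mathbf{w}(\pi_{\mathbf{u},\mathbf{w}}[\mathbf{0},i])$ for every $i\in\mathcal{A}$ and every $\mathbf{w}$. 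Because the faces $\{\pi_{\mathbf{u},\mathbf{w}}(\mathbf{x}+[\mathbf{0},i]):[\mathbf{x},i]\in\Gamma(\mathbf{w})\}$ tile $\mathbf{w}^\bot$ exactly, this identity says precisely that the \emph{average} covering multiplicity of $\mathcal{C}_\mathbf{w}$ equals $m$. In particular, once $\mathcal{C}_\mathbf{w}$ is shown to be a multiple tiling its covering degree must be $m$, so it remains to prove that almost every point of $\mathbf{w}^\bot$ lies in the same number of tiles.

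Next I would produce relatively dense points of low multiplicity. Using Lemma~\ref{l:findv} I pass to a subsequence of $(n_k)$ along which the cones $\tr{(M_{[0,n_k)})}\,\mathbb{R}^d_{\ge0}$ shrink to the ray $\mathbb{R}_{\ge0}\mathbf{v}$; since $\mathbf{w}^{(n_k)}=\tr{(M_{[0,n_k)})}\mathbf{w}$ lies in this cone, $\mathbf{w}^{(n_k)}/\|\mathbf{w}^{(n_k)}\|\to\mathbf{v}$, and (as $\mathbf{v}^{(n_k)}\to\mathbf{v}$ as well) a subsequence still has Property PRICE. The crucial convergence is $\mathcal{R}^{(n_k)}_\mathbf{w}(i)\to\mathcal{R}_\mathbf{v}(i)$ in Hausdorff metric: indeed $\mathcal{R}^{(n_k)}_\mathbf{w}(i)=\pi^{(n_k)}_{\mathbf{u},\mathbf{w}}\,\mathcal{R}^{(n_k)}_\mathbf{v}(i)$ (both are projections of the same prefix vectors along the direction $(M_{[0,n_k)})^{-1}\mathbf{u}$), while $\mathcal{R}^{(n_k)}_\mathbf{v}(i)\to\mathcal{R}_\mathbf{v}(i)$ by Proposition~\ref{p:close} and $\pi^{(n_k)}_{\mathbf{u},\mathbf{w}}\to\pi_{\mathbf{u},\mathbf{v}}$ on compact sets by the argument of Lemma~\ref{lem:projectionconvergence} applied with $\mathbf{w}^{(n_k)}$ in place of $\mathbf{v}^{(n_k)}$. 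I then fix a point $\mathbf{z}_0$ lying in exactly $m$ tiles of $\mathcal{C}_\mathbf{v}$; by Lemma~\ref{lem:intcovering} it lies in the interiors of these $m$ tiles and in a neighbourhood $U$ meeting no others. Its local configuration is a finite patch of $\Gamma(\mathbf{v})$, which by Lemma~\ref{l:relativelydense} reappears in $\Gamma(\mathbf{w}^{(n_k)})$ within every ball of some fixed radius, for all large $k$. Combined with the tile convergence, this yields that every ball of a fixed radius $R'$ in $(\mathbf{w}^{(n_k)})^\bot$ contains a point lying in exactly $m$ tiles of $\mathcal{C}^{(n_k)}_\mathbf{w}$.

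With these relatively dense multiplicity-$m$ points I bound the multiplicity from above exactly as in the proof of Lemma~\ref{l:mtiling}. Let $X\subset\mathbf{w}^\bot$ be the set of points lying in at least $m+1$ tiles of $\mathcal{C}_\mathbf{w}$. If $\lambda_\mathbf{w}(X)>0$, then, the tile boundaries having measure zero, $X$ contains a ball; by the strong convergence of Proposition~\ref{p:strongconvergence} the map $(M_{[0,n_k)})^{-1}$ expands $(\mathbf{w}^{(n_k)})^\bot$, so for large $k$ the set $(M_{[0,n_k)})^{-1}X$ contains a ball of radius $R'$, every point of which is covered at least $m+1$ times by $\mathcal{C}^{(n_k)}_\mathbf{w}$ by Lemma~\ref{l:covering}. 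This contradicts the multiplicity-$m$ point guaranteed in that ball. Hence almost every point of $\mathbf{w}^\bot$ lies in at most $m$ tiles. Together with the average multiplicity being $m$ and with the uniform recurrence of the $S$-adic word (so that a patch of $\Gamma(\mathbf{w})$ of positive frequency occurs syndetically, cf.\ minimality in Theorem~\ref{t:1}~(\ref{i:11})), a nonnegative integer-valued multiplicity defect of positive measure would force the average strictly below $m$; therefore almost every point lies in exactly $m$ tiles. Thus $\mathcal{C}_\mathbf{w}$ is a multiple tiling of $\mathbf{w}^\bot$ with covering degree $m$, which does not depend on $\mathbf{w}$.

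The main obstacle is the middle step: transporting multiplicity information from $\mathbf{v}^\bot$ to the \emph{moving} hyperplanes $(\mathbf{w}^{(n_k)})^\bot$. In contrast to the situation for $\mathbf{v}$, the relevant directions $\mathbf{w}^{(n_k)}$ do not return to $\mathbf{w}$ but converge to $\mathbf{v}$, so one must simultaneously control the combinatorics of $\Gamma(\mathbf{w}^{(n_k)})$ through Lemma~\ref{l:relativelydense} and the Hausdorff convergence of the subtiles $\mathcal{R}^{(n_k)}_\mathbf{w}(i)$ to $\mathcal{R}_\mathbf{v}(i)$ before the expansion argument can be run. A secondary subtlety is the passage from ``average multiplicity $m$ and multiplicity $\le m$ almost everywhere'' to ``multiplicity $=m$ almost everywhere'', which I would justify via the uniform patch frequencies coming from unique ergodicity rather than by a purely measure-theoretic estimate.
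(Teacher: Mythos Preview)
Your argument is correct, and its overall architecture is sound: establish $\lambda_\mathbf{w}(\mathcal{R}_\mathbf{w}(i))=m\,\lambda_\mathbf{w}(\pi_{\mathbf{u},\mathbf{w}}[\mathbf{0},i])$ via the linear isomorphism $\pi_{\mathbf{u},\mathbf{w}}|_{\mathbf{1}^\bot}$ and Lemma~\ref{l:measure}, prove the a.e.\ upper bound ``multiplicity $\le m$'' by rerunning the expansion/contradiction mechanism of Lemma~\ref{l:mtiling} with $\mathbf{w}^{(n_k)}$ playing the role of~$\mathbf{v}^{(n_k)}$, and conclude equality from the mass balance. Two small remarks: the convergence $\mathbf{w}^{(n_k)}/\|\mathbf{w}^{(n_k)}\|\to\mathbf{v}$ already holds along the full sequence $(n_k)$ (the cone diameters shrink by the proof of Lemma~\ref{l:findv} and $\mathbf{v}^{(n_k)}$ lies in the cone), so no subsequence is needed; and the final step does not require unique ergodicity---once $f\le m$ a.e.\ and the tile volumes satisfy $\lambda_\mathbf{w}(\mathcal{R}_\mathbf{w}(i))=m\,\lambda_\mathbf{w}(\pi_{\mathbf{u},\mathbf{w}}[\mathbf{0},i])$ for a bounded-diameter, locally finite collection indexed by a Delone set, a standard large-ball averaging forces $f=m$ a.e.

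The paper takes a genuinely different route for the upper bound. Instead of redeveloping the relatively-dense multiplicity-$m$ points inside $(\mathbf{w}^{(n_k)})^\bot$, it stays on $\mathbf{w}^\bot$ and compares $\mathcal{C}_\mathbf{w}$ directly with the \emph{projected} $\mathbf{v}$-tiling $\pi_{\mathbf{u},\mathbf{w}}^{(n)}\mathcal{C}^{(n)}_\mathbf{v}$ (a multiple tiling of degree~$m$ by Lemma~\ref{l:mtilingPvn}). The bridge is the auxiliary collection $\mathcal{D}_\mathbf{w}^{(n)}=\{\mathcal{S}^{(n)}_\mathbf{w}(\mathbf{x},i):[\mathbf{x},i]\in\Gamma(\mathbf{w})\}$ with
\[
\mathcal{S}^{(n)}_\mathbf{w}(\mathbf{x},i)=\bigcup_{[\mathbf{y},j]\in E_1^*(\sigma_{[0,n)})[\mathbf{x},i]\cap\Gamma(\mathbf{v}^{(n)})} M_{[0,n)}\big(\pi_{\mathbf{u},\mathbf{w}}^{(n)}\mathbf{y}+\mathcal{R}_\mathbf{w}^{(n)}(j)\big),
\]
which is contained in the set-equation decomposition of $\pi_{\mathbf{u},\mathbf{w}}\mathbf{x}+\mathcal{R}_\mathbf{w}(i)$ but uses only faces of~$\Gamma(\mathbf{v}^{(n)})$; the disjointness in Lemma~\ref{l:e1star}\,(\ref{62iii}) then gives that almost every point lies in at most~$m$ sets of~$\mathcal{D}_\mathbf{w}^{(n)}$. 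A short combinatorial count shows that for each fixed $[\mathbf{x},i]\in\Gamma(\mathbf{w})$ only boundedly many faces of $E_1^*(\sigma_{[0,n)})[\mathbf{x},i]$ fall outside $\Gamma(\mathbf{v}^{(n)})$, so $\mathcal{S}^{(n)}_\mathbf{w}(\mathbf{x},i)$ converges in measure to $\pi_{\mathbf{u},\mathbf{w}}\mathbf{x}+\mathcal{R}_\mathbf{w}(i)$; hence the ``$\le m$'' bound passes to $\mathcal{C}_\mathbf{w}$. This avoids re-establishing Hausdorff convergence of subtiles and relatively-dense exclusive points for a new direction, at the cost of introducing~$\mathcal{D}_\mathbf{w}^{(n)}$. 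Your approach is more self-contained (it reuses Lemma~\ref{l:mtiling} verbatim once the convergence $\mathcal{R}^{(n_k)}_\mathbf{w}(i)\to\mathcal{R}_\mathbf{v}(i)$ is justified), while the paper's is shorter because it leverages Lemma~\ref{l:mtilingPvn} directly rather than reproving a $\mathbf{w}$-analogue.
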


\begin{proof}
Let $\mathbf{v}$ be a recurrent left eigenvector as in Definition~\ref{def:star} and $\mathbf{w} \in \mathbb{R}^d_{\ge0} \setminus \{\mathbf{0}\}$.
Consider the collections $\mathcal{D}_\mathbf{w}^{(n)} = \{\mathcal{S}_\mathbf{w}^{(n)}(\mathbf{x},i):\, [\mathbf{x},i] \in \Gamma(\mathbf{w})\}$, $n \in \mathbb{N}$, with
\[
\mathcal{S}_\mathbf{w}^{(n)}(\mathbf{x},i) = \bigcup_{[\mathbf{y},j] \in E_1^*(\sigma_{[0,n)})[\mathbf{x},i] \cap \Gamma(\mathbf{v}^{(n)})} M_{[0,n)} \big(\pi_{\mathbf{u},\mathbf{w}}^{(n)}\, \mathbf{y} + \mathcal{R}_\mathbf{w}^{(n)}(j)\big).
\]
By Lemma~\ref{l:mtilingPvn}, the collections $\pi_{\mathbf{u},\mathbf{w}}^{(n)}\, \mathcal{C}^{(n)}_\mathbf{v} = \{\pi_{\mathbf{u},\mathbf{w}}^{(n)}\, \mathbf{y} + \mathcal{R}_\mathbf{w}^{(n)}(j):\, [\mathbf{y},j] \in \Gamma(\mathbf{v}^{(n)})\}$ are multiple tilings with covering degree~$m$ not depending on~$n$.
Therefore, for each $n \in \mathbb{N}$ by Lemma~\ref{l:e1star}~(\ref{62iii}), almost all points in~$\mathbf{w}^\bot$ lie in at most~$m$ sets of~$\mathcal{D}_\mathbf{w}^{(n)}$.

Next we show that $\mathcal{S}_\mathbf{w}^{(n)}(\mathbf{x},i)$ tends to $\pi_{\mathbf{u},\mathbf{w}}\, \mathbf{x} + \mathcal{R}_\mathbf{w}(i)$ in measure. 
For any $[\mathbf{y},j] \in E_1^*(\sigma_{[0,n)})[\mathbf{x},i]$, we have $p,s \in \mathcal{A}^*$ such that $\mathbf{y} = (M_{[0,n)})^{-1} (\mathbf{x} + \mathbf{l}(p))$, $\sigma_{[0,n)}(j) = p\hspace{.1em}i\hspace{.1em}s$.
Since
\[
\langle \mathbf{v}^{(n)}, \mathbf{y} \rangle = \langle \mathbf{v}, \mathbf{x} + \mathbf{l}(p) \rangle = \langle \mathbf{v}, \mathbf{x} - \mathbf{l}(i\hspace{.1em}s) \rangle + \langle \mathbf{v}^{(n)}, \mathbf{e}_j \rangle
\]
and $[\mathbf{y},j] \in \Gamma(\mathbf{v}^{(n)})$ if and only if $0 \le \langle \mathbf{v}^{(n)}, \mathbf{y} \rangle < \langle \mathbf{v}^{(n)}, \mathbf{e}_j \rangle$, we have $[\mathbf{y},j] \not\in \Gamma(\mathbf{v}^{(n)})$ if and only if
\[ 
\langle \mathbf{v}, \mathbf{l}(p) \rangle < - \langle \mathbf{v}, \mathbf{x} \rangle \quad \mbox{or} \quad \langle \mathbf{v}, \mathbf{l}(i\hspace{.1em}s) \rangle \le \langle \mathbf{v}, \mathbf{x} \rangle.
\]
As $\mathbf{v} \in \mathbb{R}_{\ge0}^d \setminus \{\mathbf{0}\}$ and each letter in~$\mathcal{A}$ occurs in $\sigma_{[0,n)}(j)$ with bounded gaps (by primitivity of~$\boldsymbol{\sigma}$), there is only a bounded number of faces $[\mathbf{y},j] \in E_1^*(\sigma_{[0,n)})[\mathbf{x},i] \setminus \Gamma(\mathbf{v}^{(n)})$ for each~$n$ (with the bound depending on~$\mathbf{x}$).
By \eqref{e:setequationkl} and Lemma~\ref{l:smallsubtiles}, we obtain that
\begin{align*}
& \lim_{n\to\infty} \lambda_\mathbf{w}\big(\big(\pi_{\mathbf{u},\mathbf{w}}\, \mathbf{x} + \mathcal{R}_\mathbf{w}(i)\big) \setminus \mathcal{S}_\mathbf{w}^{(n)}(\mathbf{x},i)\big) \\
& \qquad = \lim_{n\to\infty} \lambda_\mathbf{w} \Bigg( \bigcup_{[\mathbf{y},j] \in E_1^*(\sigma_{[0,n)})[\mathbf{x},i] \setminus \Gamma(\mathbf{v}^{(n)})} M_{[0,n)} \big(\pi_{\mathbf{u},\mathbf{w}}^{(n)}\, \mathbf{y} + \mathcal{R}_\mathbf{w}^{(n)}(j)\big) \Bigg) = 0
\end{align*}
for all $[\mathbf{x},i] \in \mathbb{Z}^d \times \mathcal{A}$. 
Therefore, almost all points in~$\mathbf{w}^\bot$ lie in at most~$m$ sets of~$\mathcal{C}_\mathbf{w}$.

Projecting the sets in~\eqref{e:lambdav} to~$\mathbf{w}^\bot$, we obtain that 
\[
\big(\lambda_\mathbf{w}(\mathcal{R}_\mathbf{w}(1)), \ldots, \lambda_\mathbf{w}(\mathcal{R}_\mathbf{w}(d))\big) = m\, \big(\lambda_\mathbf{w}(\pi_{\mathbf{u},\mathbf{w}}([\mathbf{0},1])), \ldots, \lambda_\mathbf{w}(\pi_{\mathbf{u},\mathbf{w}}([\mathbf{0},d]))\big)\,.
\]
As almost all points in~$\mathbf{w}^\bot$ lie in at most $m$ different sets $\pi_{\mathbf{u},\mathbf{w}}\, \mathbf{x} + \mathcal{R}_\mathbf{w}(i)$, this implies that $\mathcal{C}_\mathbf{w}$ forms a multiple tiling of~$\mathbf{w}^\bot$ with covering degree~$m$. 
\end{proof}

The following proposition generalizes a result of \cite{Ito-Rao:06}.

\begin{proposition} \label{p:tilingRd}
Let $\mathbf{w}\in \mathbb{R}^d_{\ge 0} \setminus \{\mathbf{0}\}$. 
Then $\widehat{\mathcal{C}}_{\mathbf{w}}$ forms a multiple tiling of~$\mathbb{R}^d$ with covering degree~$m$ if and only if $\mathcal{C}_{\mathbf{w}}$ forms a multiple tiling of~$\mathbf{w}^\bot$ with covering degree~$m$.
\end{proposition}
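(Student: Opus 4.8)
The plan is to translate both tiling statements into statements about \emph{covering functions} and to connect them through the fibration of $\mathbb{R}^d$ by the lines $\mathbb{R}\mathbf{u}$ along which $\pi_{\mathbf{u},\mathbf{w}}$ projects. First I would record the elementary reductions. The collection $\widehat{\mathcal{C}}_\mathbf{w}$ is $\mathbb{Z}^d$-periodic and locally finite, its tiles are compact, and after discarding the finitely many degenerate boxes with $\langle\mathbf{w},\mathbf{e}_i\rangle=0$ (which have $d$-dimensional measure zero, and for which $\Gamma(\mathbf{w})$ also contains no type-$i$ face, so the two sides ignore them symmetrically) each box is the closure of its interior with boundary of measure zero; the latter comes from Propositions~\ref{p:closint} and~\ref{p:boundary} transported to $\mathbf{w}^\bot$ via the linear isomorphism $\pi_{\mathbf{u},\mathbf{w}}|_{\mathbf{1}^\bot}\colon\mathbf{1}^\bot\to\mathbf{w}^\bot$ and suspended in the $\mathbf{u}$-direction. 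Hence $\widehat{\mathcal{C}}_\mathbf{w}$ is a multiple tiling of degree $m$ iff its periodic covering function $G(\mathbf{p})=\#\{(\mathbf{z},i):\mathbf{p}\in\mathbf{z}+\widehat{\mathcal{R}}_\mathbf{w}(i)\}$ equals $m$ almost everywhere, and likewise $\mathcal{C}_\mathbf{w}$ is a multiple tiling of degree $m$ iff $g(\bar{\mathbf{q}})=\#\{[\mathbf{x},i]\in\Gamma(\mathbf{w}):\bar{\mathbf{q}}\in\pi_{\mathbf{u},\mathbf{w}}\mathbf{x}+\mathcal{R}_\mathbf{w}(i)\}$ equals $m$ almost everywhere. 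A short computation of the suspension volume gives $\lambda(\widehat{\mathcal{R}}_\mathbf{w}(i))=\frac{\langle\mathbf{w},\mathbf{e}_i\rangle}{\|\mathbf{w}\|}\lambda_\mathbf{w}(\mathcal{R}_\mathbf{w}(i))$ (here $\lambda$ is the $d$-dimensional Lebesgue measure), while the type-$i$ faces of $\Gamma(\mathbf{w})$ have density $\frac{\langle\mathbf{w},\mathbf{e}_i\rangle}{\|\mathbf{w}\|}$ in $\mathbf{w}^\bot$; thus $G$ and $g$ have the same mean value $\bar N=\sum_i\frac{\langle\mathbf{w},\mathbf{e}_i\rangle}{\|\mathbf{w}\|}\lambda_\mathbf{w}(\mathcal{R}_\mathbf{w}(i))$ (cf.\ Lemma~\ref{l:measure}), so if either collection is a multiple tiling its degree is forced to equal $\bar N$. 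It therefore suffices to prove that $G$ is a.e.\ constant iff $g$ is.

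Next I would set up the pointwise dictionary between $G$ and $g$. Writing $\mathbf{p}=\bar{\mathbf{p}}+\frac{s}{\langle\mathbf{w},\mathbf{u}\rangle}\mathbf{u}$ with $\bar{\mathbf{p}}=\pi_{\mathbf{u},\mathbf{w}}\mathbf{p}$ and $s=\langle\mathbf{w},\mathbf{p}\rangle$, the membership $\mathbf{p}\in\mathbf{z}+\widehat{\mathcal{R}}_\mathbf{w}(i)$ unwinds, by projecting along $\mathbf{u}$ and reading off the $\mathbf{u}$-coordinate, into the two conditions $\pi_{\mathbf{u},\mathbf{w}}(\mathbf{z}-\mathbf{p})\in\mathcal{R}_\mathbf{w}(i)$ and $0\le\langle\mathbf{w},\mathbf{p}-\mathbf{z}\rangle<\langle\mathbf{w},\mathbf{e}_i\rangle$. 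Substituting $\mathbf{x}=-\mathbf{z}$ turns these into $-\bar{\mathbf{p}}\in\pi_{\mathbf{u},\mathbf{w}}\mathbf{x}+\mathcal{R}_\mathbf{w}(i)$ together with $[\mathbf{x},i]\in\Gamma_s(\mathbf{w})$, where $\Gamma_s(\mathbf{w})=\{[\mathbf{x},i]:0\le\langle\mathbf{w},\mathbf{x}\rangle+s<\langle\mathbf{w},\mathbf{e}_i\rangle\}$ is the standard discrete hyperplane of offset $s$, so that $\Gamma_0(\mathbf{w})=\Gamma(\mathbf{w})$. Hence $G(\mathbf{p})=g_s(-\bar{\mathbf{p}})$, where $g_s$ is the covering function of the offset collection $\mathcal{C}_\mathbf{w}^{[s]}=\{\pi_{\mathbf{u},\mathbf{w}}\mathbf{x}+\mathcal{R}_\mathbf{w}(i):[\mathbf{x},i]\in\Gamma_s(\mathbf{w})\}$ and $g_0=g$. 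Decomposing $\lambda$ as $\frac{1}{\|\mathbf{w}\|}\lambda_\mathbf{w}\otimes ds$ in the coordinates $(\bar{\mathbf{p}},s)$ and applying Fubini (note that $\bar{\mathbf{p}}\mapsto-\bar{\mathbf{p}}$ preserves $\lambda_\mathbf{w}$), the condition $G\equiv\bar N$ a.e.\ becomes: for almost every $s$, $g_s\equiv\bar N$ a.e.\ on $\mathbf{w}^\bot$.

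It then remains to eliminate the dependence on the offset, that is, to show that $g_s\equiv\bar N$ for a.e.\ $s$ is equivalent to $g_0\equiv\bar N$, and \textbf{this offset-independence is the main obstacle}. The tools are that each $g_s$ is locally constant off the null set $\bigcup_{[\mathbf{x},i]}(\pi_{\mathbf{u},\mathbf{w}}\mathbf{x}+\partial\mathcal{R}_\mathbf{w}(i))$ (null by Proposition~\ref{p:boundary}, and $g_s$ is constant on the interiors as in Lemma~\ref{lem:intcovering}), and that by local finiteness the combinatorial patch of $\Gamma_s(\mathbf{w})$ in any bounded window, hence the value of $g_s$ on a fixed ball, is constant for $s$ in a neighbourhood of any given value. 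For the implication from a.e.\ $s$ to $s=0$: if $g_0$ took a value $k\neq\bar N$ on a positive-measure set, it would equal $k$ on an open ball, and by constancy in $s$ near $0$ the same would hold for $g_s$ for all $s$ in a neighbourhood of $0$, contradicting $g_s\equiv\bar N$ for a.e.\ $s$; thus $g_0\equiv\bar N$. For the reverse implication I would use that $\mathrm{ess\,sup}\,g_s$ is independent of $s$: for rational $\mathbf{w}$, in particular $\mathbf{w}=\mathbf{1}$, the hyperplanes $\Gamma_s(\mathbf{w})$ are genuine $\mathbb{Z}^d$-translates of $\Gamma(\mathbf{w})$, so each $g_s$ is a translate of $g_0$ and the claim is immediate, while in general it follows from the repetitivity of standard discrete hyperplanes with fixed normal, which is exactly the recurrence of patches supplied by Lemma~\ref{l:relativelydense}; since the mean value of every $g_s$ equals the offset-independent number $\bar N$, the equality $g_0\equiv\bar N$ forces $\mathrm{ess\,sup}\,g_s=\bar N$ and hence $g_s\equiv\bar N$ for every $s$. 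Combining this offset-independence with the Fubini equivalence and the mean-value computation of the first paragraph shows that $G\equiv\bar N$ a.e.\ iff $g\equiv\bar N$ a.e., with common degree $\bar N$, which is the asserted equivalence.
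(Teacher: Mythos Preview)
Your slicing identity $G(\mathbf{p})=g_s(-\bar{\mathbf{p}})$ is exactly the heart of the paper's argument, so the two proofs start from the same place; the difference is in how the height parameter~$s$ is eliminated. You pass through Fubini to reduce to ``$g_s\equiv\bar N$ for a.e.\ $s$ iff $g_0\equiv\bar N$'' and then argue offset-independence via local constancy in~$s$, a mean-value computation (invoking Propositions~\ref{p:closint}, \ref{p:boundary} and Lemma~\ref{l:measure}), and a repetitivity statement. The paper instead makes one sharper observation: for every $\mathbf{y}\in\mathbb{Z}^d$ the slice of $\widehat{\mathcal{C}}_\mathbf{w}$ by $\mathbf{y}+\mathbf{w}^\bot$ is \emph{globally} a translate of~$-\mathcal{C}_\mathbf{w}$ (equivalently $\Gamma_{\langle\mathbf{w},\mathbf{y}\rangle}(\mathbf{w})=\Gamma(\mathbf{w})-\mathbf{y}$), and the suspension identity
\[
\big({-}\mathbf{x}+\widehat{\mathcal{R}}_\mathbf{w}(i)\big)\cap\big(\mathbf{y}+z\mathbf{u}+\mathbf{w}^\bot\big)=\Big(\big({-}\mathbf{x}+\widehat{\mathcal{R}}_\mathbf{w}(i)\big)\cap\big(\mathbf{y}+\mathbf{w}^\bot\big)\Big)+z\mathbf{u}
\]
then lets one slide any point down to an integer level (entries and exits of boxes occur only at heights in $\langle\mathbf{w},\mathbb{Z}^d\rangle$; the rational and irrational cases are distinguished only by whether this set is discrete or dense). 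In your language this shows directly that each $g_s$ is locally a $\pi_{\mathbf{u},\mathbf{w}}\mathbb{Z}^d$-translate of~$g_0$, so $g_s\equiv m$ a.e.\ iff $g_0\equiv m$ a.e., with no appeal to Fubini, mean values, or repetitivity. The payoff is that the paper's proof uses nothing beyond the definition of~$\widehat{\mathcal{R}}_\mathbf{w}(i)$; in particular it does not rely on PRICE-dependent results, so the proposition holds under its stated hypothesis alone. One small correction to your version: Lemma~\ref{l:relativelydense} concerns perturbing the \emph{normal}~$\tilde{\mathbf{w}}$, not the offset~$s$; the repetitivity you actually need is the easier fact that a bounded patch of $\Gamma_s(\mathbf{w})$ coincides with that of $\Gamma(\mathbf{w})-\mathbf{y}$ once $\langle\mathbf{w},\mathbf{y}\rangle$ is chosen close enough to~$s$, which follows from density of $\langle\mathbf{w},\mathbb{Z}^d\rangle$ and local finiteness.
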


\begin{proof}
For $\mathbf{x} \in \mathbb{Z}^d$, we have
\[
\big({-}\mathbf{x} + \widehat{\mathcal{R}}_{\mathbf{w}}(i)\big) \cap \mathbf{w}^\bot =
\begin{cases} -(\pi_{\mathbf{u},\mathbf{w}}\, \mathbf{x} + \mathcal{R}_{\mathbf{w}}(i)) & \mbox{if}\ [\mathbf{x},i] \in \Gamma(\mathbf{w}), \\
\emptyset & \mbox{otherwise}, \end{cases}
\]
since $\langle \mathbf{w}, x\, (\mathbf{e}_i - \pi_{\mathbf{u},\mathbf{w}}\, \mathbf{e}_i)\rangle = x \langle \mathbf{w}, \mathbf{e}_i\rangle$ and $\pi_{\mathbf{u},\mathbf{w}} (\mathbf{e}_i - \pi_{\mathbf{u},\mathbf{w}}\, \mathbf{e}_i) = \mathbf{0}$.
This implies that for $\mathbf{x}, \mathbf{y} \in \mathbb{Z}^d$ we have
\[ 
\big({-}\mathbf{x} + \widehat{\mathcal{R}}_{\mathbf{w}}(i)\big) \cap \big(\mathbf{y} + \mathbf{w}^\bot\big) = \begin{cases}\mathbf{y} - \big(\pi_{\mathbf{u},\mathbf{w}} (\mathbf{x} + \mathbf{y}) + \mathcal{R}_{\mathbf{w}}(i)\big) & \mbox{if}\ [\mathbf{x}+\mathbf{y},i] \in \Gamma(\mathbf{w}), \\ \emptyset & \mbox{otherwise}, \end{cases}
\]
i.e., the intersection of $\widehat{\mathcal{C}}_{\mathbf{w}}$ with $\mathbf{y} + \mathbf{w}^\bot$ is a translation of~$\mathcal{C}_{\mathbf{w}}$.
Moreover, we have
\begin{equation} \label{e:yzu}
\big({-}\mathbf{x} + \widehat{\mathcal{R}}_{\mathbf{w}}(i)\big) \cap \big(\mathbf{y} + z\, \mathbf{u} + \mathbf{w}^\bot\big) = \big({-}\mathbf{x} + \widehat{\mathcal{R}}_{\mathbf{w}}(i)\big) \cap \big(\mathbf{y} + \mathbf{w}^\bot\big) + z\, \mathbf{u}
\end{equation}
for all $0 \le z < \langle \mathbf{w}, \mathbf{e}_i - \mathbf{x}-\mathbf{y}\rangle$. 
This proves the statement of the proposition when $\{\langle\mathbf{w},\mathbf{y}\rangle:\, \mathbf{y}\in\mathbb{Z}^d\}$ is dense in~$\mathbb{R}$, i.e., when $\mathbf{w}$ is not a multiple of a rational vector. 
If $\mathbf{w}$ is a multiple of a rational vector, then $\{\langle\mathbf{w},\mathbf{y}\rangle:\, \mathbf{y}\in\mathbb{Z}^d\} = c\, \langle \mathbf{w},\mathbf{u}\rangle\, \mathbb{Z}$ for some $c > 0$. 
Now, \eqref{e:yzu} holds for all $\mathbf{x}, \mathbf{y} \in \mathbb{Z}^d$, $0 \le z < c$, hence the statement of the proposition holds in this case as well. 
\end{proof}

\subsection{Coincidences}

In this subsection, we show that strong coincidence implies non-overlapping of the pieces~$\mathcal{R}(i)$. Moreover, we prove that geometric coincidence is equivalent to tiling. We also give variants of the geometric coincidence condition that can be checked algorithmically in certain cases.  

\begin{proposition} \label{p:strongcoincidence}
Let $S$ be a finite or  infinite set of unimodular substitutions over the finite alphabet $\mathcal{A}$ and assume that $\boldsymbol{\sigma}\in S^{\mathbb{N}}$ has Property PRICE and satisfies the strong coincidence condition.
Then the subtiles~$\mathcal{R}(i)$, $i \in \mathcal{A}$, are pairwise disjoint in measure.
\end{proposition}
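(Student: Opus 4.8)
The plan is to carry out the whole argument on the subtiles $\mathcal{R}_\mathbf{v}(i)$ attached to the recurrent left eigenvector~$\mathbf{v}$ of Property PRICE, and then to transport the resulting measure-zero statement to $\mathcal{R}(i)=\mathcal{R}_\mathbf{1}(i)$ by the projection $\pi_{\mathbf{u},\mathbf{1}}|_{\mathbf{v}^\bot}$, which is a linear isomorphism onto $\mathbf{1}^\bot$ and sends $\mathcal{R}_\mathbf{v}(i)$ to $\mathcal{R}(i)$ and null sets to null sets (exactly as at the end of Proposition~\ref{p:disjoint}). The engine is the set equation~\eqref{e:setequationkl} together with the measure disjointness of its pieces (Proposition~\ref{p:disjoint}). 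First I would extract the \emph{local gain} of a single coincidence block. Let $\ell$ be the length in the strong coincidence condition and fix $j_1\neq j_2$; choose $i\in\mathcal{A}$ and $p_1,p_2$ with $\mathbf{l}(p_1)=\mathbf{l}(p_2)$ and $\sigma_{[0,\ell)}(j_\nu)\in p_\nu\,i\,\mathcal{A}^*$. By the definition of $E_1^*$ in~\eqref{eq:dualsubst}, the faces $[\mathbf{y},j_1]$ and $[\mathbf{y},j_2]$ with $\mathbf{y}=M_{[0,\ell)}^{-1}\mathbf{l}(p_1)$ both lie in $E_1^*(\sigma_{[0,\ell)})[\mathbf{0},i]$ and are distinct. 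Applying~\eqref{e:setequationkl} to $[\mathbf{0},i]$ and Proposition~\ref{p:disjoint} shows that the two pieces $M_{[0,\ell)}(\pi_{\mathbf{u},\mathbf{v}}^{(\ell)}\,\mathbf{y}+\mathcal{R}^{(\ell)}_\mathbf{v}(j_\nu))$ are disjoint in measure; stripping off the common translation and the isomorphism $M_{[0,\ell)}$ yields $\lambda_{\mathbf{v}^{(\ell)}}(\mathcal{R}^{(\ell)}_\mathbf{v}(j_1)\cap\mathcal{R}^{(\ell)}_\mathbf{v}(j_2))=0$. By recurrence (condition~\ref{defR}) the block $\sigma_{[0,\ell)}$ reoccurs as $\sigma_{[n_k,n_k+\ell)}$ for all large~$k$, so the identical reasoning gives $\lambda(\mathcal{R}^{(n_k+\ell)}_\mathbf{v}(j_1)\cap\mathcal{R}^{(n_k+\ell)}_\mathbf{v}(j_2))=0$ as well.

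The second and main step is an iterative contraction transporting this gain down to level~$0$, structurally parallel to the boundary estimate of Proposition~\ref{p:boundary}. For $i_1\neq i_2$ I would bound $\lambda_\mathbf{v}(\mathcal{R}_\mathbf{v}(i_1)\cap\mathcal{R}_\mathbf{v}(i_2))$ by the total measure of the overlapping pairs of level-$n$ pieces obtained by refining both subtiles through~\eqref{e:setequationkl}. Since $\mathcal{C}^{(n)}_\mathbf{v}$ is a multiple tiling whose covering degree is finite and independent of~$n$ (Lemmas~\ref{l:mtilingPvn} and~\ref{l:interiornk}), each piece can meet only boundedly many pieces of the opposite decomposition, so this is a controlled sum of counts of overlapping piece-pairs weighted by the shrinking measures $\lambda_\mathbf{v}(M_{[0,n)}\mathcal{R}^{(n)}_\mathbf{v}(j))$ (Lemma~\ref{l:smallsubtiles}). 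The decisive point is that passing through one coincidence block at a recurrence time~$n_k$ forces, for every type-pair, a definite proportion of the overlapping sub-pieces to resolve into measure-disjoint ones by the local gain of the first step, so that the overlap count is multiplied by a factor $c<1$. Chaining $h$ recurrence blocks as in Proposition~\ref{p:boundary} yields a bound of shape $c^{h}$ times the corresponding total count; dividing by $\sum_j C_{0,n_k}(i_1,j)$, using Lemma~\ref{l:measure} and the bounded ratio of subtile measures provided by the positive block~$B$ of condition~\ref{defP}, the renormalized overlap tends to~$0$. Projecting from $\mathbf{v}^\bot$ to $\mathbf{1}^\bot$ then gives $\lambda_\mathbf{1}(\mathcal{R}(i_1)\cap\mathcal{R}(i_2))=0$.

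The hard part is precisely the transfer performed in the second step. Because the $S$-adic setting has no self-similarity, coincidence only produces disjointness at level~$\ell$ (or $n_k+\ell$) for the \emph{desubstituted} fractals $\mathcal{R}^{(\ell)}_\mathbf{v}$, and this cannot be converted to level~$0$ by a single substitution step as in the periodic case. The genuine difficulty is that overlaps between two decompositions involve \emph{translated} copies $\pi_{\mathbf{u},\mathbf{v}}^{(n)}\mathbf{y}+\mathcal{R}^{(n)}_\mathbf{v}(j)$ and $\pi_{\mathbf{u},\mathbf{v}}^{(n)}\mathbf{y}'+\mathcal{R}^{(n)}_\mathbf{v}(j')$ of subtiles, which may overlap in positive measure even when $j=j'$; such translate-overlaps are permitted by the multiple tiling and are not eliminated by one level of coincidence. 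One must therefore show that coincidence, occurring infinitely often along the recurrence sequence $(n_k)$ of Property PRICE, removes a fixed fraction of the overlapping piece-pairs at each occurrence, and here Proposition~\ref{p:close}, Lemma~\ref{l:close2}, and the level-independence of the covering degree are needed to keep the geometry uniform across scales. Making the contraction factor $c<1$ uniform in~$k$, and ensuring the bookkeeping survives the shifts (Lemma~\ref{l:shiftedPRICE}), is the crux of the proof.
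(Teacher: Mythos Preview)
Your first step is correct and matches the paper exactly: strong coincidence produces distinct faces $[\mathbf{y},j_1],[\mathbf{y},j_2]\in E_1^*(\sigma_{[0,\ell)})[\mathbf{0},i]$, and Proposition~\ref{p:disjoint} then gives $\lambda_{\mathbf{v}^{(\ell)}}\big(\mathcal{R}^{(\ell)}_\mathbf{v}(j_1)\cap\mathcal{R}^{(\ell)}_\mathbf{v}(j_2)\big)=0$ for all $j_1\ne j_2$.

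Your second step, however, is far more complicated than needed, and the contraction mechanism you sketch is not justified. As you yourself observe, coincidence says nothing about overlaps between \emph{translated} pieces $\pi_{\mathbf{u},\mathbf{v}}^{(n)}\mathbf{y}_1+\mathcal{R}^{(n)}_\mathbf{v}(j_1)$ and $\pi_{\mathbf{u},\mathbf{v}}^{(n)}\mathbf{y}_2+\mathcal{R}^{(n)}_\mathbf{v}(j_2)$ with $\mathbf{y}_1\ne\mathbf{y}_2$, yet precisely such pairs are what the two decompositions of $\mathcal{R}_\mathbf{v}(i_1)$ and $\mathcal{R}_\mathbf{v}(i_2)$ produce. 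Your claim that ``a definite proportion of the overlapping sub-pieces resolve into measure-disjoint ones'' at each coincidence block is not substantiated, and you explicitly leave the uniform factor $c<1$ as the unresolved crux.

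The paper bypasses this entirely with two short observations. First, disjointness at level~$\ell$ propagates \emph{upward} to every $n\ge\ell$: if $j_1',j_2'$ are the first letters of $\sigma_{[\ell,n)}(j_1),\sigma_{[\ell,n)}(j_2)$, then $[\mathbf{0},j_\nu]\in E_1^*(\sigma_{[\ell,n)})[\mathbf{0},j_\nu']$, so $M_{[\ell,n)}\mathcal{R}^{(n)}_\mathbf{v}(j_\nu)\subset\mathcal{R}^{(\ell)}_\mathbf{v}(j_\nu')$ via~\eqref{e:setequationkl}. If $j_1'\ne j_2'$ the level-$\ell$ disjointness applies; if $j_1'=j_2'$ then $[\mathbf{0},j_1]$ and $[\mathbf{0},j_2]$ are distinct faces in the same image and Proposition~\ref{p:disjoint} applies directly. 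Either way $\lambda_{\mathbf{v}^{(n)}}\big(\mathcal{R}^{(n)}_\mathbf{v}(j_1)\cap\mathcal{R}^{(n)}_\mathbf{v}(j_2)\big)=0$ for all $n\ge\ell$. Second, Lemma~\ref{l:close2}---which you mention only in passing---does the transfer to level~$0$ in one stroke: since $\mathcal{R}_\mathbf{v}(j_\nu)$ is approximated in measure by $\pi_{\mathbf{u},\mathbf{v}}\,\mathcal{R}^{(n_k)}_\mathbf{v}(j_\nu)$ and the latter are measure-disjoint for large~$k$, so are $\mathcal{R}_\mathbf{v}(j_1)$ and $\mathcal{R}_\mathbf{v}(j_2)$. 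No boundary-style counting argument is needed.
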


\begin{proof}
Let the sequence~$(n_k)$ and the vector~$\mathbf{v}$ be as in Definition~\ref{def:star}.
By the definition of $E_1^*$ strong coincidence can be reformulated by saying that there is $\ell \in \mathbb{N}$ such that, for each pair of distinct $j_1,j_2 \in \mathcal{A}$, there are $i \in \mathcal{A}$ and $\mathbf{y} \in \mathbb{Z}^d$ such that $[\mathbf{y},j_1], [\mathbf{y}, j_2] \in E_1^*(\sigma_{[0,\ell)})[\mathbf{0},i]$. 
Thus Proposition~\ref{p:disjoint} yields that
\begin{equation} \label{e:j1j2disjoint}
\lambda_{\mathbf{v}^{(\ell)}}\big(\mathcal{R}^{(\ell)}_\mathbf{v}(j_1) \cap \mathcal{R}^{(\ell)}_\mathbf{v}(j_2)\big) = \lambda_{\mathbf{v}^{(\ell)}}\big(\big(\pi_{\mathbf{u},\mathbf{v}}^{(\ell)}\, \mathbf{y} + \mathcal{R}^{(\ell)}_\mathbf{v}(j_1)\big) \cap \big(\pi_{\mathbf{u},\mathbf{v}}^{(\ell)}\, \mathbf{y} + \mathcal{R}^{(\ell)}_\mathbf{v}(j_2)\big)\big)= 0.
\end{equation}
We can replace~$\ell$ by any $n \ge \ell$ since, for distinct $j_1,j_2 \in \mathcal{A}$, we have $[\mathbf{0},j_1] \in E_1^*(\sigma_{[\ell,n)})[\mathbf{0},j_1']$ and $[\mathbf{0},j_2] \in E_1^*(\sigma_{[\ell,n)})[\mathbf{0},j_2']$, where $j_1'$ and $j_2'$ are the first letters of $\sigma_{[\ell,n)}(j_1)$ and~$\sigma_{[\ell,n)}(j_2)$, respectively, thus $\lambda_{\mathbf{v}^{(n)}}(\mathcal{R}^{(n)}_\mathbf{v}(j_1) \cap \mathcal{R}^{(n)}_\mathbf{v}(j_2)) = 0$ by Proposition~\ref{p:disjoint} and~\eqref{e:j1j2disjoint}.
By Lemma~\ref{l:close2}, this implies that $\lambda_\mathbf{v}(\mathcal{R}_\mathbf{v}(j_1) \cap \mathcal{R}_\mathbf{v}(j_2)) = 0$.
\end{proof}

\begin{remark}[Negative strong coincidence]\label{rem:-}
It is sometimes convenient  (see Section \ref{sec:examples}) to use the following variant of the strong coincidence condition for suffixes: a~sequence of substitutions $\boldsymbol{\sigma} = (\sigma_n)_{n\in\mathbb{N}}\in S^{\mathbb{N}}$ satisfies the \emph{negative strong coincidence condition} if there is $\ell \in \mathbb{N}$ such that, for each pair $(j_1,j_2) \in \mathcal{A} \times \mathcal{A}$, there are $i \in \mathcal{A}$ and $s_1, s_2 \in \mathcal{A}^*$ with $\mathbf{l}(s_1) = \mathbf{l}(s_2)$ such that $i\hspace{.1em}s_1$ is a suffix of~$\sigma_{[0,\ell)}(j_1)$ and $i\hspace{.1em}s_2$ is a suffix of~$\sigma_{[0,\ell)}(j_2)$, where $v$ is a \emph{suffix} of $w \in \mathcal{A}^*$ if $w \in \mathcal{A}^* v$.

Assume that $\boldsymbol{\sigma}$ has Property PRICE. Then also negative strong coincidence allows to conclude that the sets $\mathcal{R}(i)$, $i \in \mathcal{A}$, are pairwise disjoint in measure.
Indeed, negative strong coincidence implies that
$[\mathbf{l}(j_1) - \mathbf{y}, j_1], [\mathbf{l}(j_2) - \mathbf{y}, j_2]\in E_1^*(\sigma_{[0,\ell)})[\mathbf{0},i]$, with $\mathbf{y} = (M_{[0,\ell)})^{-1}\, \mathbf{l}(i\hspace{.1em}s_1)$, thus
\[
\lambda_{\mathbf{v}^{(\ell)}} \big( \big(\pi_{\mathbf{u},\mathbf{v}}^{(\ell)}\, \mathbf{l}(j_1) + \mathcal{R}^{(\ell)}_\mathbf{v}(j_1)\big) \cap \big(\pi_{\mathbf{u},\mathbf{v}}^{(\ell)}\, \mathbf{l}(j_2) + \mathcal{R}^{(\ell)}_\mathbf{v}(j_2)\big) \big) = 0. 
\]
By the definition of~$\mathcal{R}^{(\ell)}_\mathbf{v}$ and its subtiles, we have 
\[
\bigcup_{j\in\mathcal{A}} \mathcal{R}^{(\ell)}_\mathbf{v}(j) = \mathcal{R}^{(\ell)}_\mathbf{v} = \bigcup_{j\in\mathcal{A}} \big(\pi_{\mathbf{u},\mathbf{v}}^{(\ell)}\, \mathbf{l}(j) + \mathcal{R}^{(\ell)}_\mathbf{v}(j)\big). 
\]
From disjointness in the union on the right, we get that $\lambda_\mathbf{v}^{(\ell)}\big(\mathcal{R}^{(\ell)}_\mathbf{v}\big) = \sum_{j\in\mathcal{A}} \lambda_\mathbf{v}^{(\ell)}\big(\mathcal{R}^{(\ell)}_\mathbf{v}(j)\big)$, hence, the union on the left is also disjoint in measure. The remainder of the proof is now exactly the same as in Proposition~\ref{p:strongcoincidence}.
\end{remark}

\begin{proposition} \label{p:gcc}
Let $S$ be a finite or  infinite set of unimodular substitutions over a finite alphabet and assume that $\boldsymbol{\sigma}\in S^{\mathbb{N}}$ has Property PRICE.
Then the following assertions are equivalent.
\renewcommand{\theenumi}{\roman{enumi}}
\begin{enumerate}
\item \label{i:gcc1} 
The collection $\mathcal{C}_\mathbf{w}$ forms a tiling of~$\mathbf{w}^\bot$ for some $\mathbf{w} \in \mathbb{R}_{\ge0}^d \setminus \{\mathbf{0}\}$.
\item \label{i:gcc2} 
The collection $\mathcal{C}_\mathbf{w}$ forms a tiling of~$\mathbf{w}^\bot$ for all $\mathbf{w} \in \mathbb{R}_{\ge0}^d \setminus \{\mathbf{0}\}$.
\item \label{i:gcc3}  
The sequence $\boldsymbol{\sigma}$ satisfies the geometric coincidence condition, that is, for each $R > 0$ there is $\ell \in \mathbb{N}$, such that, for all $n \ge \ell$,
\begin{equation} \label{e:gcc3}
\big\{[\mathbf{y},j] \in \Gamma(\tr{(M_{[0,n)})}\, \mathbf{1}):\, \|\mathbf{y} - \mathbf{z}_n\| \le R\big\} \subset E_1^*(\sigma_{[0,n)})[\mathbf{0},i_n]
\end{equation}
for some $i_n \in \mathcal{A}$, $\mathbf{z}_n \in (M_{[0,n)})^{-1} \mathbf{1}^\bot$.
\item \label{i:gcc4} 
There are $n \in \mathbb{N}$, $i \in \mathcal{A}$, $\mathbf{z} \in \mathbb{R}^d$, such that 
\[
\big\{[\mathbf{y},j] \in \Gamma(\tr{(M_{[0,n)})}\, \mathbf{1}):\, \|\pi_{(M_{[0,n)})^{-1}\mathbf{u},\mathbf{1}} (\mathbf{y} - \mathbf{z})\| \le C\big\} \subset E_1^*(\sigma_{[0,n)})[\mathbf{0},i],
\]
with $C \in \mathbb{N}$ chosen in a way that $\mathcal{L}_{\boldsymbol{\sigma}}^{(n)}$ is $C$-balanced.
\end{enumerate}
\end{proposition}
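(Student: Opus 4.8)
The plan is to prove the cycle $(\ref{i:gcc1})\Leftrightarrow(\ref{i:gcc2})\Rightarrow(\ref{i:gcc3})\Rightarrow(\ref{i:gcc4})\Rightarrow(\ref{i:gcc1})$, exploiting that Property PRICE puts at our disposal the multiple tilings $\mathcal{C}_\mathbf{w}$ together with all convergence results of the previous sections. The equivalence $(\ref{i:gcc1})\Leftrightarrow(\ref{i:gcc2})$ is immediate from Proposition~\ref{p:independentmultiple}: there $\mathcal{C}_\mathbf{w}$ is shown to be a multiple tiling whose covering degree~$m$ is independent of~$\mathbf{w}$, so $\mathcal{C}_\mathbf{w}$ is a tiling for one~$\mathbf{w}$ if and only if $m=1$ if and only if it is a tiling for every~$\mathbf{w}$. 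Everything thus reduces to relating $m=1$ to the two coincidence statements. Throughout I work on~$\mathbf{1}^\bot$ and with the level-$n$ collections~$\mathcal{C}^{(n)}_\mathbf{1}$, which are again multiple tilings of covering degree~$m$: this follows by applying Proposition~\ref{p:independentmultiple} and Lemma~\ref{l:mtilingPvn} to the shifted sequence through Lemma~\ref{l:shiftedPRICE}.

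For $(\ref{i:gcc4})\Rightarrow(\ref{i:gcc1})$ I would argue that a single coincidence patch produces an exclusive point. Fix $n,i,\mathbf{z}$ as in~(\ref{i:gcc4}). Since every subtile~$\mathcal{R}^{(n)}_\mathbf{1}(j)$ is contained in a projected ball of radius~$C$ by Lemma~\ref{l:convhull2}, a face $[\mathbf{y},j]\in\Gamma(\tr{(M_{[0,n)})}\,\mathbf{1})$ can contribute a tile covering the distinguished point $\pi_{(M_{[0,n)})^{-1}\mathbf{u},\mathbf{1}}\,\mathbf{z}$ only if $\|\pi_{(M_{[0,n)})^{-1}\mathbf{u},\mathbf{1}}(\mathbf{y}-\mathbf{z})\|\le C$; by~(\ref{i:gcc4}) all such faces lie in $E_1^*(\sigma_{[0,n)})[\mathbf{0},i]$. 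The tiles indexed by one $E_1^*$-image are pairwise disjoint in measure by Proposition~\ref{p:disjoint}, and boundaries are Lebesgue-null by Proposition~\ref{p:boundary}; hence a set of positive measure near $\pi_{(M_{[0,n)})^{-1}\mathbf{u},\mathbf{1}}\,\mathbf{z}$ is covered exactly once by~$\mathcal{C}^{(n)}_\mathbf{1}$. As the covering degree equals the constant~$m$, this forces $m=1$, i.e.\ (\ref{i:gcc1}).

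The heart of the argument, and what I expect to be the main obstacle, is $(\ref{i:gcc2})\Rightarrow(\ref{i:gcc3})$, where the lack of self-similarity forces us to replace the usual Perron--Frobenius zoom by the convergence machinery. Assuming $m=1$, fix an exclusive point $\mathbf{p}\in\mathbf{1}^\bot$ and a neighborhood $U\ni\mathbf{p}$ contained in the interior of its unique tile $\pi_{\mathbf{u},\mathbf{1}}\,\mathbf{x}_0+\mathcal{R}(i_0)$ and disjoint from all other tiles of~$\mathcal{C}_\mathbf{1}$ (possible since the tiling is locally finite and boundaries are null). By the set equation~\eqref{e:setequationkl} and the fact that $M_{[0,n)}\mathcal{C}^{(n)}_\mathbf{1}$ is a genuine tiling refining~$\mathcal{C}_\mathbf{1}$, every level-$n$ tile meeting~$U$ sits inside the tile of $[\mathbf{x}_0,i_0]$, hence is indexed by a face of $E_1^*(\sigma_{[0,n)})[\mathbf{x}_0,i_0]$; thus $(M_{[0,n)})^{-1}U$ is covered, with covering degree one, exclusively by such faces. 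By Lemma~\ref{l:smallsubtiles} the map~$M_{[0,n)}$ contracts on $(\tr{(M_{[0,n)})}\,\mathbf{1})^\bot$ with factor tending to~$0$, so $(M_{[0,n)})^{-1}U$ contains Euclidean balls of radius tending to infinity; since $\Gamma(\tr{(M_{[0,n)})}\,\mathbf{1})$ stays within bounded distance of this hyperplane, the faces covering such a ball form an arbitrarily large ball of the discrete hyperplane inside $E_1^*(\sigma_{[0,n)})[\mathbf{x}_0,i_0]$, which is a translate of $E_1^*(\sigma_{[0,n)})[\mathbf{0},i_0]$. This yields, for each~$R$, the inclusion~\eqref{e:gcc3} for all sufficiently large~$n$ with $i_n=i_0$; the delicate point is precisely that the expanding preimages must be shown to produce genuinely thick discrete-hyperplane balls rather than thin patches.

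Finally, $(\ref{i:gcc3})\Rightarrow(\ref{i:gcc4})$ amounts to exchanging the Euclidean ball of~(\ref{i:gcc3}) for the projected ball of radius~$C$ of~(\ref{i:gcc4}). The crucial observation is that the set $\{[\mathbf{y},j]\in\Gamma(\tr{(M_{[0,n)})}\,\mathbf{1}):\|\pi_{(M_{[0,n)})^{-1}\mathbf{u},\mathbf{1}}(\mathbf{y}-\mathbf{z})\|\le C\}$ has Euclidean extent bounded independently of~$n$. Decomposing $\mathbf{y}-\mathbf{z}$ into its $\mathbf{1}^\bot$-component (of norm $\le C$) and its component along $(M_{[0,n)})^{-1}\mathbf{u}$, the stepped-hyperplane constraint bounds the latter by a multiple of $\max_{i}\langle\mathbf{1},M_{[0,n)}\,\mathbf{e}_i\rangle\,\|(M_{[0,n)})^{-1}\mathbf{u}\|$, and this product stays bounded because $M_{[0,n)}$ sends $(M_{[0,n)})^{-1}\mathbf{u}$ to the fixed vector~$\mathbf{u}$ while expanding the hyperplane directions at the reciprocal rate, a consequence of Proposition~\ref{p:strongconvergence}. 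Choosing $R$ equal to this uniform bound and invoking~(\ref{i:gcc3}) produces, at a suitable level~$n$, a Euclidean $R$-ball inside a single $E_1^*$-image that contains the desired projected $C$-ball, which establishes~(\ref{i:gcc4}) and closes the cycle.
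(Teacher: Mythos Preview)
Your cycle and the arguments for $(\ref{i:gcc1})\Leftrightarrow(\ref{i:gcc2})$ and $(\ref{i:gcc4})\Rightarrow(\ref{i:gcc1})$ are correct and match the paper. The gap is in $(\ref{i:gcc3})\Rightarrow(\ref{i:gcc4})$. You assert that $\max_i\langle\mathbf{1},M_{[0,n)}\mathbf{e}_i\rangle\cdot\|(M_{[0,n)})^{-1}\mathbf{u}\|$ stays bounded as a consequence of Proposition~\ref{p:strongconvergence}. That proposition only controls the projection $\pi_{\mathbf{u},\mathbf{1}}M_{[0,n)}$; it says nothing about $\|(M_{[0,n)})^{-1}\mathbf{u}\|$. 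In fact one checks that your product is bounded by $\max_i|\sigma_{[0,n)}(i)|/\min_i|\sigma_{[0,n)}(i)|$, and PRICE does \emph{not} guarantee this ratio is bounded for all~$n$. You also need $\mathcal{L}_{\boldsymbol{\sigma}}^{(n)}$ to be $C$-balanced at the chosen level, which PRICE again only gives along a subsequence. The paper fixes both issues simultaneously by taking $n=n_k+\ell_k$: property~\ref{defP} gives $\tr{(M_{[0,n)})}\,\mathbf{1}\in\tr{B}\,\mathbb{R}_+^d$, so the stepped-hyperplane constraint $0\le\langle\mathbf{w},\mathbf{x}\rangle<\|\mathbf{w}\|$ with $\mathbf{w}$ in this \emph{fixed} positive cone yields a uniform slab estimate $\|\mathbf{x}\|\le c_1\|\pi_{\tilde{\mathbf{u}},\mathbf{1}}\,\mathbf{x}\|+c_2$ valid for every $\tilde{\mathbf{u}}\in\mathbb{R}_+^d$; property~\ref{defC} guarantees $C$-balancedness of $\mathcal{L}_{\boldsymbol{\sigma}}^{(n)}$ at precisely this level. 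Without invoking~\ref{defP} and~\ref{defC} at a synchronized level you cannot conclude~(\ref{i:gcc4}).

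A smaller issue affects your $(\ref{i:gcc2})\Rightarrow(\ref{i:gcc3})$. It is true (via Proposition~\ref{p:strongconvergence}, not Lemma~\ref{l:smallsubtiles}) that $M_{[0,n)}$ contracts the hyperplane, so $(M_{[0,n)})^{-1}U$ contains large Euclidean balls covered only by tiles indexed in $E_1^*(\sigma_{[0,n)})[\mathbf{x}_0,i_0]$. But to pass to ``a large ball of the discrete hyperplane'' you need every $[\mathbf{y},j]$ with $\|\mathbf{y}-\mathbf{z}_n\|\le R$ to have its tile actually meeting that Euclidean ball; this forces you to bound the oblique displacement $\|\mathbf{y}-\pi^{(n)}_{\mathbf{u},\mathbf{1}}\mathbf{y}\|$, which again involves the unbounded quantity $\|\mathbf{1}^{(n)}\|\cdot\|(M_{[0,n)})^{-1}\mathbf{u}\|$. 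The paper avoids this entirely by working on~$\mathbf{1}^\bot$: for $\|\mathbf{y}-\mathbf{z}_n\|\le R$ with $\mathbf{z}_n=(M_{[0,n)})^{-1}\tilde{\mathbf{z}}$, $\tilde{\mathbf{z}}$ in an exclusive ball $\mathcal{B}(i)$, one has $M_{[0,n)}(\pi^{(n)}_{\mathbf{u},\mathbf{1}}\mathbf{y}+\mathcal{R}^{(n)}_\mathbf{1}(j))=\pi_{\mathbf{u},\mathbf{1}}M_{[0,n)}\mathbf{y}+M_{[0,n)}\mathcal{R}^{(n)}_\mathbf{1}(j)$, and Proposition~\ref{p:strongconvergence} plus Lemma~\ref{l:smallsubtiles} force this set into~$\mathcal{B}(i)$ for all large~$n$, with no control on the level-$n$ geometry needed.
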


\begin{proof}
We show the implications (\ref{i:gcc1}) $\Leftrightarrow$ (\ref{i:gcc2}) $\Rightarrow$ (\ref{i:gcc3}) $\Rightarrow$ (\ref{i:gcc4}) $\Rightarrow$ (\ref{i:gcc1}).

\medskip \noindent
(\ref{i:gcc1}) $\Leftrightarrow$ (\ref{i:gcc2}). 
This is a special case of Proposition~\ref{p:independentmultiple}.

\medskip \noindent
 (\ref{i:gcc2}) $\Rightarrow$ (\ref{i:gcc3}). 
By the tiling property for $\mathbf{w} = \mathbf{1}$, $\mathcal{R}(i)$~contains an exclusive open ball~$\mathcal{B}(i)$ for each $i \in\mathcal{A}$.
For $[\mathbf{y},j] \in \Gamma(\tr{(M_{[0,n)})}\, \mathbf{1})$, we have thus $[\mathbf{y},j] \in E_1^*(\sigma_{[0,n)})[\mathbf{0},i]$ if $M_{[0,n)} (\pi_{\mathbf{u},\mathbf{1}}^{(n)}\, \mathbf{y} + \mathcal{R}^{(n)}_\mathbf{1}(j)) \cap \mathcal{B}(i) \neq \emptyset$.
Let $i \in \mathcal{A}$ and $\tilde{\mathbf{z}} \in \mathcal{B}(i)$.
By Proposition~\ref{p:strongconvergence} and Lemma~\ref{l:smallsubtiles}, we obtain that \eqref{e:gcc3} holds for $i_n = i$ and $\mathbf{z}_n = (M_{[0,n)})^{-1} \tilde{\mathbf{z}}$, provided that $n$ is sufficiently large.

\medskip \noindent
(\ref{i:gcc3}) $\Rightarrow$ (\ref{i:gcc4}). 
Let the sequences $(\ell_k)$ and $(n_k)$, the positive matrix~$B$, and~$C$ be as in Definition~\ref{def:star}.
Then there are constants $c_1,c_2 > 0$ such that $\|\mathbf{x}\| \le c_1 \|\pi_{\tilde{\mathbf{u}},\mathbf{1}}\, \mathbf{x}\| +c_2$ for all $\tilde{\mathbf{u}} \in \mathbb{R}_+^d$, $\mathbf{x} \in \mathbb{R}^d$ with $0 \le \langle \mathbf{x}, \mathbf{w} \rangle < \|\mathbf{w}\|$ for some $\mathbf{w} \in \tr B\, \mathbb{R}_+^d$. 

Let $k$ be such that \eqref{e:gcc3} holds for $R = c_1 C+c_2$, $n = n_k + \ell_k$ and some $i_n \in \mathcal{A}$, $\mathbf{z}_n \in (M_{[0,n)})^{-1} \mathbf{1}^\bot$.
Let $\tilde{\mathbf{u}} = (M_{[0,n_k+\ell_k)})^{-1} \mathbf{u}$, $\mathbf{w} = \tr{(M_{[0,n_k+\ell_k)})}\, \mathbf{1}$, and consider $[\mathbf{y},j] \in \Gamma(\mathbf{w})$ with $\|\pi_{\tilde{\mathbf{u}},\mathbf{1}} (\mathbf{y} - \mathbf{z}_n)\| \le C$.
Since $\mathbf{w} \in \tr B\, \mathbb{R}_+^d$, $0 \le \langle \mathbf{y}, \mathbf{w} \rangle < \|\mathbf{w}\|$, and $\langle \mathbf{z}_n, \mathbf{w}\rangle = 0$, we have $\|\mathbf{y} - \mathbf{z}_n\| \le c_1C+c_2$, thus  \eqref{e:gcc3} implies that $[\mathbf{y}, j] \in E_1^*(\sigma_{[0,n)})[\mathbf{0},i_n]$.
As $\mathcal{L}_{\boldsymbol{\sigma}}^{(n_k+\ell_k)}$ is $C$-balanced, we get (\ref{i:gcc4}) with $i = i_n$, $\mathbf{z} = \mathbf{z}_n$. 

\medskip \noindent
 (\ref{i:gcc4}) $\Rightarrow$ (\ref{i:gcc1}). 
Let $n, i, \mathbf{z}, C$ be as in~(\ref{i:gcc4}).
By Lemmas~\ref{l:bounded} and~\ref{l:e1star} and Proposition~\ref{p:setequation}, there is a neighborhood~$U$ of~$\pi_{\mathbf{u},\mathbf{1}}^{(n)}\, \mathbf{z}$ such that $M_{[0,n)}\, U$ lies in~$\mathcal{R}(i)$ and intersects no other tile of~$\mathcal{C}_\mathbf{1}$. 
By Proposition~\ref{p:independentmultiple}, this implies that~$\mathcal{C}_\mathbf{1}$ is a tiling.
\end{proof}

\begin{proposition}\label{p:gccvariant}
Let $S$ be a finite or  infinite set of unimodular substitutions over the finite alphabet $\mathcal{A}$ and assume that $\boldsymbol{\sigma}\in S^{\mathbb{N}}$ has Property PRICE.

The collection $\mathcal{C}_\mathbf{1}$ forms a tiling of~$\mathbf{1}^\bot$ if and only if $\boldsymbol{\sigma}$ satisfies the strong coincidence condition and for each $R > 0$ there exists $\ell \in \mathbb{N}$ such that $\bigcup_{i\in\mathcal{A}} E_1^*(\sigma_{[0,n)})[\mathbf{0},i]$ contains a ball of radius~$R$ of $\Gamma(\tr{(M_{[0,n)})}\, \mathbf{1})$ for all $n \ge \ell$.

If $\boldsymbol{\sigma}$ satisfies the geometric finiteness property, then $\mathbf{0}$ is an inner point of~$\mathcal{R}$ and $\mathbf{0} \not\in \pi_{\mathbf{u},\mathbf{1}}\, \mathbf{x} + \mathcal{R}(i)$ for all $[\mathbf{x},i] \in \Gamma(\mathbf{1}^\bot)$ with $\mathbf{x} \ne 0$.
\end{proposition}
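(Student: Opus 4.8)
The plan is to treat the two assertions separately. For the equivalence I would not argue directly but route everything through Proposition~\ref{p:gcc}: since Property PRICE is assumed, $\mathcal{C}_\mathbf{1}$ is a tiling if and only if the geometric coincidence condition~\eqref{e:gcc3} holds, so it suffices to show that this single‑piece condition is equivalent to the conjunction of strong coincidence and the weaker ``union'' condition of the statement. One implication is immediate, since a ball contained in one $E_1^*(\sigma_{[0,n)})[\mathbf{0},i_n]$ is a fortiori contained in $\bigcup_i E_1^*(\sigma_{[0,n)})[\mathbf{0},i]$. To extract strong coincidence from~\eqref{e:gcc3} I would use that a large ball of a standard discrete hyperplane always contains a \emph{full vertex}, i.e.\ an integer point $\mathbf{y}^\ast$ with $0\le\langle\tr{(M_{[0,n)})}\mathbf{1},\mathbf{y}^\ast\rangle<\min_{j}|\sigma_{[0,n)}(j)|$, at which \emph{all} faces $[\mathbf{y}^\ast,j]$, $j\in\mathcal{A}$, belong to $\Gamma(\tr{(M_{[0,n)})}\mathbf{1})$; such vertices fill a slab of width $\ge1$ and hence occur relatively densely with a radius bounded uniformly along the PRICE subsequence $(n_k)$. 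If the whole ball lies in a single $E_1^*(\sigma_{[0,n_k)})[\mathbf{0},i_{n_k}]$, then $[\mathbf{y}^\ast,j_1]$ and $[\mathbf{y}^\ast,j_2]$ lie in it for every pair $j_1,j_2$, which is exactly strong coincidence.

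For the converse (strong coincidence together with the union condition implies tiling) I would produce an exclusive point. Fixing $R>2C$ with $C$ a balancedness constant, the union condition places a radius‑$R$ ball of $\Gamma(\tr{(M_{[0,n)})}\mathbf{1})$ inside $\bigcup_i E_1^*(\sigma_{[0,n)})[\mathbf{0},i]$; I pick a face $[\mathbf{y}_0,j_0]$ at discrete distance $\ge C$ from the ball's boundary and a generic interior point $\mathbf{p}$ of its subtile $M_{[0,n)}(\pi^{(n)}_{\mathbf{u},\mathbf{1}}\mathbf{y}_0+\mathcal{R}^{(n)}_\mathbf{1}(j_0))$ (interior nonempty by Proposition~\ref{p:closint}). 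Any tile $\pi_{\mathbf{u},\mathbf{1}}\mathbf{x}+\mathcal{R}(i)$ of $\mathcal{C}_\mathbf{1}$ through $\mathbf{p}$ must, by the set equation~\eqref{e:setequationkl} and the vanishing diameters of Lemma~\ref{l:smallsubtiles}, use a face $[\mathbf{y}',j']\in E_1^*(\sigma_{[0,n)})[\mathbf{x},i]$ with $\mathbf{y}'$ within discrete distance $C$ of $\mathbf{y}_0$ (balancedness); hence $[\mathbf{y}',j']$ still lies in the ball, so in some $E_1^*(\sigma_{[0,n)})[\mathbf{0},i_1]$, and cylinder disjointness (Lemma~\ref{l:e1star}~(\ref{62iii})) forces $\mathbf{x}=\mathbf{0}$. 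Thus $\mathbf{p}$ is covered only by the subtiles $\mathcal{R}(i)$, which are mutually measure‑disjoint by strong coincidence (Proposition~\ref{p:strongcoincidence}); so $\mathbf{p}$ is exclusive and, the multiple tiling $\mathcal{C}_\mathbf{1}$ (Proposition~\ref{p:independentmultiple}) having constant covering degree, $\mathcal{C}_\mathbf{1}$ is a tiling.

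For the second assertion I would first reduce interiority of $\mathbf{0}$ to the arithmetic statement that $\mathcal{R}$ contains no nonzero point of $\mathbb{Z}^d\cap\mathbf{1}^\bot$, i.e.\ that $\mathbf{0}\notin\pi_{\mathbf{u},\mathbf{1}}\mathbf{x}+\mathcal{R}(i)$ for $[\mathbf{x},i]\in\Gamma(\mathbf{1})$ with $\mathbf{x}\ne\mathbf{0}$ (here $\mathbf{x}\in\mathbf{1}^\bot$, so $\pi_{\mathbf{u},\mathbf{1}}\mathbf{x}=\mathbf{x}$). Granting this, local finiteness of $\mathcal{C}_\mathbf{1}$ shows that a small ball $V$ around $\mathbf{0}$ meets only the origin tiles $\mathcal{R}(i)$; since $\mathcal{C}_\mathbf{1}$ covers $\mathbf{1}^\bot$, this gives $V\subset\bigcup_i\mathcal{R}(i)=\mathcal{R}$, so $\mathbf{0}$ is interior. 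To prove the arithmetic statement, suppose $\mathbf{0}\in\pi_{\mathbf{u},\mathbf{1}}(-\mathbf{z})+\mathcal{R}(i_1)$ for a nonzero $\mathbf{z}\in\mathbb{Z}^d\cap\mathbf{1}^\bot$; the set equation yields, for each $n$ along the PRICE subsequence, a face $[\mathbf{y}_n,j_n]\in E_1^*(\sigma_{[0,n)})[-\mathbf{z},i_1]$ whose subtile contains $\mathbf{0}$, with $M_{[0,n)}\mathbf{y}_n=-\mathbf{z}+\mathbf{l}(p_n)$. The crucial step is that $\|\mathbf{y}_n\|$ stays \emph{uniformly bounded}: writing $\mathbf{y}_n=\pi^{(n)}_{\mathbf{u},\mathbf{1}}\mathbf{y}_n+s_n(M_{[0,n)})^{-1}\mathbf{u}$, the first summand is bounded by $\operatorname{diam}\mathcal{R}^{(n)}_\mathbf{1}$ (controlled along $(n_k)$ by Lemmas~\ref{l:convhull2} and~\ref{lem:projectionconvergence}), while the height identity $s_n\langle\mathbf{1},\mathbf{u}\rangle=\langle\tr{(M_{[0,n)})}\mathbf{1},\mathbf{y}_n\rangle=|p_n|<|\sigma_{[0,n)}(j_n)|$ together with $\langle\mathbf{1},\mathbf{u}\rangle\ge|\sigma_{[0,n)}(j_n)|\,u^{(n)}_{j_n}$, where $(M_{[0,n)})^{-1}\mathbf{u}=\mathbf{u}^{(n)}\in\mathbb{R}_+^d$, bounds $\|s_n(M_{[0,n)})^{-1}\mathbf{u}\|$ by $\|\mathbf{u}^{(n)}\|/u^{(n)}_{j_n}$, which is bounded by the uniform comparability of the entries of $\mathbf{u}^{(n)}$ coming from primitivity. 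With $\|\mathbf{y}_n\|\le R_0$ uniform, geometric finiteness applied with $R=R_0$ forces $[\mathbf{y}_n,j_n]\in\bigcup_i E_1^*(\sigma_{[0,n)})[\mathbf{0},i]$ for large $n$, contradicting $[\mathbf{y}_n,j_n]\in E_1^*(\sigma_{[0,n)})[-\mathbf{z},i_1]$ with $-\mathbf{z}\ne\mathbf{0}$ by Lemma~\ref{l:e1star}~(\ref{62iii}).

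The main obstacle is precisely this last uniform bound on $\|\mathbf{y}_n\|$: a priori the faces realizing a boundary lattice point sit far out along the expanding eigendirection, and only the cancellation between the large height $s_n\sim|p_n|$ and the small contracted vector $(M_{[0,n)})^{-1}\mathbf{u}$ (of orders $\|M_{[0,n)}\|$ and $\|M_{[0,n)}\|^{-1}$) keeps them in a fixed ball where geometric finiteness, which only controls bounded faces, can be applied. Making both the comparability constant for $\mathbf{u}^{(n)}$ and the diameter control of $\mathcal{R}^{(n)}_\mathbf{1}$ uniform is what forces one to work along the recurrence subsequence $(n_k)$ of Property PRICE; the analogous density statement for full vertices in the first part is the other, easier, point requiring some care with discrete‑hyperplane geometry.
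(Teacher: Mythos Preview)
Your proof is correct and follows essentially the same route as the paper's: both directions of the equivalence go through Proposition~\ref{p:gcc}, with Lemma~\ref{l:relativelydense} (your ``full vertex'' density) producing strong coincidence from the single-piece ball, and the (\ref{i:gcc3})$\Rightarrow$(\ref{i:gcc4})$\Rightarrow$(\ref{i:gcc1}) chain (your height/projection bound via the positive matrix~$B$ along the PRICE subsequence) handling the converse and the geometric finiteness statement. The paper simply cites these results rather than unpacking them as you do; in particular your explicit bound on~$\|\mathbf{y}_n\|$ is exactly the content of the inequality $\|\mathbf{x}\| \le c_1 \|\pi_{\tilde{\mathbf{u}},\mathbf{1}}\, \mathbf{x}\| + c_2$ for $\mathbf{x}$ in a discrete hyperplane with slope in $\tr B\,\mathbb{R}_+^d$ used in the proof of Proposition~\ref{p:gcc}.
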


\begin{proof}
Assume first that $\mathcal{C}_\mathbf{1}$ forms a tiling. Then $(\sigma_n)_{n\in\mathbb{N}}$ satisfies the geometric coincidence condition by Proposition~\ref{p:gcc}. Thus, for each $R > 0$ and sufficiently large~$n$, $E_1^*(\sigma_{[0,n)})[\mathbf{0},i_n]$ contains a ball of radius~$R$ of $\Gamma(\tr{(M_{[0,n)})}\, \mathbf{1})$ for some $i_n \in \mathcal{A}$. 
By Lemma~\ref{l:relativelydense}, there is $R > 0$ such that, for $k$ large enough, each ball of radius $R$ in $\Gamma(\tr{(M_{[0,n_k)})}\, \mathbf{1})$ contains a translate of the patch~$\mathcal{U} = \{[\mathbf{0},i]:\, i\in \mathcal{A}\}$.
Therefore, we have some $k \in \mathbb{N}$, $i \in \mathcal{A}$, and $\mathbf{x} \in \mathbb{Z}^d$ such that $\mathbf{x} + \mathcal{U} \subset E_1^*(\sigma_{[0,n_k)})[\mathbf{0},i]$. This shows that the strong coincidence condition holds.

The proof of the converse direction runs along the same lines as the corresponding part of the proof of Proposition~\ref{p:gcc}, that is, (\ref{i:gcc3}) $\Rightarrow$ (\ref{i:gcc4}) $\Rightarrow$ (\ref{i:gcc1}). We have to replace $E_1^*(\sigma_{[0,n)})[\mathbf{0},i_n]$ and $E_1^*(\sigma_{[0,n)})[\mathbf{0},i]$ by $\bigcup_{i\in\mathcal{A}} E_1^*(\sigma_{[0,n)})[\mathbf{0},i]$ and use Proposition~\ref{p:strongcoincidence}.

If $\boldsymbol{\sigma}$ satisfies the geometric finiteness property, then we obtain as in Proposition~\ref{p:gcc} (\ref{i:gcc3}) $\Rightarrow$ (\ref{i:gcc4}) that $\big\{[\mathbf{y},j] \in \Gamma(\tr{(M_{[0,n)})}\, \mathbf{1}):\, \|\pi_{(M_{[0,n)})^{-1}\mathbf{u},\mathbf{1}}\, \mathbf{y}\| \le C\big\} \subset \bigcup_{i\in\mathcal{A}} E_1^*(\sigma_{[0,n)})[\mathbf{0},i]$ for some $n \in \mathbb{N}$,
with $C$ such that $\mathcal{L}_{\boldsymbol{\sigma}}^{(n)}$ is $C$-balanced, thus $\mathbf{0} \not\in \pi_{\mathbf{u},\mathbf{1}}\, \mathbf{x} + \mathcal{R}(i)$ for all $[\mathbf{x},i] \in \Gamma(\mathbf{1})$ with $\mathbf{x} \ne 0$.
As $\mathcal{C}_\mathbf{1}$ is a covering of~$\mathbf{1}^\bot$ by Proposition~\ref{p:covering}, we get that $\mathbf{0}$ is an inner point of~$\mathcal{R}$.
\end{proof}

\begin{remark}\label{rem:-2}
Proposition~\ref{p:gccvariant} remains true with an analogous proof if strong coincidence is replaced by negative strong coincidence in its statement. 
Also, Proposition ~\ref{p:gccvariant} admits an effective version analogous to Proposition~\ref{p:gcc}~(\ref{i:gcc4}).
\end{remark}

\section{Dynamical properties of $S$-adic shifts}\label{sec:UE}

We now use the results of the previous sections to investigate the dynamics of $S$-adic shifts. At the end of this section we will have collected all the necessary preparations to finish the proofs of Theorems~\ref{t:1} and~\ref{t:3}.

\subsection{Minimality and unique ergodicity}

First we observe that \cite[Theorem~5.2]{Berthe-Delecroix} implies the following result.

\begin{lemma}\label{lem:minimal}
Let $S$ be a finite or  infinite set of unimodular substitutions over a finite alphabet and let $\boldsymbol{\sigma}\in S^{\mathbb{N}}$ be a primitive directive sequence. Then the $S$-adic shift $(X_{\boldsymbol{\sigma}}, \Sigma)$ is minimal. Thus each infinite word of $(X_{\boldsymbol{\sigma}}, \Sigma)$ is uniformly recurrent.
\end{lemma}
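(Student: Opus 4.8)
The plan is to reduce the minimality of $(X_{\boldsymbol{\sigma}},\Sigma)$ to the uniform recurrence of a single limit word, and then to extract that recurrence from primitivity. First I would record that primitivity forces $|\sigma_{[0,n)}(i)| \to \infty$ as $n\to\infty$ for every letter $i\in\mathcal{A}$, so that limit words of~$\boldsymbol{\sigma}$ exist; invoking \cite[Theorem~5.2]{Berthe-Delecroix} then gives $(X_{\boldsymbol{\sigma}},\Sigma) = (X_\omega,\Sigma)$ for any such limit word~$\omega$. By \cite[Proposition~4.7]{Queffelec:10}, the system $(X_\omega,\Sigma)$ is minimal precisely when $\omega$ is uniformly recurrent, so it suffices to show that every factor of~$\omega$ recurs with bounded gaps.

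The core step is the following combinatorial argument. I would fix a factor $w$ of~$\omega$. Since $\omega = \lim_{n\to\infty}\sigma_{[0,n)}(i_n)$, the word $w$ is a factor of $\sigma_{[0,n)}(i)$ for some $i\in\mathcal{A}$ and some $n \in \mathbb{N}$. Applying primitivity at index~$n$ yields $m > n$ with $M_{[n,m)}$ positive; hence $\sigma_{[n,m)}(j)$ contains the letter~$i$ for every $j \in \mathcal{A}$, and consequently $\sigma_{[0,m)}(j) = \sigma_{[0,n)}(\sigma_{[n,m)}(j))$ contains $\sigma_{[0,n)}(i)$, and therefore $w$, as a factor, for every $j\in\mathcal{A}$. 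Writing $\omega = \sigma_{[0,m)}(\omega^{(m)})$, the presence of~$w$ inside each block $\sigma_{[0,m)}(\omega^{(m)}_k)$ shows that $w$ occurs in~$\omega$ with gaps bounded by $2\max_{j\in\mathcal{A}}|\sigma_{[0,m)}(j)|$. As this maximum involves only finitely many letters and a single finite product of substitutions, it is finite even when $S$ is infinite; thus $w$ recurs with bounded gaps, so~$\omega$ is uniformly recurrent and $(X_{\boldsymbol{\sigma}},\Sigma)$ is minimal.

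To finish, I would deduce the last sentence from minimality: every element of~$X_{\boldsymbol{\sigma}}$ has $\mathcal{L}_{\boldsymbol{\sigma}}$ as its set of factors, and since $\mathcal{L}_{\boldsymbol{\sigma}}$ is uniformly recurrent (each factor occurs in~$\omega$, hence in every element, with bounded gaps), each infinite word of $(X_{\boldsymbol{\sigma}},\Sigma)$ is itself uniformly recurrent. Because the lemma is stated as a consequence of \cite[Theorem~5.2]{Berthe-Delecroix}, there is no serious obstacle here; the argument is mostly a packaging of the cited results. The only points that genuinely require care are the existence of limit words under primitivity and the verification that the gap bound remains finite in the infinite-$S$ setting, and both are handled by the observation that only finitely many substitutions are ever composed at a fixed finite level.
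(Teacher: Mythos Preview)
Your argument is correct and aligns with the paper's approach: the paper simply records that the lemma follows from \cite[Theorem~5.2]{Berthe-Delecroix}, while you invoke the same reference (for $X_{\boldsymbol{\sigma}}=X_\omega$) and additionally spell out the standard primitivity-implies-uniform-recurrence argument that underlies that theorem. This is a faithful unpacking of the cited result rather than a different route.
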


To gain unique ergodicity we need slightly stronger assumptions.

\begin{lemma}\label{lem:uniquelyergodic}
Let $S$ be a finite or  infinite set of unimodular substitutions over a finite alphabet and let $\boldsymbol{\sigma}\in S^{\mathbb{N}}$ be a primitive, recurrent directive sequence. Then the $S$-adic shift $(X_{\boldsymbol{\sigma}}, \Sigma)$ is uniquely ergodic.
\end{lemma}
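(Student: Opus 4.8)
The plan is to deduce unique ergodicity from the uniform existence of factor frequencies. By Lemma~\ref{lem:minimal} the shift $(X_{\boldsymbol{\sigma}},\Sigma)$ is minimal, and by the standard criterion of Oxtoby a minimal subshift is uniquely ergodic if and only if, for every factor $w\in\mathcal{L}_{\boldsymbol{\sigma}}$, the averages $\frac1N\,|u_{[0,N)}|_w$ converge, as $N\to\infty$, to a limit $f_w$ that is the same for all $u\in X_{\boldsymbol{\sigma}}$ and uniform in $u$. Since $\boldsymbol{\sigma}$ is primitive we have $X_{\boldsymbol{\sigma}}=X_\omega$ for any limit word $\omega$, and every factor occurring in a window of some $u\in X_{\boldsymbol{\sigma}}$ already occurs in $\omega=\sigma_{[0,n)}(\omega^{(n)})$; thus it suffices to bound $|u_{[0,N)}|_w$ by quantities that depend only on $N$, $w$ and an auxiliary level $n$, and not on the point $u$ or the position of the window.

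The engine of the argument is a weak ergodicity (uniform cone contraction) statement for all shifted cocycles. First I would note that recurrence of $\boldsymbol{\sigma}$ provides, for every $m\in\mathbb{N}$, repeated positive blocks beyond level $m$: by primitivity there is a finite word $(\sigma_k,\dots,\sigma_{\ell-1})$ of $\boldsymbol{\sigma}$ with $M_{[k,\ell)}$ positive, and recurrence makes this word reappear infinitely often, in particular at arbitrarily large positions, so one can extract indices $m\le k_1<\ell_1\le k_2<\ell_2\le\cdots$ with $M_{[k_i,\ell_i)}$ all equal to one fixed positive matrix $B$. Hence \eqref{e:topPF} holds for every shifted sequence $(\sigma_{p+m})_{p\in\mathbb{N}}$, and by Birkhoff's contraction in the projective Hilbert metric the cones $M_{[m,n)}\,\mathbb{R}^d_{\ge0}$ shrink to a single ray as $n\to\infty$; equivalently, all columns of $M_{[m,n)}$ become collinear in direction, uniformly. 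Primitivity also gives $\min_{a}|\sigma_{[0,n)}(a)|\to\infty$ together with a uniform bound on the ratio $\max_a|\sigma_{[0,n)}(a)|/\min_a|\sigma_{[0,n)}(a)|$ (the positive block $B$ at the end of the recurring prefixes controls this ratio, cf.\ Property~\ref{defP}).

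The counting step then proceeds as follows. Any window $u_{[0,N)}$ is a concatenation of a proper suffix of a block $\sigma_{[0,n)}(a)$, a chain of full level-$n$ blocks, and a proper prefix of a block, the types of the full blocks forming a factor of $\omega^{(n)}$. Writing $g^{(n)}_a(w)=|\sigma_{[0,n)}(a)|_w/|\sigma_{[0,n)}(a)|$ for the density of $w$ in a level-$n$ block, the occurrences of $w$ in the window equal $\sum g^{(n)}_{a_i}(w)\,|\sigma_{[0,n)}(a_i)|$ over the full blocks, plus straddling occurrences bounded by $|w|$ per block boundary and the two partial blocks. The crucial point is that $g^{(n)}_a(w)\to f_w$ as $n\to\infty$ \emph{uniformly in} $a$: decomposing $\sigma_{[0,n)}(a)$ into level-$m$ sub-blocks (indexed by the letters of $\sigma_{[m,n)}(a)$), the weak ergodicity of $M_{[m,n)}$ forces the frequencies of these sub-blocks to tend, as $n\to\infty$, to one vector independent of $a$, so $g^{(n)}_a(w)$ approaches an $a$-independent value $F_m(w)$ up to a straddling error $\le|w|/\min_b|\sigma_{[0,m)}(b)|$; letting $m\to\infty$ pins down a common limit $f_w$ (equal to the frequency of $w$ in $\omega$). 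Feeding this back, $\frac1N|u_{[0,N)}|_w$ differs from $f_w$ by the density error, which is $o(1)$ in $n$, plus a boundary error $\le |w|\,(\#\text{full blocks})/N\le |w|/\min_a|\sigma_{[0,n)}(a)|$, so first choosing $n$ large and then $N$ large yields $\big|\frac1N|u_{[0,N)}|_w-f_w\big|<\varepsilon$ for all $u$ and all windows simultaneously.

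The main obstacle is precisely the uniform control of $g^{(n)}_a(w)$ across the letters $a$ together with the straddling corrections: it is here, and not merely in obtaining a single limit-word frequency, that recurrence is indispensable, since it is what guarantees weak ergodicity of every shifted cocycle $M_{[m,n)}$ (primitivity alone produces a positive block somewhere, but not the repetitions needed to contract all the shifted cones uniformly). Once uniform factor frequencies are established, Oxtoby's criterion yields a unique invariant probability measure $\mu$, completing the proof.
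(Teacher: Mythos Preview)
Your argument is correct and gives a self-contained proof. The paper takes a much shorter route: it simply verifies the hypotheses of \cite[Theorem~5.7]{Berthe-Delecroix} --- namely, that for every $k$ the cone intersection $\bigcap_{n\ge k} M_{[k,n)}\,\mathbb{R}^d_+$ is one-dimensional and that $\boldsymbol{\sigma}$ is ``everywhere growing'' --- and then cites that theorem. The verification uses exactly the observation you make at the start (primitivity and recurrence together force a fixed positive block to recur infinitely often in every tail, so \eqref{e:topPF} applies to every shifted sequence); the remaining work is delegated to the reference.

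Your proof via Oxtoby's criterion is essentially an unpacking of what that cited theorem does: the cone contraction for every shifted cocycle $M_{[m,\cdot)}$ is precisely what makes the normalized columns of $M_{[m,n)}$ converge to a common limit independent of the column, which in turn forces the uniform convergence of the block densities $g^{(n)}_a(w)$ and hence uniform factor frequencies. So the two arguments share the same engine (repeated positive blocks $\Rightarrow$ projective contraction at every level); the difference is only whether one invokes the black box or opens it. Your version is self-contained; the paper's isolates the combinatorial work in a single clean reference. One minor point: the uniform ratio bound $\max_a|\sigma_{[0,n)}(a)|/\min_a|\sigma_{[0,n)}(a)|\le c$ that you need holds along the subsequence where the positive block sits at the end (as you hint), not for arbitrary $n$; since in your counting step you are free to choose the level $n$, this is harmless, but it is worth being explicit about.
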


\begin{proof}
Primitivity and recurrence of~$\boldsymbol{\sigma}$ imply that there are indices $k_1 < \ell_1 \le k_2 < \ell_2 \le \cdots$ and a positive matrix~$B$ such that $B = M_{[k_1,\ell_1)} = M_{[k_2,\ell_2)} = \cdots$. From \eqref{e:topPF} we gain therefore that $\bigcap_{n\ge k} M_{[k,n)}\, \mathbb{R}^d_+$ is one-dimensional for  each $k\in\mathbb{N}$ and, hence, \cite[Theorem~5.7]{Berthe-Delecroix} yields the result (the fact that $\boldsymbol{\sigma}$ is ``everywhere growing'' in the sense stated in that theorem is an immediate consequence of primitivity and recurrence).
\end{proof}

\subsection{Representation map}
In order to set up a representation map from~$X_{\boldsymbol{\sigma}}$ to~$\mathcal{R}$, we define refinements of the subtiles of~$\mathcal{R}$ by
\[
\mathcal{R}(w) = \overline{\{\pi_{\mathbf{u},\mathbf{1}}\, \mathbf{l}(p):\, p \in \mathcal{A}^*,\ \mbox{$p\hspace{.1em}w$ is a prefix of a limit word of $\boldsymbol{\sigma}$}\}} \quad
(w \in \mathcal{A}^*).
\]

\begin{lemma} \label{l:convrefinement}
Let $S$ be a finite or  infinite set of unimodular substitutions over a finite alphabet and let $\boldsymbol{\sigma}\in S^{\mathbb{N}}$ be a primitive, algebraically irreducible, and recurrent directive sequence with balanced  language~$\mathcal{L}_{\boldsymbol{\sigma}}$.
Then $\bigcap_{n\in\mathbb{N}} \mathcal{R}(\zeta_0 \zeta_1 \cdots \zeta_{n-1})$ is a single point in~$\mathcal{R}$ for each infinite word $\zeta_0 \zeta_1 \cdots \in X_{\boldsymbol{\sigma}}$. Therefore, the representation map
\[
\varphi:\, X_{\boldsymbol{\sigma}} \to \mathcal{R},\ \zeta_0 \zeta_1 \cdots \mapsto \bigcap_{n\in\mathbb{N}} \mathcal{R}(\zeta_0 \zeta_1 \cdots \zeta_{n-1}),
\]
is well-defined, continuous and surjective.
\end{lemma}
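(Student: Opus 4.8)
The plan is to first settle the set-theoretic structure and then reduce everything to a single diameter estimate. I would record the nesting property: if $w$ is a prefix of $w'$, then $p\,w'$ being a prefix of a limit word forces $p\,w$ to be one as well, so $\mathcal{R}(w') \subseteq \mathcal{R}(w)$. In particular $(\mathcal{R}(\zeta_0\cdots\zeta_{n-1}))_{n}$ is a decreasing sequence of nonempty compact sets—compactness and boundedness coming from Lemma~\ref{l:bounded} since $\mathcal{L}_{\boldsymbol{\sigma}}$ is balanced—so its intersection is a nonempty compact set by the finite intersection property. Thus $\varphi$ is at least well defined as a set-valued map, and the whole statement reduces to showing that
\[
\lim_{n\to\infty} \mathrm{diam}\,\mathcal{R}(\zeta_0\zeta_1\cdots\zeta_{n-1}) = 0 \qquad\text{for every } \zeta = \zeta_0\zeta_1\cdots \in X_{\boldsymbol{\sigma}}.
\]

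For the diameter estimate I would fix $\varepsilon>0$ and, using the strong convergence property~\eqref{e:Lsn} of Proposition~\ref{p:strongconvergence}, choose $k$ with $\|\pi_{\mathbf{u},\mathbf{1}} M_{[0,k)}\,\mathbf{l}(v)\| < \varepsilon$ for all $v \in \mathcal{L}_{\boldsymbol{\sigma}}^{(k)}$. Given a prefix $p$ with $p\,w$ a prefix of a limit word $\omega = \sigma_{[0,k)}(\omega^{(k)})$ (here $w = \zeta_0\cdots\zeta_{n-1}$), I would cut $p$ at the last level-$k$ block boundary, writing $p = \sigma_{[0,k)}(q)\,s$ with $q$ a prefix of $\omega^{(k)}$ and $s$ a proper prefix of $\sigma_{[0,k)}(c)$ for the letter $c$ of $\omega^{(k)}$ following $q$. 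Then
\[
\pi_{\mathbf{u},\mathbf{1}}\,\mathbf{l}(p) = \pi_{\mathbf{u},\mathbf{1}} M_{[0,k)}\,\mathbf{l}(q) + \pi_{\mathbf{u},\mathbf{1}}\,\mathbf{l}(s),
\]
and the first summand is $<\varepsilon$ because $q \in \mathcal{L}_{\boldsymbol{\sigma}}^{(k)}$. Hence every point of $\mathcal{R}(w)$ lies within $\varepsilon$ of one of the finitely many ``phase'' points $\pi_{\mathbf{u},\mathbf{1}}\mathbf{l}(s)$. The decisive point is that for $n$ large the phase $s$ is the same for all admissible $p$: since $p\,w$ ends in the long common word $w$, the block $\sigma_{[0,k)}(c)$ equals $s$ followed by a prefix of $w$, so $(c,s)$ is determined by $w$ alone once $|w|$ exceeds the recognizability window. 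I would establish this unique desubstitution (one-sided recognizability) from primitivity together with the aperiodicity of the limit words, guaranteed by the rational independence of the coordinates of $\mathbf{u}$ (Lemma~\ref{l:independent}) via a Moss\'e-type argument; once the phases coincide, two points of $\mathcal{R}(w)$ differ by at most $2\varepsilon$, which yields the diameter bound.

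With the diameter estimate in hand the rest is routine. The map $\varphi$ is then well defined (single point), and since $\varphi(\zeta)$ and $\varphi(\tilde\zeta)$ both lie in $\mathcal{R}(\zeta_0\cdots\zeta_{n-1})$ whenever $\zeta$ and $\tilde\zeta$ share their first $n$ letters, uniform continuity follows from $\mathrm{diam}\,\mathcal{R}(\zeta_0\cdots\zeta_{n-1})\to 0$. For surjectivity I would take $z \in \mathcal{R}$, approximate it by $\pi_{\mathbf{u},\mathbf{1}}\mathbf{l}(p_m)\to z$ with each $p_m$ a prefix of a limit word $\omega_m$, and pass to a convergent subsequence of the futures $\Sigma^{|p_m|}\omega_m \in X_{\boldsymbol{\sigma}}$ (using compactness of $X_{\boldsymbol{\sigma}}$ and minimality from Lemma~\ref{lem:minimal}) to obtain $\zeta \in X_{\boldsymbol{\sigma}}$; for each $n$, eventually $p_m\,(\zeta_0\cdots\zeta_{n-1})$ is a prefix of $\omega_m$, so $z \in \mathcal{R}(\zeta_0\cdots\zeta_{n-1})$ for all $n$, whence $z = \varphi(\zeta)$.

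The hard part is precisely the coincidence of the phase terms $\pi_{\mathbf{u},\mathbf{1}}\mathbf{l}(s)$: unlike the first summand they are only bounded (by the balancedness constant), not small, so the diameter cannot tend to zero unless the desubstitution phase is eventually forced by the common suffix $w$. Establishing this synchronization—equivalently, the unique desubstitutability of every $\zeta\in X_{\boldsymbol{\sigma}}$—is the substantive step, whereas the strong convergence of Proposition~\ref{p:strongconvergence} only disposes of the ``interior'' contribution.
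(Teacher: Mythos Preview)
Your setup (nesting, nonemptiness, compactness) and your treatment of continuity and surjectivity are fine; the issue is the diameter estimate. Your decomposition $\pi_{\mathbf{u},\mathbf{1}}\mathbf{l}(p)=\pi_{\mathbf{u},\mathbf{1}}M_{[0,k)}\mathbf{l}(q)+\pi_{\mathbf{u},\mathbf{1}}\mathbf{l}(s)$ correctly isolates a small term via Proposition~\ref{p:strongconvergence}, but the whole argument then hinges on the claim that the phase $(c,s)$ is determined by~$w$ once $|w|$ is large. That is precisely level-$k$ recognizability for the $S$-adic system, and you do not prove it: you invoke a ``Moss\'e-type argument'' and aperiodicity. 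Moss\'e's theorem, however, is for a \emph{single} primitive substitution acting on bi-infinite words; transferring it to a nonstationary sequence~$\sigma_{[0,k)}$ of substitutions acting on one-sided limit words is not automatic. (Such $S$-adic recognizability results were established only later and require their own development; nothing in the present paper provides them.) Moreover, your phase claim needs a \emph{right}-recognizability statement---the cutting at position~$|p|$ must be determined by the factor~$w$ to the right alone, uniformly over all limit words---which is stronger still. As you yourself note, this synchronization is the substantive step, and it is left as a promise rather than a proof.

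The paper sidesteps recognizability entirely. It first reduces, via minimality, from an arbitrary $\zeta$ to the prefixes of a single limit word~$\omega$ (using $\mathcal{R}(\zeta_{[0,n_k+k)})\subset \mathcal{R}(\omega_{[0,k)})-\pi_{\mathbf{u},\mathbf{1}}\mathbf{l}(\zeta_{[0,n_k)})$), and then proves $\mathrm{diam}\,\mathcal{R}(\omega_{[0,k)})\to 0$ by a geometric absorption trick: setting $\mathcal{S}_k=\{\pi_{\mathbf{u},\mathbf{1}}\mathbf{l}(\omega_{[0,n)}):0\le n\le k\}$, one has $\mathcal{R}(\omega_{[0,k)})+\mathcal{S}_k\subset\mathcal{R}$, and $\mathcal{S}_k\to\mathcal{R}$ in Hausdorff metric by primitivity and Proposition~\ref{p:strongconvergence}. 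This forces $\bigcap_k\mathcal{R}(\omega_{[0,k)})=\{\mathbf{0}\}$ (otherwise $\mathbf{x}+\mathcal{R}\subset\mathcal{R}$ for some $\mathbf{x}\ne\mathbf{0}$, contradicting boundedness). This argument uses only the four stated hypotheses and no combinatorial synchronization, so it is both shorter and self-contained.
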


\begin{proof}
Let $\zeta = \zeta_0 \zeta_1 \cdots \in X_{\boldsymbol{\sigma}}$ and let  $\omega$ be a  limit word of~$\boldsymbol{\sigma}$.
Then $\mathcal{R} = \mathcal{R}(\zeta_{[0,0)}) \supset \mathcal{R}(\zeta_{[0,1)}) \supset \cdots$, and $\mathcal{R}(\zeta_{[0,n)}) \ne \emptyset$ for all $\ell\in\mathbb{N}$, where we use the abbreviation $\zeta_{[k,\ell)} = \zeta_k \zeta_{k+1} \cdots \zeta_{\ell-1}$. 
As $(X_{\boldsymbol{\sigma}}, \Sigma)$ is minimal by Lemma~\ref{lem:minimal}, we have a sequence $(n_k)_{k\in\mathbb{N}}$ such that $\zeta_{[n_k,n_k+k)} = \omega_{[0,k)}$ for all $k \in \mathbb{N}$. 
Since $\mathcal{R}(\zeta_{[0,n_k+k)}) \subset \mathcal{R}(\zeta_{[n_k,n_k+k)}) - \pi_{\mathbf{u},\mathbf{1}}\, \mathbf{l}(\zeta_{[0,n_k)})$, it only remains to show that the diameter of $\mathcal{R}(\zeta_{[n_k,n_k+k)}) = \mathcal{R}(\omega_{[0,k)})$ converges to zero. We even show that $\bigcap_{k\in\mathbb{N}} \mathcal{R}(\omega_{[0,k)}) = \{\mathbf{0}\}$. 

Let $\mathcal{S}_k = \{\pi_{\mathbf{u},\mathbf{1}}\, \mathbf{l}(\omega_{[0,n)}):\, 0 \le n \le k\}$. 
Then we clearly have $\mathcal{R}(\omega_{[0,k)}) + \mathcal{S}_k \subset \mathcal{R}$ for all $k \in \mathbb{N}$.
We also have $\lim_{k\to\infty} \mathcal{S}_k = \mathcal{R}$ (in Hausdorff metric) because, for each prefix $\tilde{p}$ of a limit word~$\tilde{\omega}$, $\pi_{\mathbf{u},\mathbf{1}}\, \mathbf{l}(\tilde{p})$ can be approximated arbitrarily well by $\pi_{\mathbf{u},\mathbf{1}}\, \mathbf{l}(p)$ with a prefix $p$ of~$\omega$, by primitivity and Proposition~\ref{p:strongconvergence}.
This implies that $\lim_{k\to\infty} \mathcal{R}(\omega_{[0,k)}) = \{\mathbf{0}\}$, which proves that $\varphi$ is well defined.

Since the sequence $(\mathcal{R}(\zeta_{[0,n)})_{n\in\mathbb{N}}$ is nested and converges to a single point, $\varphi$ is continuous.
The surjectivity follows from a Cantor diagonal argument. 
\end{proof}

\subsection{Domain exchange}
Suppose that the strong coincidence condition\footnote{All the results of this subsection remain true if strong coincidence is replaced by negative strong coincidence.} holds. 
Then, by Proposition~\ref{p:strongcoincidence}, the \emph{domain exchange} 
\begin{equation} \label{e:T}
E:\ \mathcal{R} \to \mathcal{R}, \quad 
\mathbf{x} \mapsto \mathbf{x} + \pi_{\mathbf{u},\mathbf{1}}\, \mathbf{e}_i \quad \mbox{if $\mathbf{x} \in \mathcal{R}(i) \setminus \bigcup_{j\ne i} \mathcal{R}(j)$},
\end{equation}
is well defined almost everywhere on~$\mathcal{R}$.
This map induces a dynamical system $(\mathcal{R}, E, \lambda_\mathbf{1})$.

\begin{proposition}\label{p:domainexchange}
Let $S$ be a finite or  infinite set of unimodular substitutions over a finite alphabet $\mathcal{A}$. If $\boldsymbol{\sigma}\in S^{\mathbb{N}}$ has Property PRICE and satisfies the strong coincidence condition, then the following results hold.
\renewcommand{\theenumi}{\roman{enumi}}
\begin{enumerate}
\itemsep1ex
\item \label{i:de1}
The domain exchange map $E$ is $\lambda_\mathbf{1}$-almost everywhere bijective.
\item \label{i:de2}
Each collection $\mathcal{K}_n = \{\mathcal{R}(w): w \in \mathcal{L}_{\boldsymbol{\sigma}} \cap \mathcal{A}^n\}$, $n \in \mathbb{N}$, is a measure-theoretic partition of~$\mathcal{R}$. 
\item \label{i:de3}
The representation map~$\varphi$ is $\mu$-almost everywhere bijective, where $\mu$ is the unique $\Sigma$-invariant probability measure on $(X_{\boldsymbol{\sigma}},\Sigma)$.
\item \label{i:de4}
The system $(X_{\boldsymbol{\sigma}}, \Sigma, \mu)$ is measurably conjugate to the domain exchange $(\mathcal{R}, E, \lambda_\mathbf{1})$.
More precisely, the following diagram commutes:
\[
\begin{CD}
X_{\boldsymbol{\sigma}} @> \Sigma >> X_{\boldsymbol{\sigma}} \\
@VV\varphi V @VV\varphi V\\
\mathcal{R} @> E >> \mathcal{R}
\end{CD}
\]
\end{enumerate}
\end{proposition}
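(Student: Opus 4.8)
The plan is to treat (i)--(iv) in order, the three soft inputs being Proposition~\ref{p:strongcoincidence} (the subtiles $\mathcal{R}(i)$ are pairwise disjoint in measure), Lemma~\ref{l:convrefinement} ($\varphi$ is well defined, continuous and surjective), and unique ergodicity from Lemma~\ref{lem:uniquelyergodic}. First I would record two elementary set relations that follow at once from the definition of $\mathcal{R}(w)$: splitting a prefix $p$ with $pw$ (resp.\ $pv$) a prefix of a limit word according to the letter following $w$, resp.\ the last letter of $p$, gives
\[
\mathcal{R}(w) = \bigcup_{i\in\mathcal{A}} \mathcal{R}(wi) \quad \mbox{and} \quad \mathcal{R}(v) = \bigcup_{i\in\mathcal{A}} \big(\mathcal{R}(iv) + \pi_{\mathbf{u},\mathbf{1}}\,\mathbf{e}_i\big)
\]
(the second identity up to the single point $\mathbf{0}$), the unions running over those $i$ for which the word lies in $\mathcal{L}_{\boldsymbol{\sigma}}$; in particular $\mathcal{R} = \bigcup_i \mathcal{R}(i) = \bigcup_i E(\mathcal{R}(i))$. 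For (i), the map $E$ acts as a translation on each $\mathcal{R}(i)\setminus\bigcup_{j\ne i}\mathcal{R}(j)$, hence preserves $\lambda_{\mathbf{1}}$, and since the $\mathcal{R}(i)$ are disjoint in measure one has $\sum_i \lambda_{\mathbf{1}}(E(\mathcal{R}(i))) = \sum_i\lambda_{\mathbf{1}}(\mathcal{R}(i)) = \lambda_{\mathbf{1}}(\mathcal{R})$. As the sets $E(\mathcal{R}(i))$ cover $\mathcal{R}$, their measures summing to $\lambda_{\mathbf{1}}(\mathcal{R})$ forces them to be pairwise disjoint in measure; together with injectivity of $E$ on each piece this gives that $E$ is $\lambda_{\mathbf{1}}$-a.e.\ bijective. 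This ``covering with matching total measure forces disjointness'' device is the workhorse of the argument.

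For (ii) I would induct on $n$, the case $n=1$ being Proposition~\ref{p:strongcoincidence} together with $\mathcal{R}=\bigcup_i\mathcal{R}(i)$. From the two displayed relations and the measure-disjointness of the $\mathcal{R}(i)$ one obtains, up to measure zero, $E^{-1}\mathcal{R}(v) = \bigcup_i \mathcal{R}(iv)$ and $\mathcal{R}(w_0 v) = \mathcal{R}(w_0)\cap E^{-1}\mathcal{R}(v)$. Assuming $\{\mathcal{R}(v):|v|=n\}$ is a measure partition of $\mathcal{R}$, the a.e.-bijectivity and $\lambda_{\mathbf{1}}$-invariance of $E$ from~(i) make $\{E^{-1}\mathcal{R}(v)\}$ a measure partition as well, and refining it by the partition $\{\mathcal{R}(w_0)\}$ exhibits $\{\mathcal{R}(w_0 v)=\mathcal{R}(w)\}$ as the common refinement; hence $\mathcal{K}_{n+1}$ is a measure partition. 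I would also note here that the diameters of the $\mathcal{R}(w)$ tend to $0$ (via the set equation~\eqref{e:setequationkl} and Lemma~\ref{l:smallsubtiles}), so that $\bigcup_n\mathcal{K}_n$ generates the Borel sets of $\mathcal{R}$ modulo $\lambda_{\mathbf{1}}$.

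For (iii) and (iv) I would first establish the commutation: the nested-intersection definition of $\varphi$ and the second displayed relation give $\varphi(\Sigma\zeta) = \varphi(\zeta) + \pi_{\mathbf{u},\mathbf{1}}\,\mathbf{e}_{\zeta_0}$ for \emph{every} $\zeta\in X_{\boldsymbol{\sigma}}$, and this equals $E(\varphi(\zeta))$ whenever $\varphi(\zeta)$ avoids the set $Z$ of points lying in two distinct subtiles $\mathcal{R}(i),\mathcal{R}(j)$, which is Lebesgue-null by~(ii). Since a point with two $\varphi$-preimages must lie in $\mathcal{R}(w)\cap\mathcal{R}(w')$ for distinct words $w\ne w'$ of equal length, the non-injectivity locus of $\varphi$ is contained in a $\lambda_{\mathbf{1}}$-null set by~(ii); thus $\varphi$ has a measurable a.e.-inverse $\psi\colon \mathcal{R}\to X_{\boldsymbol{\sigma}}$, defined $\lambda_{\mathbf{1}}$-a.e., with $\psi\circ E = \Sigma\circ\psi$ holding $\lambda_{\mathbf{1}}$-a.e. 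The key step is then to identify the measures: writing $\bar\lambda$ for the normalized $\lambda_{\mathbf{1}}$ on $\mathcal{R}$, the pushforward $\psi_*\bar\lambda$ is $\Sigma$-invariant because $E$ preserves $\bar\lambda$, so unique ergodicity (Lemma~\ref{lem:uniquelyergodic}) forces $\psi_*\bar\lambda = \mu$, whence $\varphi_*\mu = \bar\lambda$. Consequently $\mu$ assigns measure $0$ to the $\varphi$-preimage of any Lebesgue-null set: applied to the non-injectivity locus this yields~(iii), and applied to $Z$ it makes $E\circ\varphi = \varphi\circ\Sigma$ hold $\mu$-a.e., i.e.\ the commuting diagram of~(iv). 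Thus $\varphi$ is a $\mu$-a.e.\ bijection with $\varphi_*\mu = \lambda_{\mathbf{1}}$ intertwining $\Sigma$ and $E$, which is the asserted measurable conjugacy.

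The genuinely non-formal point, and the step I expect to be the main obstacle, is exactly the identification $\varphi_*\mu = \bar\lambda$. The geometric, Lebesgue-a.e.\ injectivity of $\varphi$ is cheap from~(ii), but transferring it to the symbolic measure $\mu$ requires knowing that $\mu$ does not charge the $\varphi$-preimage of the (Lebesgue-null) overlap set. The clean route is the a.e.-inverse $\psi$ combined with unique ergodicity, which sidesteps any direct computation of the individual cylinder measures $\lambda_{\mathbf{1}}(\mathcal{R}(w))$: a naive induction on word length only controls the two marginals $\sum_i\lambda_{\mathbf{1}}(\mathcal{R}(iv))$ and $\sum_i\lambda_{\mathbf{1}}(\mathcal{R}(vi))$ and therefore does not pin down the individual values, so the invariant-measure uniqueness really is what closes the argument.
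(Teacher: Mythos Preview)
Your proof is correct and follows essentially the same approach as the paper: piecewise isometry plus the covering $\bigcup_i E(\mathcal{R}(i))=\mathcal{R}$ gives (i); the identity $\mathcal{R}(w_0\cdots w_{n-1})=\bigcap_{\ell}E^{-\ell}\mathcal{R}(w_\ell)$ (which is your induction unwound) gives (ii); and unique ergodicity identifies the measures for (iii)--(iv), where your explicit a.e.-inverse $\psi$ and the argument $\psi_*\bar\lambda=\mu$ are in fact cleaner than the paper's somewhat informal ``$\mu=\lambda_{\mathbf{1}}\circ\varphi$''. One small misattribution: your parenthetical that the diameters of $\mathcal{R}(w)$ shrink ``via the set equation~\eqref{e:setequationkl} and Lemma~\ref{l:smallsubtiles}'' points to the wrong lemma---those control the pieces $M_{[0,n)}\mathcal{R}^{(n)}(j)$, not the word-cylinders; the correct reference is Lemma~\ref{l:convrefinement}, and in any case you never actually use this side remark.
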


\begin{proof}
All the following statements are to be understood up to measure zero.
Since $\boldsymbol{\sigma}$ satisfies the strong coincidence condition, Proposition~\ref{p:strongcoincidence} implies that the map~$E$ is a well-defined isometry on~$\mathcal{R}(i)$, with
\[
E(\mathcal{R}(i)) = \overline{\{\pi_{\mathbf{u},\mathbf{1}}\, \mathbf{l}(p\hspace{.1em}i):\, p \in \mathcal{A}^*,\ \mbox{$p\hspace{.1em}i$ is a prefix of a limit word of $\boldsymbol{\sigma}$}\}} \quad (i \in\mathcal{A}).
\]
Therefore, we have $\bigcup_{i\in\mathcal{A}} E(\mathcal{R}(i)) = \mathcal{R}$. 
Thus $E$ is a surjective piecewise isometry, hence, it is also injective, which proves Assertion~(\ref{i:de1}).
As
\begin{equation} \label{e:R0n}
\mathcal{R}(w_0 w_1 \cdots w_{n-1}) = \bigcap_{\ell=0}^{n-1} E^{-\ell} \mathcal{R}(w_\ell),
\end{equation}
Assertion~(\ref{i:de2}) is again a consequence of Proposition~\ref{p:strongcoincidence} together with the injectivity of~$E$.
Since  
\begin{equation}\label{eq:comm}
E  \circ \varphi = \varphi \circ  \Sigma  
\end{equation}
follows easily by direct calculation, the measure $\lambda_\mathbf{1} \circ \varphi$ is a shift invariant probability measure on~$X_{\boldsymbol{\sigma}}$. Thus, by unique ergodicity of $(X_{\boldsymbol{\sigma}}, \Sigma, \mu)$, we have $\mu = \lambda_\mathbf{1} \circ \varphi$.
Now, Assertion~(\ref{i:de2}) implies that $\varphi(\mathbf{x}) \ne \varphi(\mathbf{y})$ for all distinct $\mathbf{x}, \mathbf{y} $ satisfying $\varphi(\mathbf{x}),\varphi(\mathbf{y} )  \in \mathcal{R} \setminus \bigcup_{n\in\mathbb{N}, K\in\mathcal{K}_n} \partial K$.
As, by \eqref{e:R0n} and Proposition~\ref{p:boundary}, $\lambda_\mathbf{1}(\partial K)=\mu(\varphi^{-1}(\partial K)) = 0$ for all $K\in\mathcal{K}_n$, $n\in\mathbb{N}$, the map~$\varphi$ is a.e.\ injective, which, together with Lemma~\ref{l:convrefinement},  proves Assertion~(\ref{i:de3}). 
Finally, using~\eqref{eq:comm}, Assertion~(\ref{i:de4}) follows immediately from Assertion~(\ref{i:de3}).
 \end{proof}

\subsection{Group translations}
Fix some $j\in\mathcal{A}$. If $\mathcal{C}_\mathbf{1}$ forms a tiling of~$\mathbf{1}^\bot$, then $\mathcal{R}$ is a fundamental domain of the lattice $\Lambda = \mathbf{1}^\bot \cap \mathbb{Z}^d$ (which is spanned by $\mathbf{e}_j - \mathbf{e}_i$, $i \in \mathcal{A} \setminus \{j\}$). 
Since $\pi_{\mathbf{u},\mathbf{1}}\, \mathbf{e}_i \equiv \pi_{\mathbf{u},\mathbf{1}}\, \mathbf{e}_j \pmod \Lambda$ holds for each $i\in\mathcal{A}$, the canonical projection of~$E$ onto the torus $\mathbf{1}^\bot / \Lambda \simeq \mathbb{T}^{d-1}$ is equal to the translation $\mathbf{x} \mapsto \mathbf{x} + \pi_{\mathbf{u},\mathbf{1}}\, \mathbf{e}_{j}$. In general, even if the strong coincidence condition is not satisfied, the following proposition holds.

\begin{proposition}\label{p:rotate}
Let $S$ be a finite or  infinite set of unimodular substitutions over the finite alphabet $\mathcal{A}$ and let $\boldsymbol{\sigma}\in S^{\mathbb{N}}$ be a primitive, algebraically irreducible, and recurrent directive sequence with balanced language~$\mathcal{L}_{\boldsymbol{\sigma}}$.
Fix $j\in\mathcal{A}$. If $\mathcal{C}_\mathbf{1}$ forms a multiple tiling of~$\mathbf{1}^\bot$, then the translation $(\mathbf{1}^\bot/\Lambda, + \pi_{\mathbf{u},\mathbf{1}}\, \mathbf{e}_j, \overline{\lambda_\mathbf{1}})$, where $\overline{\lambda_\mathbf{1}}$ denotes the Haar measure on the torus $\mathbf{1}^\bot/\Lambda$, is a topological factor of the dynamical system $(X_{\boldsymbol{\sigma}}, \Sigma, \mu)$. If furthermore $\mathcal{C}_\mathbf{1}$ forms a tiling of~$\mathbf{1}^\bot$, then $(X_{\boldsymbol{\sigma}}, \Sigma, \mu)$ is measurably  conjugate to the translation $(\mathbf{1}^\bot/\Lambda,+ \pi_{\mathbf{u},\mathbf{1}}\, \mathbf{e}_j, \overline{\lambda_\mathbf{1}})$. More precisely, the following diagram commutes:
\[
\begin{CD}
X_{\boldsymbol{\sigma}} @> \Sigma >> X_{\boldsymbol{\sigma}} \\
@VV\overline{\varphi} V @VV\overline{\varphi} V\\
\mathbf{1}^\bot/\Lambda @> + \pi_{\mathbf{u},\mathbf{1}}\, \mathbf{e}_{j} >> \mathbf{1}^\bot/\Lambda
\end{CD}
\]
Here, $\overline{\varphi}$ is the canonical projection of the representation map~$\varphi$ onto $\mathbf{1}^\bot/\Lambda$.
\end{proposition}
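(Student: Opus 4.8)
The plan is to construct the factor map directly from the representation map $\varphi$ of Lemma~\ref{l:convrefinement} by composing it with the canonical projection $p\colon \mathbf{1}^\bot \to \mathbf{1}^\bot/\Lambda$, so that $\overline{\varphi} = p \circ \varphi$. The crucial point is an algebraic identity showing that $\varphi$ intertwines $\Sigma$ with a translation whose increment depends only on the first letter; note that this identity holds \emph{without} assuming strong coincidence, so it is available already in the multiple tiling case. First I would establish, for every $\zeta = \zeta_0\zeta_1\cdots \in X_{\boldsymbol{\sigma}}$ and every $n$, the inclusion $\mathcal{R}(\zeta_0 \zeta_1 \cdots \zeta_n) \subset \mathcal{R}(\zeta_1 \cdots \zeta_n) - \pi_{\mathbf{u},\mathbf{1}}\, \mathbf{e}_{\zeta_0}$ directly from the defining formula for $\mathcal{R}(w)$: if $p\hspace{.1em}\zeta_0\zeta_1\cdots\zeta_n$ is a prefix of a limit word, then so is $(p\hspace{.1em}\zeta_0)\,\zeta_1\cdots\zeta_n$, and $\pi_{\mathbf{u},\mathbf{1}}\mathbf{l}(p\hspace{.1em}\zeta_0) = \pi_{\mathbf{u},\mathbf{1}}\mathbf{l}(p) + \pi_{\mathbf{u},\mathbf{1}}\mathbf{e}_{\zeta_0}$. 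Intersecting over $n$ and using that both nested intersections collapse to single points by Lemma~\ref{l:convrefinement}, this yields
\[
\varphi(\Sigma\zeta) = \varphi(\zeta) + \pi_{\mathbf{u},\mathbf{1}}\, \mathbf{e}_{\zeta_0}.
\]

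Next I would pass to the quotient. Since $\mathbf{e}_i - \mathbf{e}_j \in \Lambda = \mathbf{1}^\bot \cap \mathbb{Z}^d$ lies in $\mathbf{1}^\bot$, it is fixed by the projection $\pi_{\mathbf{u},\mathbf{1}}$, whence $\pi_{\mathbf{u},\mathbf{1}}\mathbf{e}_i \equiv \pi_{\mathbf{u},\mathbf{1}}\mathbf{e}_j \pmod\Lambda$ for all $i\in\mathcal{A}$. Thus the increment $\pi_{\mathbf{u},\mathbf{1}}\mathbf{e}_{\zeta_0}$ reduces to the constant $\pi_{\mathbf{u},\mathbf{1}}\mathbf{e}_j$ modulo $\Lambda$, so that $\overline{\varphi}\circ\Sigma = (\,\cdot + \pi_{\mathbf{u},\mathbf{1}}\mathbf{e}_j)\circ\overline{\varphi}$, which is exactly the asserted commuting diagram. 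Continuity of $\overline{\varphi}$ is inherited from $\varphi$ and $p$; surjectivity follows because the multiple tiling $\mathcal{C}_\mathbf{1}$ covers $\mathbf{1}^\bot$ by Proposition~\ref{p:covering}, giving $p(\mathcal{R}) = \mathbf{1}^\bot/\Lambda$, while $\varphi$ is onto $\mathcal{R}$. Finally, $\overline{\varphi}_*\mu$ is a translation-invariant Borel probability measure; since $\mathbf{u}$ has rationally independent coordinates by Lemma~\ref{l:independent}, the translation by $\pi_{\mathbf{u},\mathbf{1}}\mathbf{e}_j$ on $\mathbf{1}^\bot/\Lambda$ is minimal and uniquely ergodic, forcing $\overline{\varphi}_*\mu = \overline{\lambda_\mathbf{1}}$. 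This establishes the topological factor assertion.

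For the conjugacy under the stronger hypothesis that $\mathcal{C}_\mathbf{1}$ is a genuine tiling, I would first invoke Proposition~\ref{p:gccvariant}, which shows that a tiling forces the strong coincidence condition. Proposition~\ref{p:domainexchange} then applies: $\varphi$ is $\mu$-almost everywhere bijective and realizes a measurable conjugacy between $(X_{\boldsymbol{\sigma}},\Sigma,\mu)$ and the domain exchange $(\mathcal{R}, E, \lambda_\mathbf{1})$. Because the covering degree is $1$ and the subtiles $\mathcal{R}(i)$ are pairwise disjoint in measure by Proposition~\ref{p:strongcoincidence}, the set $\mathcal{R} = \bigcup_{i\in\mathcal{A}} \mathcal{R}(i)$ is a fundamental domain for $\Lambda$, so $p|_{\mathcal{R}}$ is an almost everywhere bijective, measure-preserving map onto $\mathbf{1}^\bot/\Lambda$ that conjugates $E$ to the translation by $\pi_{\mathbf{u},\mathbf{1}}\mathbf{e}_j$ (using the definition \eqref{e:T} of $E$ together with $\pi_{\mathbf{u},\mathbf{1}}\mathbf{e}_i \equiv \pi_{\mathbf{u},\mathbf{1}}\mathbf{e}_j$). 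Composing these two conjugacies shows that $\overline{\varphi} = p\circ\varphi$ is the desired measurable conjugacy, and the diagram commutes.

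The main obstacle I anticipate is not the intertwining identity, which is essentially formal, but the bookkeeping needed to reconcile the various almost-everywhere statements: one must check that the $\mu$-null exceptional set produced by Proposition~\ref{p:domainexchange} and the $\overline{\lambda_\mathbf{1}}$-null set of non-exclusive points and tile boundaries coming from the tiling are compatible under $p$, which rests on the fact that the boundaries $\partial\mathcal{R}(i)$ have zero measure (Proposition~\ref{p:boundary}). The second delicate point is the identification of the pushforward measure with Haar measure in the factor case, which requires extracting unique ergodicity of the target translation from the rational independence of $\mathbf{u}$ in Lemma~\ref{l:independent}.
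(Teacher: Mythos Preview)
Your argument for the intertwining identity $\varphi(\Sigma\zeta)=\varphi(\zeta)+\pi_{\mathbf{u},\mathbf{1}}\mathbf{e}_{\zeta_0}$ and the passage to the quotient is exactly what the paper does (its proof consists of precisely these two lines plus one sentence). Your treatment of the factor case is, if anything, more complete than the paper's: you justify surjectivity and identify the pushforward measure with Haar via Lemma~\ref{l:independent}, whereas the paper simply asserts that $\overline{\varphi}$ is $m$-to-$1$ onto.

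The difference is in the conjugacy step. You route through Proposition~\ref{p:gccvariant} (tiling $\Rightarrow$ strong coincidence) and then Proposition~\ref{p:domainexchange}. Both of those propositions assume Property PRICE, which is \emph{strictly stronger} than the hypotheses of Proposition~\ref{p:rotate} (primitive, algebraically irreducible, recurrent, balanced); so as written your invocation is not licensed by the statement you are proving. More importantly, the detour is unnecessary: the hypothesis that $\mathcal{C}_\mathbf{1}$ is a tiling of covering degree~$1$ already forces the subtiles $\mathcal{R}(i)$ to be pairwise disjoint in measure (they are distinct tiles indexed by $[\mathbf{0},i]\in\Gamma(\mathbf{1})$) and forces $\mathcal{R}=\bigcup_i\mathcal{R}(i)$ to be a fundamental domain for~$\Lambda$, so $p|_{\mathcal{R}}$ is a.e.\ bijective directly. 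That is the paper's one-line argument: covering degree $m$ makes $\overline{\varphi}$ essentially $m$-to-$1$, hence a bijection when $m=1$. Your route through strong coincidence recovers the same disjointness by a longer path that also imports extra hypotheses; dropping it aligns your proof with the paper's and removes the dependence on PRICE.
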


\begin{proof}
If $\zeta = \zeta_0\zeta_1\cdots \in X_{\boldsymbol{\sigma}}$, then $\varphi \circ \Sigma(\zeta) = \varphi(\zeta)  + \pi_{\mathbf{u},\mathbf{1}}\, \mathbf{e}_{\zeta_0}$. Applying the canonical projection onto $\mathbf{1}^\bot / \Lambda$, this identity becomes $\overline{\varphi}\circ\Sigma(\zeta) = \overline{\varphi}(\zeta)  + \pi_{\mathbf{u},\mathbf{1}}\, \mathbf{e}_{j}$. The result now follows by noting that $\overline{\varphi}$ is $m$ to~$1$ onto, where $m$ is the covering degree of~$\mathcal{C}_1$, and, hence, a bijection if $\mathcal{C}_1$ forms a tiling.
\end{proof}

\subsection{Proof of Theorem~\ref{t:1}} \label{sec:proof3.1}
Let $S$ be a finite or  infinite set of unimodular substitutions over the finite alphabet $\mathcal{A}$ and let $\boldsymbol{\sigma}\in S^{\mathbb{N}}$. We are now in a position to finish the proof of Theorem~\ref{t:1} by collecting the results proved so far. Throughout the proof, observe that in view of Lemma~\ref{lem:th1star} the conditions of Theorem~\ref{t:1} imply that~$\boldsymbol{\sigma}$ has Property PRICE.

Concerning~(\ref{i:11}), we see that the system $(X_{\boldsymbol{\sigma}},\Sigma)$ is minimal by Lemma~\ref{lem:minimal} and uniquely ergodic by Lemma~\ref{lem:uniquelyergodic}. The unique $\Sigma$-invariant measure on~$X_{\boldsymbol{\sigma}}$ is denoted by~$\mu$. As for~(\ref{i:12}), first observe that $\mathcal{R}(i)$ is closed by definition ($i \in \mathcal{A}$). Thus compactness of~$\mathcal{R}(i)$ follows from Lemma~\ref{l:bounded}. The fact that $\lambda_\mathbf{1}(\partial \mathcal{R}(i)) = 0$ is contained in Proposition~\ref{p:boundary}. The multiple tiling property of the collection~$\mathcal{C}_\mathbf{1}$ in~(\ref{i:13}) follows from Proposition~\ref{p:independentmultiple} by taking $\mathbf{w} = \mathbf{1}$.  The   finite-to-one covering property comes from Proposition~\ref{p:rotate}, and it implies that $(X_{\boldsymbol{\sigma}},\Sigma,\mu)$ is not weakly mixing; see also \cite[Theorem~2.4]{Furstenberg}.
To prove~(\ref{i:14}), first observe that strong coincidence implies that the sets~$\mathcal{R}(i)$, $i\in \mathcal{A}$, are measurably disjoint by Proposition~\ref{p:strongcoincidence}. Thus Proposition~\ref{p:domainexchange}~(\ref{i:de4}) implies that $(X_{\boldsymbol{\sigma}},\Sigma, \mu)$ is measurably conjugate to an exchange of domains on~$\mathcal{R}$. To prove~(\ref{i:15}), we combine Propositions~\ref{p:gcc} and~\ref{p:independentmultiple}. This yields that the geometric coincidence condition is equivalent to the fact that $\mathcal{C}_\mathbf{1}$ forms a tiling. 

We now turn to the results that are valid under the assumption that $\mathcal{C}_\mathbf{1}$ forms a tiling. To prove~(\ref{i:16}), we use Proposition~\ref{p:rotate}, which implies that $(X_{\boldsymbol{\sigma}}, \Sigma, \mu)$ is measurably conjugate to a translation~$T$ on the torus~$\mathbb{T}^{d-1}$. This implies that $(X_{\boldsymbol{\sigma}}, \Sigma, \mu)$ has purely discrete measure-theoretic spectrum by classical results. Assertion~(\ref{i:17}) follows from the definition of a natural coding (see Section~\ref{sec:coding}), as the translation~$T$ was defined in terms of an exchange of domains. Finally, due to \cite[Proposition~7]{Adamczewski:03}, the $C$-balancedness of~$\mathcal{L}_{\boldsymbol{\sigma}}$ implies that~$\mathcal{R}(i)$ is a bounded remainder set for each $i \in \mathcal{A}$, which proves~(\ref{i:18}). 

\subsection{Proof of Theorem~\ref{t:3}}\label{sec:t:3proof}
Let $S$ be a finite or  infinite set of unimodular substitutions over the finite alphabet~$\mathcal{A}$, and let $(G,\tau)$ be an $S$-adic graph. Let $(E_G, \Sigma, \nu)$ be the associated edge shift equipped with an ergodic probability measure $\nu$. We assume that this shift has log-integrable cocycle~$A$ and satisfies the Pisot condition stated in Section~\ref{sec:lyap-expon-pisot}, and that there exists a cylinder of positive measure in~$E_G$ corresponding to a substitution with positive incidence matrix. For $C > 0$, let
\[
E_{G,C} = \{\boldsymbol{\gamma}\in E_G:\, \mathcal{L}_{\boldsymbol{\gamma}}\ \mbox{is $C$-balanced}\}.
\] 
We will use the following statement from \cite{Berthe-Delecroix}, see also  \cite{DelHL}.
\begin{lemma}[{\cite[Theorem~6.4]{Berthe-Delecroix}}] \label{l:DC}
Let $S$ be a finite or  infinite set of unimodular substitutions over the finite alphabet~$\mathcal{A}$ and let $(G,\tau)$ be an $S$-adic graph with associated edge shift $(E_G, \Sigma, \nu)$. We assume that this shift is ergodic, has log-integrable cocycle~$A$, and satisfies the Pisot condition, and that there exists a cylinder of positive measure in~$E_G$ corresponding to a substitution with positive incidence matrix. Then 
\[
\lim_{C\to\infty} \nu(E_{G,C}) = 1.
\] 
\end{lemma}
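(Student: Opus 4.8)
The plan is to reduce the $C$-balance condition to a quantitative contraction estimate supplied by the Pisot condition, and then to obtain the measure statement by monotone continuity. By Lemma~\ref{l:bounded}, for a fixed directive sequence~$\boldsymbol{\gamma}$ the language $\mathcal{L}_{\boldsymbol{\gamma}}$ is balanced precisely when $\sup_p \|\pi_{\mathbf{u},\mathbf{1}}\,\mathbf{l}(p)\|<\infty$, the supremum being taken over prefixes~$p$ of limit words, and the optimal balance constant is comparable to this supremum. Hence it suffices to show that this supremum is finite for $\nu$-almost every~$\boldsymbol{\gamma}$: writing $C(\boldsymbol{\gamma})$ for the least~$C$ making $\mathcal{L}_{\boldsymbol{\gamma}}$ $C$-balanced, the sets $E_{G,C}=\{\boldsymbol{\gamma}:C(\boldsymbol{\gamma})\le C\}$ increase with~$C$, are measurable (each is a countable intersection of conditions depending on finitely many coordinates of~$\boldsymbol{\gamma}$), and their union is the set of balanced sequences; once this union is shown to be $\nu$-conull, continuity of~$\nu$ from below yields $\lim_{C\to\infty}\nu(E_{G,C})=1$. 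The cylinder of positive measure carrying a positive incidence matrix ensures, via the ergodic theorem, that $\nu$-a.e.\ sequence is recurrent and primitive, so that the generalized right eigenvector~$\mathbf{u}$ of~\eqref{e:topPF} is well defined and the top Lyapunov exponent is simple.

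First I would fix a limit word and decompose each prefix~$p$ of~$\omega^{(0)}$ along the $S$-adic structure: iterated desubstitution yields $\mathbf{l}(p)=\sum_{k\ge0} M_{[0,k)}\,\mathbf{l}(q_k)$, a finite sum in which each~$q_k$ is a proper prefix of a single image $\sigma_k(\cdot)$, so that $\|\mathbf{l}(q_k)\|\le\max_{i\in\mathcal{A}}|\sigma_k(i)|$. Applying~$\pi_{\mathbf{u},\mathbf{1}}$ and using linearity, the balance deficiency becomes $\pi_{\mathbf{u},\mathbf{1}}\mathbf{l}(p)=\sum_{k\ge0}\pi_{\mathbf{u},\mathbf{1}} M_{[0,k)}\mathbf{l}(q_k)$, and it remains to bound the individual summands geometrically and to control the growth of the increments~$\|\mathbf{l}(q_k)\|$.

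The heart of the argument is an Oseledets-type contraction estimate: for $\nu$-a.e.\ $\boldsymbol{\gamma}$ there is a tempered constant $K(\boldsymbol{\gamma})<\infty$ with
\[
\|\pi_{\mathbf{u},\mathbf{1}} M_{[0,k)}\,\mathbf{x}\|\le K(\boldsymbol{\gamma})\,e^{(\theta_2/2)k}\,\|\mathbf{x}\|\qquad(\mathbf{x}\in\mathbb{R}^d).
\]
To establish this I would invoke the multiplicative ergodic theorem for the cocycle, whose Lyapunov exponents are the~$\theta_i$ (transposition leaves all singular values, hence all exponents, unchanged, as recorded in~\eqref{eq:transposeequal}). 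Writing $\mathbb{R}^d=V_2\oplus\mathbb{R}\mathbf{u}$, where $V_2$ is the codimension-one Oseledets subspace of vectors whose $M_{[0,k)}$-growth rate is at most~$\theta_2$, the component of~$\mathbf{x}$ in~$V_2$ is contracted to size $e^{(\theta_2+\varepsilon)k}$ by the Pisot inequality $\theta_2<0$, while the component along~$\mathbf{u}$ is expanded to size $e^{\theta_1 k}$ but its direction converges to~$\mathbb{R}\mathbf{u}$ at angular rate $e^{(\theta_2-\theta_1)k}$ (this is the cone convergence of~\eqref{e:topPF} made quantitative by the spectral gap $\theta_1>\theta_2$), so that projecting along~$\mathbf{u}$ leaves a term of size $e^{\theta_1 k}e^{(\theta_2-\theta_1)k}=e^{\theta_2 k}$. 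The gap $\theta_1>\theta_2$ driving the angle convergence is exactly what the Pisot condition furnishes. This is the step I expect to be the main obstacle, as it requires relating the Oseledets data of the transposed cocycle to the column spaces of~$M_{[0,k)}$ and controlling the angle between the expanding direction and the fixed vector~$\mathbf{u}$ uniformly in~$k$; it is the discrete analogue of the Zorich--Forni deviation phenomenon, in which the second Lyapunov exponent governs the fluctuations of ergodic sums.

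Finally I would combine the pieces. Log-integrability of the cocycle together with Birkhoff's theorem gives $\log\max_{i\in\mathcal{A}}|\sigma_k(i)|=o(k)$ along $\nu$-a.e.\ orbit, hence $\|\mathbf{l}(q_k)\|=e^{o(k)}$; inserting this bound and the contraction estimate into the telescoped sum yields
\[
\|\pi_{\mathbf{u},\mathbf{1}}\mathbf{l}(p)\|\le K(\boldsymbol{\gamma})\sum_{k\ge0}e^{(\theta_2/2)k}\,e^{o(k)}<\infty
\]
uniformly in~$p$, for $\nu$-a.e.\ $\boldsymbol{\gamma}$. By Lemma~\ref{l:bounded} this means $\mathcal{L}_{\boldsymbol{\gamma}}$ is balanced for $\nu$-a.e.\ $\boldsymbol{\gamma}$, so that $\nu\big(\bigcup_{C} E_{G,C}\big)=1$, and the monotone-continuity argument of the first paragraph then delivers $\lim_{C\to\infty}\nu(E_{G,C})=1$.
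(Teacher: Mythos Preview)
The paper does not supply a proof of this lemma: it is quoted directly from \cite[Theorem~6.4]{Berthe-Delecroix} (with a pointer to~\cite{DelHL}), so there is no in-paper argument to compare against. Your strategy---reduce via Lemma~\ref{l:bounded} and monotone continuity to showing that $\mathcal{L}_{\boldsymbol{\gamma}}$ is balanced for $\nu$-almost every~$\boldsymbol{\gamma}$, then expand each prefix of a limit word $S$-adically as $\mathbf{l}(p)=\sum_{k\ge0} M_{[0,k)}\,\mathbf{l}(q_k)$ with $q_k$ a proper prefix of some~$\sigma_k(j)$, and bound $\|\pi_{\mathbf{u},\mathbf{1}}M_{[0,k)}\mathbf{l}(q_k)\|$ by an Oseledets contraction times a subexponential factor---is exactly the standard route to such deviation estimates and is, to the best of my knowledge, the approach taken in the cited references.

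Two technical remarks. First, the claim $\log\max_i|\sigma_k(i)|=o(k)$ is not a consequence of Birkhoff's theorem; it follows from integrability of $\log\|A\|$ together with Borel--Cantelli (since $\sum_n\nu(\log\|A\|>\varepsilon n)<\infty$ for every $\varepsilon>0$, one gets $\log\|A(\Sigma^k\boldsymbol{\gamma})\|\le\varepsilon k$ eventually, a.s.). Second, the contraction step you correctly flag as delicate can be carried out via exterior powers rather than direct angle-tracking: writing $\mathbf{u}=M_{[0,k)}\mathbf{u}_k$ with $\mathbf{u}_k=(M_{[0,k)})^{-1}\mathbf{u}\in\mathbb{R}_+^d$, one has $\|\pi_{\mathbf{u},\mathbf{1}}M_{[0,k)}\mathbf{x}\|$ comparable to $\|M_{[0,k)}\mathbf{x}\wedge\mathbf{u}\|=\|(\wedge^2 M_{[0,k)})(\mathbf{x}\wedge\mathbf{u}_k)\|$; the definition of the exponents gives $\|\wedge^2 M_{[0,k)}\|=e^{(\theta_1+\theta_2+o(1))k}$, while $\|\mathbf{u}_k\|=e^{(-\theta_1+o(1))k}$ because vectors in the positive cone realise the top exponent (this is precisely where the positive-cylinder hypothesis is used). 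Combining these yields the required $e^{(\theta_2+o(1))k}$ bound and completes the argument.
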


\begin{lemma} \label{l:Pisot}
Let $S$ be a finite or  infinite set of unimodular substitutions over the finite alphabet~$\mathcal{A}$, and let $(G,\tau)$ be an $S$-adic graph with associated edge shift $(E_G, \Sigma, \nu)$. We assume that this shift is ergodic, has log-integrable cocycle~$A$, and satisfies the Pisot condition, and that $\nu$-almost all sequences $\boldsymbol{\gamma} \in E_G$ are primitive. Then, for $\nu$-almost every sequence $\boldsymbol{\gamma} \in E_G$, for each $k \in \mathbb{N}$, $M_{[k,\ell)}$ is a Pisot irreducible matrix for all sufficiently large $\ell \in \mathbb{N}$. 
\end{lemma}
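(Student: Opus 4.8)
The plan is to fix $k$ and prove that the set of $\boldsymbol{\gamma}$ for which $M_{[k,\ell)}$ fails to be Pisot irreducible for all large $\ell$ is $\nu$-null; since there are only countably many $k$, a countable intersection then gives the statement. First I would record the relevant a.e.\ structural facts. As $\nu$-almost all $\boldsymbol{\gamma}$ are primitive, $M_{[k,\ell)}$ is a non-negative primitive integer matrix for all sufficiently large~$\ell$, and unimodularity gives $\det M_{[k,\ell)} = \pm1$, so $0$ is never an eigenvalue. By Perron--Frobenius, $M_{[k,\ell)}$ then has a simple dominant real eigenvalue $\beta_{k,\ell} = \rho(M_{[k,\ell)}) > 0$ (the spectral radius), strictly larger in modulus than all other eigenvalues. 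Moreover, ergodicity together with the hypothesis that the cylinder of some positive-incidence substitution has positive measure implies, via the ergodic theorem, that a fixed positive matrix~$B$ occurs infinitely often in~$\boldsymbol{\gamma}$; in particular the topological Perron--Frobenius condition~\eqref{e:topPF} applies to every shifted sequence, producing a positive generalized right eigenvector $\mathbf{u}^{(\ell)}$ of $(\sigma_{\ell+m})_{m\in\mathbb{N}}$, i.e.\ $\bigcap_m M_{[\ell,m)}\mathbb{R}^d_+ = \mathbb{R}_+\mathbf{u}^{(\ell)}$. Finally, log-integrability and ergodicity yield the Oseledets theorem, so $\tfrac1\ell\log\|\wedge^j M_{[k,\ell)}\| \to \theta_1 + \cdots + \theta_j$ $\nu$-a.e.\ for each~$j$.

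The core step is to compute the exponential growth rate of $\beta_{k,\ell}$. Since $M_{[k,\ell)}$ is invertible and $M_{[k,\ell)}M_{[\ell,m)} = M_{[k,m)}$, applying $M_{[k,\ell)}$ to the nested cones gives $M_{[k,\ell)}\,\mathbb{R}_+\mathbf{u}^{(\ell)} = \mathbb{R}_+\mathbf{u}^{(k)}$, hence $M_{[k,\ell)}\mathbf{u}^{(\ell)} = c_{k,\ell}\,\mathbf{u}^{(k)}$ for a positive scalar cocycle with $c_{k,m} = c_{k,\ell}c_{\ell,m}$. Normalizing $\mathbf{u}^{(\ell)}$, the recurring positive matrix~$B$ forces, through Birkhoff's contraction of the Hilbert projective metric (as in \cite{Furstenberg:60}), a bound on the ratio $\max_i(\mathbf{u}^{(\ell)})_i/\min_i(\mathbf{u}^{(\ell)})_i$ depending only on~$B$. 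Feeding $\mathbf{u}^{(\ell)}$ and $\mathbf{1}$ into the Collatz--Wielandt characterization of $\beta_{k,\ell}$ and comparing $M_{[k,\ell)}\mathbf{1}$ with $M_{[k,\ell)}\mathbf{u}^{(\ell)} = c_{k,\ell}\mathbf{u}^{(k)}$ then yields $\beta_{k,\ell}\asymp c_{k,\ell}\asymp\|M_{[k,\ell)}\|$ up to multiplicative constants independent of~$\ell$. Therefore $\tfrac1\ell\log\beta_{k,\ell}\to\theta_1 > 0$, so $\beta_{k,\ell} > 1$ for all large~$\ell$.

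For the remaining eigenvalues I would use the second exterior power. Ordering eigenvalues by modulus $\beta_{k,\ell} = |\lambda_1|\ge|\lambda_2|\ge\cdots$, the spectral radius of $\wedge^2 M_{[k,\ell)}$ equals $\beta_{k,\ell}\,|\lambda_2|$, whence
\[
\tfrac1\ell\log|\lambda_2(M_{[k,\ell)})| \le \tfrac1\ell\log\|\wedge^2 M_{[k,\ell)}\| - \tfrac1\ell\log\beta_{k,\ell} \longrightarrow (\theta_1+\theta_2) - \theta_1 = \theta_2 < 0 .
\]
Thus every eigenvalue other than $\beta_{k,\ell}$ lies in the open unit disk once $\ell$ is large. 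Combining both estimates, $\beta_{k,\ell}$ is a simple real eigenvalue exceeding~$1$ all of whose remaining eigenvalues -- in particular all of its Galois conjugates, which are roots of the minimal polynomial $\mu\mid\chi_{k,\ell}$ -- have modulus $<1$, so $\beta_{k,\ell}$ is a Pisot number. Irreducibility of the characteristic polynomial $\chi_{k,\ell}$ is then automatic: writing $\chi_{k,\ell} = \mu\cdot g$ with $g\in\mathbb{Z}[x]$ monic (Gauss's lemma), the roots of~$g$ are nonzero eigenvalues of modulus $<1$, so $|g(0)| = \prod_i|r_i| < 1$ would be a nonzero integer unless $\deg g = 0$; hence $g = 1$ and $\chi_{k,\ell} = \mu$ is irreducible, i.e.\ $M_{[k,\ell)}$ is Pisot irreducible.

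The main obstacle is the lower bound in the second paragraph, namely identifying the growth rate of the Perron--Frobenius eigenvalue $\beta_{k,\ell}$ with the top Lyapunov exponent~$\theta_1$. The upper bound $\beta_{k,\ell}\le\|M_{[k,\ell)}\|$ is immediate, but the matching lower bound is precisely where the lack of a stationary dominant eigenvector in the $S$-adic setting bites, since eigenvalue moduli and singular values of $M_{[k,\ell)}$ need not agree; it is overcome by exploiting the generalized right eigenvector $\mathbf{u}^{(\ell)}$ together with the uniform cone bounds supplied by the recurring positive matrix~$B$.
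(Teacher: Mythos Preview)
Your overall strategy—establishing $\tfrac1\ell\log\beta_{k,\ell}\to\theta_1$ and then bounding $|\lambda_2|$ via $\|\wedge^2 M_{[k,\ell)}\|$—is sound and genuinely different from the paper's proof, which argues geometrically through the image ellipsoid $M_{[k,\ell)}S^{d-1}$. But the step you yourself flag as the main obstacle contains a real gap. Birkhoff contraction for the recurring positive block~$B$ controls only the projective \emph{diameter} of the nested cones $M_{[\ell,m)}\mathbb{R}^d_+$; it says nothing about \emph{where} the limiting ray $\mathbb{R}_+\mathbf{u}^{(\ell)}$ sits inside~$\mathbb{R}^d_+$. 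If the matrix~$M_\ell$ immediately after position~$\ell$ is highly skew before the next occurrence of~$B$, then $\mathbf{u}^{(\ell)}=M_\ell\,\mathbf{u}^{(\ell+1)}$ can have arbitrarily large coordinate ratio even though $\mathbf{u}^{(\ell+1)}$ is well-conditioned. The claimed bound ``depending only on~$B$'' therefore fails; it holds only for those~$\ell$ at which an occurrence of~$B$ begins, and since the gaps between such occurrences need not be bounded, this does not control $\beta_{k,\ell}$ for \emph{all} large~$\ell$, which is what your wedge inequality requires.

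There is a clean repair within your framework. Use Collatz--Wielandt with $\mathbf{x}=\mathbf{1}$ directly, giving $\beta_{k,\ell}\ge\min_i\sum_j(M_{[k,\ell)})_{ij}=\min_i\|\tr{(M_{[k,\ell)})}\,\mathbf{e}_i\|_1$. The Oseledets filtration for the cocycle $A=\tr{M_0}$ yields $\tfrac1{\ell}\log\|\tr{(M_{[k,\ell)})}\,\mathbf{e}_i\|\to\theta_1$ provided $\mathbf{e}_i\notin V^2(\Sigma^k\boldsymbol{\gamma})$; and $\mathbf{e}_i\in V^2$ is impossible, since it would force $\tr{(M_{[k,\ell)})}\,\mathbf{e}_i\to\mathbf{0}$, whereas this is a non-negative integer vector with all entries $\ge1$ once $M_{[k,\ell)}$ is positive. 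Hence $\liminf\tfrac1\ell\log\beta_{k,\ell}\ge\theta_1$, and your wedge bound then gives $\limsup\tfrac1\ell\log|\lambda_2|\le\theta_2<0$ as desired. The paper bypasses the eigenvalue growth rate altogether: using only that $s_2(M_{[k,\ell)})\to0$, it observes that any eigenvalue with $|\lambda|\ge1$ forces its unit eigenvector (or, for non-real~$\lambda$, both the real and imaginary parts) to lie along the single long semi-axis of the thin ellipsoid, hence inside the positive cone, so $\lambda$ must be the Perron--Frobenius eigenvalue.
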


\begin{proof}
Let $k \in \mathbb{N}$ and choose $\eta$ with $\theta_2 < \eta < 0$. 
Then, for $\nu$-almost all sequences $\boldsymbol{\gamma} \in E_G$, all but the largest singular values of~$M_{[k,\ell)}$ tend to zero for $\ell \to \infty$ with order~$\mathcal{O}(e^{\ell\eta})$. Thus the image of the unit sphere by~$M_{[k,\ell)}$ is an ellipsoid~$\mathcal{E}$ with largest semi-axis close to $\mathbb{R}\, (M_{[0,k)})^{-1} \mathbf{u}$, and length of all other semi-axes tending to zero with order~$\mathcal{O}(e^{\ell\eta})$.
Let $\lambda$ be an eigenvalue of~$M_{[k,\ell)}$ with $|\lambda| \ge 1$, and let  $\mathbf{w}$ be an associated eigenvector  (which depends on~$\ell$), with $\|\mathbf{w}\| = 1$. We have to show that in this case $\lambda$ is equal to the Perron-Frobenius eigenvalue of~$M_{[k,\ell)}$ for $\ell$ large enough (to make $M_{[k,\ell)}$ a positive matrix).

If $\lambda$ is real with $|\lambda| \ge 1$, then the image $M_{[k,\ell)}\, \mathbf{w}$ can lie in~$\mathcal{E}$ only if its direction is close to that of $(M_{[0,k)})^{-1} \mathbf{u}$.
Therefore, if $\ell$ is sufficiently large, the coordinates of~$\mathbf{w}$ all have the same sign, i.e., $\lambda$~is the Perron-Frobenius eigenvalue of~$M_{[k,\ell)}$. 
This shows that $\lambda$ is the only real eigenvalue with $|\lambda| \ge 1$.

If $\lambda$ is non-real with $|\lambda| \ge 1$, then $\mathbf{w} = \mathbf{w}_1 + i \mathbf{w}_2$ for two non-zero real vectors $\mathbf{w}_1,\mathbf{w}_2$. Since $\mathbf{w}$ is determined up to multiplication by a complex number, we may assume that $\|\mathbf{w}_1\| = \|\mathbf{w}_2\| = 1$ with $\mathbf{w}_1\bot \mathbf{w}_2$. Easy calculations now yield that $\|M_{[k,\ell)}\, \mathbf{w}_1\| = \|M_{[k,\ell)}\, \mathbf{w}_2\| = |\lambda|^{\frac12} \ge 1$ with $M_{[k,\ell)}\, \mathbf{w}_1 \bot M_{[k,\ell)}\, \mathbf{w}_2$. This contradicts the fact that $M_{[k,\ell)}\mathbf{w}_1, M_{[k,\ell)}\mathbf{w}_2 \in \mathcal{E}$ for large values of~$\ell$. Thus such an eigenvalue cannot exist.

We then deduce the irreducibility of  the characteristic polynomial of~$M_{[k,\ell)}$ by noticing that these integer matrices have no zero eigenvalue by unimodularity.
\end{proof}

\begin{proof}[Proof of Theorem~\ref{t:3}]\hspace{-.5em}\footnote{This proof is different from the corresponding proof in the version of the paper published in {\it Ann.\ Inst.\ Fourier (Grenoble)}. It allows to get rid of the condition that the set of vertices of the $S$-adic graph $G$ is finite in the statement of Theorem~\ref{t:3}.}\label{proof:3.3}
Our first task is to show that Property PRICE is satisfied for almost all ${\boldsymbol{\gamma} }\in E_G$. Recall that we identify an element $\boldsymbol{\gamma} =(\gamma_n)$ of the edge shift $E_G$ of $(G,\tau)$ with the associated directive sequence $(\tau(\gamma_n))$.

By the assumptions of Theorem~\ref{t:3}, there is a cylinder $\mathcal{Z}(\delta_0,\dots,\delta_{h-1})$ with positive measure such that the incidence matrix of $\delta_{[0,h)}$ is positive. 
By Lemma~\ref{l:DC}, we can choose $C$ such that $\nu(\Sigma^{-h}(E_{G,C})) = \nu(E_{G,C}) > 1 - \nu(\mathcal{Z}(\delta_0,\dots,\delta_{h-1}))$, thus $\nu\big(\mathcal{Z}(\delta_0,\dots,\delta_{h-1}) \cap \Sigma^{-h}(E_{G,C})\big) > 0$.

By Poincar\'e's Recurrence Theorem, we have for almost all $\boldsymbol{\gamma}=(\gamma_n)\in E_G$ some $\ell_0(\boldsymbol{\gamma}) \ge h$ such that $\Sigma^{\ell_0(\boldsymbol{\gamma})-h}(\boldsymbol{\gamma}) \in \mathcal{Z}(\delta_0,\dots,\delta_{h-1}) \cap \Sigma^{-h}(E_{G,C})$, i.e., $(\gamma_0,\dots,\gamma_{\ell_0(\boldsymbol{\gamma})-1})$ ends with $(\delta_0,\dots,\delta_{h-1})$ and $\Sigma^{\ell_0(\boldsymbol{\gamma})}(\boldsymbol{\gamma}) \in E_{G,C}$. 
We extend $\ell_0(\boldsymbol{\gamma})$ for almost all $\boldsymbol{\gamma} \in E_G$ to a sequence $(\ell_k(\boldsymbol{\gamma}))_{k\in\mathbb{N}}$ such that
\begin{itemize}
\item
$(\gamma_0,\dots,\gamma_{\ell_{k+1}(\boldsymbol{\gamma})-1})$ ends with $(\gamma_0,\dots,\gamma_{\ell_k(\boldsymbol{\gamma})-1})$ (and, a fortiori, with $(\delta_0,\ldots,\delta_{h-1})$),
\item
$\Sigma^{\ell_{k+1}(\boldsymbol{\gamma})}(\boldsymbol{\gamma}) \in E_{G,C}$,
\item
$\ell_{k+1}(\boldsymbol{\gamma}) \ge 2\ell_k(\boldsymbol{\gamma})$,
\end{itemize}
for all $k \in \mathbb{N}$.
To this end, assume that $\ell_0(\boldsymbol{\gamma}),\ldots,\ell_k(\boldsymbol{\gamma})$ are already defined for almost all $\boldsymbol{\gamma}\in E_G$.
Consider the set of all $\boldsymbol{\gamma}$ having a given value $\ell_k=\ell_k(\boldsymbol{\gamma})$ and a given prefix $(\gamma_0,\dots,\gamma_{\ell_k-1})$.
Assume that this set has positive measure, which implies that $\nu\big(\mathcal{Z}(\gamma_0,\dots,\gamma_{\ell_k-1}) \cap \Sigma^{-\ell_k}(E_{G,C})\big) > 0$.
Then, for almost all $\boldsymbol{\gamma}$ in this set, we obtain (by Poincar\'e's Recurrence Theorem) some $\ell_{k+1}(\boldsymbol{\gamma})$ with the required properties. 
Applying this for all choices of $\ell_k$ and $(\gamma_0,\dots,\gamma_{\ell_k-1})$, we get some $\ell_{k+1}(\boldsymbol{\gamma})$ for almost all $\boldsymbol{\gamma} \in E_G$.
Therefore, such a sequence $(\ell_k(\boldsymbol{\gamma}))_{k\in\mathbb{N}}$ exists for almost all $\boldsymbol{\gamma} \in E_G$.

Setting $n_k (\boldsymbol{\gamma}) = \ell_{k+1}(\boldsymbol{\gamma}) - \ell_k (\boldsymbol{\gamma})$, we obtain that conditions (P), (R) and (C) of Property PRICE hold for almost all $\boldsymbol{\gamma} \in E_G$.
By Lemma~\ref{l:findv} we can replace $(n_k)$ and $(\ell_k)$ by subsequences such that condition (E) holds. These subsequences also satisfy (P), (R) and (C). 
From the Pisot condition and Lemma~\ref{l:Pisot}, we obtain that almost all $\boldsymbol{\gamma}\in E_{G}$ are algebraically irreducible, i.e., (I) holds. 

Summing up, Property PRICE holds for almost all~$\boldsymbol{\gamma}\in E_G$. 
However, in Section~\ref{sec:proof3.1} it is shown that Assertions (i) to (v) of 
Theorem~\ref{t:1} hold for each $\boldsymbol{\gamma}$ satisfying Property PRICE. 
This proves Assertion (i) of Theorem~\ref{t:3}.
According to Section~\ref{sec:proof3.1}, Assertions (vi) to (viii) of Theorem~\ref{t:1} hold for each $\boldsymbol{\gamma}$ satisfying Property PRICE and the additional assumption that the collection $\mathcal{C}_1$ associated with $\boldsymbol{\gamma}$ forms a tiling of $\mathbf{1}^\bot$. This yields Assertion (ii) of Theorem~\ref{t:3}.
\end{proof}

\section{$S$-adic shifts associated with continued fraction algorithms} \label{sec:examples}

\subsection{Arnoux-Rauzy words}
In this subsection, we prove our results on Arnoux-Rauzy words. To this matter we consider $S$-adic words with $S = \{\alpha_1,\alpha_2,\alpha_3\}$. Recall that the $\alpha_i$ are the Arnoux-Rauzy substitutions defined in~\eqref{eq:AR}. We begin by proving that the conditions of Proposition~\ref{p:gccvariant} (with negative strong coincidence, see Remarks~\ref{rem:-} and~\ref{rem:-2})  hold.

\begin{lemma}\label{lem:strongAR}
Let $\boldsymbol{\sigma} \in S^\mathbb{N}$ be a directive sequence of Arnoux-Rauzy substitutions over three letters. Then $\boldsymbol{\sigma}$ satisfies the negative strong coincidence condition. 
\end{lemma}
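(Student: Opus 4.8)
The plan is to exploit the most basic structural feature of the Arnoux-Rauzy substitutions in~\eqref{eq:AR}, namely that they share a common rightmost letter. Concretely, I would first record the elementary observation that for every $i \in \mathcal{A}$ and every $j \in \mathcal{A}$ the image $\alpha_i(j)$ ends with the letter~$i$: indeed $\alpha_i(i) = i$ ends with~$i$, and $\alpha_i(j) = j\hspace{.1em}i$ ends with~$i$ for $j \neq i$. This is where the choice of the \emph{negative} (suffix) version of strong coincidence is crucial, and it is really the only conceptual point in the argument: the ordinary prefix version with $\ell = 1$ would fail, since $\alpha_i(j) = j\hspace{.1em}i$ begins with~$j$, so the leftmost letters of $\alpha_i(j_1)$ and $\alpha_i(j_2)$ generally differ. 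The uniform behaviour occurs at the right end, not the left, which is exactly what the suffix condition of Remark~\ref{rem:-} is designed to capture.

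Next I would propagate this property from single letters to arbitrary nonempty words. Writing a nonempty word $w = w_1 w_2 \cdots w_m$, the image $\alpha_i(w) = \alpha_i(w_1)\alpha_i(w_2)\cdots\alpha_i(w_m)$ is a concatenation whose final block $\alpha_i(w_m)$ ends with~$i$; hence $\alpha_i(w)$ ends with~$i$ as well. In particular, if $\sigma_0 = \alpha_{i_0}$ denotes the first substitution of the directive sequence~$\boldsymbol{\sigma}$, then $\sigma_0(j)$ ends with the letter~$i_0$ for every $j \in \mathcal{A}$.

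With this in hand the verification of the negative strong coincidence condition is immediate, and I expect no genuine obstacle. I would take $\ell = 1$, so that $\sigma_{[0,\ell)} = \sigma_0 = \alpha_{i_0}$. For an arbitrary pair $(j_1,j_2) \in \mathcal{A}\times\mathcal{A}$, set $i = i_0$ and let $s_1 = s_2 = \varepsilon$ be the empty word. Then $i\hspace{.1em}s_1 = i\hspace{.1em}s_2 = i_0$ is a suffix of both $\sigma_0(j_1)$ and $\sigma_0(j_2)$ by the previous paragraph, while trivially $\mathbf{l}(s_1) = \mathbf{0} = \mathbf{l}(s_2)$. This is exactly the requirement of the negative strong coincidence condition, uniformly in $(j_1,j_2)$, so $\boldsymbol{\sigma}$ satisfies it. The only point warranting a word of care is that $s_1, s_2$ are allowed to be the empty word in $\mathcal{A}^*$ (so that the single letter~$i_0$ counts as the common coinciding suffix); once this is granted, the statement follows with the minimal value $\ell = 1$.
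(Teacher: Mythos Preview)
Your proof is correct and takes essentially the same approach as the paper, which simply observes that for each $i \in \mathcal{A}$ the image $\alpha_i(j)$ ends with the letter~$i$ for every $j \in \mathcal{A}$. You have just spelled out in detail what the paper compresses into a single sentence.
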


\begin{proof}
Just observe that for each $i\in\mathcal{A}$ the image $\alpha_i(j)$ ends with the letter $i$ for each $j\in\mathcal{A}$. 
\end{proof}

We mention that (positive) strong coincidence for sequences of Arnoux-Rauzy substitutions is (essentially) proved in~\cite[Proposition~4]{Barge-Stimac-Williams:13}. 

\begin{proposition}\label{lem:superAR}
Let $(\sigma_n)_{n\in\mathbb{N}} \in S^\mathbb{N}$\ with $S=\{\alpha_1,\alpha_2,\alpha_3\}$ be a directive sequence of Arnoux-Rauzy substitutions such that, for each $i \in \{1,2,3\}$, we have $\sigma_n = \alpha_i$ for infinitely many values of~$n$. Then the geometric finiteness property holds.
\end{proposition}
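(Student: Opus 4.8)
The plan is to prove the generation statement directly from the combinatorics of the three dual maps $E_1^*(\alpha_1),E_1^*(\alpha_2),E_1^*(\alpha_3)$. Write $\mathcal{U}=\{[\mathbf{0},i]:i\in\mathcal{A}\}$ for the star of faces at the origin and $\mathbf{w}^{(n)}=\tr{(M_{[0,n)})}\,\mathbf{1}$. Using the composition rule $E_1^*(\sigma\tau)=E_1^*(\tau)E_1^*(\sigma)$, the patch in the geometric finiteness property is $U_n:=\bigcup_{i\in\mathcal{A}}E_1^*(\sigma_{[0,n)})[\mathbf{0},i]$, which satisfies $U_0=\mathcal{U}$ and the recurrence $U_{n+1}=E_1^*(\sigma_n)(U_n)$; by Lemma~\ref{l:e1star} each $U_n$ is a patch of the discrete plane $\Gamma(\mathbf{w}^{(n)})$, and $E_1^*(\sigma_n)$ restricts to a bijection $\Gamma(\mathbf{w}^{(n)})\to\Gamma(\mathbf{w}^{(n+1)})$. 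In these terms the property to be proved is: for every $R>0$ and all sufficiently large $n$, the patch $U_n$ contains every face $[\mathbf{x},i]\in\Gamma(\mathbf{w}^{(n)})$ with $\|\mathbf{x}\|\le R$.

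First I would record the explicit local action of each $E_1^*(\alpha_k)$ on faces near the origin, a finite computation from~\eqref{eq:dualsubst}: since $\alpha_k(i)$ always starts with the letter $i$, one has $[\mathbf{0},i]\in E_1^*(\alpha_k)[\mathbf{0},i]$, so $E_1^*(\alpha_k)(\mathcal{U})\supseteq\mathcal{U}$ for each $k$; moreover $E_1^*(\alpha_k)$ expands $[\mathbf{0},k]$ into a three-face star while translating the remaining faces by the bounded maps $M_{\alpha_k}^{-1}$. By induction $U_n\supseteq\mathcal{U}$ for all $n$, so the origin keeps its star of faces throughout the iteration. I then track the set $L_n\subseteq\mathbb{Z}^d$ of lattice vertices that are \emph{completely surrounded} by $U_n$, meaning that every face of $\Gamma(\mathbf{w}^{(n)})$ incident to the vertex lies in $U_n$.

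The crux is a growth lemma, to be verified by the finite case analysis of the local rules, with two parts: (i) applying a single $E_1^*(\alpha_k)$ never un-surrounds a vertex, so $L_n$ never loses ground; and (ii) once all three substitutions have occurred, the half-star $\mathcal{U}$ has been completed to the full surrounding of the origin (whence $\mathbf{0}\in L_n$ from the first block containing $\alpha_1,\alpha_2,\alpha_3$ onward), and each subsequent block containing all three letters enlarges the surrounded region by one further combinatorial shell. Because each $\alpha_i$ occurs infinitely often by hypothesis, $\mathbb{N}$ splits into infinitely many consecutive blocks each meeting all three substitutions, so the surrounded region grows without bound in the combinatorial metric of $\Gamma(\mathbf{w}^{(n)})$. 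Finally, since neighbouring faces of a discrete plane have first coordinates differing by a standard basis vector, the discrete plane has uniformly bounded local geometry, and surrounding all vertices within combinatorial distance $k$ forces $U_n$ to contain every face $[\mathbf{x},i]$ with $\|\mathbf{x}\|$ bounded by a comparable radius; thus combinatorial growth yields Euclidean growth and the geometric finiteness property follows.

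The main obstacle is establishing the growth lemma under the \emph{unbounded} gaps permitted by the hypothesis: iterating one $\alpha_k$ many times in a row turns $U_n$ into a long thin strip rather than a disk, so part~(i) must genuinely guarantee that coverage around already-surrounded vertices survives an arbitrarily long run of a single letter, and part~(ii) must show that the eventual reappearance of the two missing letters thickens the strip back into a patch containing one more shell around the origin. I expect to dispatch this exactly as in the combinatorial study of discrete planes generated by Arnoux--Rauzy substitutions (cf.\ the results invoked from \cite{Berthe-Jolivet-Siegel:12}), reducing everything to the boundedly many face configurations around a surrounded vertex. Together with Lemma~\ref{lem:strongAR} and Remark~\ref{rem:-2}, this verifies the hypotheses of Proposition~\ref{p:gccvariant} and hence gives the tiling.
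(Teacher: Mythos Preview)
Your proposal is correct and takes essentially the same approach as the paper: both arguments reduce the geometric finiteness property to the combinatorial growth result for the patches $U_n=\bigcup_{i\in\mathcal{A}}E_1^*(\sigma_{[0,n)})[\mathbf{0},i]$ established in \cite{Berthe-Jolivet-Siegel:12}, namely that each block of the directive sequence containing all three substitutions increases the combinatorial radius of $U_n$ by at least one. The paper simply cites that reference, while you have usefully unpacked the structure of the argument (monotonicity $\mathcal{U}\subseteq U_n$, the surrounded-vertex notion, and the block decomposition), but the substance is the same.
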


\begin{proof}
Let $(n_k)_{k\in\mathbb{N}}$ be an increasing sequence of integers such that $\{\sigma_\ell:\, n_k \le \ell < n_{k+1}\} = S$ for each $k \in \mathbb{N}$.
It is shown in the proof of \cite[Theorem~4.7]{Berthe-Jolivet-Siegel:12} that the ``combinatorial radius'' of $\bigcup_{i\in\mathcal{A}} E_1^*(\sigma_{[0,n_k)})[\mathbf{0},i]$ is at least~$k$, i.e., $\bigcup_{i\in\mathcal{A}} E_1^*(\sigma_{[0,n_k)})[\mathbf{0},i]$ contains larger and larger balls in $\Gamma(\tr{(M_{[0,n)})}\, \mathbf{1})$ around~$\mathbf{0}$. 
\end{proof}

\begin{proof}[Proof of Theorem~\ref{t:5}]
By~\cite[Theorem~1]{AD13}\footnote{Let $N_i$ be the incidence matrix of~$\alpha_i$. In~\cite{AD13}, the authors deal with products of the transposes~$\tr{N}_{i}$. However, as indicated in \eqref{eq:transposeequal}, the Lyapunov exponents do not change under transposition.}, the shift $(S^{\mathbb N},\Sigma,\nu)$ satisfies the Pisot condition. Furthermore, any product of substitutions in~$S$ that contains each of the three Arnoux-Rauzy substitutions has a positive incidence matrix. Therefore, in order to apply Theorem~\ref{t:3}, it remains to prove that the collection~$\mathcal{C}_\mathbf{1}$ forms a tiling. However, in view of  Lemma~\ref{lem:strongAR} and Proposition~\ref{lem:superAR}, this follows from Proposition~\ref{p:gccvariant}; see Remark~\ref{rem:-2}. Now all assertions of Theorem~\ref{t:5} directly follow from Theorem~\ref{t:3}.
\end{proof}

\begin{proposition}[{\cite[Theorem~7 and its proof]{Berthe-Cassaigne-Steiner}}] \label{p:1}
Let $\boldsymbol{\sigma}=(\sigma_n) \in \{\alpha_1,\alpha_2,\alpha_3\}^\mathbb{N}$. If each $\alpha_i$ occurs infinitely often in~$\boldsymbol{\sigma}$ and if we do not have $\sigma_n = \sigma_{n+1} = \cdots = \sigma_{n+h}$ for any $n \in \mathbb{N}$, then $\mathcal{L}_{\boldsymbol{\sigma}}^{(n)}$ is $(2h{+}1)$-balanced for each $n \in \mathbb{N}$.
\end{proposition}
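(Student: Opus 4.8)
The plan is to follow the strategy of \cite{Berthe-Cassaigne-Steiner} and reduce the statement to a renormalisation of letter discrepancies under desubstitution. First I would observe that the hypotheses are invariant under shifting the directive sequence: if $\boldsymbol{\sigma}=(\sigma_n)$ contains each $\alpha_i$ infinitely often and admits no block $\sigma_n=\cdots=\sigma_{n+h}$, then the same is true of every shift $(\sigma_{n+m})_{m\in\mathbb{N}}$. Since $\mathcal{L}_{\boldsymbol{\sigma}}^{(n)}$ is precisely the language of the limit word $\omega^{(n)}$ of the shifted sequence, it therefore suffices to prove that any two factors $u,v$ of $\omega=\omega^{(0)}$ with $|u|=|v|$ satisfy $\big||u|_a-|v|_a\big|\le 2h+1$ for every $a\in\mathcal{A}$, with a bound depending only on~$h$; applying this to all shifts yields the uniform statement for every~$n$. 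Throughout I would work with the prefix discrepancies $g_a^{(n)}(m)=\big|\omega^{(n)}[0,m)\big|_a-f_a^{(n)}m$, where $f_a^{(n)}$ is the frequency of~$a$ in~$\omega^{(n)}$; as in the proof of Lemma~\ref{l:bounded}, $C$-balancedness of $\mathcal{L}_{\boldsymbol{\sigma}}^{(n)}$ is equivalent to boundedness of these discrepancies, and the balance constant is governed by their oscillation.

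The engine is desubstitution. Because $\alpha_i(i)=i$ and $\alpha_i(j)=j\,i$ for $j\neq i$ (see \eqref{eq:AR}), every image ends in~$i$ and the images form a prefix code; hence every factor $U$ of $\omega^{(n)}=\alpha_{i_n}(\omega^{(n+1)})$ decomposes as $U=s\,\alpha_{i_n}(w)\,p$ with $w$ a factor of $\omega^{(n+1)}$ and $|s|,|p|\le 1$. Using that the $i_n$-th row of $M_{\alpha_{i_n}}$ is all-ones while its other rows are trivial, a direct count of prefix lengths and letter occurrences gives, for a level-$n$ prefix of length $m$ desubstituting to a level-$(n{+}1)$ prefix of length $\ell$,
\[
g_{i_n}^{(n)}(m)=f_{i_n}^{(n)}\,g_{i_n}^{(n+1)}(\ell)+O(1),
\qquad
g_{j}^{(n)}(m)=g_{j}^{(n+1)}(\ell)+f_{i_n}^{(n)}f_{j}^{(n+1)}\,g_{i_n}^{(n+1)}(\ell)+O(1)\ \ (j\neq i_n),
\]
where all $O(1)$ terms are bounded by an absolute constant. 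The crucial feature is that $f_{i_n}^{(n)}=1/(2-f_{i_n}^{(n+1)})<1$, so the discrepancy of the \emph{active} letter $i_n$ is \emph{contracted} at every level, whereas the discrepancies of the passive letters are only perturbed by a bounded amount plus a small multiple of the (small) active discrepancy.

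A separate, purely combinatorial ingredient is the run-length lemma: every $\omega^{(n)}$ has all letter runs of length $\le h+1$. Indeed, in $\omega^{(n)}=\alpha_{i_n}(\omega^{(n+1)})$ each non-$i_n$ letter occurs only as the first symbol of an image $j\,i_n$ and is both preceded and followed by~$i_n$, so non-active letters are isolated (run length~$1$); and a maximal run of $i_n$ is lengthened by exactly one under~$\alpha_{i_n}$. Iterating this down through the maximal initial block of $i_n$'s in the shifted directive sequence, whose length is $\le h$ by hypothesis, shows that the $i_n$-runs of $\omega^{(n)}$ have length $\le h+1$. This is exactly where the assumption ``no $\sigma_n=\cdots=\sigma_{n+h}$'' enters, and the assumption that each $\alpha_i$ recurs guarantees that every letter is active infinitely often, so that each discrepancy $g_a$ is contracted by a factor $<1$ at a recurrent sequence of levels.

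The hard part is to close this estimate. The recursion has no ``top level'', and the passive-letter discrepancy does \emph{not} contract, so one cannot simply induct downward; instead one must exhibit a uniform bound consistent with the recursion. The main obstacle is to show that the bounded $O(1)$ drift accumulated by a letter $a$ between two consecutive levels at which it is active cannot outgrow the contraction it suffers when it becomes active, and then to convert the resulting bound on $\mathrm{osc}(g_a^{(n)})$ into the \emph{sharp} factor constant $2h+1$ for same-length factors (the factor~$2$ coming from comparing two windows, the additive part from the run bound $h+1$ and the boundary symbols $s,p$). This balancing of the active contraction against the passive drift, controlled by the run-length bound and the recurrence of each letter, is precisely the delicate computation carried out in \cite[Theorem~7 and its proof]{Berthe-Cassaigne-Steiner}, on which I would rely to obtain the exact constant.
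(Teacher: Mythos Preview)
The paper does not actually prove this proposition; it is stated with a citation to \cite[Theorem~7 and its proof]{Berthe-Cassaigne-Steiner} and no argument is supplied. Your proposal therefore cannot be compared to an in-paper proof in the usual sense.

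That said, your sketch is a faithful outline of the argument in \cite{Berthe-Cassaigne-Steiner}: reducing to the level $n=0$ by shift invariance of the hypotheses, desubstituting through the Arnoux--Rauzy substitutions, observing the contraction of the active-letter discrepancy by the factor $f_{i_n}^{(n)}<1$, bounding run lengths by $h+1$ via the ``no block of $h{+}1$ equal substitutions'' hypothesis, and invoking recurrence of each letter so that every discrepancy is eventually contracted. You correctly identify the genuine difficulty --- bounding the drift of the passive letters between two activations uniformly in the gap length --- and you explicitly defer this step to the cited reference. So in effect your proposal and the paper end up in the same place: both rely on \cite{Berthe-Cassaigne-Steiner} for the actual closure of the estimate and the sharp constant $2h+1$. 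Your write-up simply supplies the motivation and structural outline that the paper omits.
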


\begin{proof}[Proof of Theorem~\ref{t:4}]
Let $\boldsymbol{\sigma}$ be as in Theorem~\ref{t:4}. As $\alpha_i$ occurs infinitely often in~$\boldsymbol{\sigma}$ for each $i \in \mathcal{A}$, \cite[Lemma~13]{Arnoux-Ito:01} implies that for each~$k$ and each sufficiently large $\ell>k$ the matrix~$M_{[k,\ell)}$ has a characteristic polynomial that is the minimal polynomial of a cubic Pisot unit and, hence, irreducible. Thus $\boldsymbol{\sigma}$ is  algebraically irreducible. The primitivity of~$\boldsymbol{\sigma}$ follows from the same fact, as any product~$M_{[k,\ell)}$ containing the incidence matrix of each of the three Arnoux-Rauzy substitutions is positive. Since $\boldsymbol{\sigma}$ is recurrent by assumption, Proposition~\ref{p:1} implies that there is $C>0$ such that for each~$n$ there is~$\ell$ such that $(\sigma_0,\ldots,\sigma_{\ell-1}) = (\sigma_n,\ldots,\sigma_{n+\ell-1})$ and $\mathcal{L}_{\boldsymbol{\sigma}}^{(n+\ell)}$ is $C$-balanced. As in the proof of Theorem~\ref{t:5}, in view of Lemma~\ref{lem:strongAR} and Proposition~\ref{lem:superAR}, it follows from Proposition~\ref{p:gccvariant} that $\mathcal{C}_\mathbf{1}$ induces a tiling. Thus all the assertions of Theorem~\ref{t:1} hold for~$\boldsymbol{\sigma}$, and the proof is finished.
\end{proof}

\begin{proposition}\label{prop:LR}
An Arnoux-Rauzy word is linearly recurrent if and only if it has bounded strong partial quotients, that is, each substitution of~$S$ occurs in its directive sequence with bounded gaps.
\end{proposition}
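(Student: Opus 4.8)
The plan is to reduce linear recurrence to a single combinatorial condition on the directive sequence $\boldsymbol{\sigma}=(\sigma_n)\in\{\alpha_1,\alpha_2,\alpha_3\}^{\mathbb N}$, namely \emph{uniform primitivity}: the existence of $s\in\mathbb N$ with $M_{[n,n+s)}$ a positive matrix for every $n\in\mathbb N$. Recall that an Arnoux--Rauzy word has, by definition, each $\alpha_i$ occurring infinitely often in~$\boldsymbol{\sigma}$. I would first show that uniform primitivity is equivalent to bounded strong partial quotients, and then combine this with the fact that a minimal $S$-adic shift given by a primitive directive sequence over a \emph{finite} set of substitutions is linearly recurrent precisely when its directive sequence is uniformly primitive.

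The first step rests on a positivity dichotomy for products of Arnoux--Rauzy substitutions: the matrix $M_{[n,\ell)}$ is positive if and only if each of $\alpha_1,\alpha_2,\alpha_3$ occurs among $\sigma_n,\dots,\sigma_{\ell-1}$. The direction ``all three occur $\Rightarrow$ positive'' is \cite[Lemma~13]{Arnoux-Ito:01}, already invoked in the proof of Theorem~\ref{t:4}. For the converse I would use that $\alpha_j(k)\in\{k,kj\}$, so the letter $i$ occurs in $\alpha_j(k)$ only when $i\in\{j,k\}$; hence, if $\alpha_i$ is absent from the window, an easy induction shows that the letter $i$ never appears in $\sigma_{[n,\ell)}(k)$ for any $k\neq i$, whence the entries $(i,k)$, $k\neq i$, of $M_{[n,\ell)}$ vanish and the matrix is not positive. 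Granting this dichotomy, $M_{[n,n+s)}$ is positive for all~$n$ if and only if every window $\sigma_n,\dots,\sigma_{n+s-1}$ of length~$s$ contains all three substitutions; and this property (for some fixed~$s$) is exactly the statement that each $\alpha_i$ recurs in $\boldsymbol{\sigma}$ with gaps bounded by~$s$, i.e.\ bounded strong partial quotients.

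For the second step I would appeal to the $S$-adic characterisation of linear recurrence due to Durand (see \cite{DHS:99,Durand-Leroy-Richomme:13} and Remark~\ref{rem:LR}): for a finite set~$S$ and a primitive directive sequence, linear recurrence of $(X_{\boldsymbol{\sigma}},\Sigma)$ is equivalent to uniform primitivity. Both implications can also be obtained directly. For ``uniformly primitive $\Rightarrow$ linearly recurrent'', uniform primitivity forces the length ratios $\max_a|\sigma_{[0,n)}(a)|/\min_a|\sigma_{[0,n)}(a)|$ to stay bounded and the lengths to grow geometrically; combining this with recognisability of the decomposition of~$\omega$ into the blocks $\sigma_{[0,n)}(a)$, one picks, for a factor of length~$p$, a level~$n$ with $\min_a|\sigma_{[0,n)}(a)|\asymp p$, so that every occurrence lies in two consecutive level-$n$ blocks, and then bounds $R(p)\le Cp$ by controlling how densely the length-two factors of~$\omega^{(n)}$ recur (uniformly in~$n$). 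For ``linearly recurrent $\Rightarrow$ uniformly primitive'' I would argue contrapositively: if some $\alpha_i$ has unbounded gaps, a long $\alpha_i$-free run $\sigma_m,\dots,\sigma_{m+t-1}$ (with $t$ large) produces in $\omega^{(m)}$ occurrences of the letter~$i$ separated by gaps growing with~$t$ (each block $\sigma_{[m,m+t)}(i)$ carries exactly one letter~$i$ but has length tending to infinity, while $i$ never appears in the images of the other letters); transporting this via $\sigma_{[0,m)}$ and using recognisability, the factor $\sigma_{[0,m)}(i)$ of~$\omega$ then has return words whose length-to-size ratio is unbounded, contradicting linear recurrence.

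The main obstacle is the implication uniform primitivity $\Rightarrow$ linear recurrence, since the Arnoux--Rauzy substitutions are only right-proper (each $\alpha_i(k)$ ends with~$i$, but all images start with the distinct letter~$k$, so no product is left-proper), and hence the cleanest form of Durand's criterion, usually phrased for proper representations, does not apply verbatim. I would circumvent this either by passing to a conjugate proper $S$-adic representation as in Remark~\ref{rem:LR} (via \cite[\S5]{DHS:99}), or by running the self-contained recognisability argument sketched above, which only uses right-properness.
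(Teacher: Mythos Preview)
Your strategy is correct and close in spirit to the paper's, but the paper takes a shorter path for the implication ``bounded strong partial quotients $\Rightarrow$ linearly recurrent''. Instead of invoking the general equivalence ``uniform primitivity $\Leftrightarrow$ linear recurrence'' (which, as you rightly note, runs into the issue that the $\alpha_i$ are only right-proper), the paper applies \cite[Lemma~3.1]{Durand:00b} directly. That lemma gives linear recurrence as soon as the largest gap between two consecutive occurrences of any factor of length~$2$ in~$\omega^{(n)}$ is bounded independently of~$n$; this hypothesis follows immediately from bounded strong partial quotients (essentially your positivity observation, read at the level of words rather than matrices), and no properness or recognisability argument is needed.

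For the converse direction the two arguments coincide: the paper simply says it is easy and points to~\cite{RisleyZamboni}, which is your contrapositive observation that an $\alpha_i$-free block of length~$t$ produces in~$\omega^{(m)}$ (and hence, after mapping by~$\sigma_{[0,m)}$, in~$\omega$) occurrences of a fixed factor with gaps growing without bound.

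So your approach is sound, and the equivalence ``uniform primitivity $\Leftrightarrow$ bounded strong partial quotients'' via the Arnoux--Ito positivity dichotomy is a nice way to frame Step~1; but the detour through a proper $S$-adic representation or through recognisability in Step~2 is unnecessary once one uses the length-$2$ criterion of \cite[Lemma~3.1]{Durand:00b}.
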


\begin{proof}
It is easy to check that strong partial quotients have to be bounded for an Arnoux-Rauzy word~$\omega$ to be linearly recurrent; see also \cite{RisleyZamboni}.\footnote{This characterization is already given in \cite[Corollary 3.9]{RisleyZamboni} but it relies on \cite{Durand:00a} and it needs the extra argument of \cite[Lemma 3.1]{Durand:00b}.}
The converse is a direct consequence of \cite[Lemma 3.1]{Durand:00b} by noticing that  the largest difference between two consecutive occurrences of a word of length~$2$ in~$\omega^{(n)}$ is bounded (with respect to~$n$).
\end{proof}

\begin{proof}[Proof of Corollary~\ref{cor:AR}]
This is a direct consequence of Proposition~\ref{prop:LR} together with Theorem~\ref{t:4}.
\end{proof}

\subsection{Brun words}
In this subsection, we prove our results on $S$-adic words defined in terms of the Brun substitutions $\beta_1,\beta_2,\beta_3$ defined in~\eqref{eq:brun}. Consider $S$-adic words, where $S = \{\beta_1,\beta_2,\beta_3\}$. Again we begin by proving that the conditions of Proposition~\ref{p:gccvariant} hold for negative strong coincidences (see Remarks~\ref{rem:-} and \ref{rem:-2}).

\begin{lemma}\label{lem:strongB}
Let $S = \{\beta_1,\beta_2,\beta_3\}$. If $\boldsymbol{\sigma} \in S^\mathbb{N}$ contains $\beta_3$, then it has negative strong coincidences. 
\end{lemma}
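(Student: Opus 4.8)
The plan is to prove the stronger (and cleaner) statement that for a suitable~$\ell$ all three images $\sigma_{[0,\ell)}(1), \sigma_{[0,\ell)}(2), \sigma_{[0,\ell)}(3)$ end with one and the same letter. Once this is established, the negative strong coincidence condition of Remark~\ref{rem:-} holds trivially: for every pair $(j_1,j_2)\in\mathcal{A}\times\mathcal{A}$ I take $i$ to be this common last letter and $s_1=s_2$ the empty word, so that $i\hspace{.1em}s_1$ and $i\hspace{.1em}s_2$ are suffixes of $\sigma_{[0,\ell)}(j_1)$ and $\sigma_{[0,\ell)}(j_2)$ respectively, with $\mathbf{l}(s_1)=\mathbf{l}(s_2)=\mathbf{0}$.

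To track last letters I would introduce, for a substitution~$\sigma$, the \emph{last-letter map} $\lambda_\sigma:\mathcal{A}\to\mathcal{A}$ sending~$j$ to the last letter of~$\sigma(j)$. Reading off~\eqref{eq:brun} one computes $\lambda_{\beta_1}=\lambda_{\beta_2}=:f$ with $f(1)=1$ and $f(2)=f(3)=3$, and $\lambda_{\beta_3}=:g$ with $g(1)=g(3)=3$ and $g(2)=1$. Since the last letter of $\sigma(w)$ is determined by the last letter of~$w$, iterating this through a product shows that the last letter of $\sigma_{[0,\ell)}(j)$ equals $\lambda_{\sigma_0}\circ\lambda_{\sigma_1}\circ\cdots\circ\lambda_{\sigma_{\ell-1}}(j)$.

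The key observation is then that both $f$ and $g$ map $\mathcal{A}$ into the two-element set $\{1,3\}$, and that on $\{1,3\}$ the map $f$ acts as the identity while $g$ is constant equal to~$3$. Hence, assuming $\boldsymbol{\sigma}$ contains~$\beta_3$, say $\sigma_n=\beta_3$, I would take $\ell=n+2$. The innermost factor $\lambda_{\sigma_{\ell-1}}$ already pushes $\mathcal{A}$ into $\{1,3\}$, and the remaining composition $\lambda_{\sigma_0}\circ\cdots\circ\lambda_{\sigma_{\ell-2}}$, restricted to $\{1,3\}$, is a composition of identities and at least one constant map~$g$ (contributed by $\sigma_n$), hence itself constant equal to~$3$. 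Thus all three images $\sigma_{[0,\ell)}(j)$ end with the letter~$3$, which finishes the argument. The proof is entirely combinatorial and short; the only point demanding a little care is the bookkeeping of the composition order and checking that a \emph{single} occurrence of $g$ suffices to collapse the restricted composition to the constant~$3$. This is immediate once one records that $f$ fixes $\{1,3\}$ pointwise, so that any copies of~$f$ appearing before or after an occurrence of~$g$ neither create nor destroy the collapse.
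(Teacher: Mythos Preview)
Your proof is correct and follows essentially the same idea as the paper's one-line argument, which simply observes that $\beta_3\beta_i(j)$ ends with the letter~$3$ for all $i,j\in\mathcal{A}$ (and hence, if $\sigma_n=\beta_3$, the images $\sigma_{[0,n+2)}(j)$ all share the same last letter). Your last-letter-map formalism makes the bookkeeping more explicit and additionally records that the common last letter is actually~$3$, but the underlying mechanism is identical.
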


\begin{proof}
This follows from the fact that $\beta_3\beta_i(j)$ ends with the letter~$3$ for all $i, j\in\mathcal{A}$.
\end{proof}

Next we use a result from \cite{BBJS14}, where a slightly different set of Brun substitutions is considered, namely
\[
\sigma_1^\mathrm{Br} : \begin{cases} 1 \mapsto 1  \\ 2 \mapsto 2 \\ 3 \mapsto 32 \end{cases} \quad
\sigma_2^\mathrm{Br} : \begin{cases} 1 \mapsto 1  \\ 2 \mapsto 3 \\ 3 \mapsto 23 \end{cases} \quad
\sigma_3^\mathrm{Br} : \begin{cases} 1 \mapsto 2  \\ 2 \mapsto 3 \\ 3 \mapsto 13 \end{cases}
\] 
Note that the incidence matrix of~$\sigma_i^\mathrm{Br}$ is the transpose of that of~$\beta_i$. 
We have the following relation between products of substitutions from the two sets.

\begin{lemma} \label{l:relBrun}
Let $i_0, i_1, \ldots, i_n \in \{1,2,3\}$, $n \in \mathbb{N}$.  
Then
\[
\beta_{i_0} \beta_{i_1} \cdots \beta_{i_n} = \begin{cases}\sigma_2^\mathrm{Br} \sigma_{i_0}^\mathrm{Br} \sigma_{i_1}^\mathrm{Br} \cdots \sigma_{i_{n-1}}^\mathrm{Br} \pi_{(23)} & \mbox{if}\ i_n = 1, \\ \sigma_2^\mathrm{Br} \sigma_{i_0}^\mathrm{Br} \sigma_{i_1}^\mathrm{Br} \cdots \sigma_{i_{n-1}}^\mathrm{Br} & \mbox{if}\ i_n = 2, \\ \sigma_2^\mathrm{Br} \sigma_{i_0}^\mathrm{Br} \sigma_{i_1}^\mathrm{Br} \cdots \sigma_{i_{n-1}}^\mathrm{Br} \pi_{(12)} & \mbox{if}\ i_n = 3,\end{cases}
\]
where $\pi_{(ij)}$ denotes the cyclic permutation that exchanges the letters $i$ and~$j$.
\end{lemma}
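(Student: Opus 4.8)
The plan is to prove the identity by induction on~$n$, with the engine of the induction being a single \emph{commutation relation} between the two families of substitutions that is mediated by the permutations appearing on the right-hand side. First I would repackage the three cases of the statement into one permutation-valued map: write $P(1) = \pi_{(23)}$, $P(2) = \mathrm{id}$, and $P(3) = \pi_{(12)}$, regarding each $P(i)$ as a letter-to-letter substitution on $\mathcal{A} = \{1,2,3\}$. With this notation the claimed formula reads $\beta_{i_0}\beta_{i_1}\cdots\beta_{i_n} = \sigma_2^{\mathrm{Br}}\sigma_{i_0}^{\mathrm{Br}}\cdots\sigma_{i_{n-1}}^{\mathrm{Br}}\,P(i_n)$ uniformly in the value of~$i_n$, which is the shape that cooperates with induction.

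The key step I would isolate is the commutation relation
\begin{equation*}
P(i)\,\beta_j = \sigma_i^{\mathrm{Br}}\,P(j) \qquad (i,j \in \{1,2,3\}),
\end{equation*}
where substitutions are composed as in the paper, so that $P(i)\beta_j$ means ``first apply $\beta_j$, then $P(i)$''. This is a purely finite verification: there are nine pairs $(i,j)$, and for each one both sides are evaluated on the three letters of~$\mathcal{A}$ and the resulting words compared. For example, the case $i=2$, where $P(2) = \mathrm{id}$, reduces the relation to $\beta_j = \sigma_2^{\mathrm{Br}}P(j)$; and for $(i,j)=(1,3)$ one computes $\pi_{(23)}\beta_3\colon 1\mapsto 2,\ 2\mapsto 1,\ 3\mapsto 32$ and $\sigma_1^{\mathrm{Br}}\pi_{(12)}\colon 1\mapsto 2,\ 2\mapsto 1,\ 3\mapsto 32$, so the two sides agree. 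The remaining seven pairs are checked in exactly the same mechanical way.

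With the commutation relation in hand, the induction is immediate. The base case $n=0$ is the relation with $i=2$, namely $\beta_{i_0} = \sigma_2^{\mathrm{Br}}P(i_0)$, which matches the right-hand side with an empty $\sigma^{\mathrm{Br}}$-product. For the inductive step, assuming $\beta_{i_0}\cdots\beta_{i_{n-1}} = \sigma_2^{\mathrm{Br}}\sigma_{i_0}^{\mathrm{Br}}\cdots\sigma_{i_{n-2}}^{\mathrm{Br}}P(i_{n-1})$, I would multiply on the right by $\beta_{i_n}$ and apply the commutation relation with $(i,j) = (i_{n-1},i_n)$ to rewrite $P(i_{n-1})\beta_{i_n} = \sigma_{i_{n-1}}^{\mathrm{Br}}P(i_n)$. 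Substituting this yields precisely $\beta_{i_0}\cdots\beta_{i_n} = \sigma_2^{\mathrm{Br}}\sigma_{i_0}^{\mathrm{Br}}\cdots\sigma_{i_{n-1}}^{\mathrm{Br}}P(i_n)$, completing the step.

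I do not expect a genuine obstacle here: the statement is an elementary combinatorial identity, and essentially all of its content is concentrated in the commutation relation, which is settled by a finite computation. The only points that require care are fixing the composition convention consistently, so that the permutation is absorbed on the correct side at each step, and tracking the permutations $\pi_{(23)}$ and $\pi_{(12)}$ correctly through the images of the substitutions. A convenient independent sanity check is that, after abelianization, the commutation relation becomes the matrix identity $M_{P(i)}\,M_{\beta_j} = M_{\sigma_i^{\mathrm{Br}}}\,M_{P(j)}$, which is compatible with $M_{\sigma_i^{\mathrm{Br}}} = \tr{(M_{\beta_i})}$ and with $M_{P(i)}$ being the permutation matrix of~$P(i)$; one can verify this for each pair $(i,j)$ as a cross-check against the word-level computation.
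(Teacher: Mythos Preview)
Your proof is correct and follows essentially the same approach as the paper. The paper's version is slightly more economical: rather than verifying the nine-case commutation relation $P(i)\beta_j = \sigma_i^{\mathrm{Br}}P(j)$ directly, it factors it into the two simpler observations $\beta_j = \sigma_2^{\mathrm{Br}}P(j)$ and $P(i)\sigma_2^{\mathrm{Br}} = \sigma_i^{\mathrm{Br}}$, which require only five (three plus two nontrivial) verifications and immediately compose to give your relation.
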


\begin{proof}
We have $\beta_1 = \sigma_2^\mathrm{Br}\, \pi_{(23)}$, $\beta_2 = \sigma_2^\mathrm{Br}$, $\beta_3 = \sigma_2^\mathrm{Br}\, \pi_{(12)}$, and $\pi_{(23)}\, \sigma_2^\mathrm{Br} = \sigma_1^\mathrm{Br}$, $\pi_{(12)}\, \sigma_2^\mathrm{Br} = \sigma_3^\mathrm{Br}$.
\end{proof}

\begin{proposition}\label{lem:superB}
Let $(\sigma_n)_{n\in\mathbb{N}} \in S^\mathbb{N}$\ with $S = \{\beta_1,\beta_2,\beta_3\}$ be a directive sequence of Brun substitutions with infinitely many occurrences of~$\beta_3$.
Then, for each $R > 0$, $\bigcup_{i\in\mathcal{A}} E_1^*(\sigma_{[0,n)})[\mathbf{0},i]$ contains a ball of radius~$R$ of $\Gamma(\tr{(M_{[0,n)})}\, \mathbf{1})$ for all sufficiently large $n \in \mathbb{N}$.
\end{proposition}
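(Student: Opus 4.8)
The plan is to transfer the assertion to the companion Brun substitutions $\sigma_1^\mathrm{Br},\sigma_2^\mathrm{Br},\sigma_3^\mathrm{Br}$, for which the analogous covering property is available in \cite{BBJS14}, by exploiting the algebraic relation of Lemma~\ref{l:relBrun} together with the antimorphism property $E_1^*(\sigma\tau)=E_1^*(\tau)E_1^*(\sigma)$ of the dual substitution. First I would rewrite the set to be analysed as the image of the unit cell $\mathcal{U}=\{[\mathbf{0},i]:\,i\in\mathcal{A}\}$ under a single dual map, namely $\bigcup_{i\in\mathcal{A}} E_1^*(\sigma_{[0,n)})[\mathbf{0},i]=E_1^*(\sigma_{[0,n)})\,\mathcal{U}$. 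Writing $\sigma_{[0,n)}=\beta_{i_0}\cdots\beta_{i_{n-1}}$ and applying Lemma~\ref{l:relBrun}, we get $\sigma_{[0,n)}=\sigma_2^\mathrm{Br}\sigma_{i_0}^\mathrm{Br}\cdots\sigma_{i_{n-2}}^\mathrm{Br}\,\pi$ for a permutation $\pi\in\{\mathrm{id},\pi_{(12)},\pi_{(23)}\}$ depending only on $i_{n-1}$. The antimorphism property then yields
\[
\bigcup_{i\in\mathcal{A}} E_1^*(\sigma_{[0,n)})[\mathbf{0},i]
= E_1^*(\pi)\,\Big(E_1^*\big(\sigma_2^\mathrm{Br}\sigma_{i_0}^\mathrm{Br}\cdots\sigma_{i_{n-2}}^\mathrm{Br}\big)\,\mathcal{U}\Big).
\]

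Second, I would apply the covering result of \cite{BBJS14} to the inner set. The index sequence feeding the $\sigma^\mathrm{Br}$-product is $(2,i_0,\ldots,i_{n-2})$, and by hypothesis $i_j=3$ for infinitely many $j$, so $\sigma_3^\mathrm{Br}$ occurs infinitely often along this sequence. This is exactly the situation for which \cite{BBJS14} establishes (in the language of combinatorial radius / generation of discrete planes, as in the proof of Proposition~\ref{lem:superAR}) that $E_1^*(\sigma_2^\mathrm{Br}\sigma_{i_0}^\mathrm{Br}\cdots\sigma_{i_{n-2}}^\mathrm{Br})\,\mathcal{U}$ contains balls of radius tending to infinity, sitting inside the corresponding discrete plane $\Gamma(\tr{(M_{\sigma_2^\mathrm{Br}\sigma_{i_0}^\mathrm{Br}\cdots\sigma_{i_{n-2}}^\mathrm{Br}})}\,\mathbf{1})$ by Lemma~\ref{l:e1star}~(\ref{62ii}). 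Note that the leading $\sigma_2^\mathrm{Br}$ merely shifts the index sequence by one and is harmless.

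Third, I would transfer this through $E_1^*(\pi)$. Since $\pi$ is a permutation, Lemma~\ref{l:e1star}~(\ref{62ii}) gives that $E_1^*(\pi)$ is a bijection from the inner discrete plane onto $\Gamma(\tr{(M_{[0,n)})}\,\mathbf{1})$, and because permutation matrices preserve the maximum norm $\|\cdot\|$ used to measure balls in $\Gamma$, the map $E_1^*(\pi)$ sends a ball of radius $R$ onto a ball of radius $R$. Consequently $\bigcup_{i\in\mathcal{A}} E_1^*(\sigma_{[0,n)})[\mathbf{0},i]$ contains balls of radius growing to infinity in $\Gamma(\tr{(M_{[0,n)})}\,\mathbf{1})$, which gives the claim for all sufficiently large~$n$ and any fixed~$R$.

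The main obstacle I anticipate is the second step: extracting from \cite{BBJS14} the precise ball-covering statement in the exact form needed here. That reference works with the substitutions $\sigma_i^\mathrm{Br}$ and its conclusion may be phrased as generation of the whole discrete plane as $n\to\infty$, or via a combinatorial radius bound, rather than as a clean "contains a ball of radius~$R$" statement for sequences that begin with the spurious prefix $\sigma_2^\mathrm{Br}$ and only require $\sigma_3^\mathrm{Br}$ (not all three) to recur. I would therefore need to confirm that infinite recurrence of $\sigma_3^\mathrm{Br}$ alone suffices for this growth, since $\sigma_3^\mathrm{Br}$ is the fully mixing Brun substitution, and to check that the initial block $\sigma_2^\mathrm{Br}$ and the base-point bookkeeping do not affect the growth of the combinatorial radius. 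The permutation transfer in the third step is routine once one records that $\tr{M_{\sigma_i^\mathrm{Br}}}=M_{\beta_i}$ makes all the discrete planes match up consistently under the dual formalism.
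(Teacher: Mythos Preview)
Your approach is essentially the same as the paper's: both rely on Lemma~\ref{l:relBrun} to convert the product of~$\beta_i$'s into a product of the companion substitutions~$\sigma_i^{\mathrm{Br}}$ (up to a permutation), and then invoke \cite[Theorem~5.4~(1)]{BBJS14} for the latter family. Your treatment of the permutation via the antimorphism property of~$E_1^*$ and the observation that~$E_1^*(\pi)$ is an isometric bijection of discrete planes are exactly the right bookkeeping steps; the paper simply suppresses these details.

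One point you do not mention but the paper does: its one-line proof also cites Lemma~\ref{lem:strongB} (negative strong coincidence). This likely reflects the precise formulation in \cite{BBJS14}, which may deliver the growth of the combinatorial radius under a coincidence-type hypothesis or from a seed of a specific shape, so that Lemma~\ref{lem:strongB} is used to bridge from~$\mathcal{U}$ to that seed. Your cautionary paragraph about extracting the exact statement from \cite{BBJS14} is therefore well placed; when you consult that reference, expect that the coincidence property enters at that point.
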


\begin{proof}
This follows by Lemma~\ref{l:relBrun} from \cite[Theorem~5.4~(1)]{BBJS14} together with Lemma  \ref{lem:strongB}.
\end{proof}

\begin{proof}[Proof of Theorem~\ref{t:6}]
By~\cite[Theorem~1]{AD13}\footnote{Again, in \cite{AD13} the authors deal with products of the transposes of the incidence matrices of  the substitutions.} (see also  \cite{FUKE96,Meester,Schratzberger:98,Broise}), the shift $(S^{\mathbb N},\Sigma,\nu)$ satisfies the Pisot condition.
Moreover, it is easy to see that the product $\beta_3\beta_2\beta_3\beta_2$ has positive incidence matrix. Thus, in order to apply Theorem~\ref{t:3}, we need to prove that the collection~$\mathcal{C}_\mathbf{1}$ forms a tiling. Using Lemma~\ref{lem:strongB} and Proposition~\ref{lem:superB}, this follows for $\nu$-almost every $\boldsymbol{\sigma} \in S^\mathbb{N}$ from Proposition~\ref{p:gccvariant} (see Remark~\ref{rem:-2}).
Now, all assertions of Theorem~\ref{t:6} follow directly from Theorem~\ref{t:3}.
\end{proof}

\begin{proof}[Proof of Theorem~\ref{t:7}]
In view of Proposition~\ref{p:rotate}, Theorem~\ref{t:6} states that almost all $\boldsymbol{\sigma} \in S^\mathbb{N}$ (w.r.t.\ any ergodic shift invariant probability measure~$\nu$ that assigns positive measure to each cylinder) give rise to an $S$-adic shift $(X_{\boldsymbol{\sigma}}, \Sigma)$ that is measurably conjugate to the translation 
\[
\pi_{\mathbf{u},\mathbf{1}}(\mathbf{e}_3) = u_1 (\mathbf{e}_3-\mathbf{e}_1) + u_2 (\mathbf{e}_3-\mathbf{e}_2)
\]
on the torus $\mathbf{1}^\bot/(\mathbb{Z}(\mathbf{e}_3-\mathbf{e}_1) + \mathbb{Z}(\mathbf{e}_3-\mathbf{e}_2))$. Here, $(u_1,u_2,u_3)$ is the frequency vector of a word in~$X_{\boldsymbol{\sigma}}$. Of course, this translation is conjugate to the translation $(u_1,u_2)$ on the standard torus~$\mathbb{T}^2$.
Note that the vector $(x_1,x_2) \in \Delta_2$ corresponds to $(u_1,u_2,u_3) = \big(\frac{x_1}{1+x_1+x_2},\frac{x_2}{1+x_1+x_2},\frac{1}{1+x_1+x_2}\big)$ in the linear version of Brun's algorithm.

Recall the definition of the conjugacy map~$\Phi$ in~\eqref{CDPhi}.
According to \cite[Th\'eor\`eme]{ArnouxNogueira93} (see also \cite[Section~3.1]{Schweiger:91}), the invariant probability measure~$m$ of the map $T_\mathrm{Brun}$ defined in~\eqref{eq:brunmap} has density $h(x_1,x_2) = \frac{12}{\pi^2x_1(1+x_2)}$ and is therefore equivalent to the Lebesgue measure. 
We now define the measure $\nu = m \Phi^{-1}$ on~$S^\mathbb{N}$. It is an ergodic shift invariant probability measure on~$S^\mathbb{N}$.  By~\eqref{CDPhi}, the mapping~$T_\mathrm{Brun}$ is measurably conjugate to the shift $(S^\mathbb{N}, \Sigma,\nu)$ via~$\Phi$.   
Moreover, $\nu(C)$ is positive for each cylinder $C\subset S^\mathbb{N}$, since each cylinder in~$\Delta_2$ has also positive Lebesgue measure and, hence, positive measure~$m$ (it has non-vanishing  Jacobian, see e.g.~\cite{SCHWEIGER}).
 
Let now $Y \subset \Delta_2$ be a set with the property that for each $(x_1,x_2) \in Y$ the $S$-adic shift $X_{\Phi(x_1,x_2)}$ is \emph{not} measurably conjugate to the translation $(u_1,u_2)$ on~$\mathbb{T}^2$. 
Theorem~\ref{t:6} (together with Proposition~\ref{p:rotate}) implies that $\nu \Phi(Y) = m(Y) = 0$. As $m$ is equivalent to the Lebesgue measure, this proves the result.
\end{proof}

\begin{proof}[Proof of Corollary~\ref{cor:8}]
We can prove similarly as in the proof of Theorem~\ref{t:7}, by choosing $j=1$ and $j=2$, respectively in Proposition~\ref{p:rotate} that, for almost all $(x_1,x_2) \in \Delta_2$, the $S$-adic shift $(X_{\boldsymbol{\sigma}},\Sigma)$ with $\boldsymbol{\sigma} = \Phi(x_1,x_2)$ is measurably conjugate to the translation by~$\mathbf{t}$ on the torus~$\mathbb{T}^2$, for each
\begin{equation}\label{eq:tvals}
\mathbf{t}\in \Big\{ \Big(\frac{x_1}{1+x_1+x_2},\frac{x_2}{1+x_1+x_2}\Big),\Big(\frac{x_1}{1+x_1+x_2},\frac{1}{1+x_1+x_2}\Big),\Big(\frac{x_2}{1+x_1+x_2},\frac{1}{1+x_1+x_2}\Big) \Big\} .
\end{equation}

It is easy to see that the set of all $\mathbf{t} \in \mathbb{R}^2$ satisfying~\eqref{eq:tvals} for some pair $(x_1,x_2) \in \Delta_2$ is equal to $\{\mathbf{t} = (t_1,t_2):\, 0\le t_2\le 1,\, t_2 \le t_1 \le 1-t_2\}$. Since the translations $(t_1,t_2)$, $(t_2,t_1)$, $(1-t_1,1-t_2)$, and $(1-t_2,1-t_1)$ on $\mathbb{T}^2$ are pairwise (measurably) conjugate, this implies the result.
\end{proof}

\bigskip

\noindent
{\bf Acknowledgements.} We are grateful to Pascal Hubert  and Sasha Skripchenko for valuable discussions on the subject of this paper. We also thank the anonymous referee for his suggestions.

\bibliographystyle{amsalpha}
\bibliography{sadic}

\newcommand{\etalchar}[1]{$^{#1}$}
\providecommand{\bysame}{\leavevmode\hbox to3em{\hrulefill}\thinspace}
\providecommand{\MR}{\relax\ifhmode\unskip\space\fi MR }
\providecommand{\MRhref}[2]{%
  \href{http://www.ams.org/mathscinet-getitem?mr=#1}{#2}
}
\providecommand{\href}[2]{#2}
\begin{thebibliography}{ABM{\etalchar{+}}18}

\bibitem[ABB{\etalchar{+}}15]{AkiBBLS}
S.~Akiyama, M.~Barge, V.~Berth{\'e}, J.-Y. Lee, and A.~Siegel, \emph{On the
  {P}isot substitution conjecture}, Mathematics of aperiodic order, Progress in
  mathematics, vol. 309, Birkh{\"a}user, Basel, 2015, pp.~33--72.

\bibitem[ABI02]{Arnoux-Berthe-Ito:02}
P.~Arnoux, V.~Berth{\'e}, and S.~Ito, \emph{Discrete planes, {$\mathbb
  Z^2$}-actions, {J}acobi-{P}erron algorithm and substitutions}, Ann. Inst.
  Fourier \textbf{52} (2002), no.~2, 305--349.

\bibitem[ABM{\etalchar{+}}18]{sadic3}
P.~Arnoux, V.~Berth{\'e}, M.~Minervino, W.~Steiner, and J.~M. Thuswaldner,
  \emph{Nonstationary {M}arkov partitions, flows on homogeneous spaces, and
  generalized continued fractions}, in preparation, 2018.

\bibitem[AD15]{AD13}
A.~Avila and V.~Delecroix, \emph{Some monoids of {P}isot matrices},
  arXiv:1506.03692, preprint, 2015.

\bibitem[Ada03]{Adamczewski:03}
B.~Adamczewski, \emph{Balances for fixed points of primitive substitutions},
  Theoret. Comput. Sci. \textbf{307} (2003), no.~1, 47--75.

\bibitem[Ada04]{Adamdis}
\bysame, \emph{Symbolic discrepancy and self-similar dynamics}, Ann. Inst.
  Fourier (Grenoble) \textbf{54} (2004), no.~7, 2201--2234 (2005).

\bibitem[AF01]{Arnoux-Fisher:01}
P.~Arnoux and A.~M. Fisher, \emph{The scenery flow for geometric structures on
  the torus: the linear setting}, Chinese Ann. Math. Ser. B \textbf{22} (2001),
  no.~4, 427--470.

\bibitem[AF05]{Arnoux-Fisher:05}
\bysame, \emph{Anosov families, renormalization and non-stationary subshifts},
  Ergodic Theory Dynam. Systems \textbf{25} (2005), no.~3, 661--709.

\bibitem[AHS16a]{AvilaHubSkripbis}
A.~Avila, P.~Hubert, and A.~Skripchenko, \emph{Diffusion for chaotic plane
  sections of $3$-periodic plane surfaces}, Invent. Math. \textbf{206} (2016),
  109--146.

\bibitem[AHS16b]{AvilaHubSkrip}
\bysame, \emph{On the {H}ausdorff dimension of the {R}auzy gasket}, Bull. Soc.
  Math. France \textbf{144} (2016), 539--568.

\bibitem[AI01]{Arnoux-Ito:01}
P.~Arnoux and S.~Ito, \emph{Pisot substitutions and {R}auzy fractals}, Bull.
  Belg. Math. Soc. Simon Stevin \textbf{8} (2001), no.~2, 181--207,
  Journ{\'e}es Montoises d'Informatique Th{\'e}orique (Marne-la-Vall{\'e}e,
  2000).

\bibitem[AL11]{AL11}
S.~Akiyama and J.-Y. Lee, \emph{Algorithm for determining pure pointedness of
  self-affine tilings}, Adv. Math. \textbf{226} (2011), no.~4, 2855--2883.

\bibitem[AMS14]{Arnoux-Mizutani-Sellami}
P.~Arnoux, M.~Mizutani, and T.~Sellami, \emph{Random product of substitutions
  with the same incidence matrix}, Theoret. Comput. Sci. \textbf{543} (2014),
  68--78.

\bibitem[AN93]{ArnouxNogueira93}
P.~Arnoux and A.~Nogueira, \emph{Mesures de {G}auss pour des algorithmes de
  fractions continues multidimensionnelles}, Ann. Sci. \'Ecole Norm. Sup. (4)
  \textbf{26} (1993), no.~6, 645--664.

\bibitem[AR91]{Arnoux-Rauzy:91}
P.~Arnoux and G.~Rauzy, \emph{Repr\'esentation g\'eom\'etrique de suites de
  complexit\'e {$2n+1$}}, Bull. Soc. Math. France \textbf{119} (1991), no.~2,
  199--215.

\bibitem[BA09]{Broise}
A.~Broise-Alamichel, \emph{On the characteristic exponents of the
  {J}acobi-{P}erron algorithm}, Dynamical systems and {D}iophantine
  approximation, S\'emin. Congr., vol.~19, Soc. Math. France, Paris, 2009,
  pp.~151--171.

\bibitem[Bar16]{Barge14}
M.~Barge, \emph{Pure discrete spectrum for a class of one-dimensional
  substitution tiling systems}, Disc. and Cont. Dynam. Sys. - A \textbf{36}
  (2016), 1159--1173.

\bibitem[Bar18]{Barge15}
\bysame, \emph{The {P}isot conjecture for {$\beta$}-substitutions}, Ergodic
  Theory Dynam. Systems \textbf{38} (2018), 444--472.

\bibitem[BBJS13]{BBJS13}
V.~Berth{\'e}, J.~Bourdon, T.~Jolivet, and A.~Siegel, \emph{Generating discrete
  planes with substitutions}, WORDS, Lecture Notes in Computer Science, vol.
  8079, Springer, 2013, pp.~119--131.

\bibitem[BBJS15]{BBJS14}
\bysame, \emph{A combinatorial approach to products of {P}isot substitutions},
  Ergodic Theory Dynam. Systems (2015), 1--38.

\bibitem[BCS13]{Berthe-Cassaigne-Steiner}
V.~Berth{\'e}, J.~Cassaigne, and W.~Steiner, \emph{Balance properties of
  {A}rnoux-{R}auzy words}, Internat. J. Algebra Comput. \textbf{23} (2013),
  no.~4, 689--703.

\bibitem[BD14]{Berthe-Delecroix}
V.~Berth{\'e} and V.~Delecroix, \emph{Beyond substitutive dynamical systems:
  {$S$}-adic expansions}, RIMS Lecture note `K{\^o}ky{\^u}roku Bessatsu'
  \textbf{B46} (2014), 81--123.

\bibitem[Ber07]{Berstel:07}
J.~Berstel, \emph{Sturmian and episturmian words (a survey of some recent
  results)}, Algebraic informatics, Lecture Notes in Comput. Sci., vol. 4728,
  Springer, Berlin, 2007, pp.~23--47.

\bibitem[Ber11]{Berthe:11}
V.~Berth{\'e}, \emph{Multidimensional {E}uclidean algorithms, numeration and
  substitutions}, Integers \textbf{11B} (2011), A2.

\bibitem[BFZ05]{Berthe-Ferenczi-Zamboni:05}
V.~Berth{\'e}, S.~Ferenczi, and L.~Q. Zamboni, \emph{Interactions between
  dynamics, arithmetics and combinatorics: the good, the bad, and the ugly},
  Algebraic and topological dynamics, Contemp. Math., vol. 385, Amer. Math.
  Soc., Providence, RI, 2005, pp.~333--364.

\bibitem[Bir57]{Birkhoff}
G.~Birkhoff, \emph{Extensions of {J}entzsch's theorem}, Trans. Amer. Math. Soc.
  \textbf{85} (1957), 219--227.

\bibitem[BJS12]{Berthe-Jolivet-Siegel:12}
V.~Berth{\'e}, T.~Jolivet, and A.~Siegel, \emph{Substitutive {A}rnoux-{R}auzy
  sequences have pure discrete spectrum}, Unif. Distrib. Theory \textbf{7}
  (2012), no.~1, 173--197.

\bibitem[BK06]{Barge-Kwapisz:06}
M.~Barge and J.~Kwapisz, \emph{Geometric theory of unimodular {P}isot
  substitutions}, Amer. J. Math. \textbf{128} (2006), no.~5, 1219--1282.

\bibitem[BMST16]{BMST:15}
V~Berth\'e, M.~Minervino, W.~Steiner, and J.~Thuswaldner, \emph{The {$S$}-adic
  {P}isot conjecture on two letters}, Top. and its Appl. \textbf{205} (2016),
  47--57.

\bibitem[Bre81]{BRENTJES}
A.~J. Brentjes, \emph{Multidimensional continued fraction algorithms},
  Mathematical Centre Tracts, vol. 145, Mathematisch Centrum, Amsterdam, 1981.

\bibitem[Bru58]{BRUN}
V.~Brun, \emph{Algorithmes euclidiens pour trois et quatre nombres},
  Treizi\`eme congr\`es des math\`ematiciens scandinaves, tenu \`a {H}elsinki
  18-23 ao\^ut 1957, Mercators Tryckeri, Helsinki, 1958, pp.~45--64.

\bibitem[BST10]{CANTBST}
V.~Berth{\'e}, A.~Siegel, and J.~M. Thuswaldner, \emph{{Substitutions, Rauzy
  fractals, and tilings}}, Combinatorics, Automata and Number Theory,
  Encyclopedia of Mathematics and its Applications, vol. 135, Cambridge
  University Press, 2010.

\bibitem[BSTY19]{sadic4}
Val\'{e}rie Berth\'{e}, Wolfgang Steiner, J\"{o}rg~M. Thuswaldner, and Reem
  Yassawi, \emph{Recognizability for sequences of morphisms}, Ergodic Theory
  Dynam. Systems \textbf{39} (2019), no.~11, 2896--2931.

\bibitem[B{\v{S}}W13]{Barge-Stimac-Williams:13}
M.~Barge, S.~{\v{S}}timac, and R.~F. Williams, \emph{Pure discrete spectrum in
  substitution tiling spaces}, Discrete Contin. Dyn. Syst. \textbf{33} (2013),
  no.~2, 579--597.

\bibitem[BT02]{Berthe-Tijdeman:02}
V.~Berth{\'e} and R.~Tijdeman, \emph{Balance properties of multi-dimensional
  words}, Theoret. Comput. Sci. \textbf{273} (2002), no.~1-2, 197--224.

\bibitem[CFM08]{Cassaigne-Ferenczi-Messaoudi:08}
J.~Cassaigne, S.~Ferenczi, and A.~Messaoudi, \emph{Weak mixing and eigenvalues
  for {A}rnoux-{R}auzy sequences}, Ann. Inst. Fourier (Grenoble) \textbf{58}
  (2008), no.~6, 1983--2005.

\bibitem[CFZ00]{Cassaigne-Ferenczi-Zamboni:00}
J.~Cassaigne, S.~Ferenczi, and L.~Q. Zamboni, \emph{Imbalances in
  {A}rnoux-{R}auzy sequences}, Ann. Inst. Fourier (Grenoble) \textbf{50}
  (2000), no.~4, 1265--1276.

\bibitem[Che09]{Chevallier}
N.~Chevallier, \emph{Coding of a translation of the two-dimensional torus},
  Monatsh. Math. \textbf{157} (2009), no.~2, 101--130.

\bibitem[CS01]{CS:01}
V.~Canterini and A.~Siegel, \emph{Geometric representation of substitutions of
  {P}isot type}, Trans. Amer. Math. Soc. \textbf{353} (2001), no.~12,
  5121--5144.

\bibitem[CS03]{CS:03}
A.~Clark and L.~Sadun, \emph{When size matters: subshifts and their related
  tiling spaces}, Ergodic Theory Dynam. Systems \textbf{23} (2003), no.~4,
  1043--1057.

\bibitem[Dek78]{Dekking}
F.~M. Dekking, \emph{The spectrum of dynamical systems arising from
  substitutions of constant length}, Z. Wahrscheinlichkeitstheorie und Verw.
  Gebiete \textbf{41} (1977/78), no.~3, 221--239.

\bibitem[DHL14]{DelHL}
V.~Delecroix, P.~Hubert, and S.~Leli{\`e}vre, \emph{Diffusion for the periodic
  wind-tree model}, Ann. Sci. \'Ec. Norm. Sup\'er. (4) \textbf{47} (2014),
  no.~6, 1085--1110.

\bibitem[DHS99]{DHS:99}
F.~Durand, B.~Host, and C.~Skau, \emph{Substitutional dynamical systems,
  {B}ratteli diagrams and dimension groups}, Ergodic Theory Dynam. Systems
  \textbf{19} (1999), no.~4, 953--993.

\bibitem[DHS13]{Delecroix-Hejda-Steiner}
V.~Delecroix, T.~Hejda, and W.~Steiner, \emph{Balancedness of {A}rnoux-{R}auzy
  and {B}run words}, WORDS, Lecture Notes in Computer Science, vol. 8079,
  Springer, 2013, pp.~119--131.

\bibitem[DLR13]{Durand-Leroy-Richomme:13}
F.~Durand, J.~Leroy, and G.~Richomme, \emph{Do the properties of an {$S$}-adic
  representation determine factor complexity?}, J. Integer Seq. \textbf{16}
  (2013), no.~2, Article 13.2.6, 30 pp.

\bibitem[Dur00]{Durand:00a}
F.~Durand, \emph{Linearly recurrent subshifts have a finite number of
  non-periodic subshift factors}, Ergodic Theory Dynam. Systems \textbf{20}
  (2000), 1061--1078.

\bibitem[Dur03]{Durand:00b}
\bysame, \emph{Corrigendum and addendum to: ``{L}inearly recurrent subshifts
  have a finite number of non-periodic subshift factors'' [{E}rgodic {T}heory
  {D}ynam. {S}ystems {\bf 20} (2000), 1061--1078]}, Ergodic Theory Dynam.
  Systems \textbf{23} (2003), 663--669.

\bibitem[Fer92]{Ferenczi92}
S.~Ferenczi, \emph{Bounded remainder sets}, Acta Arith. \textbf{61} (1992),
  no.~4, 319--326.

\bibitem[Fer06]{Fernique:06}
T.~Fernique, \emph{Multidimensional {S}turmian sequences and generalized
  substitutions}, Int. J. Found. Comput. Sci. \textbf{17} (2006), 575--600.

\bibitem[FIKO96]{FUKE96}
T.~Fujita, S.~Ito, M.~Keane, and M.~Ohtsuki, \emph{On almost everywhere
  exponential convergence of the modified {J}acobi-{P}erron algorithm: a
  corrected proof}, Ergodic Theory Dynam. Systems \textbf{16} (1996), no.~6,
  1345--1352.

\bibitem[Fis09]{Fisher:09}
A.~M. Fisher, \emph{Nonstationary mixing and the unique ergodicity of adic
  transformations}, Stoch. Dyn. \textbf{9} (2009), no.~3, 335--391.

\bibitem[FKS73]{Furstenberg}
H.~Furstenberg, H.~Keynes, and L.~Shapiro, \emph{Prime flows in topological
  dynamics}, Israel J. Math. \textbf{14} (1973), 26--38.

\bibitem[Fog02]{Fog02}
N.~Pytheas Fogg, \emph{Substitutions in dynamics, arithmetics and
  combinatorics}, Lecture Notes in Mathematics, vol. 1794, Springer-Verlag,
  Berlin, 2002.

\bibitem[FS92]{FS92}
C.~Frougny and B.~Solomyak, \emph{Finite beta-expansions}, Ergodic Theory
  Dynam. Systems \textbf{12} (1992), no.~4, 713--723.

\bibitem[Fur60]{Furstenberg:60}
H.~Furstenberg, \emph{Stationary processes and prediction theory}, Annals of
  Mathematics Studies, No. 44, Princeton University Press, Princeton, N.J.,
  1960.

\bibitem[GL15]{Grepstad-Lev}
S.~Grepstad and N.~Lev, \emph{Sets of bounded discrepancy for multi-dimensional
  irrational rotation}, Geom. Funct. Anal. \textbf{25} (2015), 87--133.

\bibitem[Gor07]{Gorodnik}
A.~Gorodnik, \emph{Open problems in dynamics and related fields}, J. Mod. Dyn.
  \textbf{1} (2007), no.~1, 1--35.

\bibitem[HM06]{Hubert-Messaoudi:06}
P.~Hubert and A.~Messaoudi, \emph{Best simultaneous {D}iophantine
  approximations of {P}isot numbers and {R}auzy fractals}, Acta Arith.
  \textbf{124} (2006), no.~1, 1--15.

\bibitem[Hos86]{Host:86}
B.~Host, \emph{Valeurs propres des syst\`emes dynamiques d\'efinis par des
  substitutions de longueur variable}, Ergodic Theory Dynam. Systems \textbf{6}
  (1986), no.~4, 529--540.

\bibitem[IFHY03]{ito:03}
S.~Ito, J.~Fujii, H.~Higashino, and S.-I. Yasutomi, \emph{On simultaneous
  approximation to {$(\alpha,\alpha^2)$} with {$\alpha^3+k\alpha-1=0$}}, J.
  Number Theory \textbf{99} (2003), no.~2, 255--283.

\bibitem[IO93]{Ito-Ohtsuki:93}
S.~Ito and M.~Ohtsuki, \emph{Modified {J}acobi-{P}erron algorithm and
  generating {M}arkov partitions for special hyperbolic toral automorphisms},
  Tokyo J. Math. \textbf{16} (1993), no.~2, 441--472.

\bibitem[IO94]{Ito-Ohtsuki:94}
\bysame, \emph{Parallelogram tilings and {J}acobi-{P}erron algorithm}, Tokyo J.
  Math. \textbf{17} (1994), no.~1, 33--58.

\bibitem[IR06]{Ito-Rao:06}
S.~Ito and H.~Rao, \emph{Atomic surfaces, tilings and coincidence. {I}.
  {I}rreducible case}, Israel J. Math. \textbf{153} (2006), 129--155.

\bibitem[Ito89]{Ito:89}
S.~Ito, \emph{Weyl automorphisms, substitutions and fractals}, Stability theory
  and related topics in dynamical systems ({N}agoya, 1988), World Sci. Adv.
  Ser. Dynam. Systems, vol.~6, World Sci. Publ., Teaneck, NJ, 1989, pp.~60--72.

\bibitem[Ito95]{Ito:95b}
\bysame, \emph{Fractal domains of quasi-periodic motions on {$T^2$}},
  Algorithms, fractals, and dynamics ({O}kayama/{K}yoto, 1992), Plenum, New
  York, 1995, pp.~95--99.

\bibitem[IY07]{Ito:07}
S.~Ito and S.-I. Yasutomi, \emph{On simultaneous {D}iophantine approximation to
  periodic points related to modified {J}acobi-{P}erron algorithm}, Probability
  and number theory---{K}anazawa 2005, Adv. Stud. Pure Math., vol.~49, Math.
  Soc. Japan, Tokyo, 2007, pp.~171--184.

\bibitem[Lag93]{Lagarias}
J.~C. Lagarias, \emph{The quality of the {D}iophantine approximations found by
  the {J}acobi-{P}erron algorithm and related algorithms}, Monatsh. Math.
  \textbf{115} (1993), no.~4, 299--328.

\bibitem[LL16]{LabLer}
S.~Labb\'e and J.~Leroy, \emph{Bispecial factors in the {B}run {$S$}-adic
  system}, Developments in Language Theory (DLT), Lecture Notes in Comput.
  Sci., Springer, Berlin, 2016.

\bibitem[Mee99]{Meester}
R.~Meester, \emph{A simple proof of the exponential convergence of the modified
  {J}acobi-{P}erron algorithm}, Ergodic Theory Dynam. Systems \textbf{19}
  (1999), no.~4, 1077--1083.

\bibitem[MT14]{MineThus}
M.~Minervino and J.~Thuswaldner, \emph{The geometry of non-unit {P}isot
  substitutions}, Ann. Inst. Fourier (Grenoble) \textbf{64} (2014), 1373--1417.

\bibitem[Per07]{Perron:07}
O.~Perron, \emph{Grundlagen f\"ur eine {T}heorie des {J}acobischen
  {K}ettenbruchalgorithmus}, Math. Ann. \textbf{64} (1907), no.~1, 1--76.

\bibitem[PFS14a]{PriebeFrank-Sadun11}
N.~Priebe~Frank and L.~Sadun, \emph{Fusion: a general framework for
  hierarchical tilings of {$\mathbb{R}^d$}}, Geom. Dedicata \textbf{171}
  (2014), 149--186.

\bibitem[PFS14b]{PriebeFrank-Sadun14}
\bysame, \emph{Fusion tilings with infinite local complexity}, Topology Proc.
  \textbf{43} (2014), 235--276.

\bibitem[Pod77]{POD77}
E.~V. Podsypanin, \emph{A generalization of the continued fraction algorithm
  that is related to the {V}iggo {B}run algorithm}, Zap. Nau\v cn. Sem.
  Leningrad. Otdel. Mat. Inst. Steklov. (LOMI) \textbf{67} (1977), 184--194,
  227, Studies in number theory (LOMI), 4.

\bibitem[Que10]{Queffelec:10}
M.~Queff{\'e}lec, \emph{Substitution dynamical systems---spectral analysis},
  second ed., Lecture Notes in Mathematics, vol. 1294, Springer-Verlag, Berlin,
  2010.

\bibitem[Rau82]{Rauzy:82}
G.~Rauzy, \emph{Nombres alg\'ebriques et substitutions}, Bull. Soc. Math.
  France \textbf{110} (1982), no.~2, 147--178.

\bibitem[Rau84]{Rauzy:84}
\bysame, \emph{Ensembles \`a restes born\'es}, Seminar on number theory,
  1983--1984 ({T}alence, 1983/1984), Univ. Bordeaux I, Talence, 1984, pp.~Exp.
  No. 24, 12.

\bibitem[Rev91]{Reveilles:91}
J.-P. Reveill{\`e}s, \emph{G\'eom\'etrie discr\`ete, calculs en nombres entiers
  et algorithmes}, Ph.D. thesis, Universit\'e Louis Pasteur, Strasbourg, 1991.

\bibitem[RZ00]{RisleyZamboni}
R.~N. Risley and L.~Q. Zamboni, \emph{A generalization of {S}turmian sequences:
  combinatorial structure and transcendence}, Acta Arith. \textbf{95} (2000),
  no.~2, 167--184.

\bibitem[Sad16]{Sadun:15}
L.~Sadun, \emph{Finitely balanced sequences and plasticity of $1$-dimensional
  tilings}, Topology Appl. \textbf{205} (2016), 82--87.

\bibitem[Sch91]{Schweiger:91}
F.~Schweiger, \emph{Invariant measures for maps of continued fraction type}, J.
  Number Theory \textbf{39} (1991), no.~2, 162--174.

\bibitem[Sch98]{Schratzberger:98}
B.~R. Schratzberger, \emph{The exponent of convergence for {B}run's algorithm
  in two dimensions}, \"Osterreich. Akad. Wiss. Math.-Natur. Kl. Sitzungsber.
  II \textbf{207} (1998), 229--238 (1999).

\bibitem[Sch00]{SCHWEIGER}
F.~Schweiger, \emph{Multidimensional continued fractions}, Oxford Science
  Publications, Oxford University Press, Oxford, 2000.

\bibitem[Sid03]{Sidorov}
N.~Sidorov, \emph{Arithmetic dynamics}, Topics in dynamics and ergodic theory,
  London Math. Soc. Lecture Note Ser., vol. 310, Cambridge Univ. Press,
  Cambridge, 2003, pp.~145--189.

\bibitem[Sol97]{Solomyak:97}
B.~Solomyak, \emph{Dynamics of self-similar tilings}, Ergodic Theory Dynam.
  Systems \textbf{17} (1997), no.~3, 695--738.

\bibitem[ST09]{ST09}
A.~Siegel and J.~M. Thuswaldner, \emph{Topological properties of {R}auzy
  fractals}, M\'em. Soc. Math. Fr. (N.S.) (2009), no.~118, 140 pp.

\bibitem[SW02]{SirventWang02}
V{\'{\i}}ctor~F. Sirvent and Yang Wang, \emph{Self-affine tiling via
  substitution dynamical systems and {R}auzy fractals}, Pacific J. Math.
  \textbf{206} (2002), no.~2, 465--485.

\bibitem[Ver81]{Vershik:81}
A.~M. Vershik, \emph{Uniform algebraic approximation of shift and
  multiplication operators}, Dokl. Akad. Nauk SSSR \textbf{259} (1981), no.~3,
  526--529, English translation: {\it Soviet Math. Dokl.} 24 (1981), 97--100.

\end{thebibliography}
\end{document}